\documentclass{amsart}

\usepackage{amsfonts, amsmath, amssymb}
\usepackage{graphicx, epic, enumerate}
\usepackage{float}
\usepackage{xcolor}
\usepackage{gensymb}
\usepackage{cellspace}
\setlength\cellspacetoplimit{5pt}
\setlength\cellspacebottomlimit{5pt}

\newtheorem{theorem}{Theorem}
\newtheorem{lemma}{Lemma}
\newtheorem{corollary}{Corollary}

\theoremstyle{definition}
\newtheorem{definition}{Definition}
\newtheorem{remark}{Remark}
\newtheorem{example}{Example}

\usepackage{ifxetex}
\ifxetex
  \usepackage{fontspec}
\else
  \usepackage[T1]{fontenc}
  \usepackage[utf8]{inputenc}
  \usepackage{lmodern}
\fi

\begin{document}

\title{The $L$-move and Markov theorems for trivalent braids}
\author{Carmen Caprau}
\address{Department of Mathematics, California State University, Fresno, CA 93740}
\email{ccaprau@csufresno.edu}

\author{Gabriel Coloma}
\address{Departamento de Matem\'{a}ticas, Universidad de Puerto Rico, Mayag\"{u}ez, Puerto Rico 00681}
\email{gabriel.coloma@upr.edu}

\author{Marguerite Davis}
\address{Department of Mathematics, Ithaca College,  Ithaca, NY 14850}
\email{mdavis7@ithaca.edu}

\subjclass[2010]{57M25, 57M15; 20F36}
\keywords{L-moves, Markov-type moves, spatial trivalent graphs, trivalent braids}
\thanks{This work was supported by NSF Grant DMS-1460151 and Simons Foundation grant $\#$355640}

\begin{abstract}
The $L$-move for classical braids extends naturally to trivalent braids. We follow the $L$-move approach to the Markov Theorem, to prove a one-move Markov-type theorem for trivalent braids. We also reformulate this $L$-Move Markov theorem and prove a more algebraic Markov-type theorem for trivalent braids. Along the way, we provide a proof of the Alexander's theorem analogue for spatial trivalent graphs and trivalent braids.
\end{abstract}

\maketitle

\section{Introduction}

  The Alexander~\cite{A} and Markov~\cite{M} Theorems are fundamental results in classical knot theory. Alexander Theorem states that any oriented link is isotopic to the closure of some braid (which is not unique). Markov Theorem characterizes braids that yield isotopic links via the closure operation. Specifically, the closures of two classical braids represent isotopic links if and only if the braids are related by a finite sequence consisting of braid isotopy and two additional moves, usually referred to as \textit{Markov moves}: conjugation by a crossing and bottom right stabilization.
  
  There is another type of braid move, the so-called $L$-move, which was introduced by S. Lambropoulou in~\cite{L} (see also~\cite{LR}). This move replaces the two moves of the Markov equivalence and yields a `one-move Markov-type theorem' for oriented links.

  As an extension of classical knot theory, spatial graph theory seeks to classify, up to isotopy, embeddings of graphs in three-space. In this paper we focus on oriented spatial trivalent graphs whose vertices are neither sources nor sinks. Just as oriented links can be represented by diagrams that are closures of braids, so can oriented spatial trivalent graphs be represented by closures of trivalent braids (with the same number of top and bottom endpoints). We borrow the $L$-move approach and show that the $L$-move can be extended to the setting of trivalent braids. Then we prove that this type of move and trivalent braid isotopy are sufficient to prove an $L$-move Markov-type theorem for trivalent braids. With this theorem at hand, we are able to state and prove an algebraic Markov-type theorem for trivalent braids.

  We remark that the $L$-move was extended to other diagrammatic situations, including virtual braids~\cite{KL2}, virtual singular braids~\cite{CPM}, and virtual trivalent braids~\cite{CDS}.

  The paper is organized as follows: We start with a brief discussion about spatial trivalent graphs, trivalent braids, and trivalent braid isotopy; this is done in Section~\ref{sec:braidsintro}. In Section~\ref{sec:AlexanderThm} we describe our preparation for braiding and braiding algorithm for spatial trivalent graphs, which implicitly proves the Alexander-type theorem for spatial trivalent graphs. We introduce in Section~\ref{sec:TL-equiv} the $TL$-equivalence among trivalent braids, and use it in Section~\ref{sec:ProofLMarkov} to prove our one-move Markov-type theorem based on the $L$-move for trivalent braids. Finally, we close with Section~\ref{sec:ProofAMarkov}, where we state and prove a more algebraic Markov-type theorem for trivalent braids.

\section{Trivalent braids and spatial trivalent graphs} \label{sec:braidsintro}

  In this section, we briefly review spatial trivalent graphs and trivalent braids.

  A trivalent graph is a finite graph whose vertices have valency three and a \textbf{spatial trivalent graph} (or shortly STG) is a trivalent graph embedded in $\mathbb{R}^3$. Two spatial trivalent graphs are called \textbf{ambient isotopic} if there exists an orientation-preserving self-homeomorphism on $\mathbb{R}^3$ taking one graph onto the other.

  When studying STGs we work with their diagrams. A \textbf{diagram} of a spatial trivalent graph is a projection of a spatial trivalent graph into a plane. It is well-known that two spatial trivalent graphs are ambient isotopic if and only if their diagrams are related by planar isotopy and a finite sequence of the moves $R1-R5$
  depicted in Figure~\ref{fig:graph-moves} (see for example~\cite{Ka1}). Note that for the $R3$ move, the sliding strand may be either an understrand or overstrand. We refer to these moves as the \textbf{extended Reidemeister moves} for STG diagrams. In addition, if two STG diagrams differ by a finite sequence of the extended Reidemeister moves we refer to them as being \textbf{isotopic} (or \textbf{equivalent}).

  \begin{figure}[ht]
    \[\raisebox{-13pt}{\includegraphics[height=0.4in]{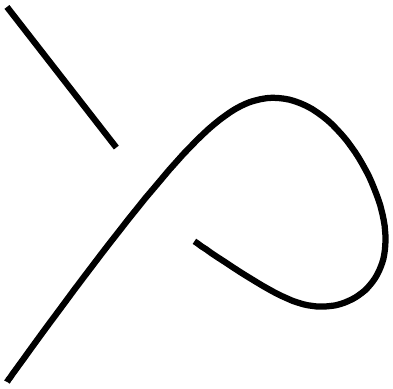}}\,
    \stackrel{R1}{\longleftrightarrow} \,
    \raisebox{-13pt}{\includegraphics[height=0.4in]{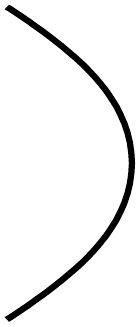}} \,
    \stackrel{R1}{\longleftrightarrow} \,
    \reflectbox{\raisebox{17pt}{\includegraphics[height=0.4in, angle =
        180]{poskink}}} \,\,\qquad\,\,
    \raisebox{-13pt}{\includegraphics[height=0.4in]{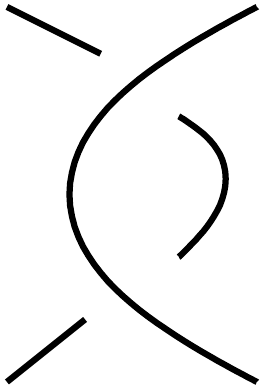}} \,
    \stackrel{R2}{\longleftrightarrow} \,
    \raisebox{-13pt}{\includegraphics[height=0.4in]{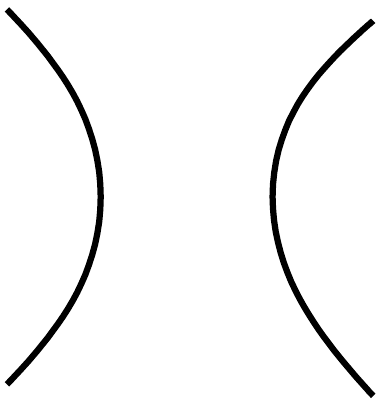}} \,\,
    \qquad \,\,
    \raisebox{-13pt}{\includegraphics[height=0.4in]{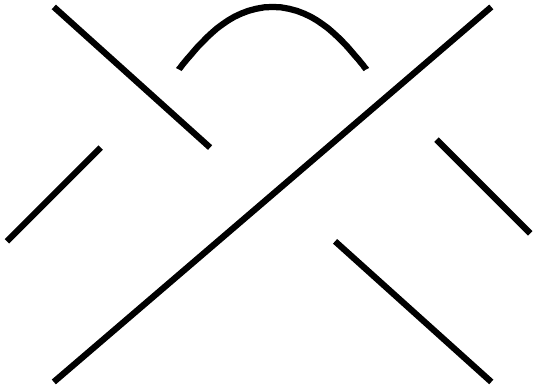}} \,
    \stackrel{R3}{\longleftrightarrow} \,
    \raisebox{-13pt}{\includegraphics[height=0.4in]{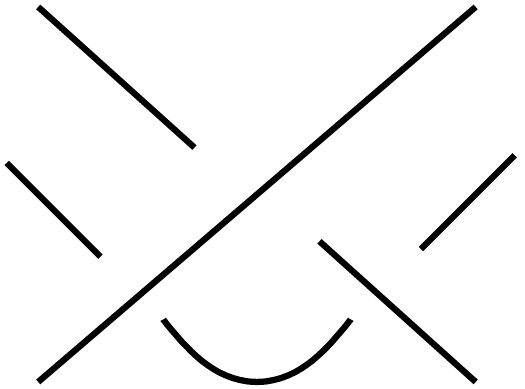}}\]
    \[ \raisebox{-13pt}{\includegraphics[height=0.45in]{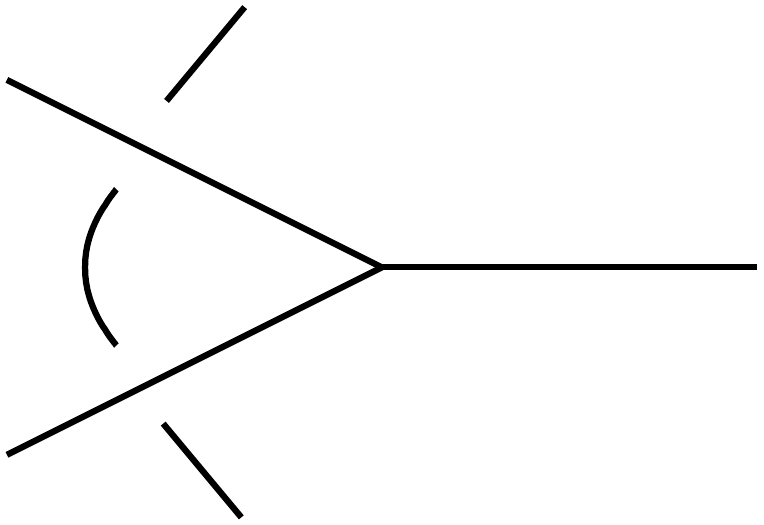}}\,\,
    \stackrel{R4}{\longleftrightarrow} \,\,
    \raisebox{-13pt}{\includegraphics[height=0.45in]{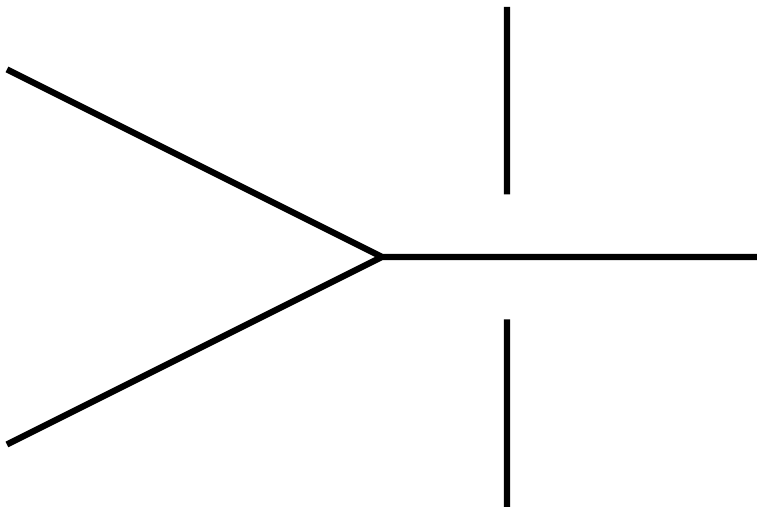}} \,\,
    \qquad \,\,
    \raisebox{-13pt}{\includegraphics[height=0.45in]{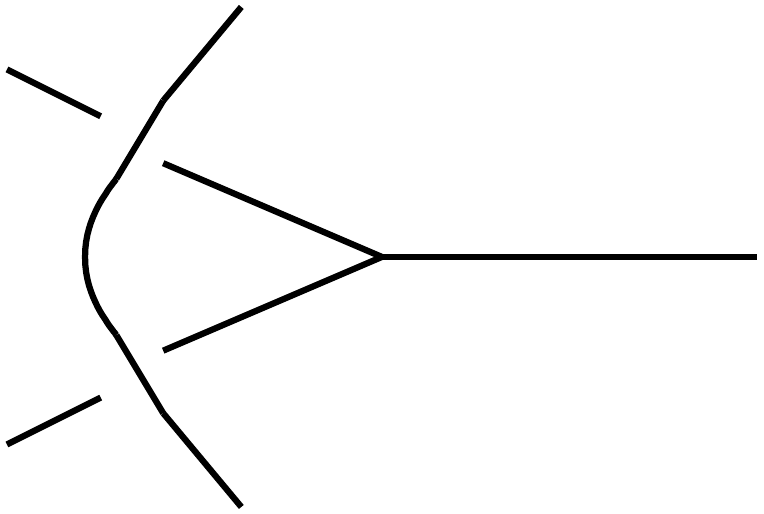}} \,\,
    \stackrel{R4}{\longleftrightarrow}\,\,
    \raisebox{-13pt}{\includegraphics[height=0.45in]{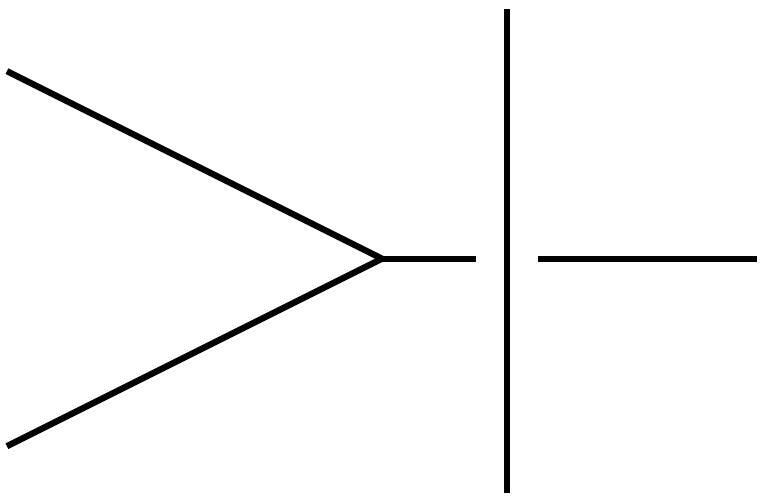}} \]
    \[
    \raisebox{-13pt}{\includegraphics[height=0.4in]{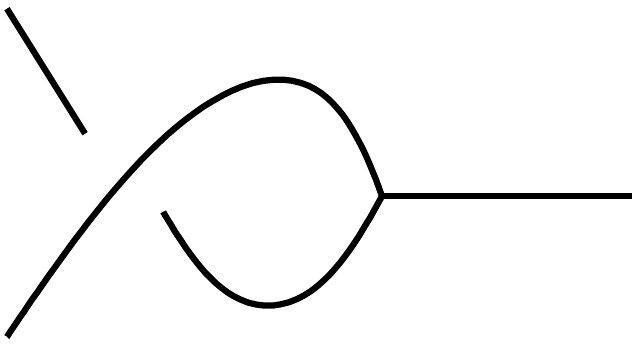}}\,\,\stackrel{R5}{
      \longleftrightarrow} \,\,
    \raisebox{-13pt}{\includegraphics[height=0.4in]{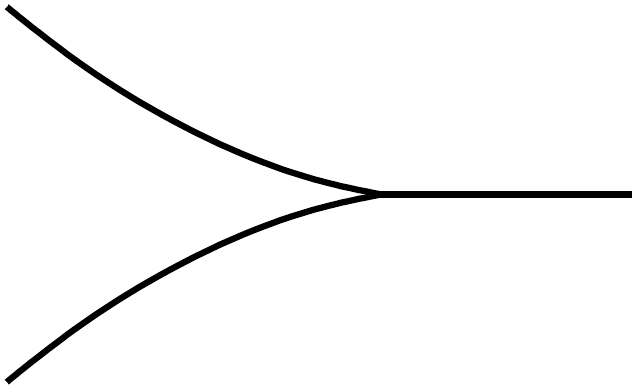}} \,\,
    \stackrel{R5}{\longleftrightarrow} \,\,
    \raisebox{-13pt}{\includegraphics[height=0.4in]{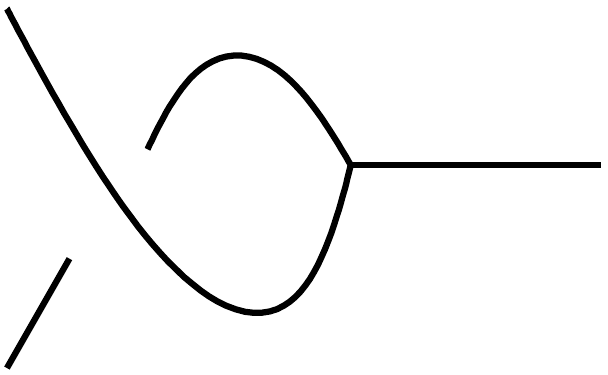}}\]
    \caption{Extended Reidemeister moves for STG
      diagrams} \label{fig:graph-moves}
  \end{figure}

  A trivalent tangle can be regarded as a local region in a spatial trivalent graph diagram. A \textbf{trivalent braid} is a trivalent tangle in braid form. We denote by $TB^m_n$ the set of trivalent braids with $m$ top endpoints and $n$ bottom endpoints, and we refer to an element in $TB^m_n$ as an $(m, n)$ trivalent braid. See, for instance, the $(3,2)$ braid in Figure \ref{32braid}.

\begin{figure}[ht]
  \centering
  \includegraphics[height=0.7in]{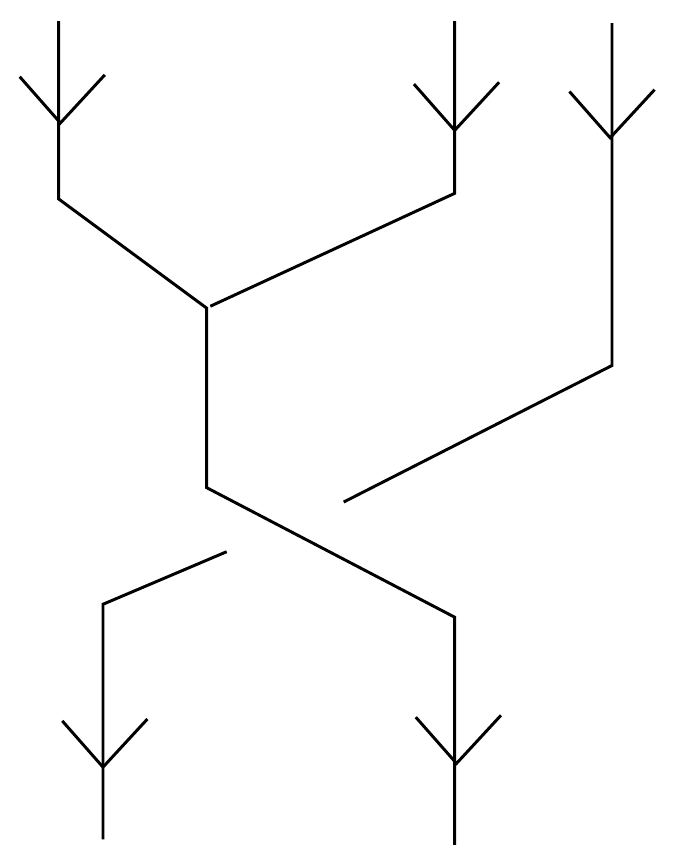}
  \caption{Example of a $(3,2)$ braid.}
  \label{32braid}
\end{figure}

The \textbf{closure} $\overline{b}$ of an $(n, n)$ trivalent braid $b$ is the STG diagram obtained by connecting the $n$ top endpoints of $b$ with the corresponding bottom endpoints, drawing $n$ non-intersecting arcs (see Figure \ref{clos}).

\begin{figure}
  \[
  \raisebox{20pt}{\includegraphics[height = 0.8in]{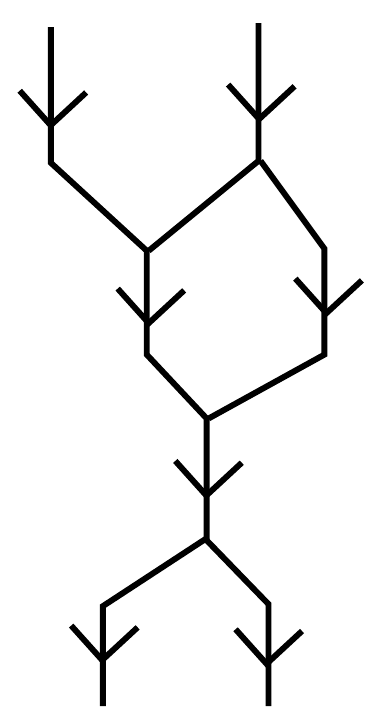}}
  \hspace{1cm}
  \raisebox{40pt}{$\underrightarrow{\text{Closure}}$} 
  \hspace{1cm}
  \includegraphics[height = 1.2in]{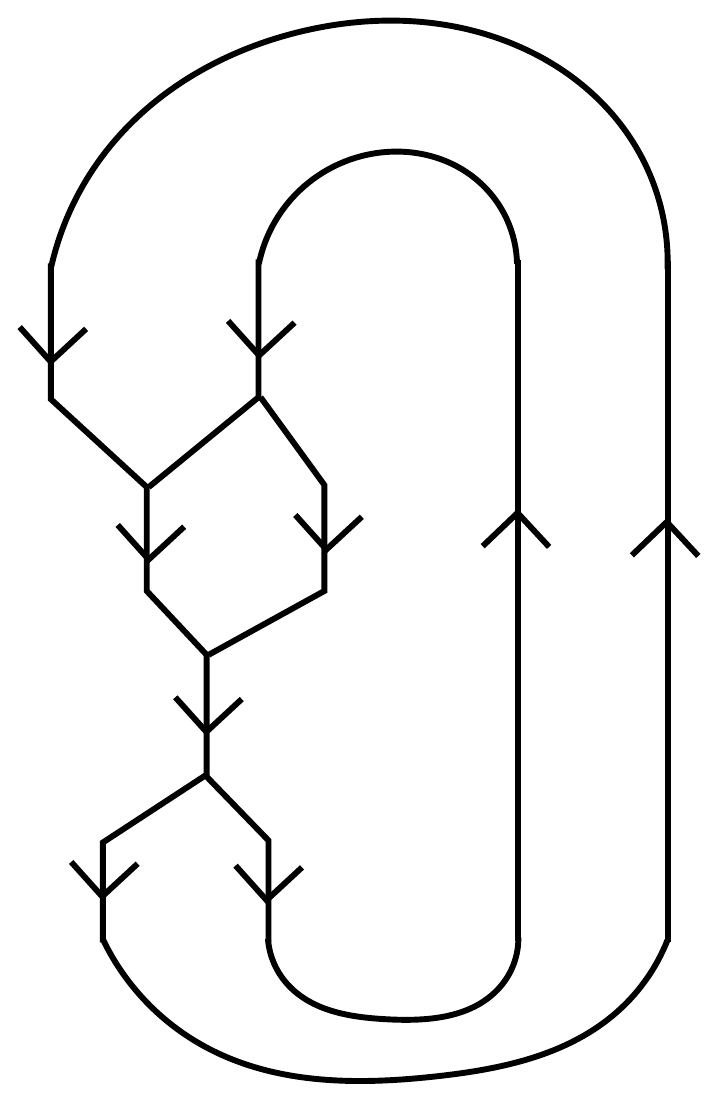}
  \]
  \caption{Example of closure operation}
  \label{clos}
\end{figure}

If $b_1 \in TB^m_n$ and $b_2 \in TB^n_s$, then we can compose $b_1$ with $b_2$. The \textbf{composition} $b_1b_2$ is the trivalent braid obtained by placing $b_1$ on top of $b_2$ and connecting the bottom endpoints of $b_1$ with the top endpoints of $b_2$. Note that $b_1b_2 \in TB^m_s$ (see Figure \ref{comp}).

\begin{figure}[ht]
  \[
  \raisebox{0pt}{\includegraphics[height=1in]{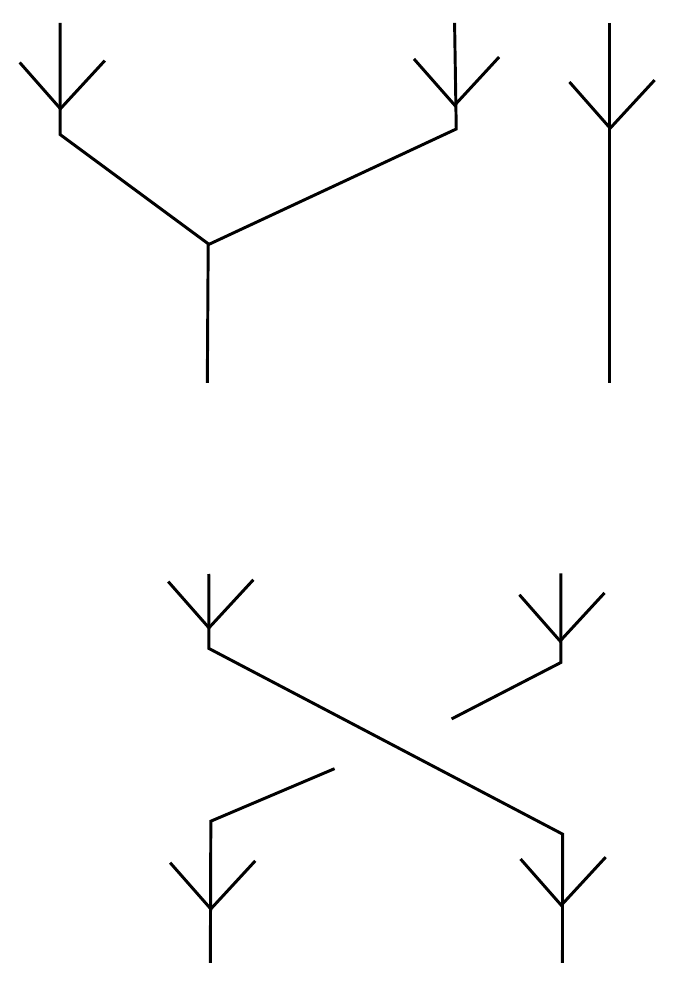}} \hspace{1cm}
  \raisebox{40pt}{$\longrightarrow$} \hspace{1cm}
  \raisebox{10pt}{\includegraphics[height=0.8in]{BraidAlg/Notabraid.pdf}}
  \put(-180, 60){\fontsize{11}{11}\text{$b_1$}}
   \put(-180, 15){\fontsize{11}{11}\text{$b_2$}}
    \put(5, 40){\fontsize{11}{11}\text{$b_1b_2$}}
  \]
  \caption{Composition of trivalent braids}
  \label{comp}
\end{figure}

The \textbf{identity $(n, n)$ braid}, denoted by $1_n$, is the braid with $n$ parallel strands free of crossings or vertices. In addition, the braids $\sigma_i, \sigma^{-1}_i$ and  $y_i, \lambda_{i}$ depicted in Figure~\ref{fig:ElementaryBraids} are called \textbf{elementary trivalent braids}; the only restriction for the index $i$ is that it must be less than the number of braid strands at the horizontal level where such elementary trivalent braid is located (note that a trivalent braid in $TB^m_n$ may contain an arbitrary number of strands at some horizontal level). With these at hand, any trivalent braid can be regarded as a composition of elementary trivalent braids, and the corresponding representation of the braid is called a `word'.

\begin{figure}[ht]
  \[ \sigma_i \,\,\, =\,\,\,
  \raisebox{-17pt}{\includegraphics[height=.5in]{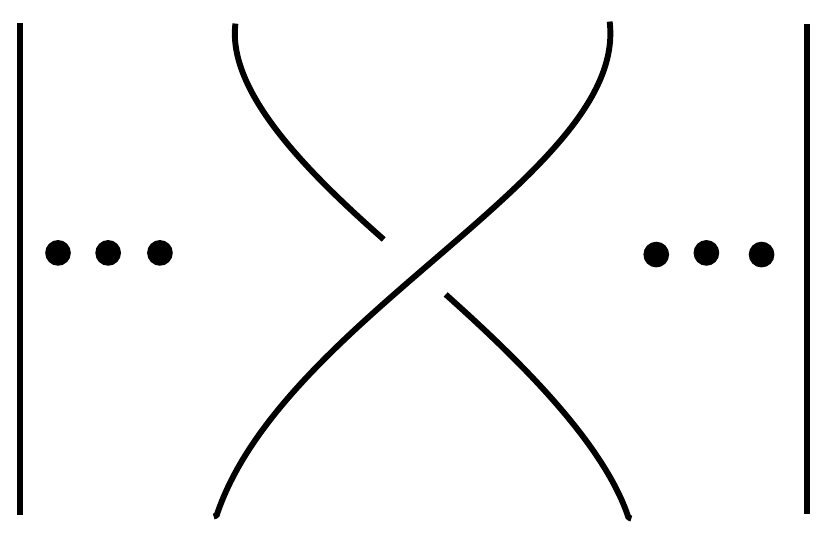}} \hspace{1cm}
  \sigma_i^{-1} \,\,\,=\,\,\,
  \raisebox{-17pt}{\includegraphics[height=.5in]{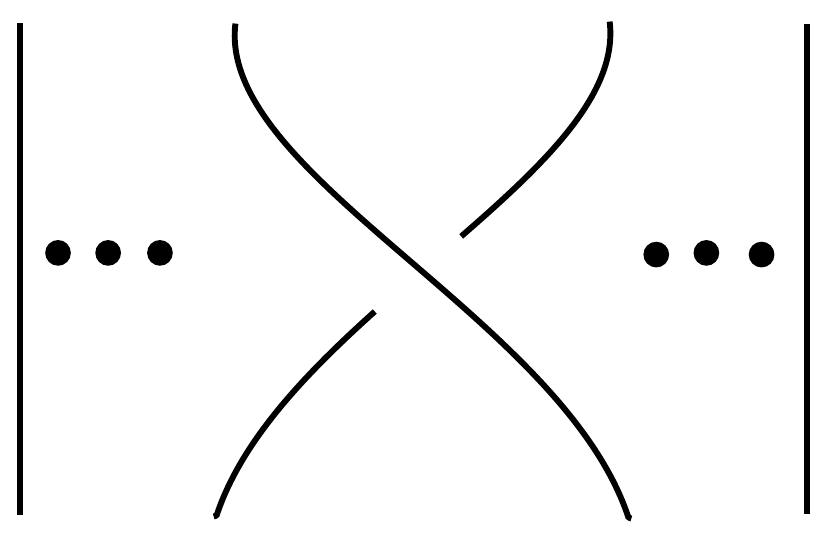}} \put(-180,
  21){\fontsize{7}{7}$1$} \put(-165,
  21){\fontsize{7}{7}$i$} \put(-150,
  21){\fontsize{7}{7}$i+1$}
  \put(-127,21){\fontsize{7}{7}$n$} \put(-58,
  21){\fontsize{7}{7}$1$} \put(-40,
  21){\fontsize{7}{7}$i$} \put(-25,
  21){\fontsize{7}{7}$i+1$} \put(-3,21){\fontsize{7}{7}$n$}
  \]

  \[y_i \,\, =\,\, \raisebox{-17pt}{\includegraphics[height=.5in,
    width=0.8in]{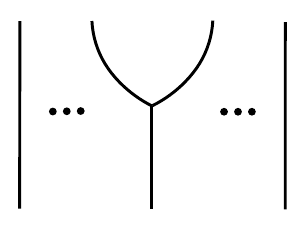}} \hspace{1cm} \lambda_i \,\,\,=\,\,\,
  \raisebox{-17pt}{\includegraphics[height=.5in,
    width=0.83in]{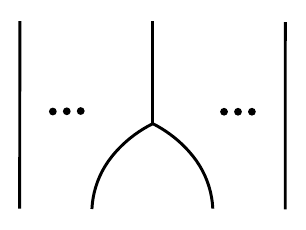}} \put(-177,
  21){\fontsize{7}{7}$1$} \put(-162,
  21){\fontsize{7}{7}$i$} \put(-146,
  21){\fontsize{7}{7}$i+1$}
  \put(-125,21){\fontsize{7}{7}$n$} \put(-58,
  21){\fontsize{7}{7}$1$} \put(-31,
  21){\fontsize{7}{7}$i$}
  \put(-10,21){\fontsize{7}{7}$n-1$} \put(-177,
  -21){\fontsize{7}{7}$1$} \put(-152,
  -21){\fontsize{7}{7}$i$}
  \put(-130,-21){\fontsize{7}{7}$n-1$} \put(-58,
  -21){\fontsize{7}{7}$1$} \put(-45,
  -21){\fontsize{7}{7}$i$} \put(-27,
  -21){\fontsize{7}{7}$i+1$} \put(-5,-21){\fontsize{7}{7}$n$}
  \]
  \caption{Elementary trivalent braids} \label{fig:ElementaryBraids}
\end{figure}

Similar to the case of classical braids, trivalent braids are considered up to isotopy. Two $(m, n)$ trivalent braids are called \textbf{isotopic} if they are related by a finite sequence of the following replacements for subwords (note that for each such move below there is also the variant which uses $\sigma_i^{-1}$):

\begin{itemize}
\item $\sigma_i\sigma_i^{-1}=\sigma_i^{-1}\sigma_i=1_n$
  \[ \raisebox{-13pt}{\includegraphics[height=0.5in]{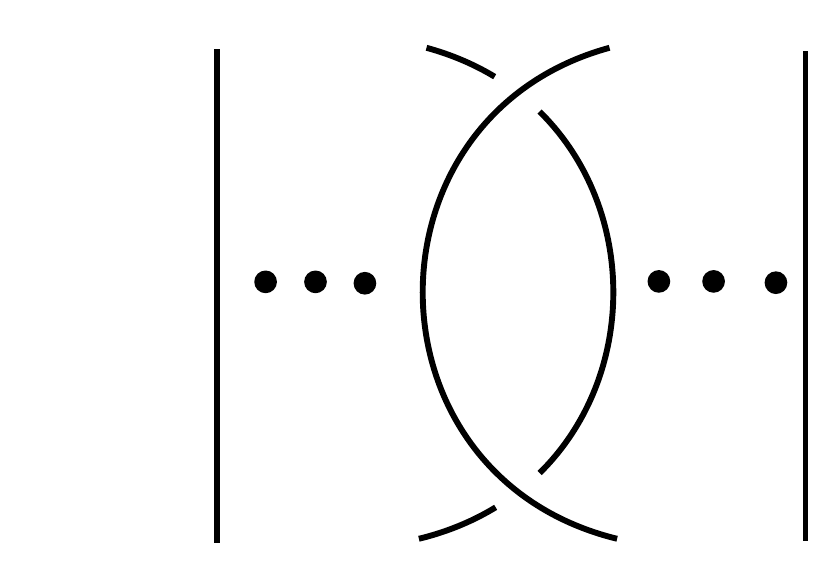}}
  \hspace{0.2cm} \stackrel{R2}{\sim} \hspace{0.2cm} \raisebox{-14
    pt}{\includegraphics[height=0.5in]{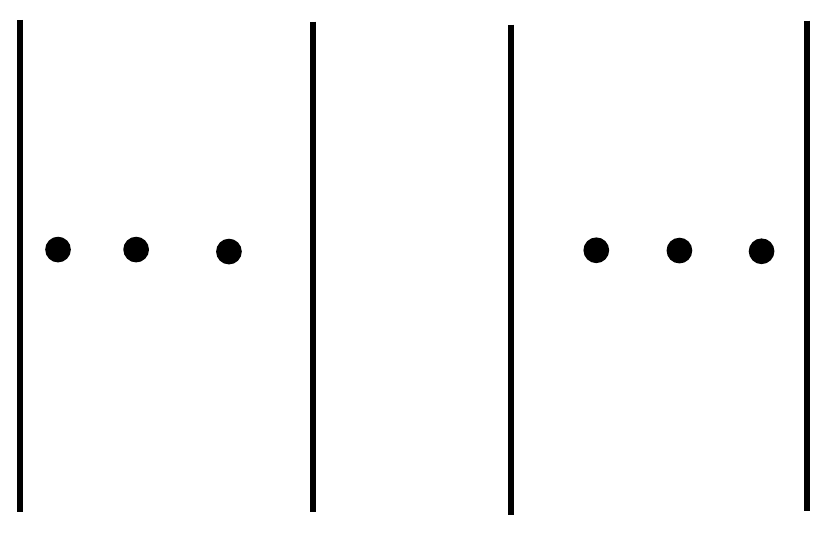}} \]

\item $\sigma_i\sigma_{i+1}\sigma_i=\sigma_{i+1}\sigma_i\sigma_{i+1}$
  \[ \raisebox{-.5cm}{\includegraphics[height=0.5in]{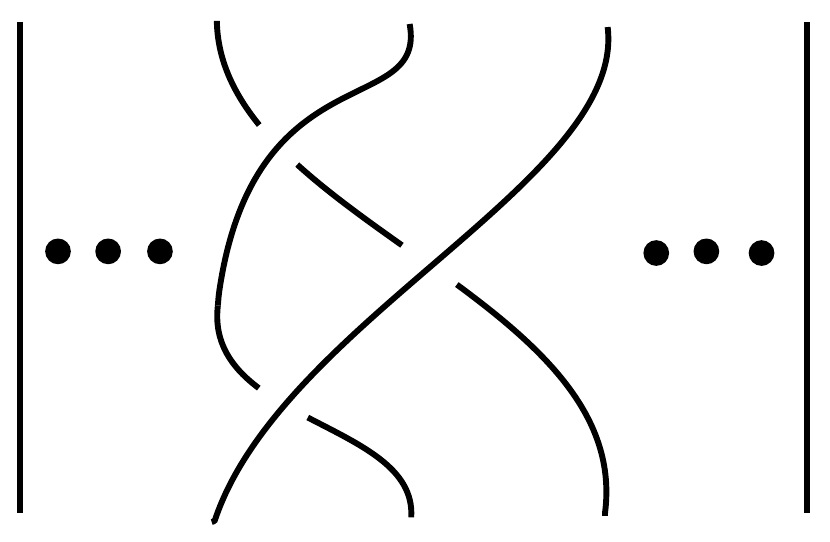}}
  \hspace{0.2cm} \stackrel{R3}{\sim} \hspace{0.2cm}
  \raisebox{-.5cm}{\includegraphics[height=0.5in]{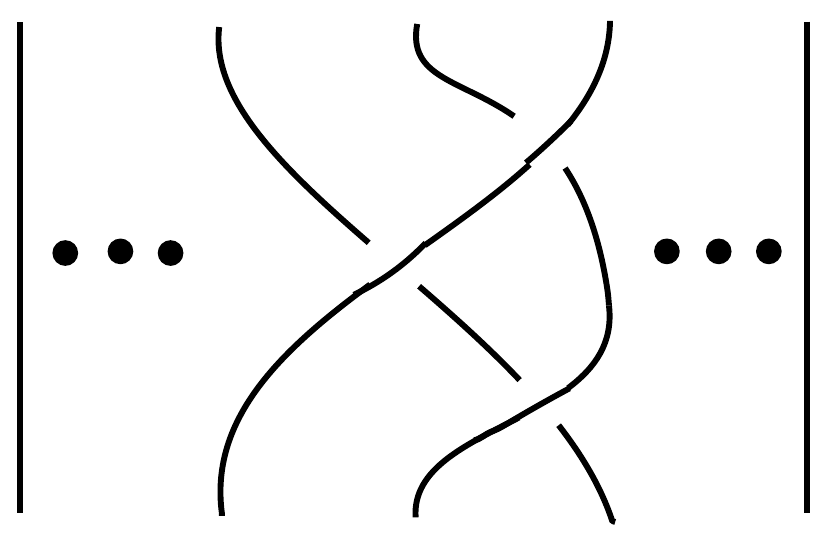}} \]

\item $y_i = \sigma_i y_i$, \hspace{5cm}
  $\lambda_i = \lambda_i \sigma_i$
  \[ \raisebox{-15pt}{\includegraphics[height=0.5in]{y1}}\,\,
  \stackrel{R5}{\sim} \,\,
  \raisebox{-15pt}{\includegraphics[height=0.5in]{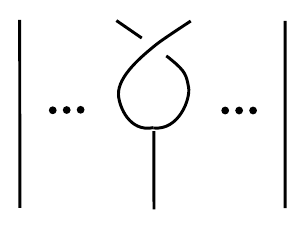}}
  \hspace{0.45in}
  \raisebox{-15pt}{\includegraphics[height=0.5in]{lambda1}}\,\,
  \stackrel{R5}{\sim} \,\,
  \raisebox{-15pt}{\includegraphics[height=0.5in, width=0.7in]{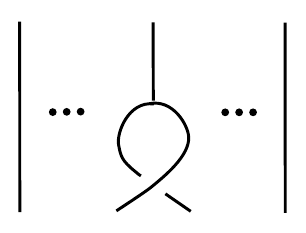}}
  \]

\item $\sigma_i \sigma_{i+1} y_i = y_{i+1} \sigma_i$, \hspace{3.5cm}
  $\sigma_{i+1} \sigma_i y_{i+1} = y_i \sigma_i$
  \[
  \raisebox{20pt}{\includegraphics[height=0.5in, angle =
    180]{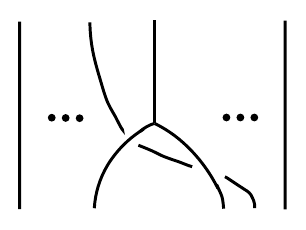}}\,\, \stackrel{R4}{\sim} \,\,
  \raisebox{20pt}{\includegraphics[height=0.5in, angle = 180]{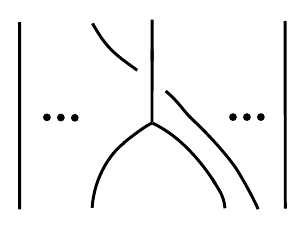}}
  \hspace{0.45in}
  \raisebox{-15pt}{\includegraphics[height=0.5in]{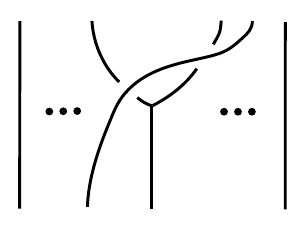}}\,\,
  \stackrel{R4}{\sim} \,\,
  \raisebox{-15pt}{\includegraphics[height=0.5in]{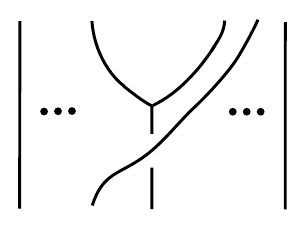}}
  \]

\item $\lambda_i \sigma_{i+1} \sigma_i = \sigma_i \lambda_{i+1}$,
  \hspace{3.5cm}
  $\lambda_{i+1} \sigma_i \sigma_{i+1} = \sigma_i \lambda_i$
  \[\raisebox{20pt}{\includegraphics[height=0.5in, angle =
    180]{R41}}\,\, \stackrel{R4}{\sim} \,\,
  \raisebox{20pt}{\includegraphics[height=0.5in, angle = 180]{R42}}
  \hspace{0.45in}
  \raisebox{-15pt}{\includegraphics[height=0.5in]{LR42a}}\,\,
  \stackrel{R4}{\sim} \,\,
  \raisebox{-15pt}{\includegraphics[height=0.5in]{LR41a}}
  \]
\item Commuting relations:
  \[\sigma_i b_{j} = b_j \sigma_i, \,\, \, y_i b_{j-1} = b_j y_i, \,\,
  \, \lambda_i b_j = b_{j-1} \lambda_i, \,\, \text{where} \,\,i+1 <
  j \]
  and
  $b_i \in \{ \sigma_i^{\pm 1}, y_i, \lambda_i\}$.
\end{itemize}

In this paper, we work with oriented STG diagrams whose vertices are either zip or unzip vertices (see Figure~\ref{fig:orientations}).  A \textbf{zip vertex} is a trivalent vertex with two of its edges oriented toward it and one edge oriented away from it. On the other hand, an \textbf{unzip vertex} has one edge oriented toward it and two edges oriented away from it. We do not allow \textbf{sink} or \textbf{source} vertices, where all edges are oriented toward or, respectively, away from it.

\begin{figure}[ht]
  \[ {\includegraphics[height=0.7in]{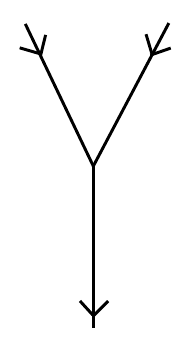}} \hspace{2cm}
  {\includegraphics[height=0.7in]{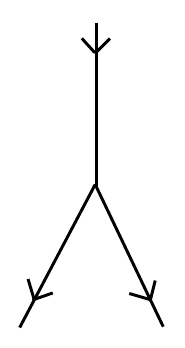}} \put(-105,
  -10){\fontsize{7}{7}\text{zip}} \put(-25,
  -10){\fontsize{7}{7}\text{unzip}}
  \]
  \caption{Allowed orientations near a
    vertex} \label{fig:orientations}
\end{figure}

We say that a spatial trivalent graph is \textbf{well-oriented} if it contains only zip and unzip vertices. We remark that any STG can be well oriented (a proof of this can be found, for example, in~\cite{V}).

We use the convention that trivalent braids have downward orientation. By the handshaking lemma, every (unoriented) trivalent graph has an even number of vertices (all of degree three). Similarly, an $(n,n)$ trivalent braid has an even number of vertices. It is easy to show that for a well-oriented STG and an $(n,n)$ trivalent braid, each has half of its vertices zipped and half unzipped.


\section{Alexander-type theorem for trivalent braids} \label{sec:AlexanderThm}

  In this section we shall generalize the Alexander Theorem~\cite{A} for classical knots and links to spatial trivalent graphs. We remark that the Alexander theorem for oriented spatial graphs was first proved by K. Kanno and K. Taniyama~\cite{KaTa}; we shall give our own proof here, so that the discussion in the next section on the Markov Theorem is simple.
  
  We will first show how to braid an STG diagram. We present a braiding algorithm analogous to the one in~\cite{L, LR} and adapt it to the setting of spatial trivalent graphs. For the purposes of this paper, we care about how the isotopy moves on diagrams affect the final braids. The conventions introduced in the following braiding process (that is, preparation for braiding and braiding algorithm) were carefully chosen so as to simplify the examination of the resulting braids.

\subsection{Preparation for braiding}

  We work strictly with well-oriented spatial trivalent graphs, so from now on we assume all STG diagrams are well-oriented. In addition, STG diagrams are assumed to be piecewise linear. This allows us to subdivide an arc into two smaller arcs, by marking it with a point. From now on, when we refer to vertices, we strictly mean trivalent vertices, thus, distinguishing vertices from subdivision points. Also, we consider local maxima and minima in arcs to be subdivision points.

  Before we begin braiding an arbitrary STG diagram, we need to establish certain requirements for the diagrams. These requirements describe a sort of `general picture' of how an STG diagram must look like in order to proceed with the braiding process. A large portion of the preparation for braiding is devoted to explaining such requirements and how to isotope an arbitrary STG diagram such that it satisfies these requirements. As we shall see below, the isotopy needed in order to meet such requirements is local, and, in particular, can be reduced to small changes involving planar isotopy and the $R5$ move.

  Now STG diagrams lie in the plane, which is equipped with the top-to-bottom direction. This allows us to impose particular restrictions on the diagrams. For example, we require STG diagrams to contain strictly up-arcs and down-arcs (no horizontal arcs). Furthermore, there should not be pairs of horizontally aligned crossings or vertices, so as to have the vertices and crossings in the corresponding braid lying on different horizontal levels. In addition, vertically aligned vertices,
 crossings, or subdivision points are not allowed, so as to avoid triple points when creating new pairs of braid strands with the same endpoint. Later, when we show how the braiding process is performed, the justification for rejecting such vertical alignment shall be made clear.

  The goal of the braiding process is to preserve the down-arcs in a diagram and replace the up-arcs with pairs of braid strands oriented downwards. An arbitrary up-arc may cross with several other arcs. We subdivide each up-arc into smaller pieces, such that each up-arc between two subdivision points contains at most one crossing. We label all up-arcs with an ``o'' or ``u'' indicating whether it is the over- or under-strand of a crossing in the diagram. Note that for \textbf{free up-arcs} (up-arcs that do not contain crossings), we have a choice whether to label them with an ``o'' or ``u''.

  In order to simplify the braiding for vertices, we impose the condition that a subdivision point cannot coincide with a vertex. We want all arcs incident with a vertex to be oriented downwards. By doing this, we isolate the vertices from the braiding of the up-arcs. We say that a vertex is in \textbf{regular position} if, in a small neighborhood, it is incident only with down-arcs.

  Now we will introduce conventions for bringing vertices into regular position. Given an STG diagram, every vertex is, roughly speaking, in either one of the four positions illustrated in Figure~\ref{vert} (there cannot be horizontal arcs). We remark that a vertex in any one of these positions must be oriented so as to satisfy the conventions for either zip or unzip vertices. Note that a vertex incident only with down-arcs must necessarily be either $Y$- or $\lambda$-type, since we do not allow sink or source vertices. We will start by treating the cases for either $Y$- or $\lambda$-type vertices incident with at least one up-arc.

  \begin{figure}[ht]
    \[
    \includegraphics[height = .5in]{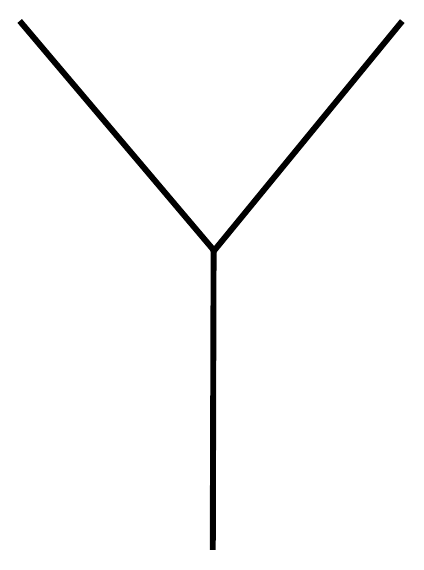}
    \hspace{1.5cm}
    \includegraphics[height = .5in]{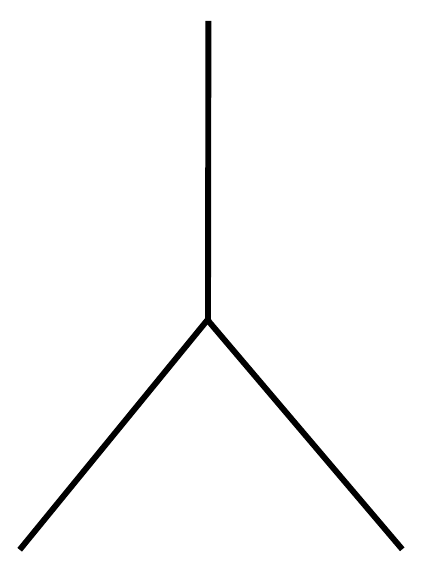}
    \hspace{1.5cm}
    \includegraphics[height = .5in]{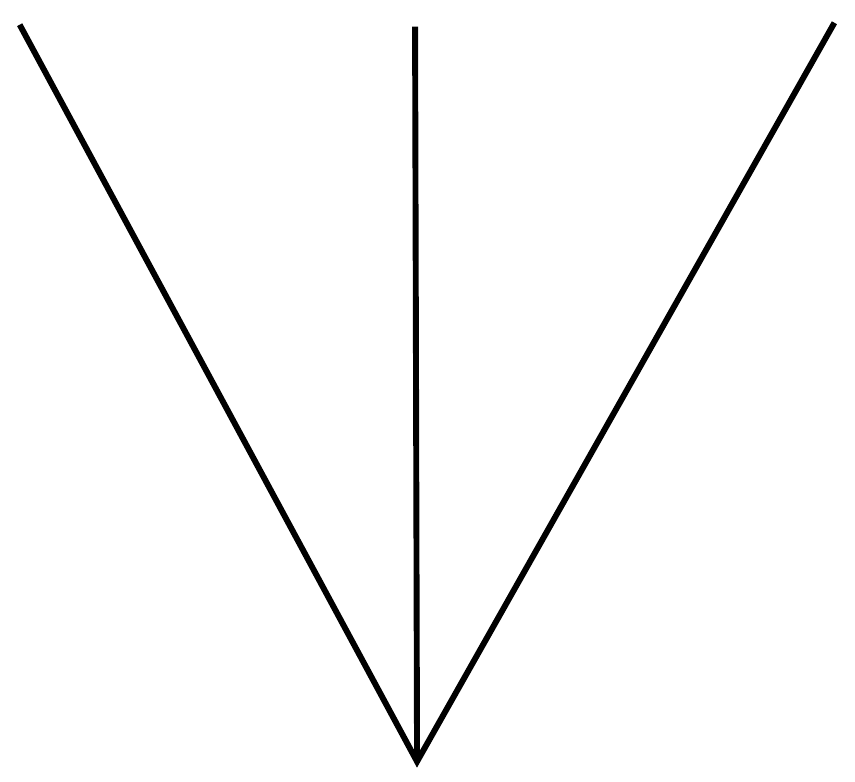}
    \hspace{1.5cm}
    \includegraphics[height = .5in]{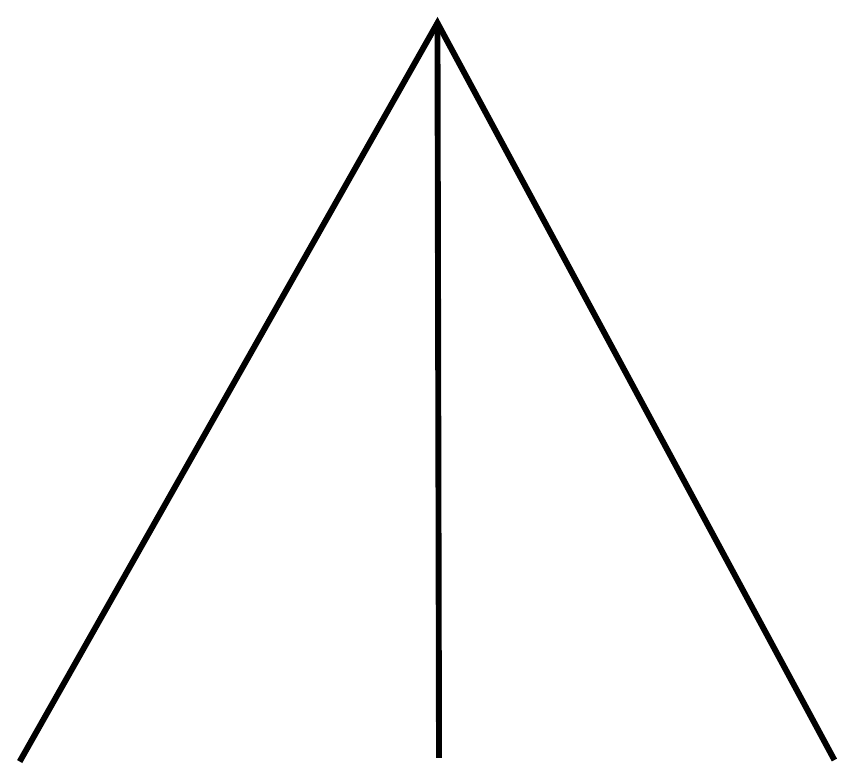}
    \put(-260,
    -15){\small{$Y$-type}} \put(-190,
    -15){\small{$\lambda$-type}} \put(-113,
    -15){\small{$W$-type}} \put(-32, -15){\small{$M$-type}}
    \]
    \caption{Types of trivalent vertices} \label{vert}
  \end{figure}

  In Figure~\ref{vgp}, we consider the various possible orientations for a $Y$-type vertex and show how to put it in regular position. For a $Y$-type vertex incident with only one up-arc, we simply perform planar isotopy on the problematic arc (see the first row of Figure \ref{vgp}). Note that the corrected diagram is a $\lambda$-type vertex. When correcting a $Y$-type vertex incident with exactly two up-arcs, we perform an $R5$ move introducing a crossing between the
  two up-arcs (see the second row of Figure~\ref{vgp}). Note that we have a choice for the type of crossing introduced by the $R5$ move, and that the corrected diagram is a $Y$-type vertex.

  The most interesting case is a $Y$-type vertex incident with three up-arcs; this reduces to performing planar isotopy on an arc so that, near the vertex, it is
  oriented downwards, and performing an $R5$ move that introduces a crossing between the remaining two up-arcs (see the bottom row of Figure~\ref{vgp}). It is important that the planar isotopy be performed on one of the two arcs oriented away from the vertex. Note that we have a choice concerning which arc the planar isotopy is performed on, and the type of crossing (positive or negative) introduced by the $R5$ move. Note that the resulting diagram is a $\lambda$-type vertex.

  \begin{figure}
    \begin{center}
      \begin{tabular}{| c | c| }
        \hline
        &  \\[-.2cm]
        $\raisebox{-10pt}{\includegraphics[height=0.5in]{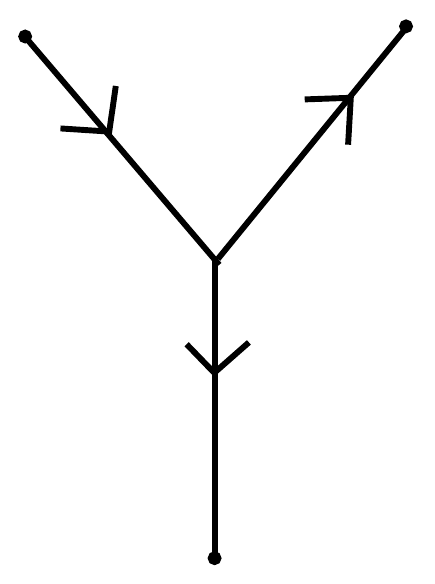}} \rightarrow \reflectbox{\raisebox{-10pt}{\includegraphics[height=.5in]{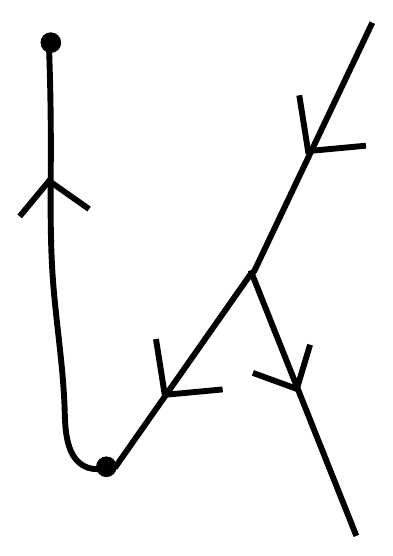}}}$ & $\raisebox{-10pt}{\includegraphics[height=0.5in]{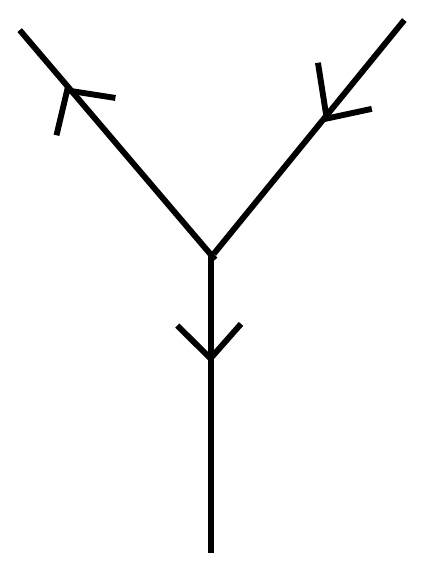}} \rightarrow \raisebox{-10pt}{\includegraphics[height=.5in]{V/UDD2}}$ \\
        [-.2cm] &

        \\ \hline
        &  \\[-.2cm]
        $\raisebox{-10pt}{\includegraphics[height=0.5in]{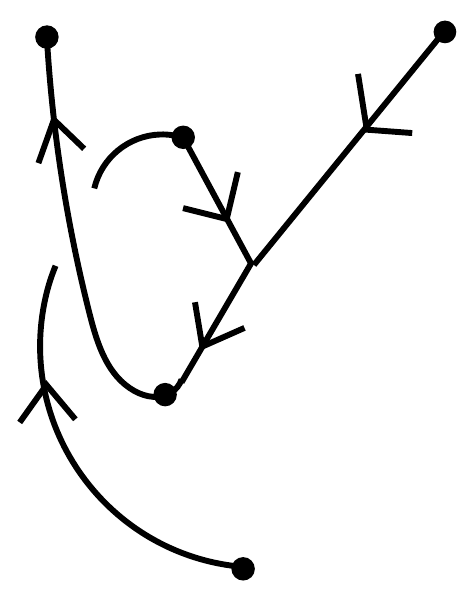}} \leftarrow \raisebox{-10pt}{\includegraphics[height=0.5in]{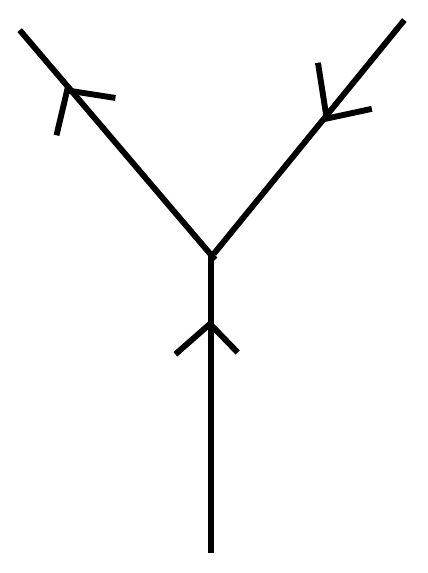}} \rightarrow \raisebox{-10pt}{\includegraphics[height=0.5in]{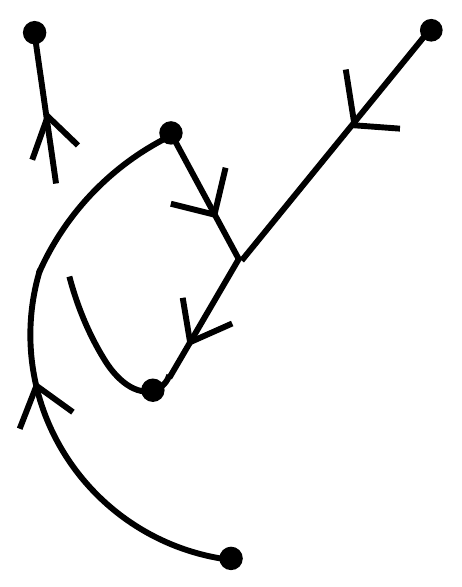}} $ &  $ \reflectbox{\raisebox{-10pt}{\includegraphics[height=0.5in]{V/UDU2}}} \leftarrow \raisebox{-10pt}{\includegraphics[height=0.5in]{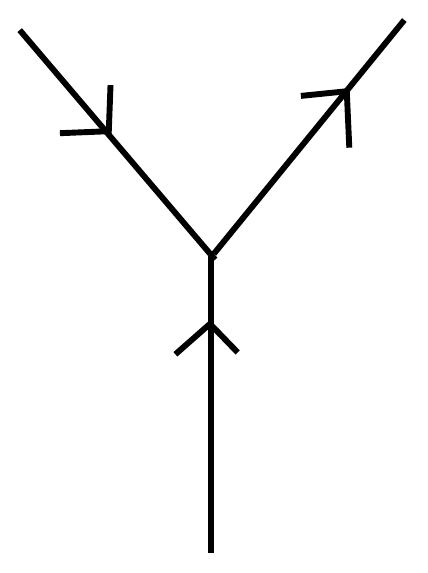}} \rightarrow  \reflectbox{\raisebox{-10pt}{\includegraphics[height=0.5in]{V/UDU3}}}$ \\
        [-.2cm] &
        \\ \hline

        \multicolumn{2}{| Sc |}{ $\raisebox{-10pt}{\includegraphics[height=0.5in]{V/UUU5}} \hspace{.1in} \text{or} \hspace{.1in} \raisebox{-10pt}{\includegraphics[height=0.5in]{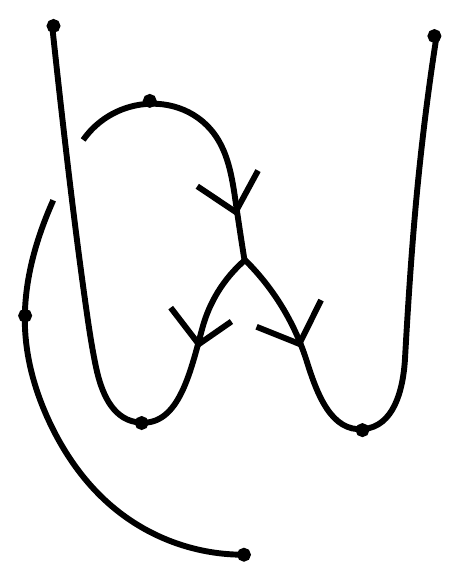}} \leftarrow \raisebox{-10pt}{\includegraphics[height=0.5in]{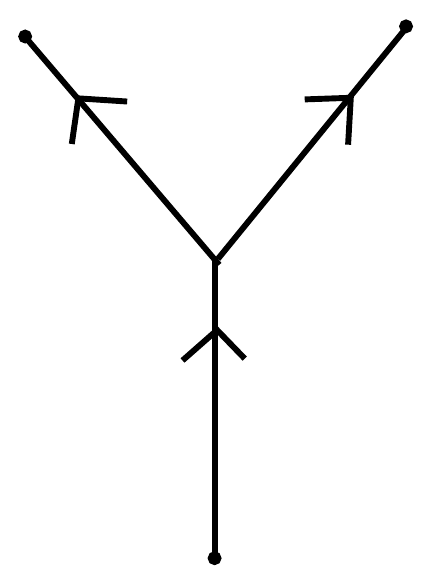}} \rightarrow \raisebox{-10pt}{\includegraphics[height=0.5in]{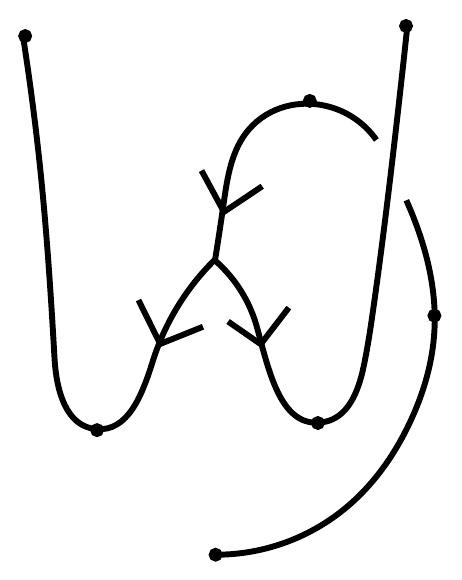}} \hspace{.1in} \text{or} \hspace{.1in} \raisebox{-10pt}{\includegraphics[height=0.5in]{V/UUU4}}$} \\

        \hline

      \end{tabular}
    \end{center}
    \caption{Adjusting $Y$-type vertices into regular position} \label{vgp}
  \end{figure}

  The process for bringing a
  $\lambda$-type vertex into regular position is similar and is
  depicted in Figure \ref{vgp2}.

  \begin{figure}
    \begin{center}
      \begin{tabular}{| c | c| }
        \hline
        &  \\[-.2cm]
        $ \raisebox{-.2in}{\includegraphics[height=0.5in]{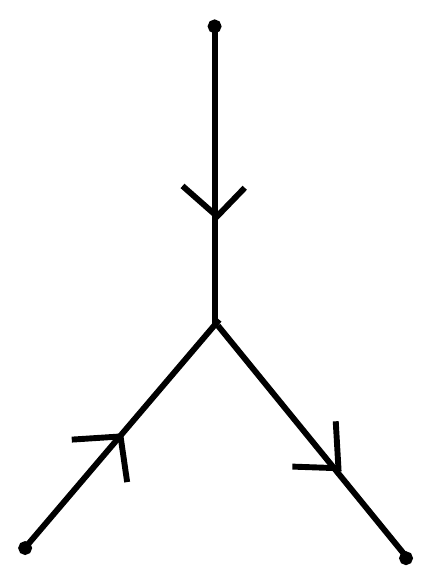}}\,\, \rightarrow \,\, \raisebox{-.2in}{\includegraphics[height=.5in]{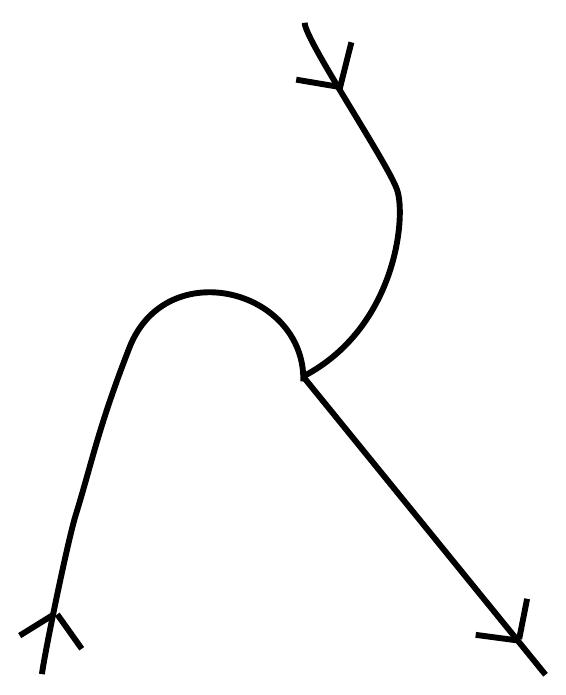}} $ & $ \raisebox{-.2in}{\includegraphics[height=0.5in]{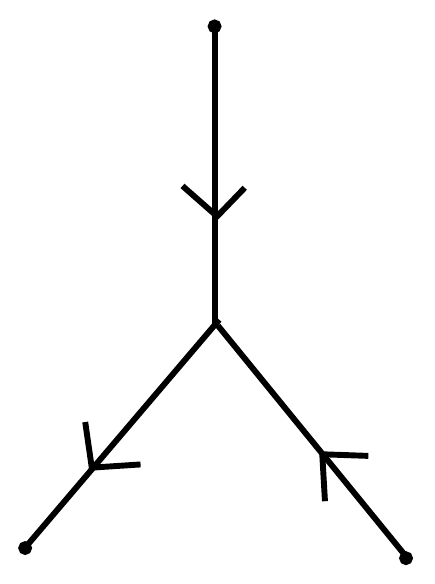}}\,\, \rightarrow \,\, \raisebox{-.2in}{\includegraphics[height=.5in]{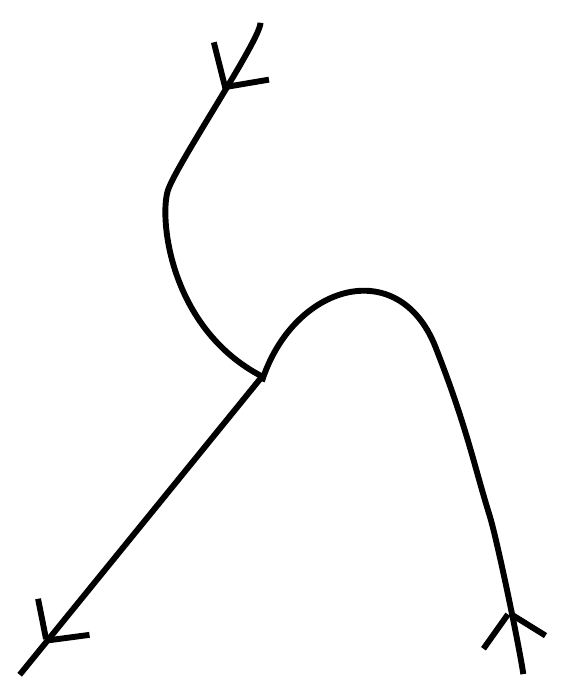}} $ \\
        [-.2cm] &

        \\ \hline
        &  \\[-.2cm]
        $\raisebox{-10pt}{\includegraphics[height=0.5in]{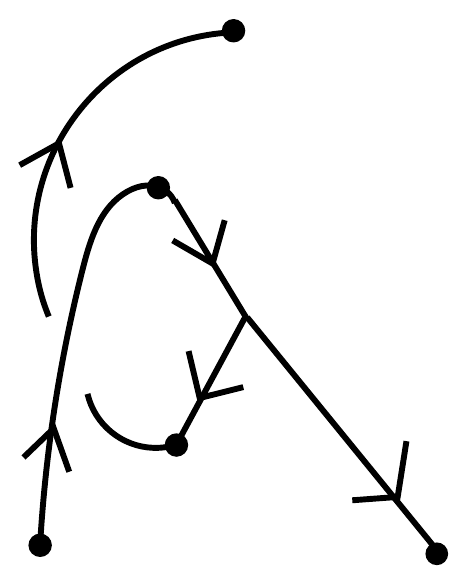}} \leftarrow \raisebox{-10pt}{\includegraphics[height=0.5in]{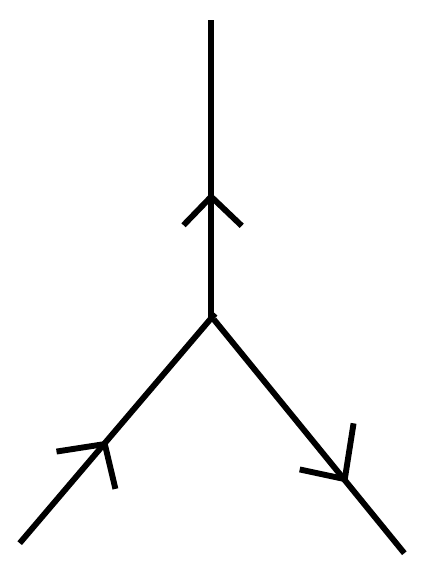}} \rightarrow \raisebox{-10pt}{\includegraphics[height=0.5in]{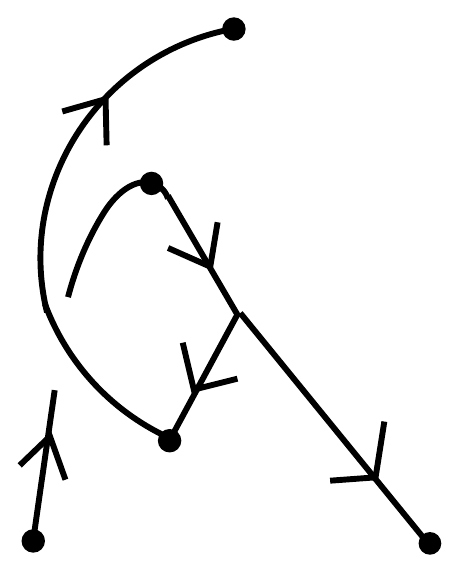}}$ &  $ \reflectbox{\raisebox{-10pt}{\includegraphics[height=0.5in]{V/UDU2L}}} \leftarrow \reflectbox{\raisebox{-10pt}{\includegraphics[height=0.5in]{V/UDUL}}} \rightarrow  \reflectbox{\raisebox{-10pt}{\includegraphics[height=0.5in]{V/UDU3L}}}$ \\
        [-.2cm] &
        \\ \hline

        \multicolumn{2}{| Sc |}{$\raisebox{-10pt}{\includegraphics[height=0.5in]{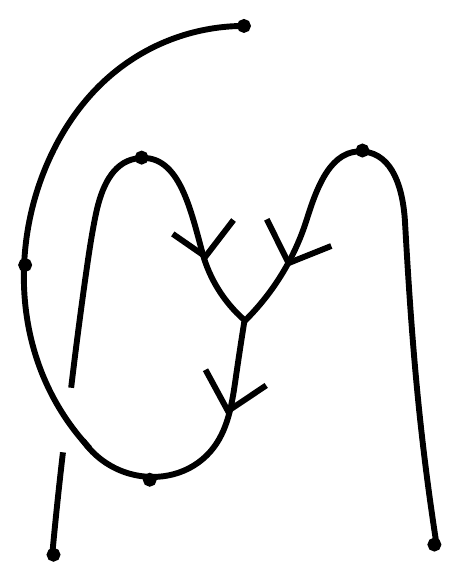}} \hspace{.1in} \text{or} \hspace{.1in} \raisebox{-10pt}{\includegraphics[height=0.5in]{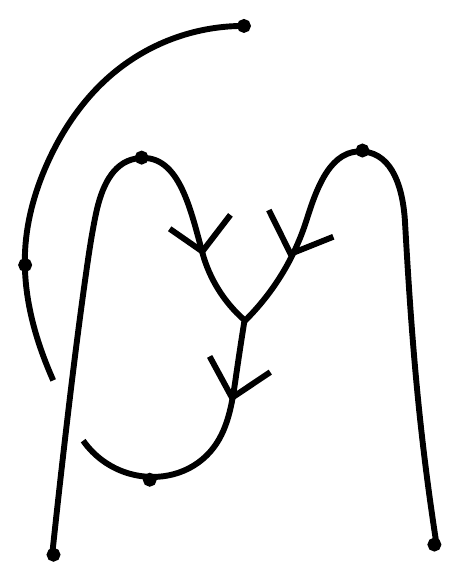}} \leftarrow \raisebox{-10pt}{\includegraphics[height=0.5in]{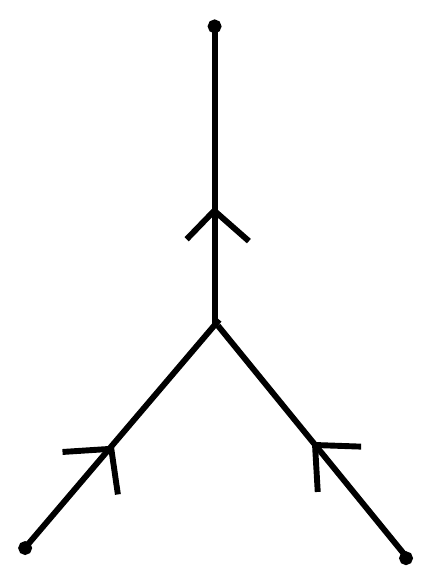}} \rightarrow \raisebox{-10pt}{\includegraphics[height=0.5in]{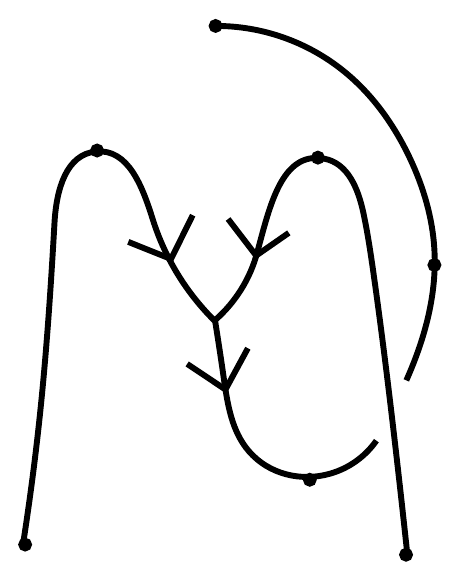}} \hspace{.1in} \text{or} \hspace{.1in} \raisebox{-10pt}{\includegraphics[height=0.5in]{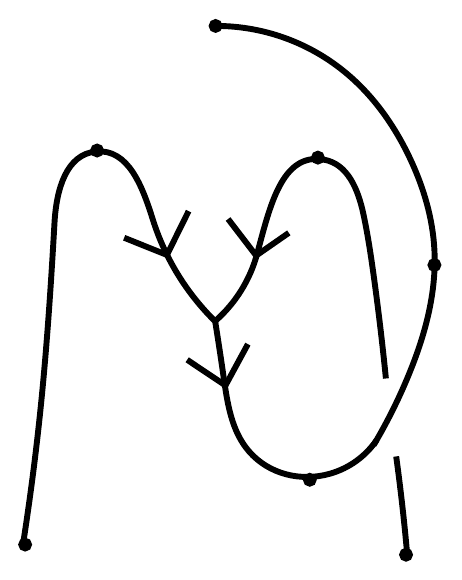}}$} \\

        \hline

      \end{tabular}
    \end{center}
    \caption{Adjusting $\lambda$-type vertices into regular
      position} \label{vgp2}
  \end{figure}

  Now that we have taken care of $Y$- and $\lambda$-type vertices, only two cases remain to be considered, namely $M$- and $W$-type vertices (see rightmost two diagrams in Figure~\ref{vert}). In fact, these can be reduced to the case of either a $Y$- or $\lambda$-type vertex. For example, given an $M$-type vertex, we
  perform planar isotopy on the rightmost arc to obtain a $\lambda$-type vertex (see left hand side of Figure~\ref{vyl}). Now, using the previous conventions for $\lambda$-type vertices, we proceed to bring it into regular position. Note that we have not assumed a particular orientation for the arcs incident with the vertex,
  besides that the vertex is either a zip or unzip vertex. We treat $W$-type vertices similarly, except that in these cases, the isotopy is performed on the leftmost strand, so as to obtain a $Y$-type vertex (as shown on the right hand side of Figure~\ref{vyl}). Applying these conventions to all vertices in a given STG diagram, we obtain a new diagram which is isotopic to the original one and whose vertices are all in regular position.

  \begin{figure}[ht]
    \[
    \raisebox{-20pt}{\includegraphics[height =
      .5in]{BraidAlg/Vertices/3Down.pdf}} \hspace{.4cm}
    \longrightarrow \hspace{.4cm}
    \raisebox{-20pt}{\includegraphics[height =
      .5in]{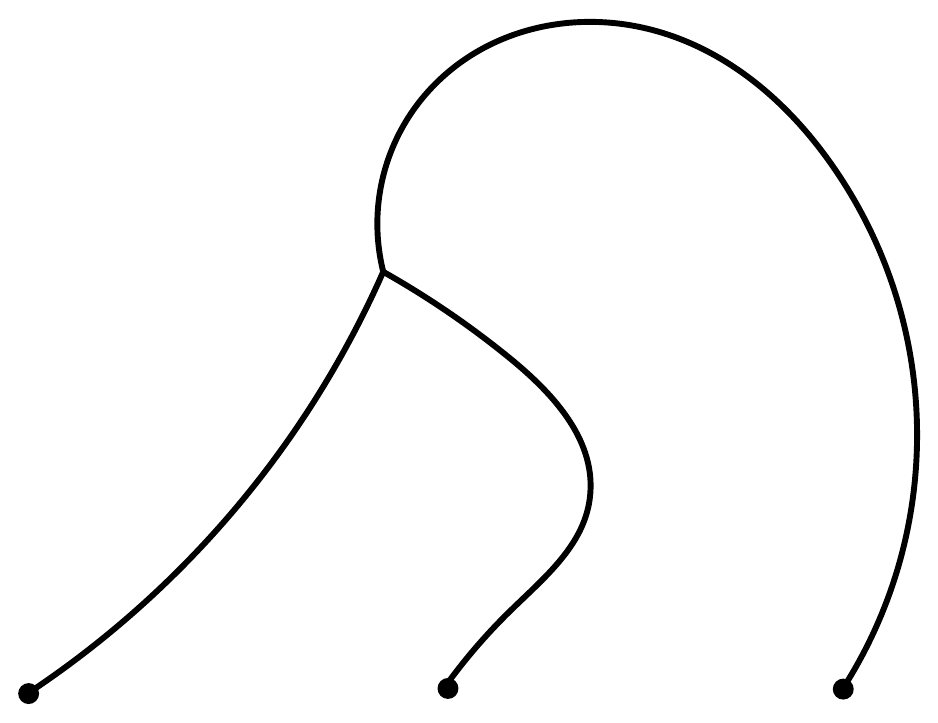}} \hspace{2cm}
    \raisebox{-20pt}{\includegraphics[height =
      .5in]{BraidAlg/Vertices/Trident.pdf}} \hspace{.4cm}
    \longrightarrow \hspace{.4cm}
    \raisebox{-20pt}{\includegraphics[height =
      .5in]{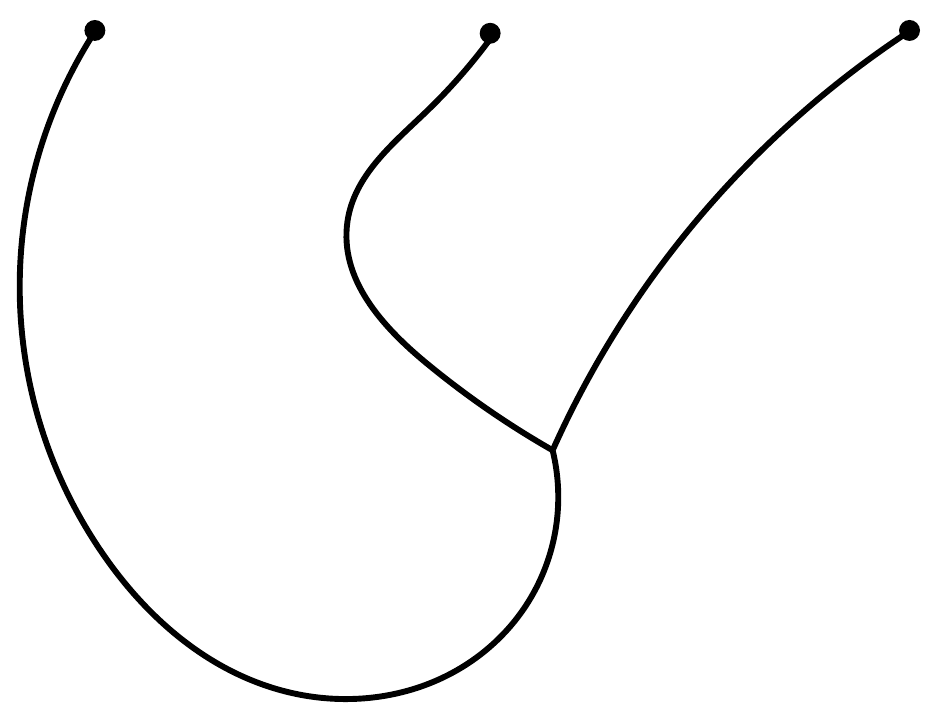}}
    \]
    \caption{Spreading arcs around a vertex} \label{vyl}
  \end{figure}

  We now shift our focus to up-arcs which are not incident with a vertex. We shall use the concept of \textbf{sliding triangle} introduced in~\cite{LR}. The
sliding triangle associated with an up-arc is the right triangle with hypotenuse the up-arc and with right angle lying below the hypotenuse (see Figure~\ref{st}). As we shall see when we explain the actual braiding, the sliding triangle serves as a guideline for how to arrange the braided outputs of an up-arc. We say a sliding triangle is of type \textbf{over} or \textbf{under} according to the label of the up-arc it is associated with. Also, we consider sliding triangles to be adjacent whenever the corresponding up-arcs have a common subdivision point.
  
  \begin{figure}[ht]
    \[
    \includegraphics[height=1in]{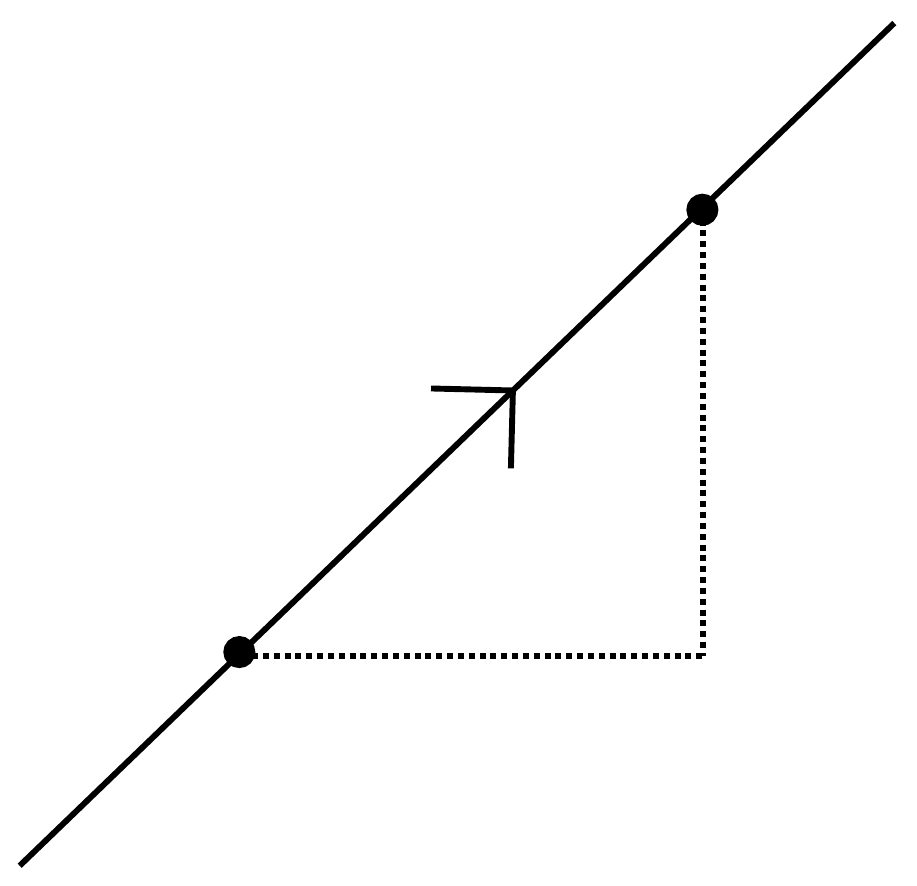}
    \]
    \caption{A sliding triangle associated with an up-arc}
    \label{st}
  \end{figure}

  We introduce the \textbf{triangle condition} (see~\cite{LR}) which states that non-adjacent sliding triangles can overlap only if they have opposite labels. Later, once we introduce the braiding moves for up-arcs, we will go into detail justifying why the triangle condition is needed. In short, the triangle condition ensures
  that the braiding moves do not interfere with each other, so that the order in which we eliminate the up-arcs is irrelevant.

\begin{lemma}
  Let $G$ be an STG diagram with vertices in regular position, and let $G'$ be a subdivision of $G$. Then there is a refinement of $G'$ such that (for appropriate choices of under/over for free up-arcs) the triangle condition is satisfied. \label{lemA1}
\end{lemma}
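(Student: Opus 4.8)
The plan is to show that any collection of sliding triangles can be forced into compliance with the triangle condition by repeatedly subdividing up-arcs, i.e.\ by inserting new subdivision points (and correspondingly shrinking the offending triangles). The key observation is that when we subdivide an up-arc $a$ at a point $p$, the single sliding triangle associated with $a$ is replaced by two smaller triangles sitting underneath the two sub-arcs, each contained in (a subset of) the original triangle; repeating this, the triangles associated to the pieces of $a$ can be made to lie in an arbitrarily thin neighborhood of $a$ itself. So the strategy is: first fix the overlaps that genuinely conflict, then argue the process terminates.

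First I would set up the combinatorics. After subdividing $G$ to $G'$, each up-arc carries at most one crossing and hence a well-defined label ``o'' or ``u'', except for free up-arcs, for which we still have a free choice. I would list all pairs of non-adjacent sliding triangles whose interiors intersect; call such a pair a \emph{bad pair} if in addition the two up-arcs carry the \emph{same} label (for free up-arcs, ``same label'' should be read as: no choice of labels for the free arcs among the pair avoids a coincidence — but since a free up-arc can always be given either label, the only genuinely forced bad pairs are those in which both up-arcs are non-free and carry equal labels, or one is free and... ) — more precisely I would first choose labels for all free up-arcs greedily so as to minimize the number of bad pairs, and then only non-free vs.\ non-free same-label overlaps, plus unavoidable same-label overlaps, remain. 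For each remaining bad pair, at least one of the two up-arcs is non-free, so it carries a crossing; I would then subdivide \emph{both} up-arcs of the bad pair finely enough that their new (smaller) sliding triangles no longer meet. Concretely, if the two up-arcs are $a$ and $a'$ with crossing points (or, for a free arc, any interior point) $q$ and $q'$, subdivide $a$ at points straddling $q$ and $a'$ at points straddling $q'$, chosen close enough to $q,q'$ that the resulting triangles lie within $\varepsilon$-neighborhoods of the segments $a$ and $a'$; since $a$ and $a'$ are distinct compact arcs meeting in at most the crossing point, for $\varepsilon$ small these neighborhoods are disjoint away from a neighborhood of that crossing, and the piece containing the crossing can be isolated so its triangle meets no other.

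Then I would verify that the refinement does not create \emph{new} bad pairs: shrinking a triangle can only remove overlaps with triangles that were previously disjoint from it, and the newly created adjacent pairs (consecutive pieces of the same original up-arc) are exempt from the triangle condition by definition, while a new piece and a triangle from a \emph{different} original arc inherit (a subset of) an overlap that was already present — and that overlap, if it was bad, is among the ones we are handling, while if it was not bad it had opposite labels, and sub-pieces keep the same label, so it stays not bad. Hence the set of bad pairs strictly decreases (after processing each one it is removed and none are added), so after finitely many subdivisions no bad pair remains; choosing the free-up-arc labels as fixed above, the triangle condition holds.

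The main obstacle I expect is the ``no new bad pairs'' bookkeeping together with the geometric claim that the small triangles can be made to lie in a prescribed thin neighborhood of their up-arc: this needs the piecewise-linear, finite, general-position setup (finitely many arcs, no vertical alignment of crossings/subdivision points, no triple points) to guarantee that distinct up-arcs are either disjoint or meet in a single transverse crossing, so that an $\varepsilon$-neighborhood argument actually separates them. Making the triangles shrink requires placing the new subdivision points so that the right-angle vertex of each sub-triangle still lies below its sub-arc and inside the original triangle, which is automatic since the original triangle is convex and the sub-arcs are sub-segments of the hypotenuse. With that geometric lemma in hand, the termination argument is a finite descent on the (finite) number of bad pairs.
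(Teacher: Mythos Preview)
Your geometric core is the right one and coincides with the paper's: subdividing an up-arc replaces its sliding triangle by smaller ones contained in it, so repeated subdivision forces every triangle into an arbitrarily thin neighborhood of its arc. Where you and the paper diverge is in the organization of the subdivision and in the termination argument.

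The paper does not iterate over bad pairs. It subdivides \emph{every} up-arc in one global pass, arguing by cases. For an up-arc through a crossing, refine until its triangle meets nothing except the other strand of that crossing. For a crossing with two up-arcs, do this for both; the two resulting triangles overlap only each other, and since one is the over-strand and the other the under-strand they automatically carry opposite labels, so the triangle condition holds. For a free up-arc, refine until its triangle is disjoint from the rest of the diagram, whereupon either label works. Regularity of the vertices ensures no interference there. No bad-pair bookkeeping is needed.

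Your iterative scheme has a gap in the termination step. You assert that after resolving one bad pair $(a,a')$ the total number of bad pairs strictly decreases because ``none are added''. But subdividing $a$ into pieces $a_1,\dots,a_k$ can turn a single pre-existing bad pair $(a,b)$ with a third arc $b$ into several bad pairs $(a_i,b)$, so the raw count can go up. The repair is easy --- either carry out all the refinements for all bad pairs simultaneously, or run the descent on the number of \emph{original} pairs of arcs of $G'$ that still possess a bad descendant pair --- but as written the finite descent is not justified. A smaller point: the claim that after greedy labelling every remaining bad pair involves a non-free arc is not argued and need not hold (the overlap graph among free arcs can have odd cycles), though this is harmless since free arcs can be subdivided just as well.
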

\begin{proof}
  Consider a crossing which contains one up-arc. We want to have the sliding triangle of such up-arc so that it overlaps only with the other strand in the crossing. If the sliding triangle of the up-arc overlaps with any other arc or sliding triangle outside the crossing, then we further subdivide the up-arc such that the resulting up-arc containing the crossing becomes small enough that its sliding triangle covers only a small neighborhood around the crossing.

  For the case of a crossing containing two up-arcs, we argue as above so as to have the sliding triangles corresponding to the crossing up-arcs isolated from the rest of the diagram. Note that the corresponding sliding triangles have opposite labels (since they correspond to arcs with opposite labels), so  their intersection respects the triangle condition.

  Similarly, for the case of free up-arcs, we can add subdivision points, if necessary, so that the corresponding sliding triangles are disjoint from the rest of the diagram, as shown in Figure~\ref{TC}. Consequently, we have the liberty to assign any label to the triangles. Note that we do not encounter any problems in regions containing vertices, since the latter are in regular position.
\end{proof}

\begin{figure}[ht]
  \[
  \raisebox{-30pt}{\includegraphics[height =
    .8in]{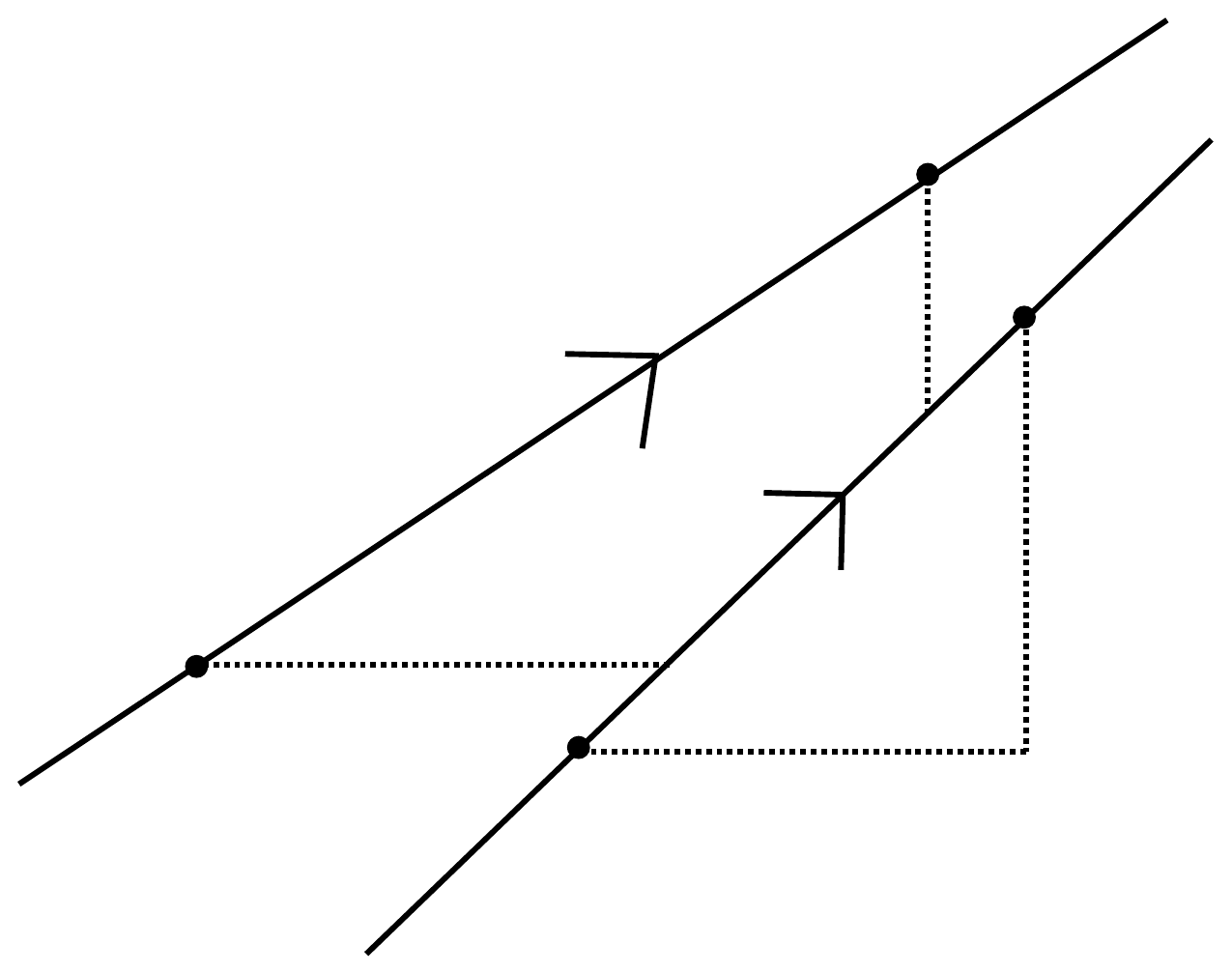}} \hspace{.6cm}
  \longrightarrow \hspace{.6cm}
  \raisebox{-30pt}{\includegraphics[height =
    .7in]{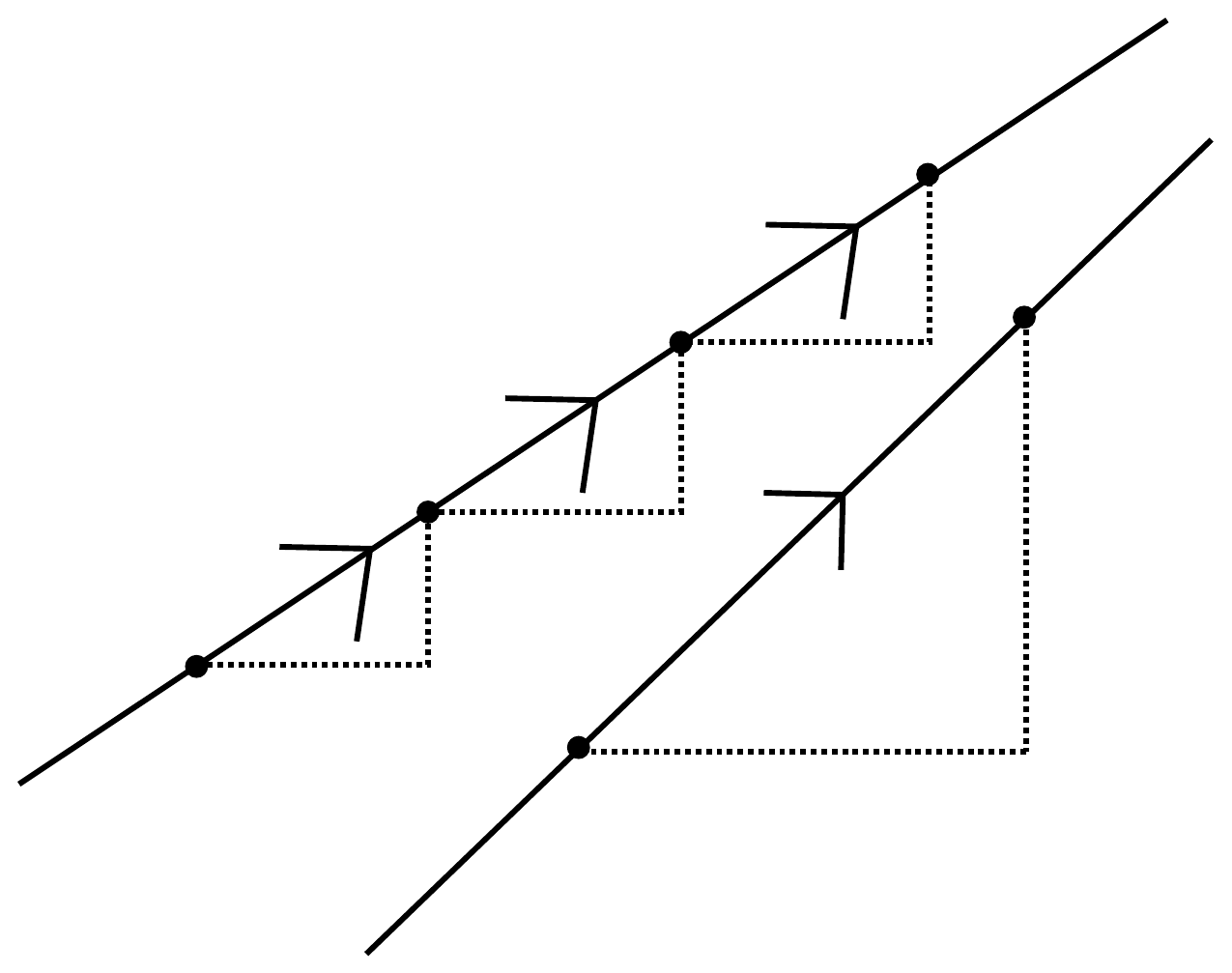}}
  \]
  \caption{Adjusting sliding triangles} \label{TC}
\end{figure}

From the proof of Lemma~\ref{lemA1}, we see that given a diagram with a subdivision which satisfies the triangle condition, then any refinement of the subdivision (with appropriate labels) satisfies the triangle condition as well.

With this, we conclude the requirements an STG diagram must satisfy so that it is ready to be braided. We summarize the previous discussion in the following definition.

\begin{definition}
  An STG diagram with regular vertices is said to be in \textbf{general position} if the following conditions hold:
  \begin{enumerate}
  \item There are no horizontal arcs;
  \item There are no crossings, subdivision points, or vertices that are either horizontally or vertically aligned;
  \item All nonadjacent sliding triangles must satisfy the triangle condition, and if they intersect, this must be along a common interior (and not a single point).
  \end{enumerate} \label{gen pos}
\end{definition}

We refer to \textbf{direction sensitive moves} as the local shifts on an STG diagram in order to put it in general position. These shifts can be in the horizontal or vertical directions. For example, whenever two subdivision points are either vertically or horizontally aligned, we can correct these singularities by performing planar isotopy locally on one of the subdivision points, so that they are no longer horizontally/vertically aligned. A similar argument can be used when correcting alignment of any combination of either subdivision points, crossings, or vertices. In addition, whenever two non-adjacent sliding triangles intersect at a point, we can choose a subdivision point corresponding to one of the participating sliding triangles, and replace it by another subdivision point arbitrarily close to the original one, so that condition (3) of general position is not violated. On the other hand, if two non-adjacent sliding triangles with common labels intersect, we further subdivide one of the corresponding up-arcs and change labels appropriately (We refer the reader to the proof of Lemma 3.5 in~\cite{LR} for details). Note that we allow different choices to be made when shifting a diagram into general position.

Next we shall introduce the last two kinds of the direction sensitive moves. As we shall see later, these particular moves are crucial for our one-move Markov type theorem to work; it turns out that these moves, together with planar isotopy and $R1-R5$, will allow us to only consider instances of isotopy moves between diagrams in general position (see Lemma~\ref{lemma:isotopyRegPosition}). The \textbf{swing moves} form a special case of the direction sensitive moves. Figure~\ref{swing} shows a version of the swing move in which an arc slides across a local minimum; in general a swing move allows an arc to swing over/under an extremum point. It is well-known that the swing moves involving arcs with a local maximum can be obtained from those with a minimum together with $R2$ moves. Note that since we regard local extrema points as subdivision points, we cannot have an arc intersecting an extremum point in an STG diagram; a swing move avoids the coincidence of a minimum or maximum and a crossing.

\begin{figure}[ht]
  \[ \raisebox{-20pt}{\includegraphics[height=0.5in]{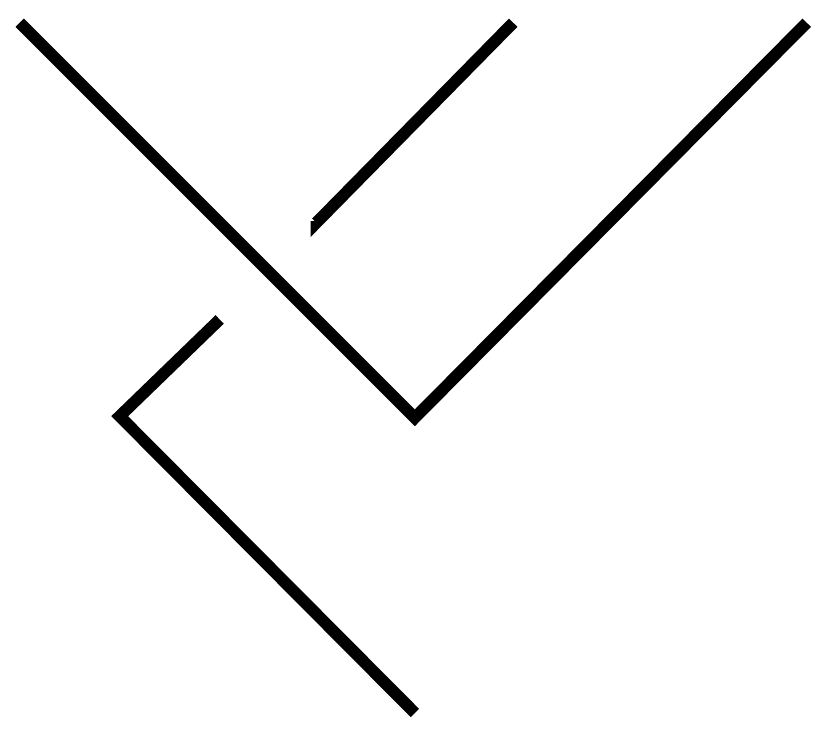}}\,\,
  \longleftrightarrow \,\,
  \raisebox{-20pt}{\includegraphics[height=0.5in]{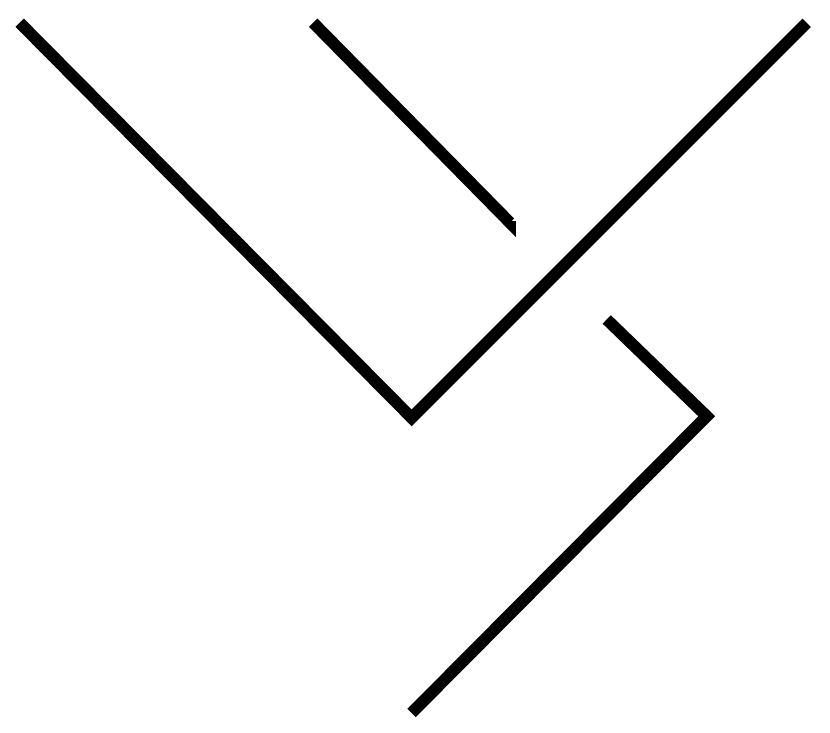}}\]
  \caption{A swing move}\label{swing}
\end{figure}

The last of the direction sensitive moves is the \textbf{switch move} for $Y$- and $\lambda$-type vertices (see Figure~\ref{twist}). These moves are intimately related to our requirement that vertices be in regular position. Recall that when bringing a $Y$- or $\lambda$-type vertex into regular position, if it is incident with at least two up-arcs, we have a choice concerning the type of crossing introduced by the $R5$ move (see the last two rows in Figures~\ref{vgp} and~\ref{vgp2}). Ultimately, all possible choices are related via the switch move, the swing move, and isotopy moves.

\begin{figure}[ht]
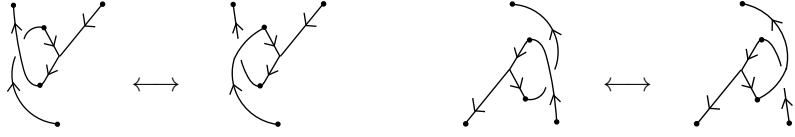

  \[ \raisebox{-15pt}{\includegraphics[height=0.7in]{V/UDU2}}
  \hspace{.2cm} \longleftrightarrow \hspace{.5cm}
  \raisebox{-15pt}{\includegraphics[height=0.7in]{V/UDU3}}
  \hspace{1.7cm}
  \reflectbox{\raisebox{-15pt}{\includegraphics[height=0.7in]{V/UDU2L}}}
  \hspace{.4cm} \longleftrightarrow \hspace{.4cm}
  \reflectbox{\raisebox{-15pt}{\includegraphics[height=0.7in]{V/UDU3L}}}\]
  \caption{The switch moves for Y- and
    $\lambda$-type vertices} \label{twist}
\end{figure}

\begin{lemma} \label{lemma:isotopyRegPosition} 
Let $G$ and $G'$ be isotopic STG diagrams in general position. Then there is a sequence of isotopy moves relating $G$ and $G'$, all of which are between diagrams in general position.
\end{lemma}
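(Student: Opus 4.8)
The plan is to take an arbitrary isotopy from $G$ to $G'$, break it into elementary moves, normalize every intermediate diagram into general position, and then check locally that consecutive normalized diagrams are joined by isotopy moves that never leave general position. Concretely, since $G$ and $G'$ are isotopic we may write $G = G_0 \to G_1 \to \cdots \to G_n = G'$, where each $G_i \to G_{i+1}$ is a single planar isotopy move or a single move among $R1$--$R5$, supported in a disk $\Delta_i$: inside $\Delta_i$ the diagrams differ, outside $\Delta_i$ they coincide. It then suffices to construct diagrams $H_i$ in general position with $H_0 = G$ and $H_n = G'$, together with, for each $i$, a sequence of isotopy moves from $H_i$ to $H_{i+1}$ all of whose terms are in general position; concatenation proves the lemma.

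For $0 < i < n$ I would let $H_i$ be obtained from $G_i$ by the preparation-for-braiding procedure of Section~\ref{sec:AlexanderThm}: first bring each vertex into regular position via planar isotopy and $R5$, using the conventions of Figures~\ref{vgp},~\ref{vgp2}, and~\ref{vyl}; then apply Lemma~\ref{lemA1} to refine the subdivision and choose under/over labels so the triangle condition holds; finally apply small direction-sensitive shifts (planar isotopy) to remove horizontal arcs and horizontal or vertical alignments, so that Definition~\ref{gen pos} is met. Then $H_i$ is in general position and isotopic to $G_i$. Since the move $G_i \to G_{i+1}$ is supported in $\Delta_i$ and the preparation consists of local modifications (planar isotopy, $R5$, and subdivision refinements), we may carry out the preparations for $G_i$ and for $G_{i+1}$ using identical choices on the complement of a slightly larger disk $\Delta_i^+ \supset \Delta_i$ — after first using Lemma~\ref{lemA1} to confine any sliding triangle that meets $\Delta_i$ to a neighborhood of $\Delta_i$, and using swing moves (Figure~\ref{swing}) to push local extrema out of $\Delta_i^+$. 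Hence $H_i$ and $H_{i+1}$ agree outside $\Delta_i^+$, and it remains only to connect them by general-position isotopy moves supported inside $\Delta_i^+$.

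For this local step, inside $\Delta_i^+$ the diagram $H_i$ is the normalization of the input picture of $G_i \to G_{i+1}$ (that is, the input picture decorated with a choice of crossing types for any nearby corrected vertices, a subdivision, and small shifts), while $H_{i+1}$ is the normalization of the output picture. I would reconcile the two as follows: use switch moves (Figure~\ref{twist}) and swing moves to match the crossing-type and extremum choices made on the two sides; then perform the move $G_i \to G_{i+1}$ itself inside a tiny subdisk where the local picture is in standard form for that move; and interleave this with small direction-sensitive shifts and subdivision refinements so that whenever the move momentarily produces a horizontal arc, an aligned pair of crossings/vertices/subdivision points (as $R1$, $R2$, $R3$ may), or a crossing adjacent to a vertex (as $R4$, $R5$ may), the diagram is immediately perturbed back into general position. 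Every step here is an isotopy move, every intermediate diagram stays in general position, and what is left is a finite case check over the type of $G_i \to G_{i+1}$ and the local configuration of $H_i$ near $\Delta_i$ — precisely the situations the conventions of Section~\ref{sec:AlexanderThm} were arranged to accommodate.

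The main obstacle, and the bulk of the work, is this case analysis. The delicate cases are $R4$ and $R5$ performed next to a vertex that had to be brought into regular position: the crossing type introduced by the normalization on one side of the move need not agree with the one produced on the other side, and one must verify that the switch move bridges the discrepancy without disturbing the remaining general-position data. A secondary point requiring attention is ensuring that $\Delta_i^+$ can be chosen large enough that the normalizations of $G_i$ and $G_{i+1}$ genuinely coincide on its complement — that is, that no sliding triangle or vertex correction leaks across $\partial\Delta_i^+$; this is exactly why the preliminary shrinking via Lemma~\ref{lemA1} and the swing moves are applied before anything else. No individual case is deep, but assembling all of them is where the care lies.
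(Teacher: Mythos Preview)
Your proposal is correct and follows essentially the same strategy as the paper: take an arbitrary isotopy sequence, normalize each intermediate stage into general position, and bridge consecutive stages locally using the direction-sensitive moves (in particular the swing and switch moves) together with the vertex conventions of Figures~\ref{vgp}--\ref{vyl}. The paper's proof is a terser sketch that defers the vertex-free case to~\cite{LR} and observes that $R1$--$R4$ applied to a diagram in general position already yield a diagram in general position, so only $R5$ and planar isotopy near a vertex actually require the readjustment you describe; your more uniform localization scheme is sound but carries out more of the case analysis than is strictly necessary.
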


\begin{proof}
  The idea is the following: given a finite sequence of isotopy moves relating $G$ and $G'$, we can correct the middle stages in such sequence which are not in general position, and, thus obtain an alternative sequence in which all diagrams are in general position. It is well known that instances of isotopy in regions free of vertices can be easily achieved while still respecting the general position conditions inside such regions (for a proof of this, see Lemma 3.5 in \cite{LR}). Thus it remains to be seen that the same is true for regions that contain a vertex. Note first that the extended Reidemeister moves $R1-R4$ applied to an STG diagram in general position yield a diagram still in general position. On the other hand, the $R5$ move and planar isotopy applied near a vertex $v$ in regular position may convert that vertex into non-regular position. Note that since $G'$ is in general position, any such problematic move is countered, later in the sequence, by another move that brings back $v$ into regular position. Therefore, we can readjust the vertex $v$ to regular position according to the rules in Figures~\ref{vgp} and~\ref{vgp2}, and, thus, complete the sequence with the aid of the direction sensitive moves, in particular the swing and switch moves. Hence, we can replace the `troublesome' moves between STG diagrams, where the second diagram is not in regular position, by a sequence of isotopy moves between STG diagrams in general position.
\end{proof}

\begin{remark}
  From the discussion above, it follows easily that given two isotopic STG diagrams in general position, they differ by a finite sequence of direction sensitive moves (planar isotopy, swing moves, and switch moves) and the extended Reidemeister moves, where each diagram in such sequence is in general position as well.
\end{remark}

From here on, we shall assume that all STG diagrams are in general position.
\subsection{The braiding algorithm}\label{sec:braiding}

We will now illustrate our braiding algorithm. The idea is to keep the down-arcs, and eliminate the up-arcs by replacing them with pairs of vertically aligned braid strands. The braiding algorithm outlined here is inspired by the one presented in~\cite{LR}. These algorithms share many similarities, namely the way in which up-arcs are braided and the conditions that determine the general position. The main difference is that our set-up requires a careful treatment of arcs
incident with a vertex. It is essential that every vertex of a given STG diagram is in regular position prior to shifting the diagram into general position. By doing this, we isolate the regions containing a vertex from the braiding of the up-arcs.

We will first show how to braid crossings. For that, consider an up-arc which is the over-strand of a crossing, thus labeled ``o''. The braiding consists of first sliding
the up-arc across the sliding triangle, making sure the horizontal arc has a negative slope, so that it does not conflict with the general position requirement. We then cut the vertical segment, and pull the upper cut-point upward and the lower downward (see Figure~\ref{crossing}).  Note that the new vertical strands are both oriented downwards, and are vertically aligned. In addition, both vertical strands cross \textit{over} all other arcs in the diagram. This is indicated abstractly in diagrams by adding the label ``o'' to both vertical strands (the braid box in Figure~\ref{crossing} indicates a magnified region in the diagram).

If the up-arc is the under-strand in the crossing, appropriately labeled ``u'', then the braiding of it is identical, except that the new pair of vertical braid strands both cross \textit{under} all other arcs in the diagram. By the same reasoning, both vertical braid strands are labeled ``u''.
\begin{figure}[ht]
  \[
  \raisebox{-25pt}{\includegraphics[height =
    .8in]{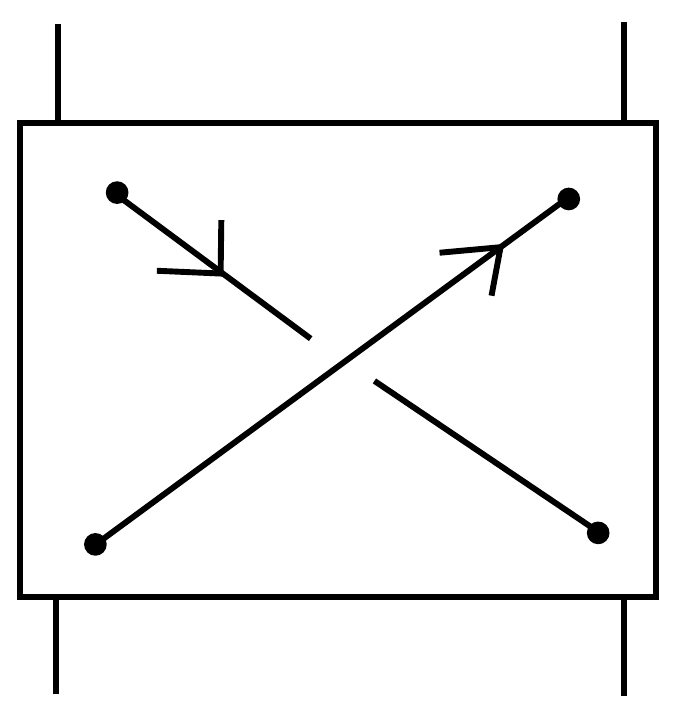}} \longrightarrow
  \raisebox{-25pt}{\includegraphics[height =
    .8in]{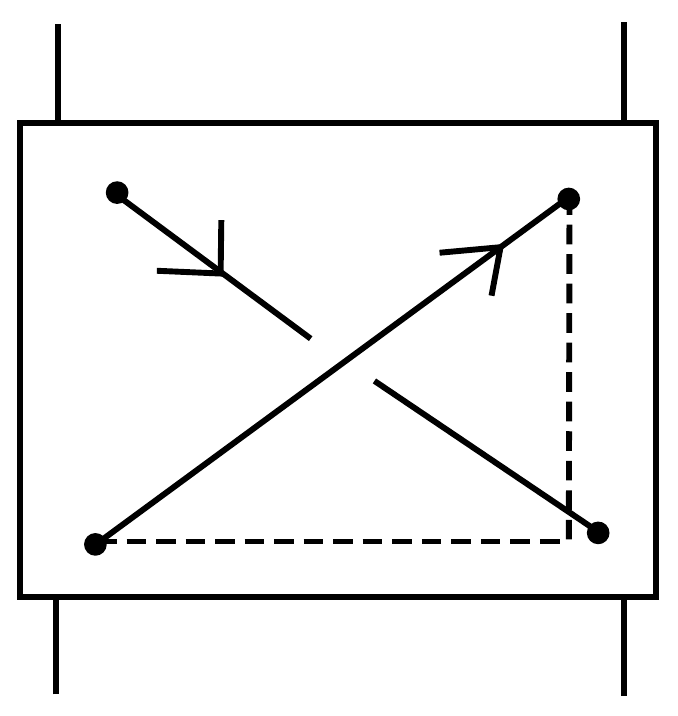}} \put(-100,
  15){\fontsize{9}{9}$o$} \longrightarrow
  \raisebox{-25pt}{\includegraphics[height =
    .8in]{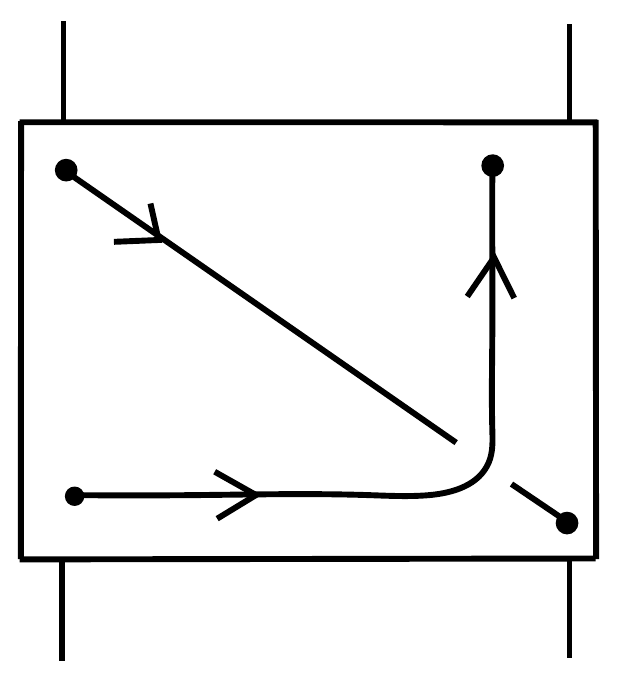}} \longrightarrow
  \raisebox{-36pt}{\includegraphics[height =
    1.06in]{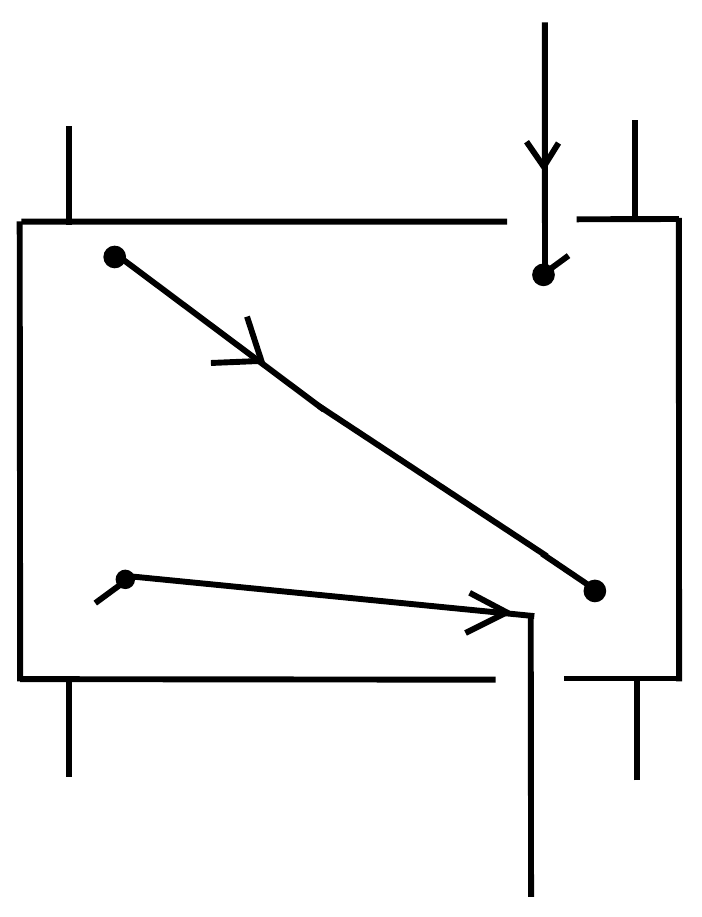}} \put(-23,
  30){\fontsize{9}{9}$o$} \put(-23, -30){\fontsize{9}{9}$o$}
  \]
  \caption{Braiding a crossing} \label{crossing}
\end{figure}

The braiding of a free up-arc is done similarly, making sure the new pair of vertical braid strands cross either under or over all other arcs in the diagram, in accordance with the label of the original up-arc (see Figure~\ref{free}). In essence, by braiding a free up-arc, we simply replace the arc with a pair of vertical braid strands oriented downwards and vertically aligned with the endpoint of the free up-arc. We shall refer to the braiding of an up-arc as a \textbf{basic braiding move.}
\begin{figure}[ht]
  \[
  \raisebox{-38pt}{\includegraphics[height =
    1.05in]{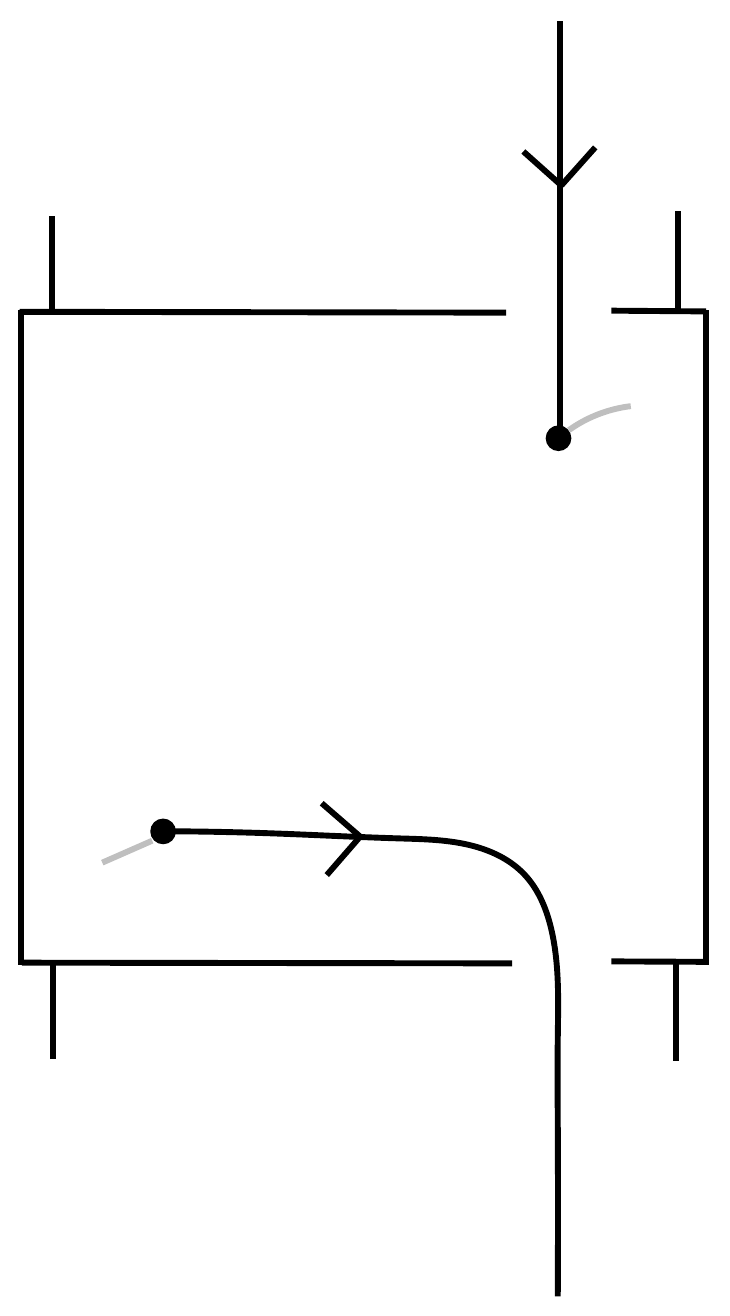}} \,\, \longleftarrow \,\,
  \raisebox{-28pt}{\includegraphics[height = .8in]{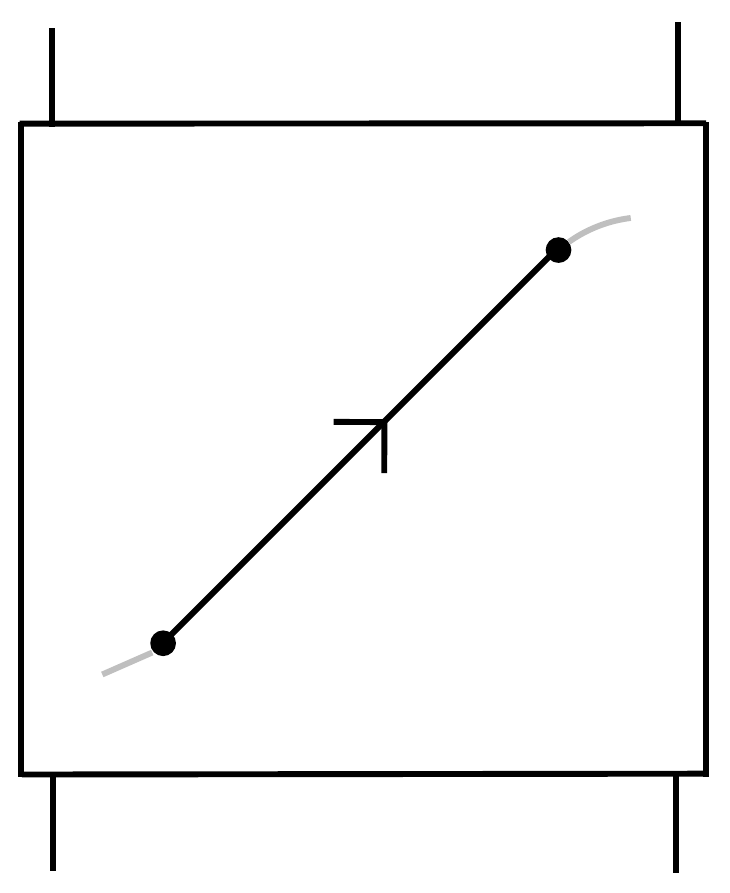}}
  \put(-93, 30){\fontsize{9}{9}$o$} \put(-93,
  -30){\fontsize{9}{9}$o$} \,\, \longrightarrow \,\,
  \raisebox{-38pt}{\includegraphics[height = 1.05in]{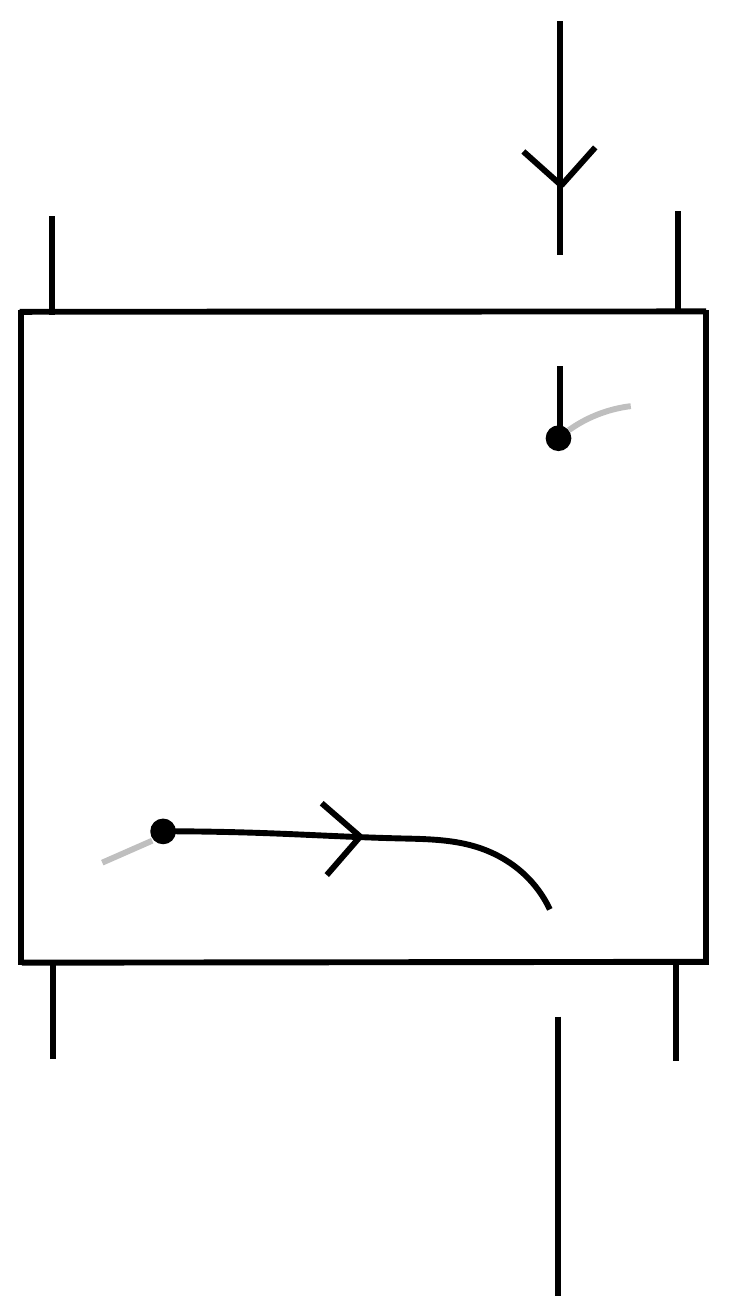}}
  \put(-20, 30){\fontsize{9}{9}$u$} \put(-20,-30){\fontsize{9}{9}$u$}
  \]
  \caption{Braiding a free up-arc} \label{free}
\end{figure}

It is important to remark that by connecting the two newly created pair of vertical braid strands (outside of the diagram, around the braid axis), we obtain a trivalent tangle isotopic to the original one. This holds when braiding either a free up-arc or a crossing.

\begin{remark} We can finally make clear the following:
  \begin{enumerate}
  \item[(i)] It is now clear the reason for rejecting vertical alignment of any combination of vertices, subdivision points or crossings. The problematic alignments are usually caused by subdivision points which determine the top of an up-arc, because the braiding produces pairs of vertical braid strands aligned with such points. For example, when braiding a diagram that displays vertical alignment between subdivision points, the resulting pairs of vertical braid strands correspond to the same endpoints. In addition, when braiding a diagram that displays vertical alignment between a subdivision point and a crossing or a vertex, we obtain a multiple point (which in a flat projection is a vertex of degree six or five) between the crossing or vertex and one of the new vertical strands.
  \item [(ii)] We can now give a proper justification as to why we introduced the triangle condition. Let $G$ be an STG diagram equipped with subdivision points and labels, and suppose that $G$ contains two overlapping sliding triangles with the same label. The middle diagram of Figure~\ref{ambiguous} depicts the  magnified region with such overlapping triangles. Now the order in which we braid the up-arcs of $G$ will affect the final braid. The diagram in the left hand side of Figure~\ref{ambiguous} is obtained by first braiding the up-arc with left-to-right orientation; conversely, the diagram on the right hand side of the same figure is obtained by first braiding the up-arc with right-to-left orientation. Note that the two braid diagrams differ by the type of crossing introduced. We avoid this kind of behavior by introducing the triangle condition, which ensures that the braiding moves do not interfere with each other. That is, the order in which we braid the up-arcs does not affect the final braid. This will be important for the proof of the Markov-type theorem.
  \end{enumerate}
\end{remark}

\begin{figure}[ht]
  \[
  \raisebox{-35pt}{\includegraphics[height=1in]{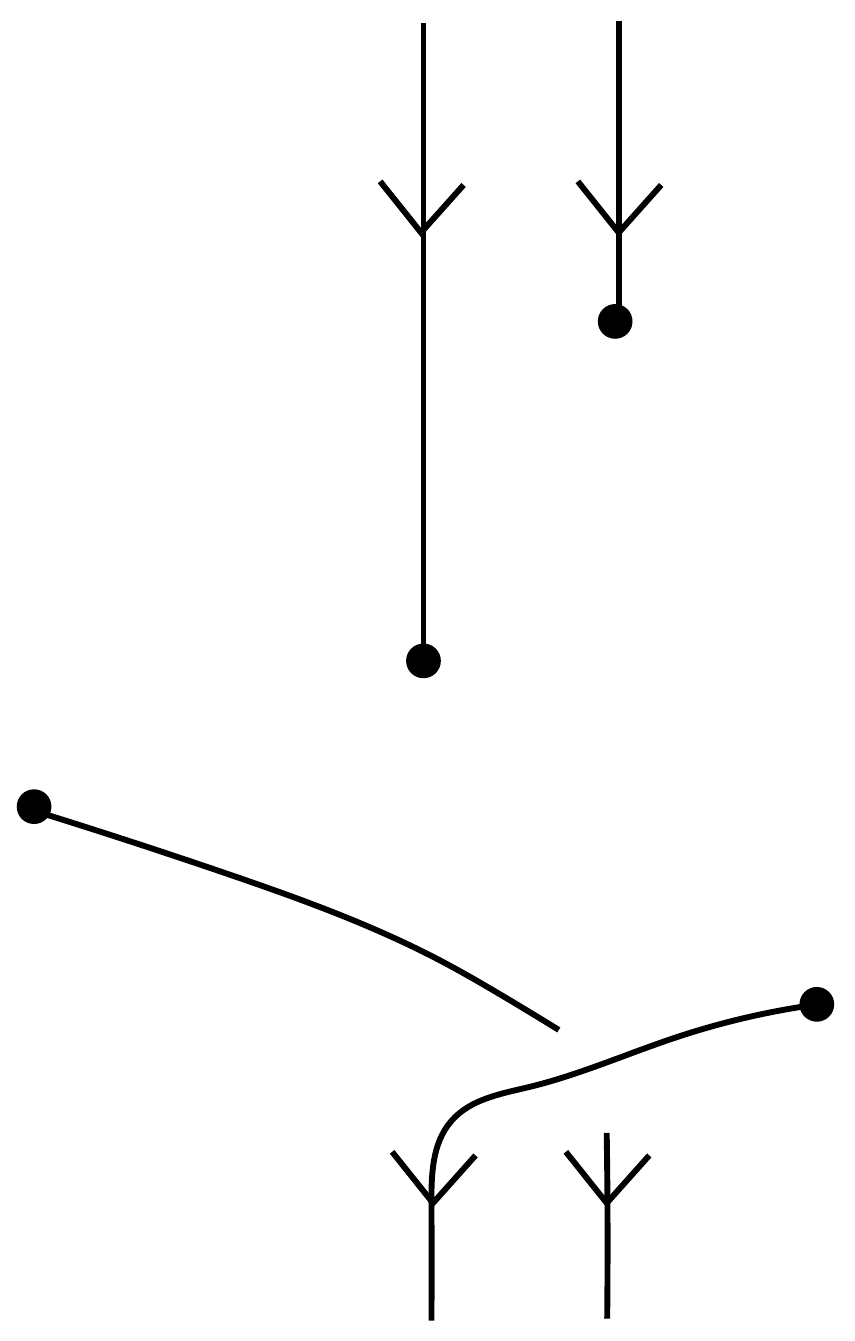}}
  \hspace{.4cm} \longleftarrow \hspace{.4cm}
  \raisebox{-20pt}{\includegraphics[height=.54in]{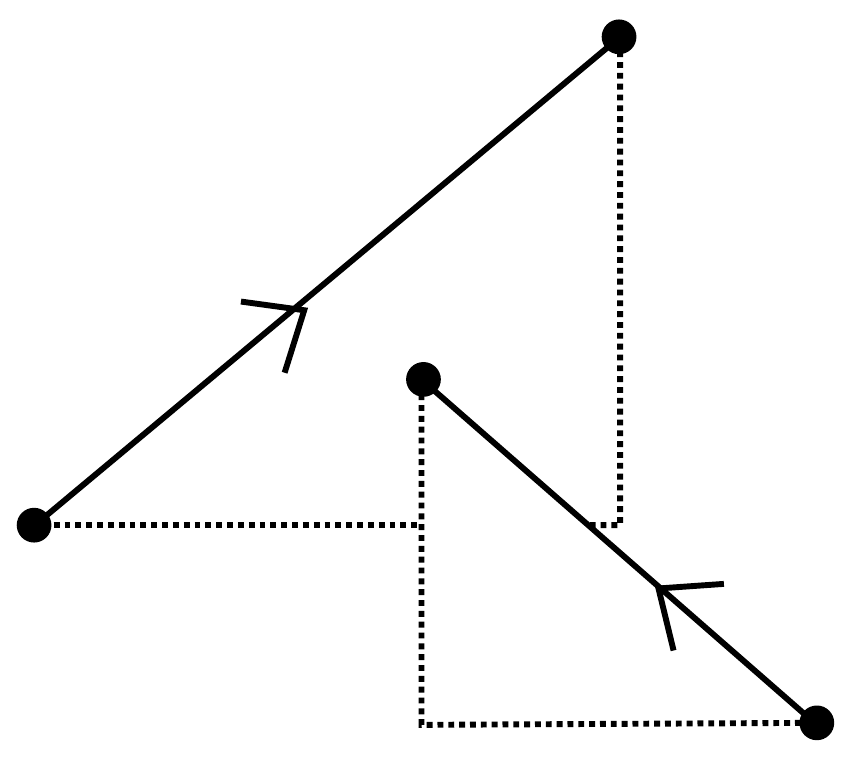}}
  \hspace{.4cm} \longrightarrow \hspace{.4cm}
  \raisebox{-35pt}{\includegraphics[height=1in]{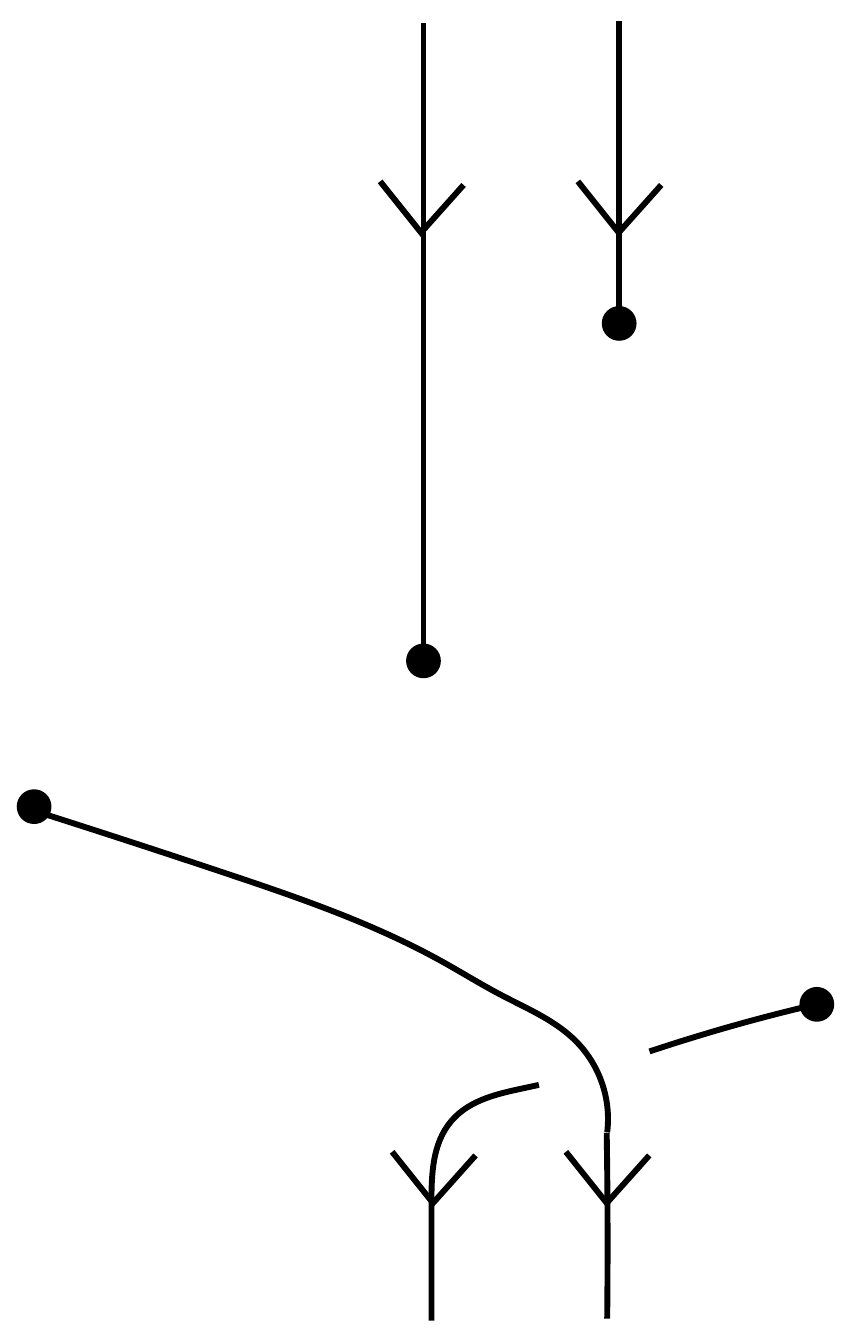}}
  \put(-210,
  30){\fontsize{9}{9}$o$} \put(-209,
  -34){\fontsize{9}{9}$o$} \put(-198,
  30){\fontsize{9}{9}$o$} \put(-199,
  -34){\fontsize{9}{9}$o$} \put(-115,
  15){\fontsize{9}{9}$o$} \put(-95,
  -12){\fontsize{9}{9}$o$} \put(-30,
  30){\fontsize{9}{9}$o$} \put(-30,
  -34){\fontsize{9}{9}$o$} \put(-20,
  30){\fontsize{9}{9}$o$} \put(-20,
  -34){\fontsize{9}{9}$o$} \put(-85, 10){\small{braiding}} \put(-173,
  10){\small{braiding}}
  \]
  \caption{Forbidden sliding triangles}
  \label{ambiguous}
\end{figure}

After braiding every up-arc in a given STG diagram in general position, we obtain a trivalent braid diagram. Recall that well-oriented STG diagrams contain an even number of vertices, half of which are zip and the other half are unzip vertices.  This in fact means that by braiding an STG diagram, we indeed obtain an
$(n,n)$ trivalent braid, where $n$ is a positive integer. Hence the closure operation is well defined. Ultimately, by braiding an STG diagram, we obtain a trivalent braid whose closure is isotopic to the original STG diagram. We state formally the previous discussion in the following theorem.

\begin{theorem}[Alexander-type Theorem for STGs]
  Every well-oriented spatial trivalent graph can be represented as the closure of a trivalent braid.
\end{theorem}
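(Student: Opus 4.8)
The plan is to turn the braiding algorithm just described into an honest existence proof. First I would start with a well-oriented spatial trivalent graph $\Gamma$ and fix any diagram $D$ of it in the plane. Since $D$ need not be in general position, the first step is the preparation for braiding: using planar isotopy and the $R5$ move I bring every vertex of $D$ into regular position (Figures~\ref{vgp}, \ref{vgp2}, and~\ref{vyl}), then apply direction sensitive moves to eliminate horizontal arcs and horizontal/vertical alignments of crossings, vertices and subdivision points, subdivide each up-arc so that it contains at most one crossing, label every up-arc with ``o'' or ``u'', and finally invoke Lemma~\ref{lemA1} to refine the subdivision so that the triangle condition holds. The output is a diagram $D'$, isotopic to $D$, that is in general position in the sense of Definition~\ref{gen pos}.

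Next I would run the braiding algorithm on $D'$. Enumerate the up-arcs of $D'$ and eliminate them one at a time by basic braiding moves: a crossing up-arc is slid across its sliding triangle and the resulting vertical segment is cut and pulled to the top and bottom of the diagram, becoming a pair of vertically aligned downward strands carrying the same label as the original arc; a free up-arc is treated in the same way. Two things must be checked here. First, each basic braiding move, once the two new strands are reconnected around the braid axis, yields a trivalent tangle isotopic to the one before the move; this is exactly the observation made in the discussion of Figures~\ref{crossing} and~\ref{free}, and it is where regular position of the vertices is used, since it guarantees the move takes place in a region containing only down-arcs and no vertex, so that no triple or higher-order point is created. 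Second, the triangle condition, being part of general position, ensures that distinct basic braiding moves do not interfere (see the remark following Figure~\ref{ambiguous}), so the algorithm terminates after finitely many steps in a diagram with no up-arcs, and the resulting diagram is independent of the order of the moves.

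A diagram with no up-arcs is monotone with respect to the top-to-bottom direction, hence is a trivalent tangle in braid form, i.e. a trivalent braid $\beta$. It remains to see that $\beta$ has equally many top and bottom endpoints. Each basic braiding move contributes exactly one endpoint to the top edge of the diagram and one to the bottom edge (the two ends of the cut vertical segment), so the top and bottom counts increase together; since $D'$ is a closed diagram with no free endpoints to begin with, $\beta \in TB^{n}_{n}$ for some positive integer $n$. Alternatively, one can argue directly from the handshaking lemma together with the fact noted at the end of Section~\ref{sec:braidsintro} that a well-oriented STG has equally many zip and unzip vertices. Finally, closing $\beta$ reverses all the basic braiding moves up to isotopy, so $\overline{\beta}$ is isotopic to $D'$, hence to $D$, and therefore represents $\Gamma$.

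The main obstacle, and the reason the preparation for braiding is as elaborate as it is, is preventing the braiding of up-arcs from colliding with the vertices: without regular position and the prohibition on vertical alignments one would create degree-five or degree-six singular points, and without the triangle condition the output braid would depend on the order in which the up-arcs are eliminated. Once those conditions are secured by the lemmas and the definition above, the remainder of the argument is a straightforward bookkeeping of the algorithm.
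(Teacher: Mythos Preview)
Your proposal is correct and follows essentially the same approach as the paper: the paper's proof of the Alexander-type theorem is precisely the preparation-for-braiding procedure and the braiding algorithm of Section~\ref{sec:AlexanderThm}, summarized in the sentence ``We state formally the previous discussion in the following theorem.'' Your write-up organizes that discussion into a clean existence argument, and your counting argument that each basic braiding move adds one top and one bottom endpoint is a pleasant alternative to the paper's appeal to the zip/unzip balance for showing $\beta\in TB^n_n$.
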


\begin{remark}
  In our braiding algorithm, we require that each up-arc is subdivided such that it contains at most one crossing. In fact, we can relax this condition a bit by allowing for up-arcs to cross over any number of strands. The only requirement is that an up-arc contains crossings of only one type. That is, an up-arc must cross either over or under\---- but not both\---- any other strand.
\end{remark}


\begin{example} We provide in Figure~\ref{example} an example of the braiding algorithm. We start with a well-oriented spatial trivalent graph diagram representing a spatial version of the Petersen graph. The first step is to put the vertices in regular position according to the conventions in Figures~\ref{vgp},~\ref{vgp2}, and~\ref{vyl}. Then we convert the resulting diagram into a diagram in general position (recall Definition~\ref{gen pos}). Finally, we braid the free up-arcs and all crossings containing up-arcs to arrive at a trivalent braid whose closure is isotopic to the original STG diagram.
 
\begin{figure}[ht]
 \[
 \raisebox{10pt}{\includegraphics[height=1.4in]{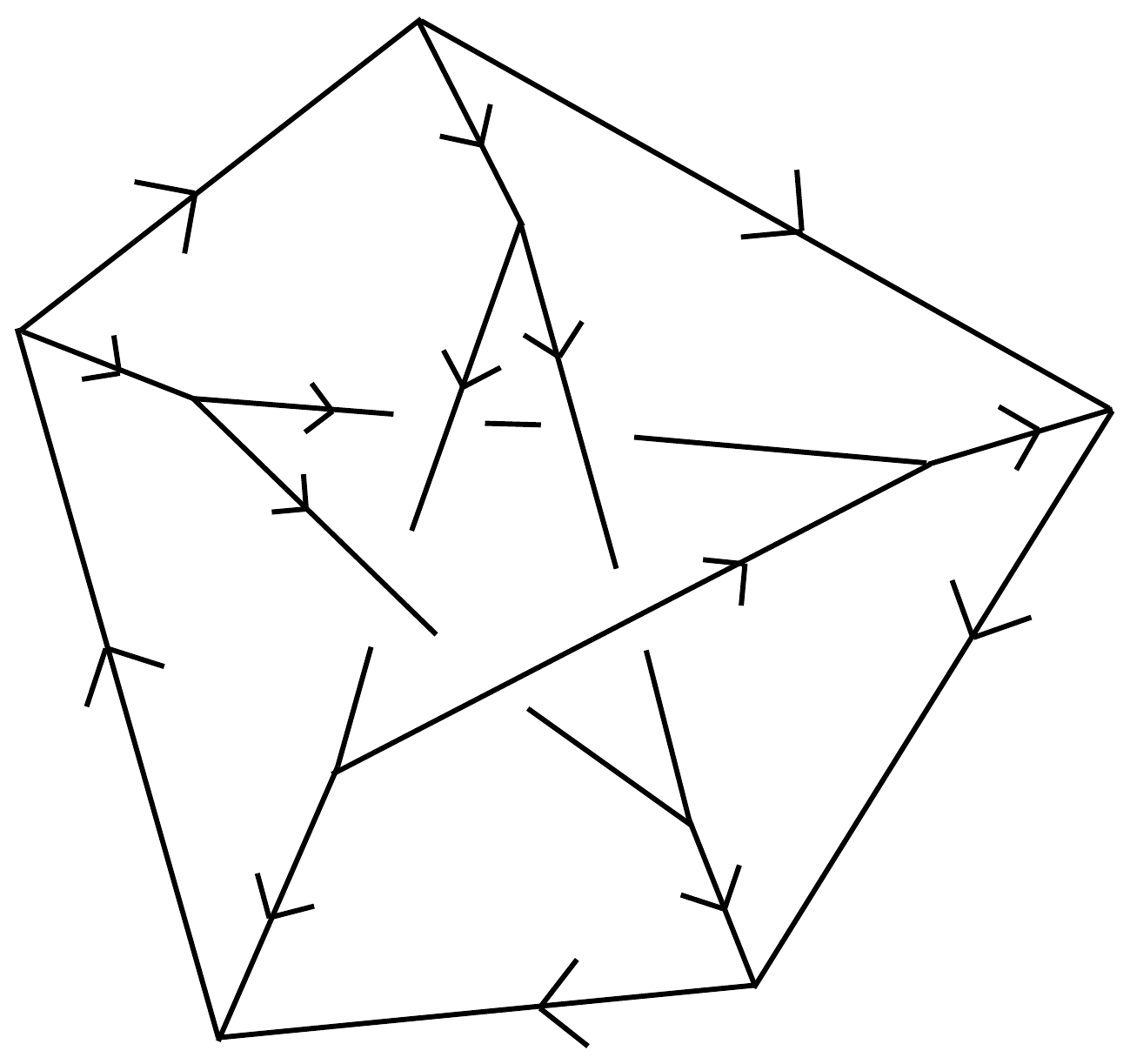}} \hspace{.2cm} \raisebox{60pt}{$\longrightarrow$} \hspace{.2cm} \includegraphics[height=1.7in]{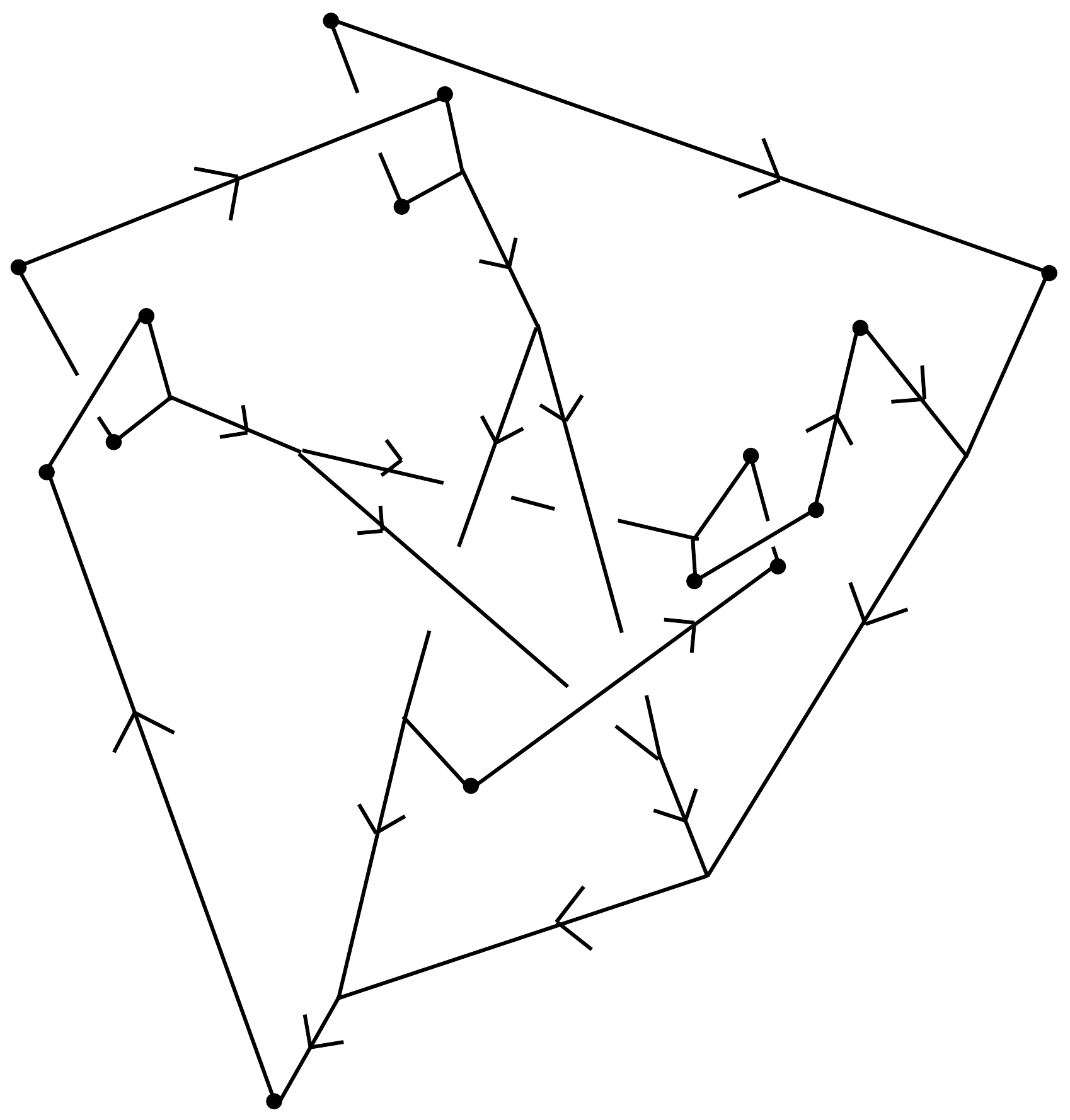} \raisebox{60pt}{$\longrightarrow$}
\]
\put(-110,0){\fontsize{9}{9}STG diagram}
\put(13,0){\fontsize{9}{9}Vertices in regular position}
\vspace{.5cm}
\[
\raisebox{10pt}{\includegraphics[height=1.7in]{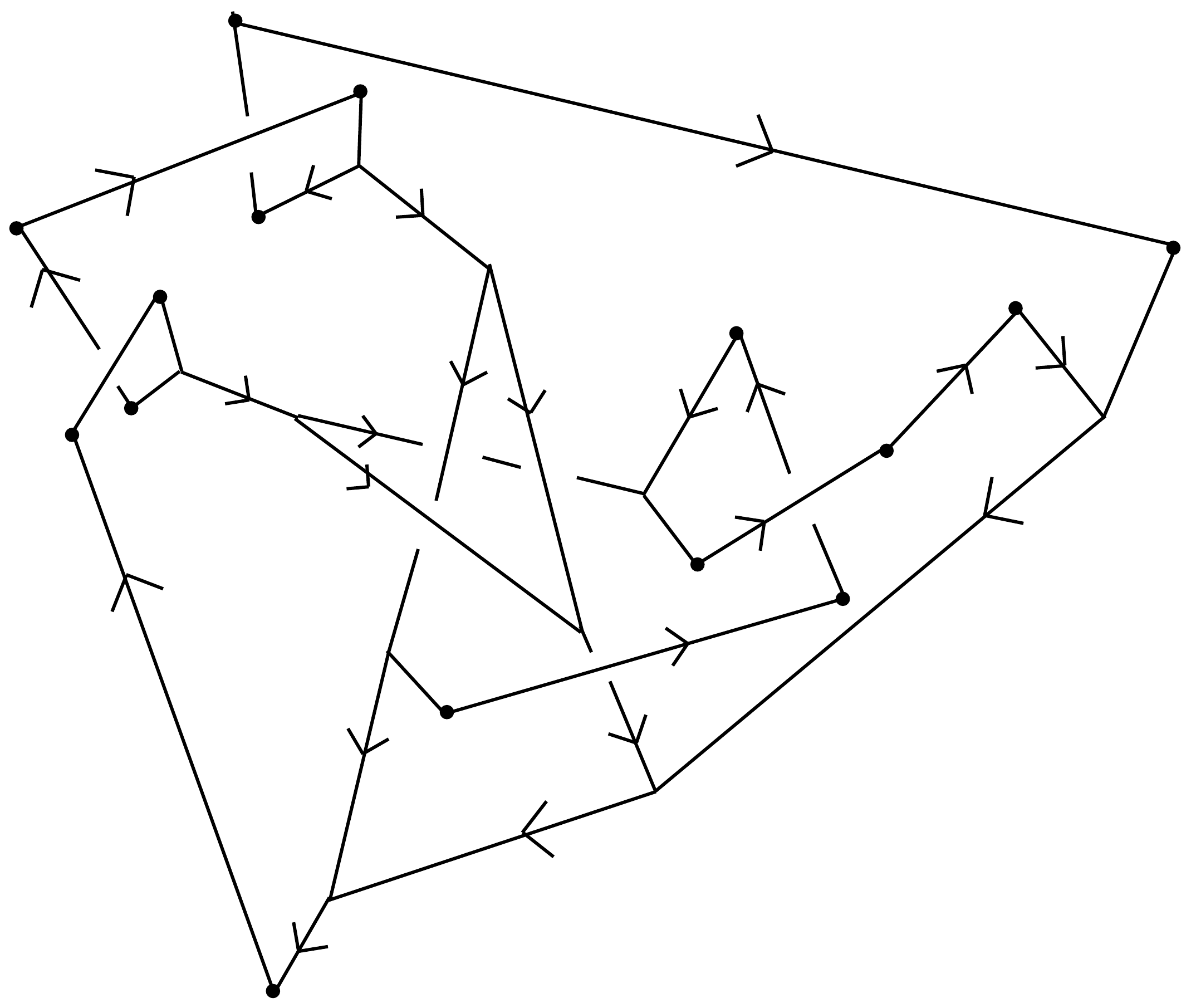}} \hspace{.2cm} \raisebox{60pt}{$\longrightarrow$} \hspace{.2cm} \includegraphics[height=2.1in]{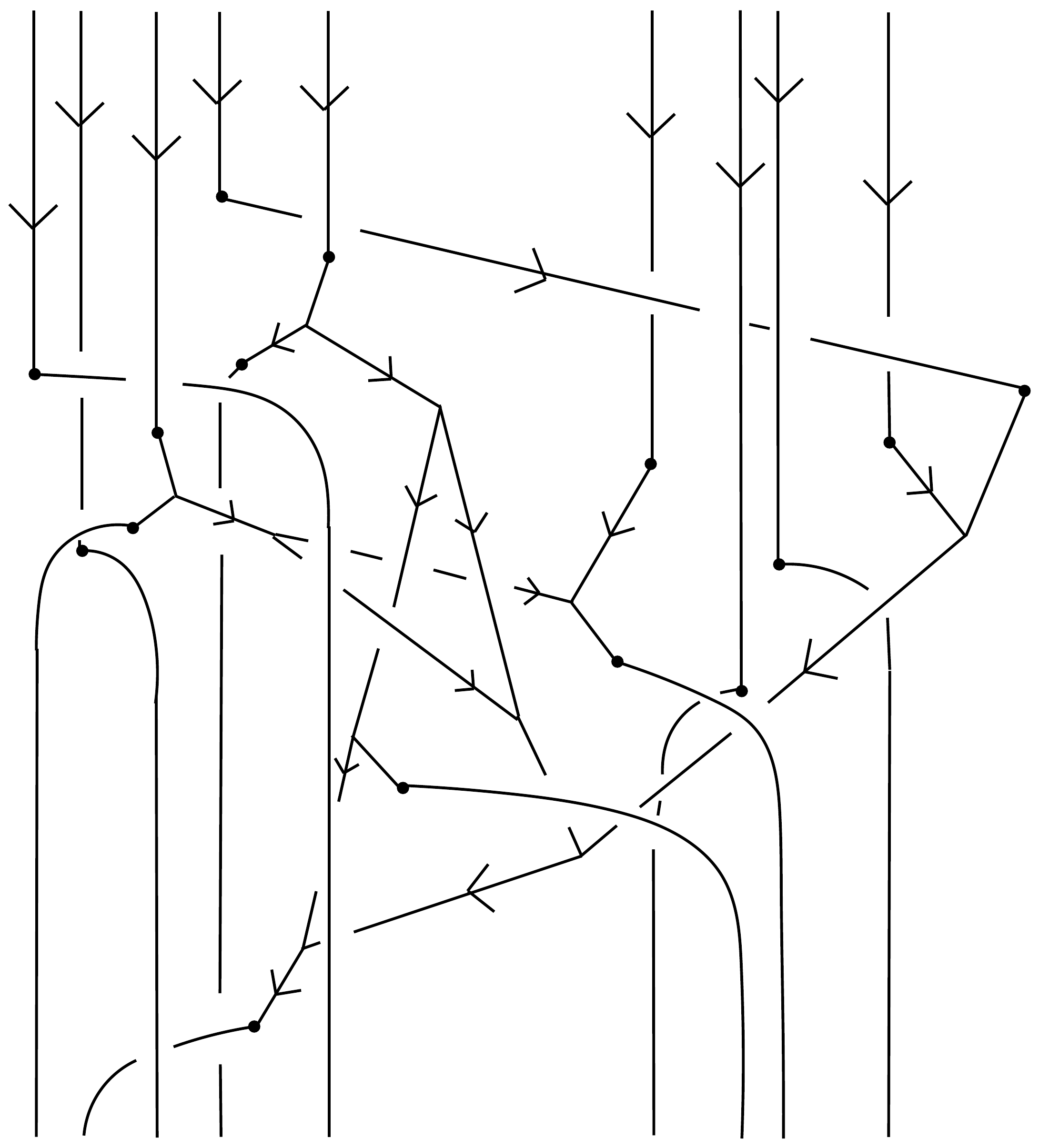}
 \]
\put(-150,0){\fontsize{9}{9}Diagram in general position}
\put(40,0){\fontsize{9}{9}Corresponding braid}
\caption{An example of the braiding algorithm} \label{example}
\end{figure}
\end{example}


\section{Markov-type theorems for trivalent braids}

Analogous to classical knot theory and in particular to the work by Markov~\cite{M}, we want to classify trivalent braids that, upon the closure operation, yield STG diagrams representing isotopic spatial trivalent graphs.

\subsection{Trivalent $L$-equivalence}\label{sec:TL-equiv}

The goal of this section is to define an equivalence relation on the set of trivalent braids, which we refer to as \textbf{trivalent $L$-equivalence} (or \textbf{$TL$-equivalence}). This equivalence relation can be seen as an extension of the $L$-equivalence between classical braids. The $L$-equivalence is described solely by braid isotopy and the $L$-moves introduced by Lambropoulou in her Ph.D. thesis~\cite{L}, and used to prove the `one-move Markov theorem' for oriented links (see also~\cite[Theorem 2.3]{LR}). The $L$-moves for classical braids extend naturally to trivalent braids, as we shall explain in this section.

\begin{definition}
  A \textbf{basic $L$-move} on a trivalent braid consists of cutting an arc of the braid at a point (such point cannot be a vertex), and pulling the upper cut-point downward and the lower cut-point upward, therefore, creating a new pair of vertical braid strands, as explained in Figure~\ref{basicL}. The new pair of braid strands are vertically aligned with the cut-point, and they either cross over or under\---- but not both\---- any other arc of the braid. Consequently, there are two types of the basic $L$-moves, namely an \textbf{under $L$-move} ($L_u$-move) and an \textbf{over $L$-move} ($L_o$-move).
\end{definition}

\begin{figure}[ht]
  \[
  \raisebox{-30pt}{\includegraphics[height=1in]{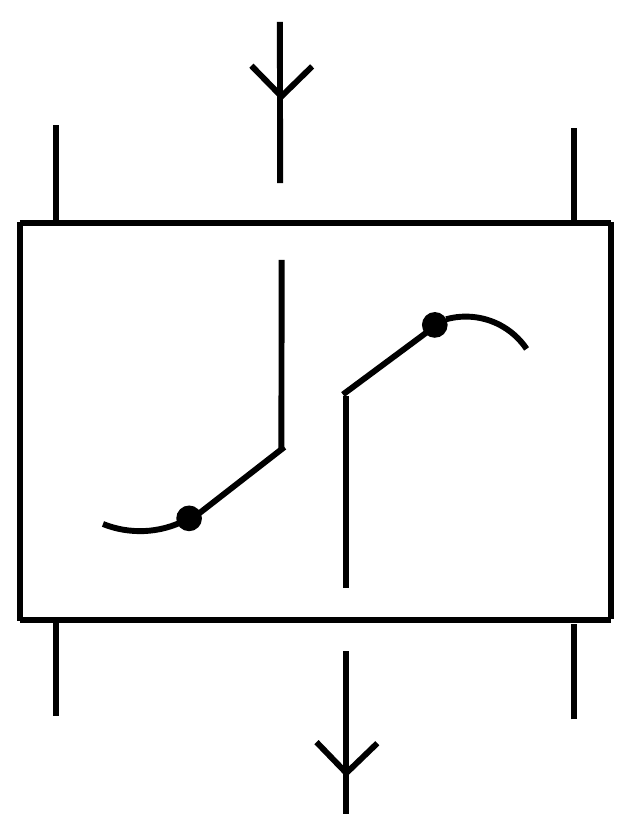}}
  \hspace{.29cm} \longleftrightarrow \hspace{.29cm}
  \raisebox{-23pt}{\includegraphics[height=.8in]{BasicLMove1}}
  \hspace{.29cm} \longleftrightarrow \hspace{.29cm}
  \raisebox{-30pt}{\includegraphics[height=1in]{BasicLMove2}}
  \put(-27,28){\fontsize{9}{9}$o$}
  \put(-34,-21){\fontsize{9}{9}$o$}
  \put(-223,28){\fontsize{9}{9}$u$}
  \put(-227,-21){\fontsize{9}{9}$u$}
  \put(-192,10){\fontsize{9}{9}$L_u$-move} \put(-185,20){\text{Basic}}
  \put(-93,10){\fontsize{9}{9}$L_o$-move} \put(-87,20){\text{Basic}}
  \]
  \caption{Basic $L_u$- and $L_o$-moves} \label{basicL}
\end{figure}

Using braid isotopy, an $L$-move may be formulated with a crossing (positive or negative) which can be either to the right or to the left of the 
cut-point; we refer to these as a \textbf{right $L$-move} or a \textbf{left $L$-move}, respectively. More precisely, we refer to this version of the $L$-move as either \textbf{right/left $+L$-move} or \textbf{right/left $-L$-move}, in accordance with the type of crossing being created (see Figure~\ref{crossL}).

\begin{figure}[ht]
  \[
  \raisebox{-30pt}{\includegraphics[height=1in]{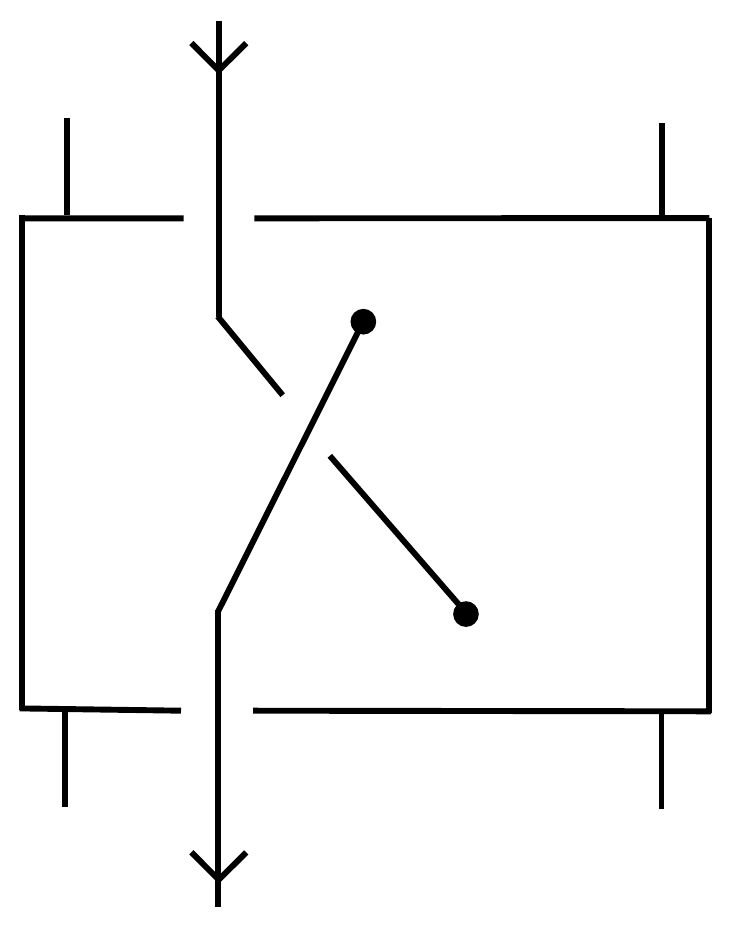}}
  \hspace{.4cm} \longleftrightarrow \hspace{.4cm}
  \raisebox{-23pt}{\includegraphics[height=.8in]{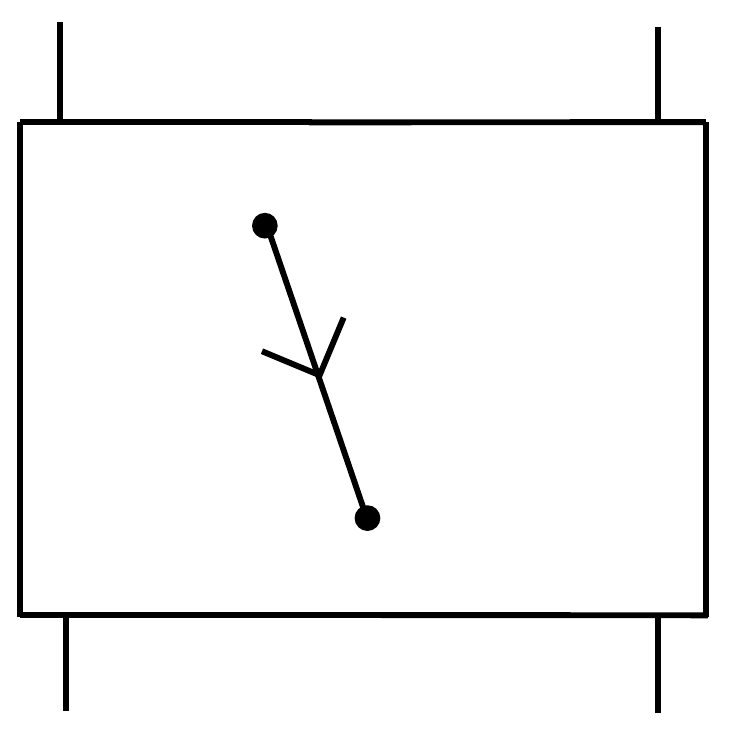}}
  \hspace{.4cm} \longleftrightarrow \hspace{.4cm}
  \raisebox{-30pt}{\includegraphics[height=1in]{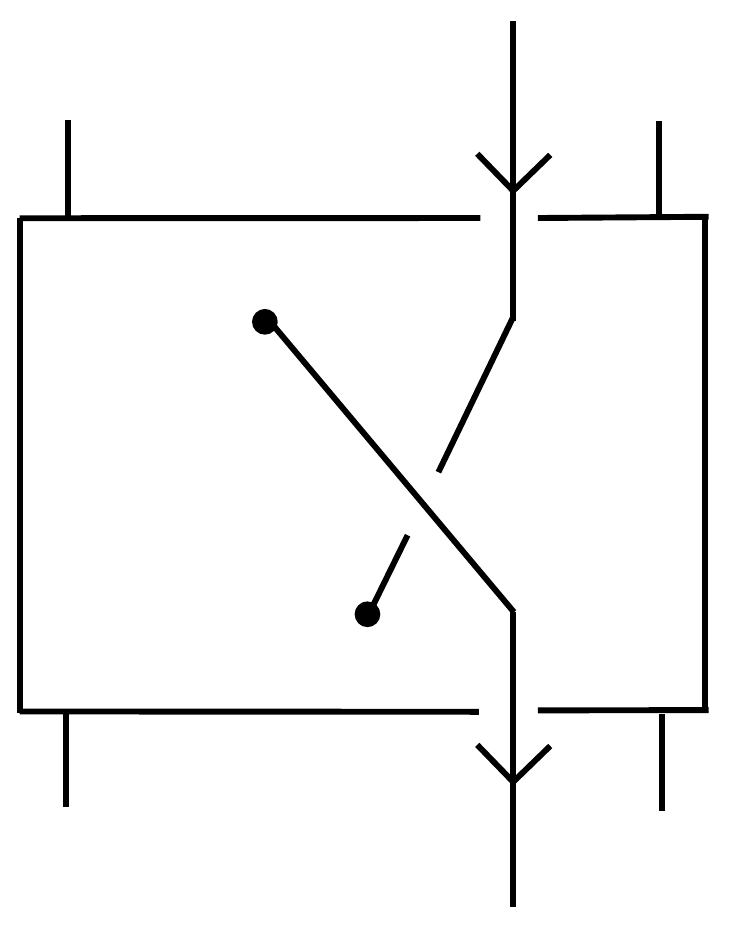}}
  \put(-27,30){\fontsize{9}{9}$o$}
  \put(-27,-23){\fontsize{9}{9}$o$}
  \put(-244,30){\fontsize{9}{9}$o$}
  \put(-244,-23){\fontsize{9}{9}$o$} \put(-188,20){\small{Left}}
  \put(-206,10){\small{\fontsize{9}{9}$+L_o$-move}}
  \put(-88,20){\small{Right}}
  \put(-100,10){\small{\fontsize{9}{9}$-L_o$-move}}
  \]
  \[
  \raisebox{-30pt}{\includegraphics[height=1in]{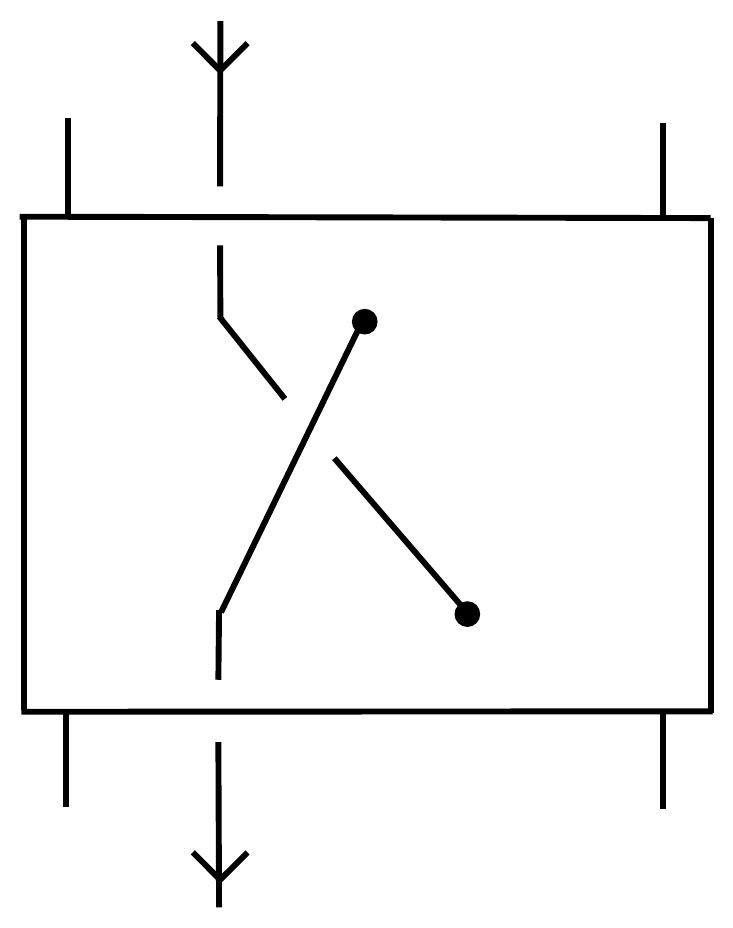}}
  \hspace{.4cm} \longleftrightarrow \hspace{.45cm}
  \raisebox{-23pt}{\includegraphics[height=.8in]{CrossLMove}}
  \hspace{.4cm} \longleftrightarrow \hspace{.4cm}
  \raisebox{-30pt}{\includegraphics[height=1in]{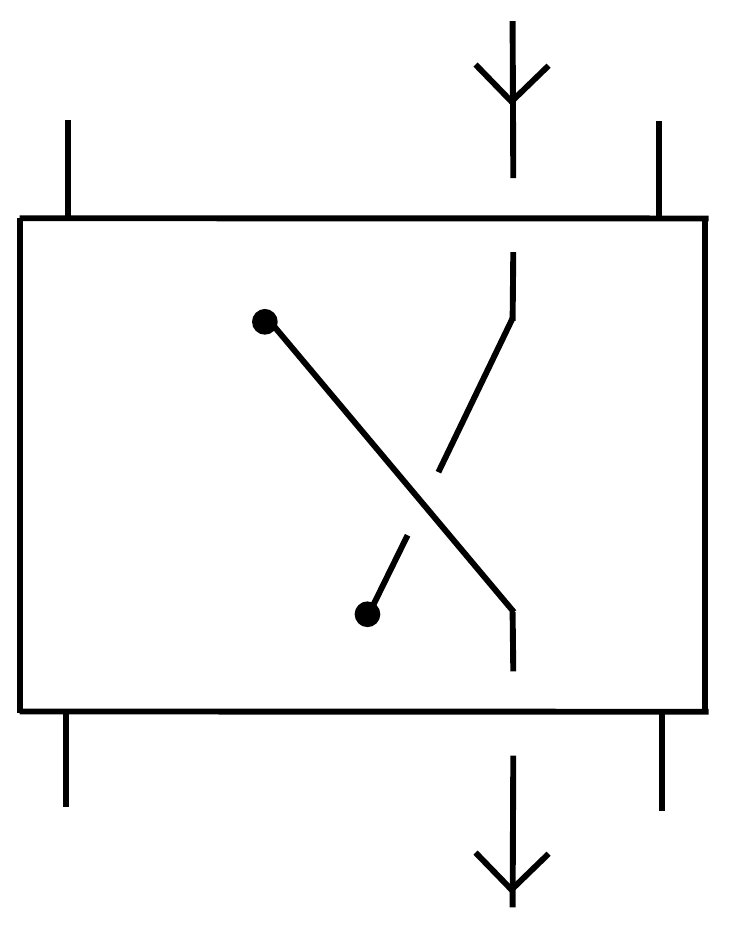}}
  \put(-27,30){\fontsize{9}{9}$u$}
  \put(-27,-23){\fontsize{9}{9}$u$}
  \put(-244,30){\fontsize{9}{9}$u$}
  \put(-244,-23){\fontsize{9}{9}$u$} \put(-188,20){\small{Left}}
  \put(-206,10){\small{\fontsize{9}{9}$+L_u$-move}}
  \put(-88,20){\small{Right}}
  \put(-100,10){\small{\fontsize{9}{9}$-L_u$-move}}
  \]
  \caption{Left and right $L$-moves} \label{crossL}
\end{figure}

\newpage

\begin{remark} Some comments about the $L$-moves on trivalent braids are needed.
  \begin{enumerate}
  \item[(i)] The effect of the $L$-move is to stretch an arc around the braid axis, where the arc is being stretched either over or under the braid diagram. Therefore, such a move between trivalent braids yields isotopic closures.
  \item[(ii)] Although we defined the $L$-moves in one direction, we allow for an $L$-move to be undone in a braid. That is, we allow for the contraction of a pair of vertical strands that correspond to an $L$-move, so as to obtain a down-arc.
  \end{enumerate}
\end{remark}

\begin{definition}
  The \textbf{$TL$-equivalence} is the equivalence relation on the set of trivalent braids determined by (1) braid isotopy and (2) right $L$-moves.
\end{definition}

Note that we did not include neither the basic $L$-moves nor the left $L$-moves in the definition of $TL$-equivalence. In the next lemma, we show that these $L$-moves follow from the right $L$-moves and braid isotopy (see also~\cite{LR}). This will give us the freedom to use all versions of the $L$-moves when comparing $TL$-equivalent braids.

\begin{lemma}
  The basic $L$-moves and left $L$-moves follow from the right $L$-moves together with braid isotopy.
  \label{L equiv}
\end{lemma}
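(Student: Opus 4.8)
The plan is to reduce each of the non-canonical $L$-moves to a right $L$-move via braid isotopy, handling them in a convenient order: first pass from a right $L$-move to the basic $L$-moves, and then from the basic $L$-moves to the left $L$-moves (or, depending on which direction is cleanest, go directly from right to left). The guiding principle throughout is that an $L$-move creates a pair of vertically aligned braid strands, and the only thing distinguishing the variants is the position (left/right) and sign of the small crossing that appears adjacent to the cut-point; since all of these local pictures sit inside a genuine braid that we are free to isotope, such a crossing can be slid around and absorbed.

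First I would treat the basic $L_o$-move. Starting from a right $+L_o$-move (the new strands cross over, with a positive crossing to the right of the cut-point), I use braid isotopy to slide that crossing downward and around the braid axis; because the new strand is an overstrand everywhere, dragging it past the braid box and back undoes the crossing, leaving exactly the basic $L_o$-move picture of Figure~\ref{basicL}. The same argument with the sign reversed shows the right $-L_o$-move is isotopic to the basic $L_o$-move, and the two ``under'' variants follow identically, with ``over'' replaced by ``under'' throughout and the role of overstrand/understrand swapped. Crucially, none of these isotopies pass an arc through a vertex — the cut-point is not a vertex by definition, and the new vertical strands are strand-like (no trivalent vertex is created), so only the classical braid-isotopy moves ($R2$, $R3$, and commuting relations) among $\sigma_i^{\pm1}$ are invoked, exactly as in the classical case~\cite{LR}.

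Next I would obtain the left $L$-moves. From the basic $L_o$-move I can introduce a crossing on the \emph{left} of the cut-point by the reverse of the move just described: push the overstrand arc the other way around the braid axis, which now plants a crossing to the left; choosing the direction of the push controls whether that crossing is positive or negative, giving both the left $+L_o$-move and the left $-L_o$-move. The under-versions are symmetric. Combining the two steps, every basic and left $L$-move (of either type and sign) is connected to a right $L$-move by a finite sequence of braid-isotopy moves, which is the claim.

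The main obstacle — really the only place one must be careful rather than merely routine — is verifying that every isotopy used genuinely lies within the allowed trivalent braid isotopy moves and never drags a strand across a vertex or creates a forbidden configuration. The new vertical strands produced by an $L$-move are ordinary strands (not edges incident to a vertex), and the arc being stretched is cut away from any vertex, so the isotopies are supported in a region that, as far as the braid structure is concerned, is no different from the classical setting; hence Lambropoulou's classical argument transfers verbatim. I would make this explicit by noting that the only subwords being manipulated involve $\sigma_i^{\pm1}$ and the commuting relations $\sigma_i b_j = b_j \sigma_i$, and then refer the reader to the corresponding figures for the bookkeeping of which crossing appears where.
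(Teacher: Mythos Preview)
Your overall architecture---derive the basic $L$-moves from the right ones first, then the left $L$-moves from the basic ones, and handle the under-versions by the obvious symmetry---is exactly the paper's route, and your remark that no trivalent vertex is touched (so the classical Lambropoulou--Rourke argument applies verbatim) is correct and is the only point specific to the trivalent setting.

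Where your proposal goes wrong is in the mechanism. You write that from the right $+L_o$ picture one should ``slide that crossing downward and around the braid axis'' and that ``dragging it past the braid box and back undoes the crossing.'' That is not braid isotopy. Braid isotopy consists of the local replacements $R2$, $R3$, $R4$, $R5$ and the commuting relations, all performed inside the open braid; none of these removes a single crossing between two distinct strands, and there is no ``around the axis'' or ``past the braid box'' available before closure. If this worked, any $\sigma_i$ could be isotoped away. The paper's argument is instead entirely local to a neighbourhood of the cut-point: a single braid $R2$ move converts the right $\pm L_o$ picture into the basic $L_o$ picture (Figure~\ref{prop:basicL}), and a short local braid isotopy followed by a basic $L_o$-move realises each left $\pm L_o$-move (Figure~\ref{prop:crosstype}). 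Once you replace your global ``slide around the axis'' description with this local $R2$ step, your proof coincides with the paper's.
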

\begin{proof}
  Figure~\ref{prop:basicL} shows how a basic $L_o$-move follows from the right $L_o$-moves together with Reidemeister moves $R2$ in braid form. Then Figure~\ref{prop:crosstype} shows that the left $L_o$-moves follow from basic $L_o$-moves. Therefore, left $L_o$-moves follow from right $L_o$-moves. A similar argument can be used to show that the basic and left $L_u$-moves follow from the right $L_u$-moves and braid isotopy.
\end{proof}

\begin{figure}[ht]
  \[
  \raisebox{-35pt}{\includegraphics[height=1in]{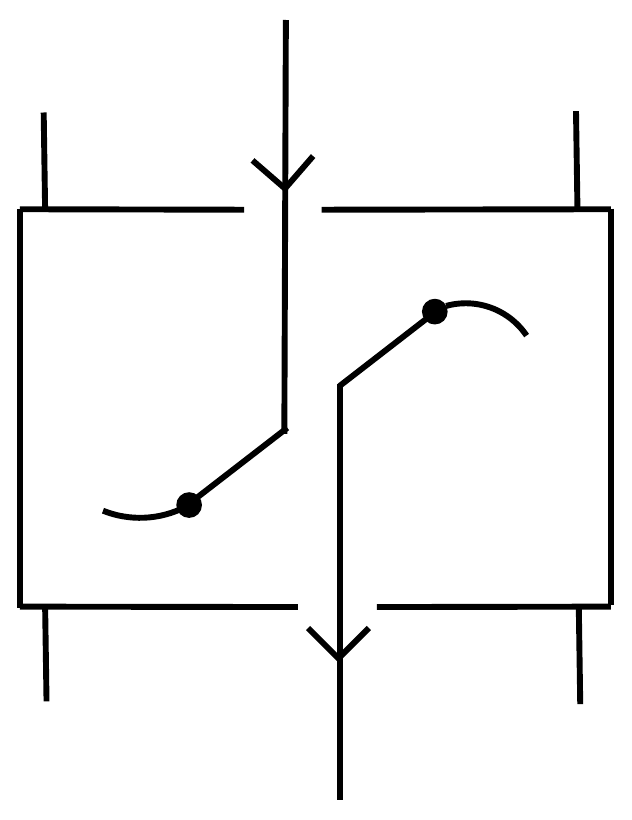}}
  \hspace{.4cm} \longleftrightarrow \hspace{.4cm}
  \raisebox{-35pt}{\includegraphics[height=1in]{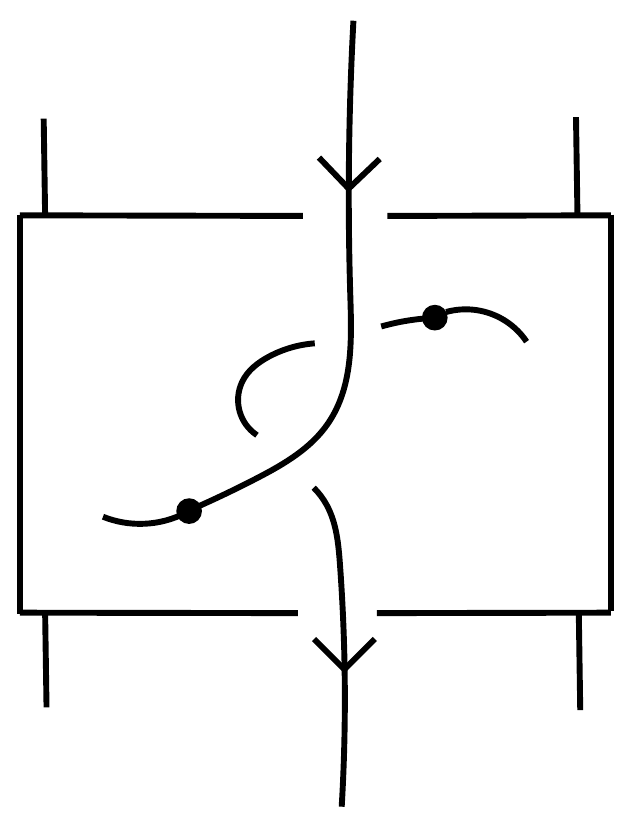}}
  \hspace{.4cm} \longleftrightarrow \hspace{.4cm}
  \raisebox{-26pt}{\includegraphics[height=.75in]{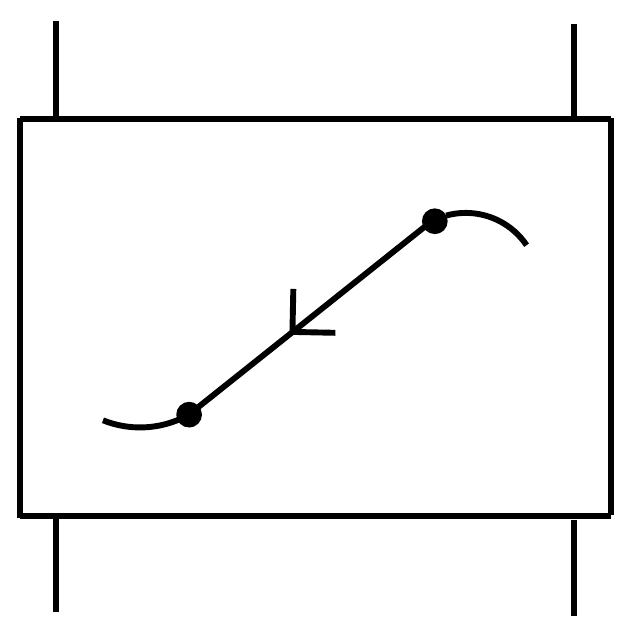}}
  \put(-230,30){\fontsize{9}{9}$o$}
  \put(-226,-30){\fontsize{9}{9}$o$}
  \put(-123,30){\fontsize{9}{9}$o$}
  \put(-124,-30){\fontsize{9}{9}$o$} \put(-193,10){\small{br.
      $R2$}} \put(-90,20){\small{Right}}
  \put(-98,10){\small{\fontsize{9}{9}$+L_o$-move}}
  \put(-260,-45){\small{Basic \fontsize{9}{9}$L_o$-move}}
  \]
  \[
  \raisebox{-35pt}{\includegraphics[height=1in]{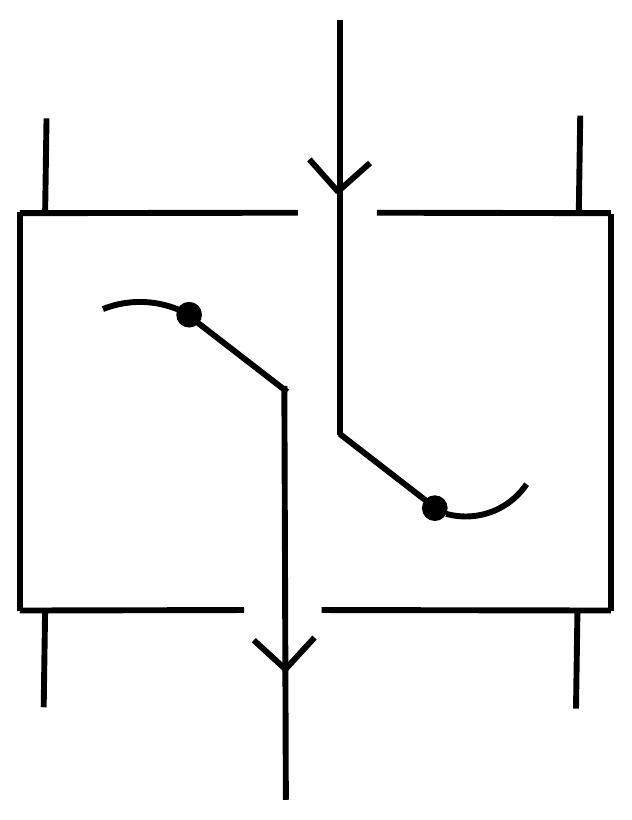}}
  \hspace{.4cm} \longleftrightarrow \hspace{.4cm}
  \raisebox{-35pt}{\includegraphics[height=1in]{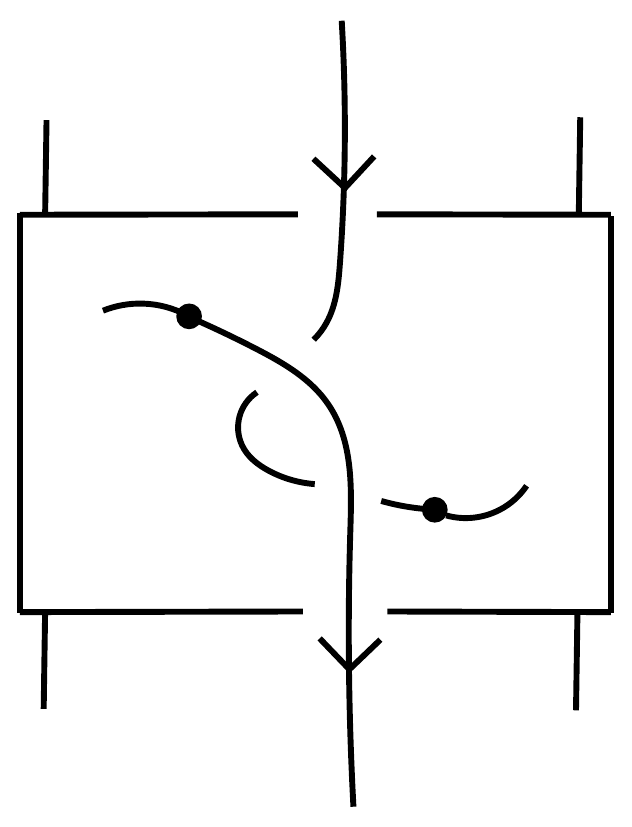}}
  \hspace{.4cm} \longleftrightarrow \hspace{.4cm}
  \raisebox{-26pt}{\includegraphics[height=.75in]{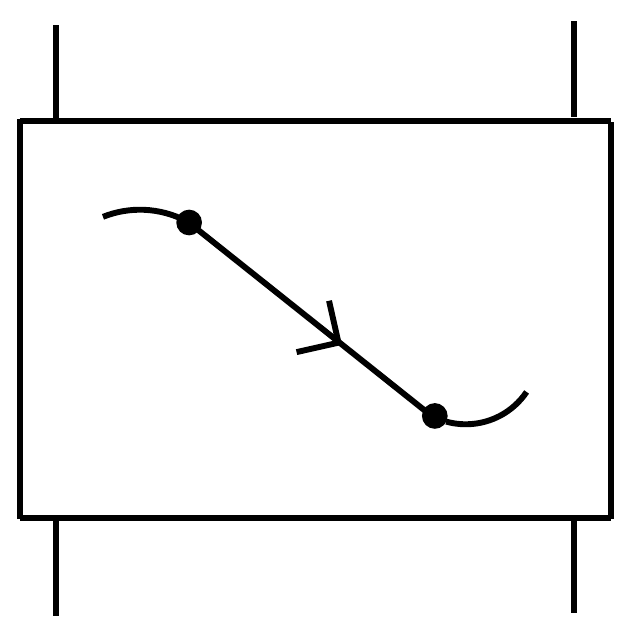}}
  \put(-227,30){\fontsize{9}{9}$o$}
  \put(-231,-30){\fontsize{9}{9}$o$}
  \put(-123,30){\fontsize{9}{9}$o$}
  \put(-124,-30){\fontsize{9}{9}$o$} \put(-193,10){\small{br.
      $R2$}} \put(-90,20){\small{Right}}
  \put(-98,10){\small{\fontsize{9}{9}$-L_o$-move}}
  \put(-260,-45){\small{Basic \fontsize{9}{9}$L_o$-move}}
  \]
  \caption{Basic $L_o$-moves via right $\pm
    L_o$-moves} \label{prop:basicL}
\end{figure}

\begin{figure}[ht]
  \[
  \raisebox{-35pt}{\includegraphics[height=1in]{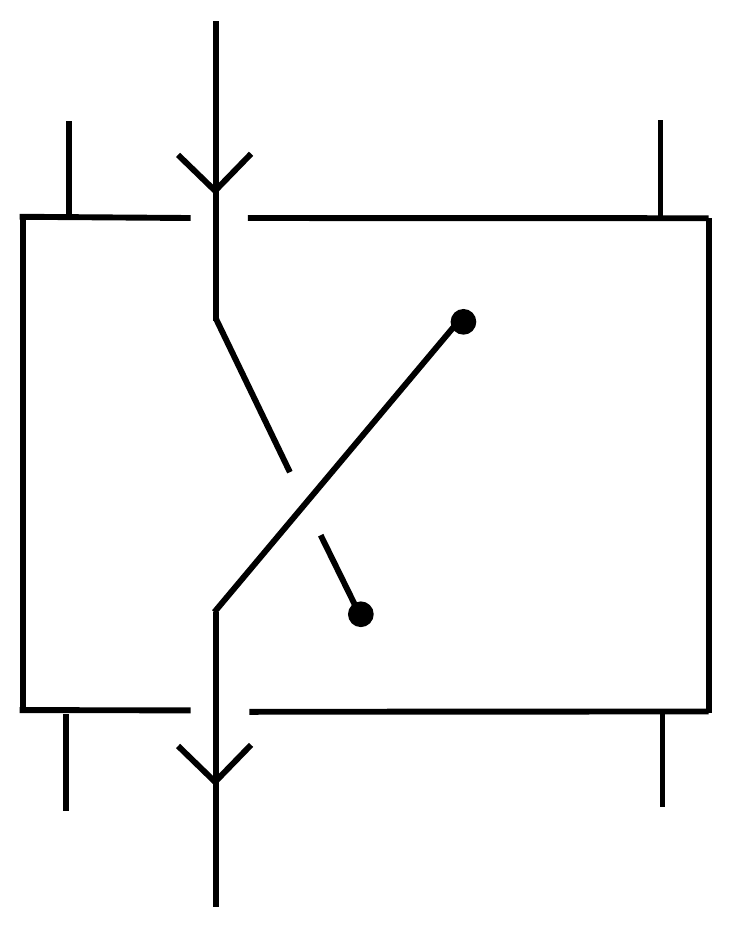}}
  \hspace{.4cm} \longleftrightarrow \hspace{.4cm}
  \raisebox{-35pt}{\includegraphics[height=1in]{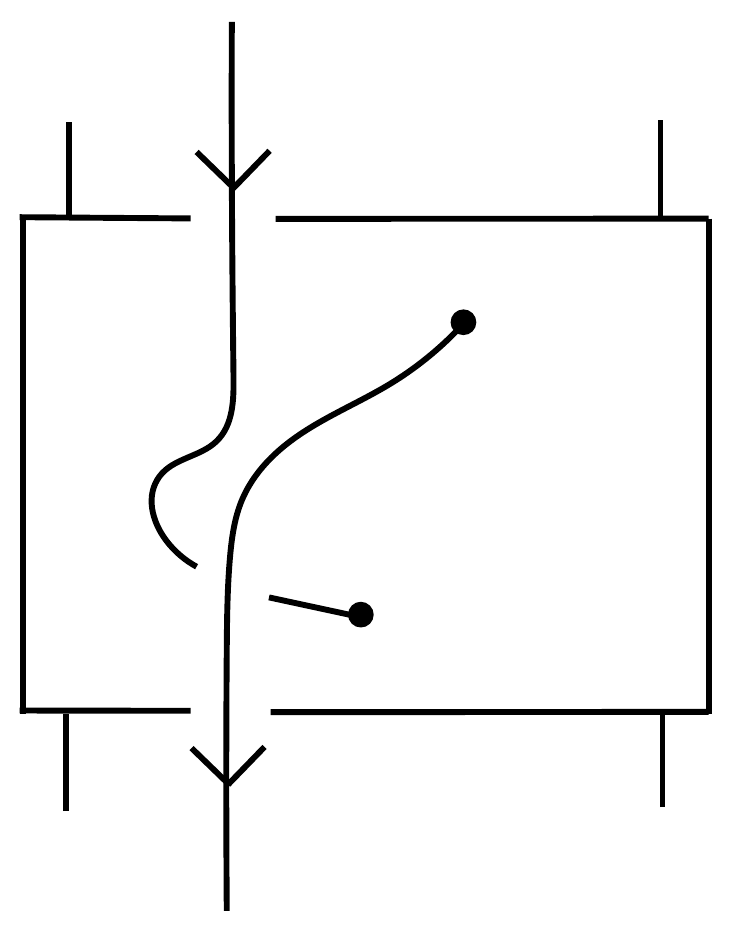}}
  \hspace{.4cm} \longleftrightarrow \hspace{.4cm}
  \raisebox{-27pt}{\includegraphics[height=.78in]{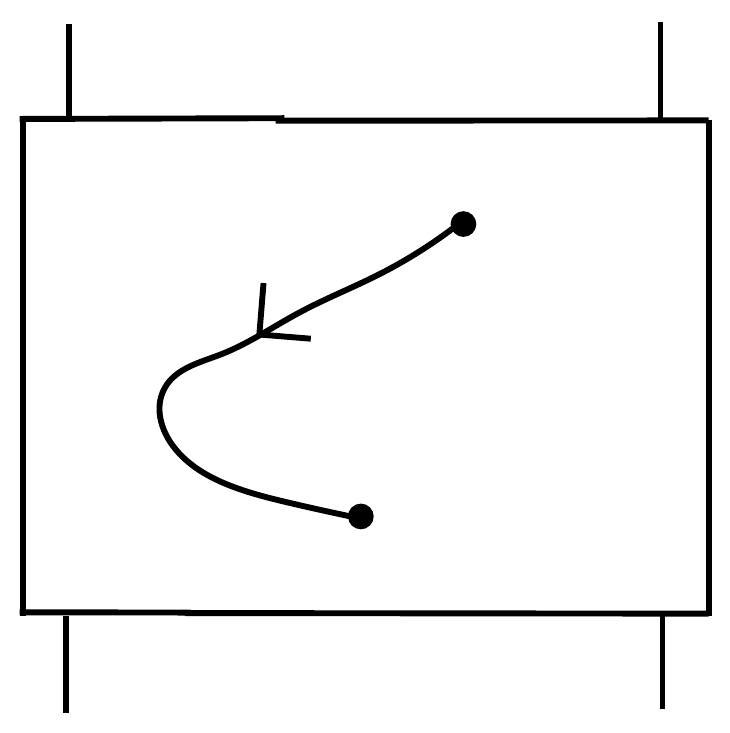}}
  \put(-243,30){\fontsize{9}{9}$o$}
  \put(-243,-30){\fontsize{9}{9}$o$}
  \put(-139,30){\fontsize{9}{9}$o$}
  \put(-139,-30){\fontsize{9}{9}$o$} \put(-190,20){\small{braid}}
  \put(-193,10){\small{isotopy}}
  \put(-95,10){\small{\fontsize{9}{9}$L_o$-move}}
  \put(-260,-45){\small{Left \fontsize{9}{9}$+L_o$-move}}
  \]
  \[
  \raisebox{-35pt}{\includegraphics[height=1in]{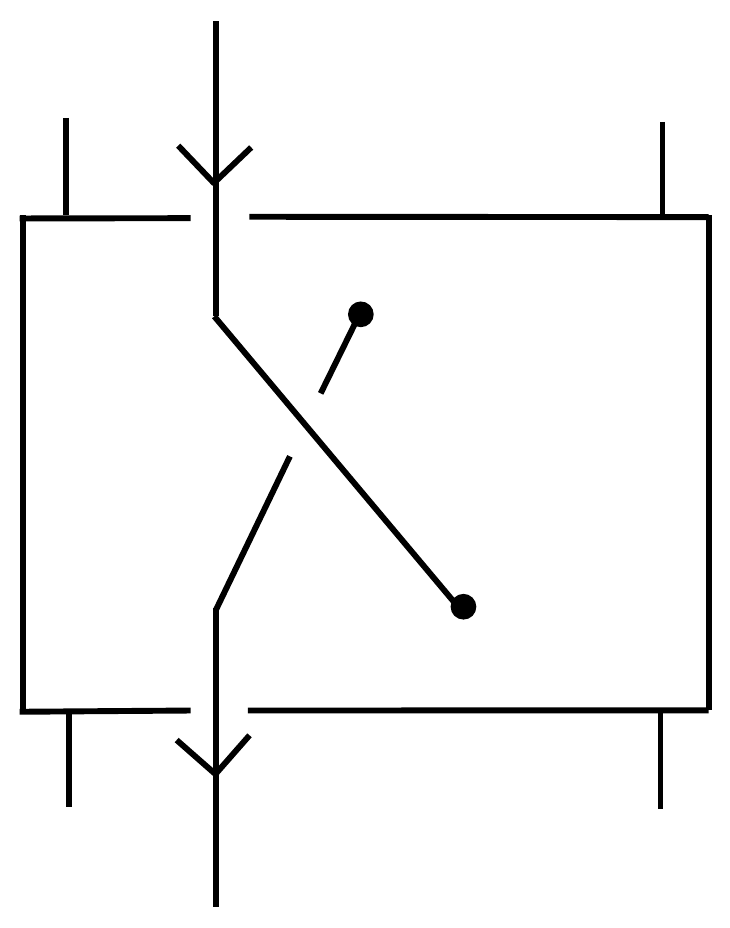}}
  \hspace{.4cm} \longleftrightarrow \hspace{.4cm}
  \raisebox{-35pt}{\includegraphics[height=1in]{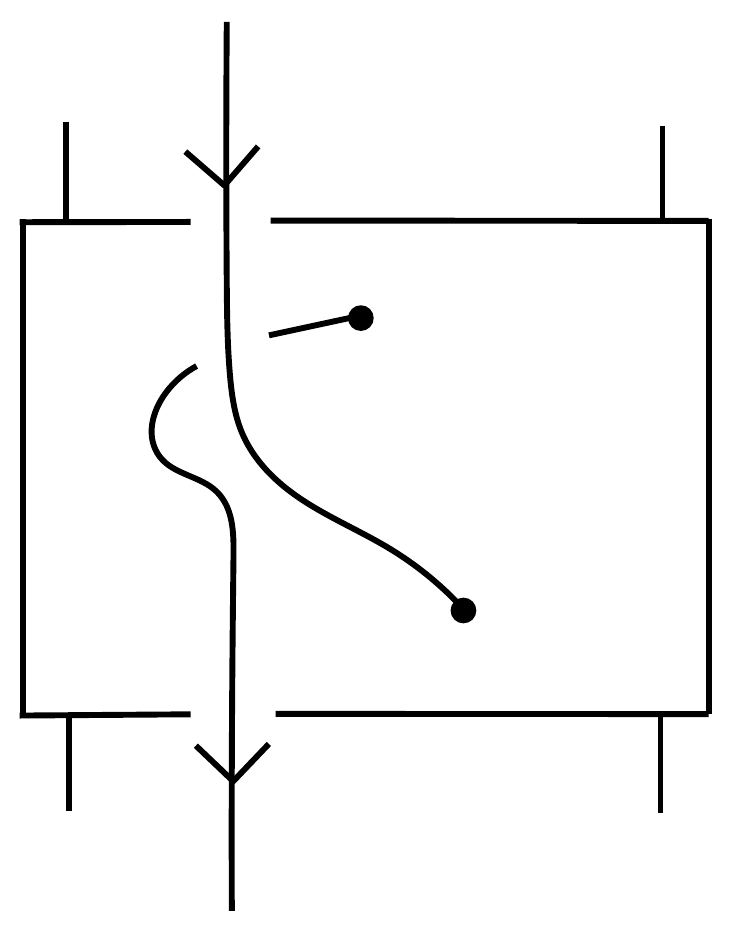}}
  \hspace{.4cm} \longleftrightarrow \hspace{.4cm}
  \raisebox{-27pt}{\includegraphics[height=.78in]{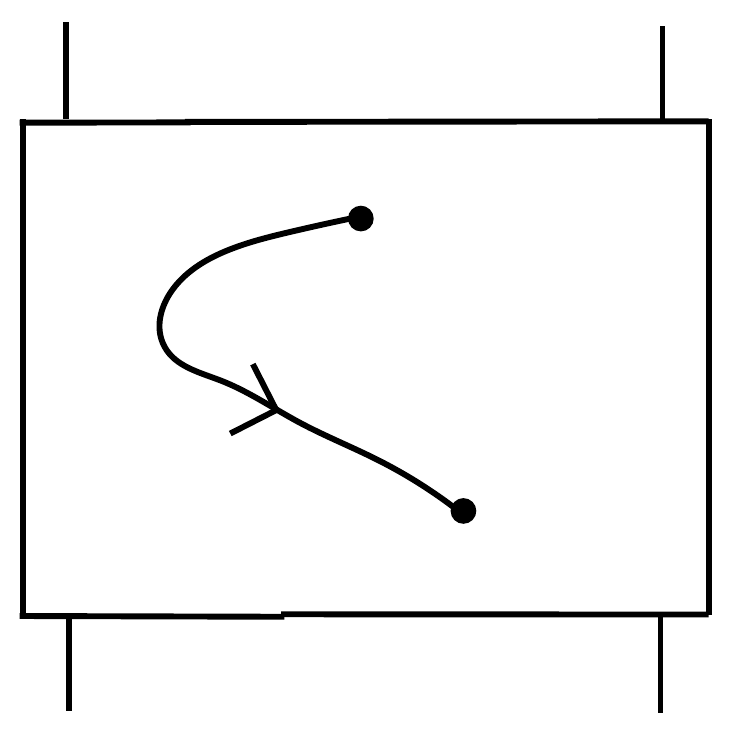}}
  \put(-243,30){\fontsize{9}{9}$o$}
  \put(-243,-30){\fontsize{9}{9}$o$}
  \put(-139,30){\fontsize{9}{9}$o$}
  \put(-139,-30){\fontsize{9}{9}$o$} \put(-190,20){\small{braid}}
  \put(-193,10){\small{isotopy}}
  \put(-95,10){\small{\fontsize{9}{9}$L_o$-move}}
  \put(-260,-45){\small{Left \fontsize{9}{9}$-L_o$-move}}
  \]
  \caption{Left $\pm L$-moves via basic
    $L$-moves} \label{prop:crosstype}
\end{figure}

We remark that the $L$-equivalence for classical braids introduced in~\cite{L, LR} comprises classical braid isotopy and the right $L$-move for classical braids. Thus, the $TL$-equivalence, when restricted to classical braids, is the same as the $L$-equivalence. In other words, the $L$-equivalence for classical braids extends to trivalent braids (as long as the $L$-moves are applied away from trivalent vertices, as we explained in the previous discussion).

Finally, we define conjugation by elementary trivalent braids $\sigma_i$ and $\sigma_i^{-1}$ in $TB_n^n$. Given a trivalent braid $b \in TB_n^n$, we say that the braids $b \sigma_i^{\pm 1} \sim \sigma_i^{\pm 1} b$, where $1 \leq i \leq n-1$, are related by \textbf{elementary conjugation} (see Figure~\ref{fig:conj2}). Since $\sigma_i$ is invertible in $TB_n^n$ with inverse $\sigma_i^{-1}$, the elementary conjugation in $TB_n^n$ has the following equivalent form: $b \sim \sigma_i b \sigma_i^{-1}$ or $b \sim \sigma_i^{-1} b \sigma_i$.

The statement of Markov's theorem~\cite{M} for classical braids makes use of conjugation by $\sigma_i^{\pm 1}$. But when employing the $L$-moves, the elementary conjugation can be dropped from the $L$-move Markov-type theorem, since elementary conjugation for classical braids follows from $L$-equivalence (see~\cite{L, LR}). The same holds for trivalent braids, as we now prove.

\begin{lemma} \label{conj} Elementary conjugation in a trivalent braid
  can be realized by a sequence of $L$-moves together with braid
  isotopy.
\end{lemma}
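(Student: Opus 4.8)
The plan is to follow Lambropoulou's classical argument (see~\cite{L}, \cite{LR}) essentially verbatim, the key observation being that elementary conjugation involves only the crossing $\sigma_i^{\pm 1}$ and no trivalent vertex. The scheme is: given $b\in TB_n^n$, realize $b\sim\sigma_i b\sigma_i^{-1}$ by (i) introducing an auxiliary pair of vertical strands with an $L$-move, (ii) using braid isotopy to transport the crossing $\sigma_i^{\pm 1}$ around the braid axis so that it cancels against its inverse, and (iii) removing the auxiliary pair of strands by an inverse $L$-move, leaving $b$.

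First I would record that it suffices to prove $b\sim\sigma_i b\sigma_i^{-1}$ and the analogous $b\sim\sigma_i^{-1}b\sigma_i$ for every admissible index $i$, since these are equivalent to $b\sigma_i^{\pm 1}\sim\sigma_i^{\pm 1}b$. By Lemma~\ref{L equiv} we may freely use basic, left, and right $L$-moves of both $L_o$- and $L_u$-type. Next I would observe that every $L$-move in the argument can be carried out legally: the conjugating element $\sigma_i^{\pm 1}$ is a crossing, not a vertex, and near the top and bottom of a trivalent braid in $TB_n^n$ there are $n$ parallel arcs with no incident vertices (any vertex close to the boundary can be pushed away first by braid isotopy), so there is always a vertex-free arc on which to cut. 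With these preliminaries, the sequence of pictures producing the conjugation is identical to the classical one: perform a right $L_o$-move on a suitable arc near the crossing, then slide the resulting over-strands around the braid using braid-form $R2$ moves together with the commuting relations $\sigma_j b_k=b_k\sigma_j$, transporting $\sigma_i^{\pm 1}$ around the braid axis so that, after cancellation, one is left with $b$ together with a pair of vertical strands of exactly the form created by a basic $L_o$-move; contracting the latter (allowed by Remark~(ii) on $L$-moves) yields $b$. The $L_u$-version and the opposite-sign version follow by taking mirror images.

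The only point requiring comment — and the closest thing to an obstacle — is checking that the presence of trivalent vertices inside $b$ does not interfere with this manipulation, and it does not: the braid isotopies used (planar isotopy, braid $R2$, and the commuting relations) act only on the transported crossing and the auxiliary strands, which can be kept in a region disjoint from the vertices of $b$, the latter simply riding along unchanged. Thus the lemma reduces to the classical statement, and the remaining ``work'' is essentially reproducing the corresponding figure in the trivalent setting.
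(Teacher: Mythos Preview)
Your proposal is correct and follows essentially the same approach as the paper: both arguments transport the conjugating crossing around the braid axis via an $L$-move, braid isotopy, and an inverse $L$-move, invoking the classical Lambropoulou argument and noting that the trivalent vertices of $b$ play no role. The only cosmetic difference is that the paper works with the formulation $\sigma_i^{-1}b \sim b\sigma_i^{-1}$ directly (moving the crossing from top to bottom, as in~\cite{HL}) rather than your equivalent $b \sim \sigma_i b\sigma_i^{-1}$, and its figure uses a basic $L_o$-move followed by a left $-L_o$-move rather than the right $L_o$-move you name; by Lemma~\ref{L equiv} these choices are interchangeable.
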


\begin{proof}
  The proof is illustrated in Figure~\ref{fig:conj} (compare with~\cite{HL}). We start with an $(n,n)$ trivalent braid of the form $\sigma_i^{-1}b$, where $b \in TB_n^n$ and $1 \leq i \leq n-1$, and obtain the trivalent braid $b\sigma_i^{-1}$ through a sequence of $L$-moves and trivalent braid isotopy.
\end{proof}

\begin{figure}[ht]
  \[
  \raisebox{-35pt}{\includegraphics[height=1in]{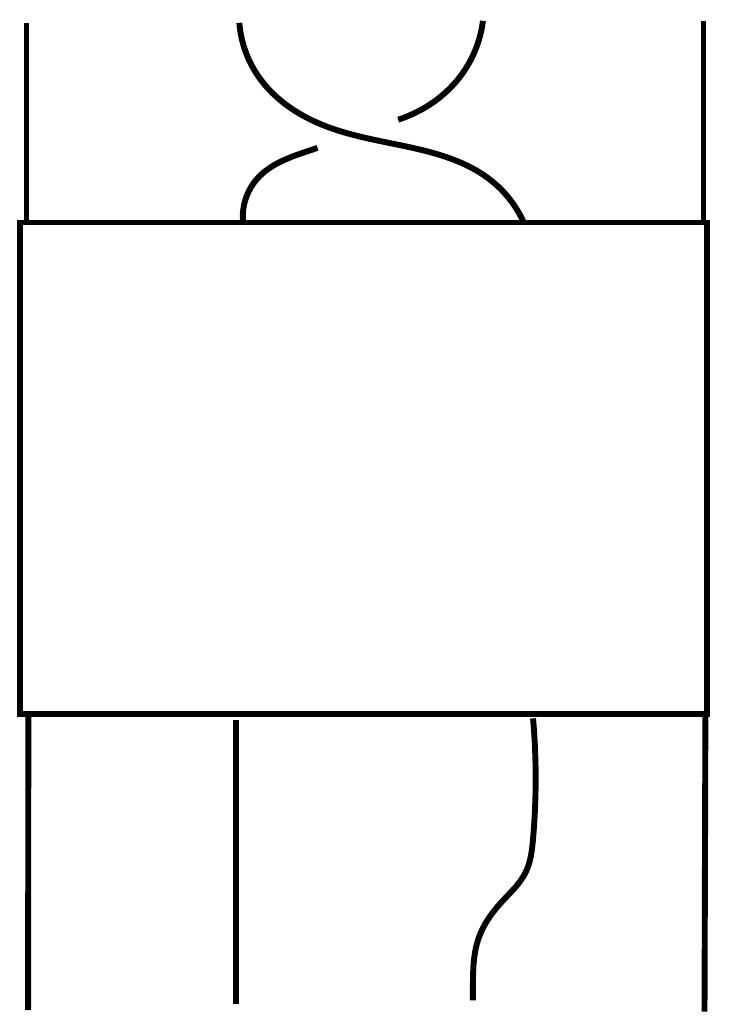}}
  \hspace{.4cm} \longleftrightarrow \hspace{.4cm}
  \raisebox{-35pt}{\includegraphics[height=1in]{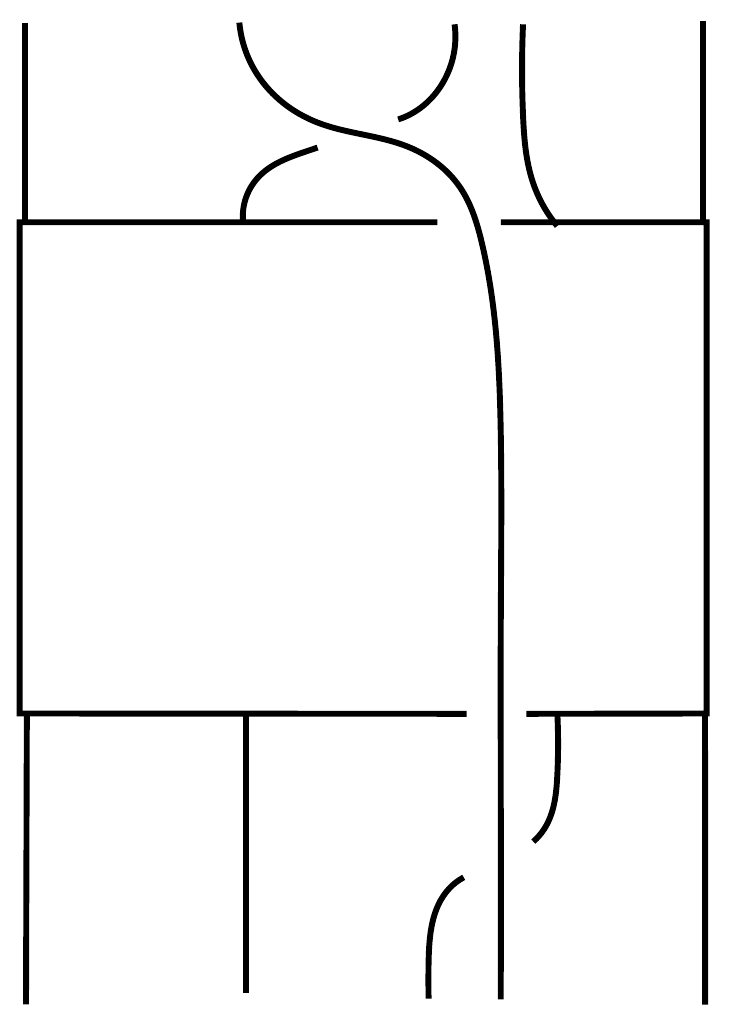}}
  \hspace{.4cm} \thicksim \hspace{.4cm}
  \raisebox{-35pt}{\includegraphics[height=1in]{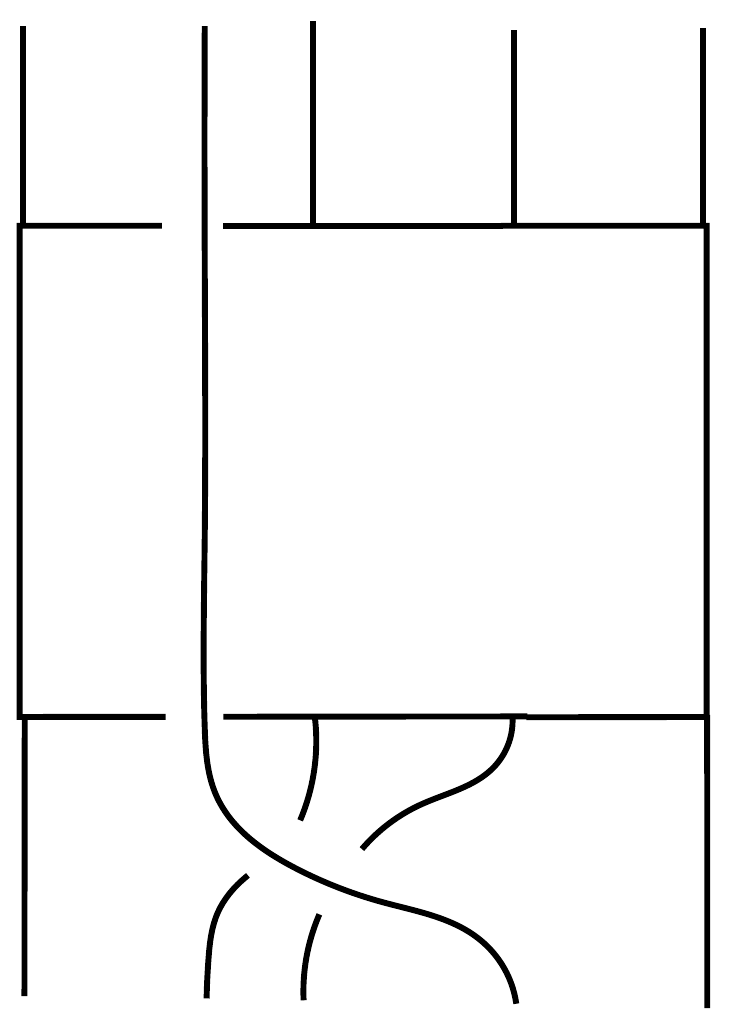}}
  \hspace{.4cm} \longleftrightarrow \hspace{.4cm}
  \raisebox{-35pt}{\includegraphics[height=1in]{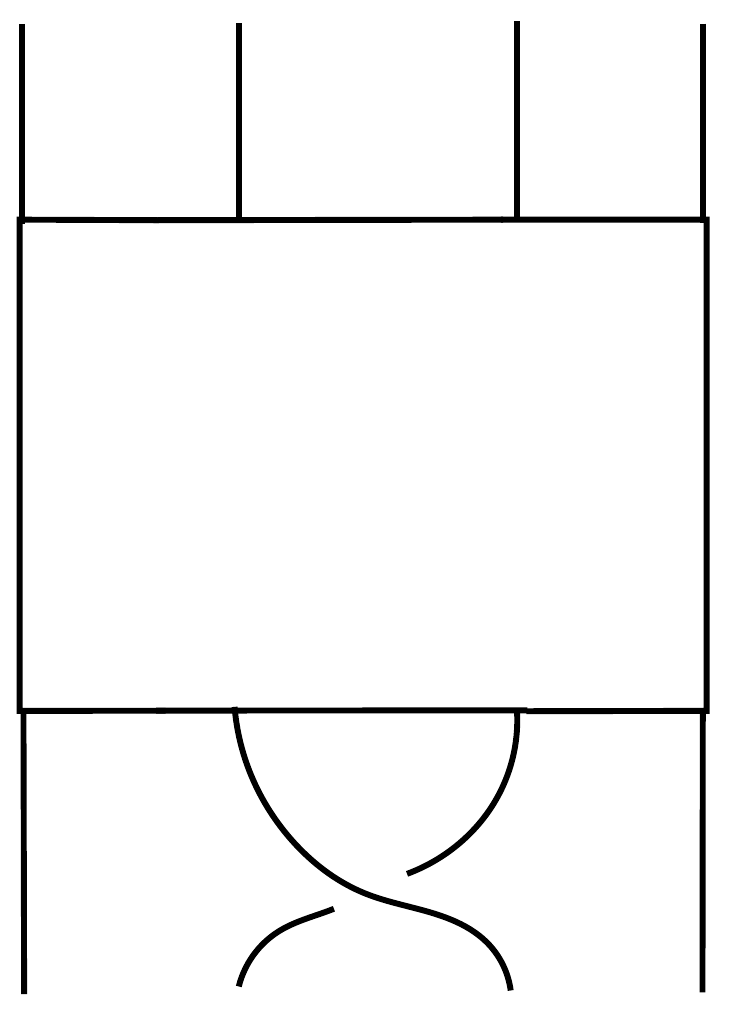}}
  \put(-210,7){\fontsize{9}{9}$o$}
  \put(-132,7){\fontsize{9}{9}$o$}
  \put(-278,10){\small{\fontsize{9}{9}$L_o$-move}}
  \put(-179,20){\small{braid}} \put(-182,10){\small{isotopy}}
  \put(-85,20){\small{Left}}
  \put(-97,10){\small{\fontsize{9}{9}$-L_o$-move}}
  \put(-331,26){\tiny{$\cdots$}}
  \put(-331,-30){\tiny{$\cdots$}}
  \put(-298,26){\tiny{$\cdots$}}
  \put(-298,-30){\tiny{$\cdots$}}
  \put(-233,26){\tiny{$\cdots$}}
  \put(-233,-30){\tiny{$\cdots$}}
  \put(-199,26){\tiny{$\cdots$}}
  \put(-199,-30){\tiny{$\cdots$}}
  \put(-147,26){\tiny{$\cdots$}}
  \put(-147,-30){\tiny{$\cdots$}}
  \put(-112,26){\tiny{$\cdots$}}
  \put(-112,-30){\tiny{$\cdots$}}
  \put(-47,26){\tiny{$\cdots$}}
  \put(-47,-30){\tiny{$\cdots$}}
  \put(-13,26){\tiny{$\cdots$}} \put(-13,-30){\tiny{$\cdots$}}
  \]
  \caption{Elementary conjugation in terms of
    $L$-moves} \label{fig:conj}
\end{figure}

We aim to show that there is a 1-1 correspondence between the isotopy types of STG diagrams and the $TL$-equivalence classes of trivalent braids. Now, it can be easily seen that different choices when applying the braiding algorithm affect the output of the final braid. In addition, local isotopy changes in an STG diagram may induce a different final braid, upon our braiding algorithm. However, the following theorem asserts that every instance of isotopy between STG diagrams
can be translated in terms of $TL$-equivalence of trivalent braids.

\begin{theorem}[\textbf{$L$-move Markov-type theorem for STGs}] \label{LMarkov} 
Two well-oriented spatial trivalent graphs are isotopic if and only if any two of their corresponding trivalent braids are $TL$-equivalent.
\end{theorem}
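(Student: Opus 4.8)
The plan is to prove the two implications separately, with the bulk of the work lying in the ``only if'' direction. The easy direction first: if two trivalent braids $b_1$ and $b_2$ are $TL$-equivalent, then $\overline{b_1}$ and $\overline{b_2}$ are isotopic STG diagrams. This follows immediately from the observations already recorded in the excerpt: braid isotopy of $(n,n)$ trivalent braids clearly induces isotopy of the closures, and Remark (i) after Figure~\ref{crossL} states that a right $L$-move stretches an arc over or under the braid axis, so it too preserves the isotopy type of the closure. Hence $\overline{\,\cdot\,}$ descends to a well-defined map on $TL$-equivalence classes.

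For the ``only if'' direction, suppose $G$ and $G'$ are isotopic well-oriented STG diagrams, and let $b$, $b'$ be trivalent braids obtained by applying the braiding algorithm of Section~\ref{sec:braiding} to $G$ and $G'$ respectively (after putting them in general position). I want to show $b$ and $b'$ are $TL$-equivalent. The strategy is the standard Lambropoulou-style argument: first reduce to the case where $G$ and $G'$ differ by a \emph{single} elementary move, then check each move. By Lemma~\ref{lemma:isotopyRegPosition} and the Remark following it, I may assume $G$ and $G'$ are connected by a finite sequence of diagrams all in general position, consecutive ones differing by a single planar isotopy, swing move, switch move, or one of $R1$--$R5$. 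By induction on the length of this sequence it suffices to treat one such step. Additionally I must address the ambiguity of the braiding algorithm itself: different legitimate choices (the label of a free up-arc, which arc to swing when regularizing a vertex, the sign of the $R5$ crossing, the order of braiding up-arcs) must be shown to produce $TL$-equivalent braids; the triangle condition already guarantees order-independence, and the remaining choices are handled move-by-move alongside the isotopy moves. Finally, since Lemma~\ref{conj} shows elementary conjugation is subsumed by $L$-moves and braid isotopy, and Lemma~\ref{L equiv} lets me freely use basic and left $L$-moves, I have ample flexibility when verifying each case.

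So the heart of the proof is a case analysis: for each local move relating $G$ to $G'$, and for each way that move can sit relative to the up-arcs, subdivision points, and regularized vertices near it, I exhibit an explicit sequence of braid isotopies and $L$-moves carrying $b$ to $b'$. The moves in vertex-free regions ($R1$, $R2$, $R3$, planar isotopy, swing moves away from vertices) are exactly as in the classical case of~\cite{L,LR}, so I can cite that work; the genuinely new content is the moves $R4$, $R5$, the switch move, and any planar isotopy or swing move interacting with a regularized vertex. For these I track how the up-arcs incident to the vertex get braided before and after the move, using the fact that vertices are in regular position so the braiding of up-arcs is isolated from the vertex, and reconcile the two outputs via $L$-moves (which introduce/cancel pairs of vertical strands) and the trivalent braid relations of Section~\ref{sec:braidsintro}.

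The main obstacle I anticipate is the bookkeeping in the vertex cases, particularly $R5$ and the switch move. When an $R5$ move slides a strand past a vertex, the vertex may momentarily leave regular position and a crossing must be introduced (with a sign-of-crossing choice) to restore it; showing that the two braids—one where the strand was braided on one side of the vertex, one on the other—are related by an $L$-move requires carefully choosing where to cut and which crossing type to create, and then unwinding the result with braid relations such as $\sigma_i\sigma_{i+1}y_i = y_{i+1}\sigma_i$ and $\lambda_i\sigma_{i+1}\sigma_i = \sigma_i\lambda_{i+1}$. Making sure every sub-case (all orientations of the vertex, all over/under labelings of the involved arcs, crossing vs.\ free up-arc, strand passing over vs.\ under) is covered, and that the choices made in the braiding algorithm do not secretly matter, is where the real effort lies; the classical portion and the easy direction are essentially immediate by comparison.
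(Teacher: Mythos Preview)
Your plan is correct and matches the paper's approach almost exactly: the easy direction is immediate, and the hard direction proceeds by (a) showing the braiding algorithm's internal choices (subdivision, labels) don't matter up to $TL$-equivalence, then (b) checking the direction-sensitive moves, then (c) checking $R1$--$R5$, all between diagrams in general position via Lemma~\ref{lemma:isotopyRegPosition}.

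One calibration point: you misjudge where the difficulty sits. Because every vertex is already in regular position before any isotopy move is applied, an $R5$ move on the diagram is literally one of the braid relations $y_i=\sigma_i y_i$ or $\lambda_i=\lambda_i\sigma_i$---it is trivalent braid isotopy and requires no $L$-moves at all. Likewise $R4$ reduces to its braid form after a single $R2$ on the sliding strand. The genuine work in the vertex cases is not $R5$ but the \emph{switch move} (your description ``the vertex may momentarily leave regular position and a crossing must be introduced'' is really about the switch move, not $R5$): the paper front-loads all the subtlety of vertex orientations into the regularization step, so comparing the two sign choices there is where the relations $\sigma_i\sigma_{i+1}y_i=y_{i+1}\sigma_i$, $\lambda_i\sigma_{i+1}\sigma_i=\sigma_i\lambda_{i+1}$, and several $L$-moves actually get used. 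Once you reallocate your effort accordingly, your outline is the paper's proof.
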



\subsection{Proof of Theorem~\ref{LMarkov}} \label{sec:ProofLMarkov}
It is clear from the definition of $L$-moves and braid isotopy that $TL$-equivalent braids have isotopic closures. Therefore, we only need  to show that isotopic STG diagrams yield $TL$-equivalent braids upon our braiding algorithm. Throughout this proof, diagrams shall be assumed to be in general position. Also, isotopy moves will be considered strictly between diagrams in general position. In order to extend the proof for arbitrary STG diagrams (not necessarily in general position) it is essential to show that different choices when bringing a diagram into general position do not affect the final braid. Therefore, using Lemma~\ref{lemma:isotopyRegPosition} will complete the proof that any two isotopic STG diagrams yield $TL$-equivalent braids, upon our braiding algorithm.

The proof will be divided into two parts: The first part consists of analyzing the different choices made during the braiding process, and showing that each of these yield the same final braid, up to $TL$-equivalence; these choices amount to how the subdivision points are assigned, and the labels for free up-arcs. The second part will address isotopy between STG diagrams, and thus show it does not affect the final braid;  for this, we analyze the direction sensitive moves and the extended Reidemeister moves for STG diagram.

Our main approach for the proof is the following. For a given STG diagram we shall consider only the local region in which an isotopy move takes place. We assume that all other up-arcs outside such local region have been braided already. By the triangle condition, this choice does not affect the final braid, and, thus, we have the liberty to compare the braided portions corresponding to such local regions and conclude that the final braids are $TL$-equivalent.

For the first part of the proof, we shall assume that the diagram under consideration is equipped with a choice of subdivision points. To this end, in order to compare the effect of different choices of subdivision points for a given diagram, we need the following lemmas (see Lemmas 4.1 and 4.2 in~\cite{LR} for detailed
proofs).

\begin{lemma}\label{lemM1}
  If we add an extra subdivision point to an up-arc of an STG diagram, the corresponding braids differ by basic $L$-moves.
\end{lemma}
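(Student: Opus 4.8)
The plan is to analyze what happens locally when an extra subdivision point is inserted into a single up-arc, and to show that the two braiding outputs differ by a (possibly iterated) basic $L$-move. By the triangle condition and the remark following Lemma~\ref{lemA1}, I may assume all up-arcs outside a small neighborhood of the arc in question have already been braided, so it suffices to compare the two braided local pictures. There are two cases according to whether the up-arc being subdivided is free or contains a crossing, and one should also note that after refining the subdivision the triangle condition is still met (by the observation recorded after the proof of Lemma~\ref{lemA1}), so the braiding of the new configuration is well-defined.

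First I would treat the free up-arc case. Let $\alpha$ be a free up-arc; adding a subdivision point splits it into two shorter free up-arcs $\alpha_1$ (the lower one, sharing the old bottom endpoint) and $\alpha_2$ (the upper one, sharing the old top endpoint), with a new common subdivision point $p$ in the middle. Carrying out the basic braiding move of Figure~\ref{free} on $\alpha$ produces a single pair of parallel downward strands vertically aligned with the top endpoint of $\alpha$, crossing over (resp. under) everything according to the label of $\alpha$. Braiding $\alpha_1$ and $\alpha_2$ instead produces two pairs of parallel downward strands: one pair aligned with $p$ and one pair aligned with the top endpoint of $\alpha$, with matching over/under labels (we are free to give $\alpha_1$ the same label as $\alpha_2 = \alpha$, by the label freedom for free up-arcs). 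The claim is that the first picture is obtained from the second by contracting the $\alpha_1$-pair, i.e.\ by undoing one basic $L$-move (recall that a basic $L$-move applied to a plain down-arc produces exactly such a vertically aligned pair of strands). Conversely, inserting the subdivision point corresponds to performing one basic $L$-move. The orientations match because all newly created strands are downward, and the over/under consistency is exactly what guarantees the contracted strands form a genuine $L$-move pair rather than an arbitrary clasp.

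Next I would treat the case where the up-arc $\alpha$ contains the single crossing it is allowed to carry, say as the over-strand (the under-strand case is identical with ``$o$'' replaced by ``$u$''). Adding a subdivision point to $\alpha$ can be done either on the segment of $\alpha$ below the crossing or above it; in either case the new sub-arc not containing the crossing is free and can be labeled ``$o$'' to agree with $\alpha$, and by Lemma~\ref{lemA1} we may refine further so the triangle condition holds. Braiding $\alpha$ itself gives one pair of downward strands (labeled ``$o$'') aligned with the top endpoint of $\alpha$; braiding the subdivided version gives the same crossing-induced pair together with one extra pair aligned with the new subdivision point, again with consistent ``$o$'' labels, and the extra pair is precisely an $L_o$-move pair on the down-arc emanating from that subdivision point. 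So once more the two braids differ by a single basic $L$-move (or finitely many, if several subdivision points are added at once). Assembling the cases, adding an extra subdivision point anywhere changes the braid by basic $L$-moves together with braid isotopy, as claimed; invoking Lemma~\ref{L equiv} lets us phrase this in terms of right $L$-moves if desired.

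The main obstacle I anticipate is bookkeeping the over/under labels and the precise vertical/horizontal positions so that the extra pair of strands created by subdivision is genuinely a basic $L$-move pair — in particular, making sure the new pair is vertically aligned with its subdivision point and crosses uniformly over (or uniformly under) every other strand, which is what distinguishes an $L$-move from a more complicated local tangle. This is exactly the content that the general-position conventions and the triangle condition were set up to control, so the argument should reduce to carefully reading off Figures~\ref{crossing} and~\ref{free}; the reference to Lemmas 4.1 and 4.2 of~\cite{LR} indicates the analogous classical statement, and the trivalent case adds nothing new here since the subdivision point cannot coincide with a vertex.
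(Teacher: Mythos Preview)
Your proposal is correct and follows the intended argument: the paper does not actually prove this lemma but refers the reader to Lemmas~4.1 and~4.2 of~\cite{LR}, and your case analysis (free up-arc versus up-arc carrying a crossing, showing that the extra pair of strands produced by the subdivided braiding is precisely a basic $L$-move pair on the down-strand created by braiding the adjacent piece) is exactly the argument given there. The only mild imprecision is in the crossing case when the new subdivision point lies \emph{above} the crossing: then it is the crossing sub-arc whose top endpoint shifts to $p$, so the ``same crossing-induced pair'' is now aligned with $p$ rather than with the old top endpoint, and the comparison with the undivided braiding requires one $L$-move together with a small braid isotopy rather than literally ``the same pair plus one extra pair''; this is harmless and is absorbed in the phrase ``basic $L$-moves'' in the lemma's statement.
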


\begin{lemma}\label{lemM2}
  When we braid a free up-arc, which we have the choice of labeling ``u'' or ``o'', the resulting braid is independent of this choice, up to $TL$-equivalence.
\end{lemma}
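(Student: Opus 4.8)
\textbf{Proof proposal for Lemma~\ref{lemM2}.}

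The plan is to reduce the claim to a purely local comparison between the two braided outputs that arise from labeling a free up-arc ``u'' versus ``o'', exactly as in the general strategy announced at the start of Section~\ref{sec:ProofLMarkov}. First I would fix the STG diagram in general position and single out the small neighborhood containing the chosen free up-arc together with its sliding triangle; by the triangle condition (and the remark following Lemma~\ref{lemA1}) I may further subdivide so that this sliding triangle is disjoint from the rest of the diagram, and hence the order in which up-arcs are braided is irrelevant. So I assume every other up-arc has already been braided, and I only need to compare the two trivalent braids that differ in the local region where the basic braiding move is applied to this single free up-arc.

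Next I would carry out the basic braiding move on the free up-arc in both cases. Braiding with label ``o'' replaces the up-arc by a vertically aligned pair of downward strands that cross \emph{over} every strand they meet; braiding with label ``u'' gives a vertically aligned pair crossing \emph{under} those strands. In both cases the pair of strands runs from the endpoint of the up-arc out to the braid axis and back, i.e.\ it is precisely a basic $L$-move applied at that point of the arc: an $L_o$-move in the first case and an $L_u$-move in the second. Thus the original (pre-braiding) arc is related to each of the two outputs by a single basic $L$-move of the appropriate type, and what remains is to show that an $L_o$-move and an $L_u$-move performed at the same point of the same arc yield $TL$-equivalent braids.

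For that last step I would argue as follows. Start from the braid obtained by the $L_u$-move: a pair of vertical strands passing under all the strands between the cut-point and the axis. Using braid isotopy, slide the ``turn-around'' arc of this pair; then apply a right $L$-move (of the other crossing sign) to introduce a small loop, and use Reidemeister~$R2$ moves in braid form to cancel the old undercrossings against new overcrossings one at a time, converting the pair of strands from crossing under to crossing over while only modifying the diagram by braid isotopy and right $L$-moves. This is the same bookkeeping used in Lemma~\ref{L equiv} to pass between basic and left $L$-moves, now carried out crossing-by-crossing across the whole block of strands. Since by Lemma~\ref{L equiv} the basic and left $L$-moves are already subsumed in $TL$-equivalence, every intermediate step is a legitimate $TL$-equivalence, and we conclude that the ``u''-output and the ``o''-output are $TL$-equivalent.

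The main obstacle I anticipate is the third step: making precise, crossing by crossing, that one can trade a strand running \emph{under} a block of arcs for one running \emph{over} it using only braid isotopy and right $L$-moves. One must be careful that the intermediate diagrams are genuine trivalent braids (no triple points, no coincident horizontal levels) and that no trivalent vertex is ever disturbed — which is guaranteed here because the whole manipulation happens in a neighborhood of a \emph{free} up-arc, away from all vertices, and because the free up-arc was chosen to contain no crossings. Everything else (the reduction to a local region, the identification of the basic braiding move with a basic $L$-move) is routine given Lemmas~\ref{lemA1}, \ref{L equiv}, and~\ref{lemM1}.
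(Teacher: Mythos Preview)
Your overall strategy is correct and matches what the paper has in mind (the paper itself gives no proof here, deferring to Lemmas~4.1 and~4.2 of~\cite{LR}): isolate the free up-arc, recognize each labeled braiding as a basic $L$-move of the corresponding type, and conclude $TL$-equivalence. But you have made step~3 much harder than necessary, and in doing so you have obscured the one point that actually needs care.

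Once step~2 is done properly, step~3 is immediate. If the ``o''-braiding and the ``u''-braiding are, respectively, a basic $L_o$-move and a basic $L_u$-move applied at the same point of the \emph{same} intermediate braid $B$, then each output is $TL$-equivalent to $B$ by the very definition of $TL$-equivalence (together with Lemma~\ref{L equiv}, and recalling that $L$-moves may be undone), and hence the two outputs are $TL$-equivalent to each other by transitivity. No crossing-by-crossing conversion is needed, and the ``obstacle'' you anticipate simply does not arise.

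The genuine subtlety is hidden in your step~2. Your ``original (pre-braiding) arc'' is still an \emph{up}-arc, so the diagram containing it is not yet a braid and one cannot literally speak of an $L$-move applied to it. The correct intermediate $B$ is obtained by first sliding the free up-arc across its sliding triangle to a down-arc; this is planar isotopy and creates no new crossings precisely because you arranged in step~1 for that triangle to be disjoint from the rest of the diagram. Only then is the cut-and-pull a genuine basic $L$-move on a braid. This is the real reason the disjointness from step~1 (via subdivision and Lemma~\ref{lemM1}) matters --- not merely so that the order of braiding is irrelevant, but so that the common intermediate $B$ exists as a braid independent of the label. With $B$ in hand, the lemma follows in one line.
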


\begin{remark}
  The following are consequences of the previous two lemmas.
  \begin{enumerate}
  \item If we have a chain of overlapping sliding triangles of free up-arcs so that we have a free choice of labeling for the whole chain then, up to $TL$-equivalence, this choice does not affect the final braid.
  \item If by adding a subdivision point on an up-arc we have a choice for relabeling the resulting new up-arcs so that the triangle condition is still satisfied, then the resulting braids are $TL$-equivalent.
  \end{enumerate}
\end{remark}

\begin{corollary}
  Given any two subdivisions, $S_1$ and $S_2$, of an STG diagram which will satisfy the triangle condition with appropriate labelings, the resulting braids are $TL$-equivalent.
\end{corollary}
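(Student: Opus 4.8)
The plan is to show that any two subdivisions $S_1$ and $S_2$ of a fixed STG diagram $G$ (each satisfying the triangle condition under some choice of labels) produce $TL$-equivalent braids by passing through a common refinement. First I would take $S$ to be the coarsest common refinement of $S_1$ and $S_2$, that is, the subdivision obtained by marking on each arc all subdivision points of $S_1$ together with all subdivision points of $S_2$ (perturbing slightly, via direction sensitive moves, to avoid any accidental vertical or horizontal alignments and any forbidden sliding-triangle intersections, so that $S$ itself is a legitimate general-position subdivision). By the remark following the proof of Lemma~\ref{lemA1}, once $S_1$ satisfies the triangle condition, so does any refinement of it with appropriately chosen labels; hence $S$ carries a valid labeling refining the labels of $S_1$, and likewise a valid labeling refining those of $S_2$.

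Next I would reduce to the one-point case: since $S$ is obtained from $S_1$ by adding finitely many subdivision points one at a time, it suffices to handle the addition of a single subdivision point on a single up-arc, and then iterate. This is exactly the content of Lemma~\ref{lemM1}: adding one subdivision point changes the resulting braid only by basic $L$-moves, which are $TL$-equivalences by Lemma~\ref{L equiv}. Thus the braid from $S_1$ and the braid from $S$ are $TL$-equivalent, and symmetrically the braid from $S_2$ and the braid from $S$ are $TL$-equivalent. Composing, the braids from $S_1$ and $S_2$ are $TL$-equivalent, which is the claim.

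There is one subtlety to address carefully, and I expect it to be the main obstacle: the labels. When we refine $S_1$ to $S$, Lemma~\ref{lemM1} as stated compares braids for subdivisions that differ only by an added point; but the natural refined labeling on $S$ coming from $S_1$ need not be the one coming from $S_2$, so after reaching $S$ from both sides we may be comparing two braids associated to the \emph{same} subdivision $S$ but \emph{different} admissible labelings. This is resolved by Lemma~\ref{lemM2} together with item~(1) of the remark after Lemma~\ref{lemM2}: any two admissible labelings of a fixed subdivision differ only on free up-arcs (an up-arc containing a crossing has its label forced by whether it is the over- or under-strand), and on each maximal chain of overlapping sliding triangles of free up-arcs the label choice is free and, up to $TL$-equivalence, irrelevant. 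Hence the two braids attached to $S$ with its two labelings are $TL$-equivalent, and the chain of equivalences closes up.

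In summary: $S_1$ to $S$ (with $S_1$-induced labels) by iterated Lemma~\ref{lemM1}; relabel $S$ from $S_1$-induced to $S_2$-induced labels by Lemma~\ref{lemM2} and the subsequent remark; $S$ to $S_2$ by iterated Lemma~\ref{lemM1} run in reverse. Each step is a $TL$-equivalence, so the composite is, and the corollary follows.
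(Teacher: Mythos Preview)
Your proposal is correct and follows essentially the same strategy as the paper: pass to the common refinement $S_1\cup S_2$ and invoke Lemmas~\ref{lemM1} and~\ref{lemM2} (together with the remark following them) to handle the added subdivision points and the labeling discrepancy. The paper's proof is simply a terse version of what you have written, without spelling out the relabeling subtlety you flagged.
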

\begin{proof}
  Recall that whenever a subdivision satisfies the triangle condition, any refinement satisfies it as well. Using Lemmas~\ref{lemM1} and~\ref{lemM2}, the statement follows by considering the subdivision $S_1 \cup S_2$.
\end{proof}

For the second part of the proof, we shall first examine the choices made when bringing an STG diagram into general position. These amount to different applications of the direction sensitive moves.

\begin{lemma}   \label{dir sen}
  Spatial trivalent graph diagrams in general position that differ by direction sensitive moves correspond to trivalent braids that differ by $L$-moves.
\end{lemma}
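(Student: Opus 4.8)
The plan is to prove Lemma~\ref{dir sen} by a case-by-case analysis of the direction sensitive moves, using the same local-region philosophy already set up for the proof of Theorem~\ref{LMarkov}: since the triangle condition guarantees that the order of braiding is irrelevant, I may assume every up-arc outside the local region where the direction sensitive move occurs has already been braided, and I only need to compare the braided outputs of the small region affected by the move. Recall that the direction sensitive moves are: (1) small planar-isotopy shifts correcting horizontal or vertical alignments of crossings, subdivision points, and vertices (together with the relabeling adjustments forced when two non-adjacent sliding triangles of the same label overlap); (2) swing moves; and (3) switch moves for $Y$- and $\lambda$-type vertices. I will treat these in that order.

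First I would dispatch the generic alignment-correcting shifts. A small horizontal or vertical shift of a subdivision point changes where the top of an up-arc sits, hence where the associated pair of vertical braid strands is created; but the two braids so produced differ only by sliding a pair of vertical strands past a crossing or another pair of strands, which is braid isotopy — in the worst case, together with an $R2$ move in braid form, exactly as in the classical argument (cf.\ the proof of Lemma~3.5 in~\cite{LR}). When the shift is accompanied by a relabeling of newly subdivided up-arcs, I invoke Lemma~\ref{lemM2} and its corollary to conclude the resulting braids are $TL$-equivalent. For the swing move, I would braid the diagram before and after the arc swings over (or under) the extremum point: because local extrema are treated as subdivision points, the swing move simply relocates the top-of-up-arc data across the extremum, and the difference in the two braided pictures is again absorbed by braid isotopy, with the maximum-version of the swing move reducing to the minimum-version plus $R2$ (as already noted in the text preceding Figure~\ref{swing}).

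The main obstacle — and the case deserving the most care — is the switch move for $Y$- and $\lambda$-type vertices, because it genuinely changes the sign of the crossing introduced by the $R5$ move when a vertex with two incident up-arcs is put into regular position, and a sign change of a crossing cannot be undone by braid isotopy alone. Here I would argue that in the braided picture the switch move manifests as a pair of vertical strands (the braided image of one of the two up-arcs) being pushed from one side of the vertex to the other; following these strands around the braid axis, this push is precisely the stretching of an arc over or under the braid diagram, i.e.\ an $L$-move of the appropriate ($o$ or $u$) flavor, possibly combined with its inverse contraction and braid isotopy. The key point making this work is that the vertex itself is in regular position in both diagrams — incident only with down-arcs — so the $L$-move is applied away from the vertex, as required by our definition of the $L$-move, and the trivalent structure near the vertex is untouched by the move. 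Assembling these three cases, every direction sensitive move between general-position diagrams is seen to produce braids differing by $L$-moves (and braid isotopy), which is the assertion of the lemma.
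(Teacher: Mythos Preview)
Your overall strategy matches the paper's: localize to the region of the move, braid both sides, and compare. But several of your case-by-case claims are too optimistic and constitute genuine gaps.

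\textbf{Swing moves are not absorbed by braid isotopy.} You assert that the swing move ``simply relocates the top-of-up-arc data across the extremum, and the difference in the two braided pictures is again absorbed by braid isotopy.'' This is false in the case where the arc sliding across the minimum is itself an up-arc crossing the strand forming the extremum: after braiding, one side has a pair of vertical strands on one side of a second pair, while the other side has them interleaved with a different crossing pattern; no amount of braid $R2/R3$ will fix this because the number of crossings of a given sign between the new strands and the ambient braid differs. The paper's verification of this case (its second swing-move case) uses a genuine basic $L_o$-move together with a right $-L_u$-move, not just braid isotopy. Your proposal gives no mechanism for producing these $L$-moves.

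\textbf{The three-up-arc vertex case is missing.} When a $Y$- or $\lambda$-type vertex is incident with three up-arcs, the regularization involves two independent choices: the sign of the $R5$ crossing \emph{and} which of the two outward arcs receives the planar isotopy. Your switch-move discussion covers only the first choice. Comparing regularizations that differ in the second choice is not a switch move at all; the paper handles it by relating the two local pictures through an $R4$ move, an $R1$ move, and swing moves --- i.e., by reducing to Reidemeister moves that are verified separately. You need either this reduction or an independent argument.

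\textbf{Planar isotopy of down-arcs (elimination of horizontal arcs) is not just braid isotopy.} Your first paragraph treats all alignment-correcting planar shifts as braid isotopy plus possibly an $R2$. But when a down-arc is bent so that part of it becomes an up-arc (the basic mechanism for eliminating a near-horizontal down-arc), the braiding of the new up-arc introduces an extra pair of strands that were not present before; relating the two braids requires an honest $L$-move, not braid isotopy. The paper checks exactly this.

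Finally, your switch-move paragraph has the right intuition --- the change is absorbed by pushing strands around the braid axis --- but is too coarse to be a proof. The paper's verification for the $\lambda$-type switch, for instance, uses a sequence involving braid $R5$, a left $L_u$-move, braid $R4$, conjugation (itself derived from $L$-moves via Lemma~\ref{conj}), and $R2$; your one-sentence description does not make clear why a single $L$-move ``of the appropriate flavor'' suffices, and in fact it does not.
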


\begin{proof}
  When putting an STG diagram in general position, the first thing we do is shift its vertices into regular position using the conventions introduced in Figures~\ref{vgp} and \ref{vgp2}. In the next few paragraphs we check that different choices when applying these conventions do not affect, up to $TL$-equivalence, the final braid. The only cases where we have different choices are for vertices incident with at least two up-arcs.

  Consider either a $Y$- or $\lambda$-type vertex incident with only one up-arc (as shown in the first row of Figure~\ref{vgp} and Figure~\ref{vgp2}). In any case, there is only one option for shifting such vertex into regular position.

  For the case of a $Y$-type vertex incident with exactly two up-arcs, we apply an $R5$ move between the up-arcs (this is shown in the second row of Figure~\ref{vgp}).  We have a choice for the type of crossing (positive or negative) to add when the move is applied.  Note that the diagrams obtained from either choice differ by a switch move (recall Figure~\ref{twist}). Therefore, we need to verify that the switch move of this type does not affect the final braid, up to $TL$-equivalence. In Figure \ref{twist move} we consider the braided portions obtained from a switch move performed on the right hand side of a vertex. It is clear that the braids (1) and (2) in Figure~\ref{twist move} differ by planar isotopy.  The case where the two up-arcs are on the left hand side of a $Y$-type vertex is treated similarly (this can be seen by reflecting the diagrams in Figure~\ref{twist move} across a vertical axis).

  \begin{figure}[ht]
    \[
    \raisebox{-25pt}{\includegraphics[height =
      .6in]{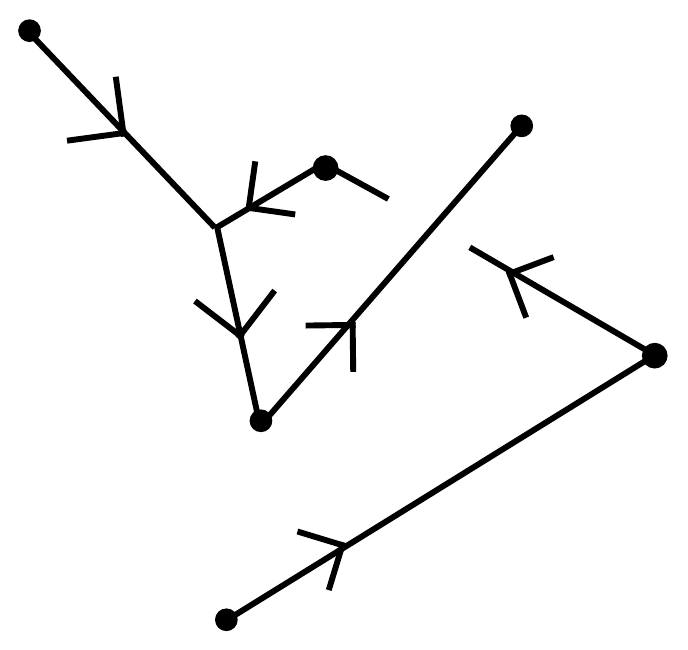}} \hspace{.2cm} \longrightarrow
    \hspace{.2cm} \raisebox{-40pt}{\includegraphics[height =
      1.2in]{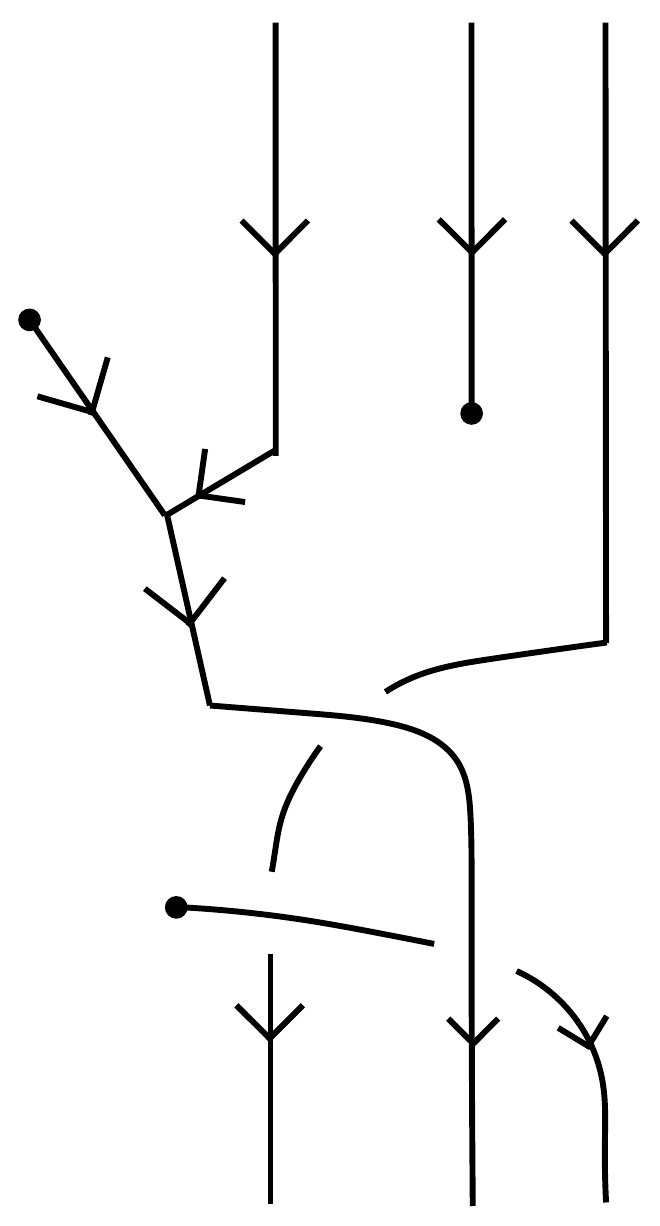}} \hspace{.2cm} \thicksim
    \hspace{.2cm} \raisebox{-40pt}{\includegraphics[height =
      1.2in]{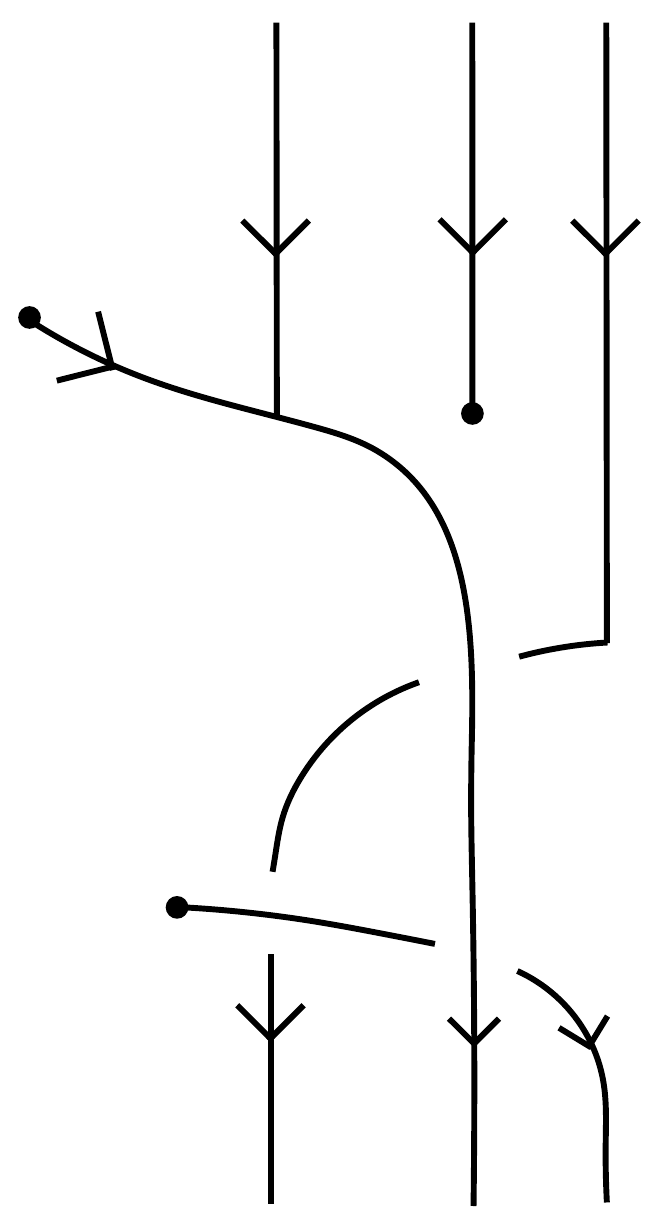}} \hspace{.2cm} \longleftrightarrow
    \hspace{.2cm} \raisebox{-40pt}{\includegraphics[height =
      1.2in]{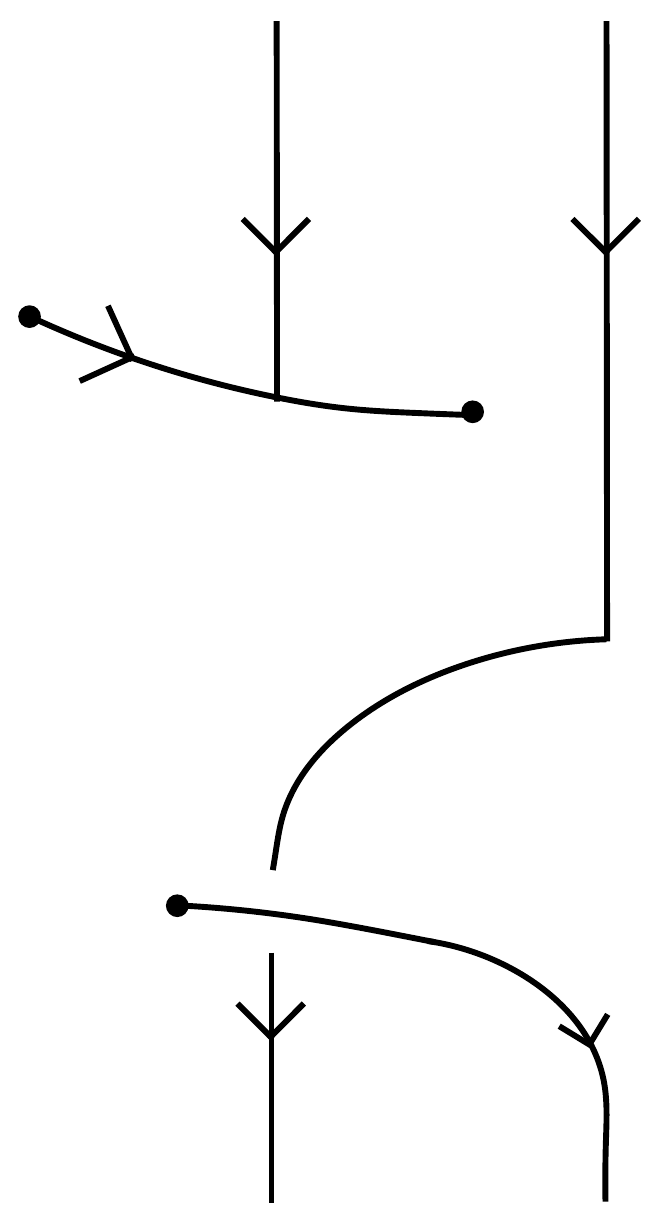}} \hspace{.2cm} \longleftrightarrow
    \hspace{.2cm} \raisebox{-40pt}{\includegraphics[height =
      1.2in]{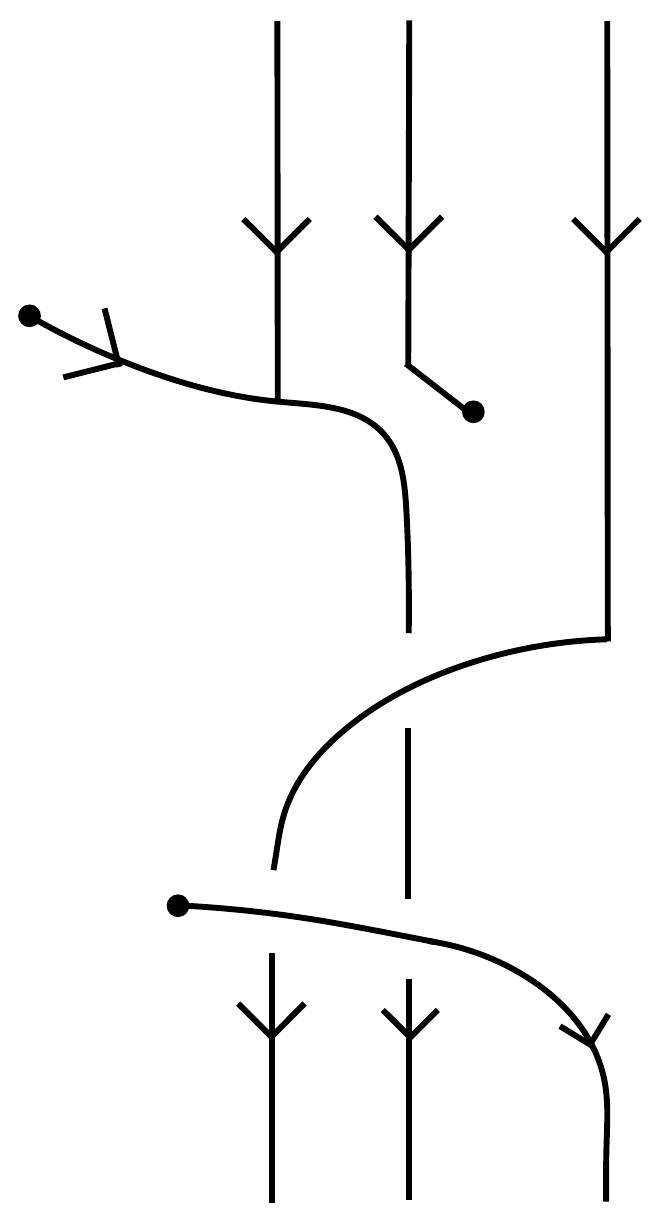}} \put(-330
    ,-25){\fontsize{9}{9}$o$} \put(-270
    ,38){\fontsize{9}{9}$u$} \put(-270
    ,-38){\fontsize{9}{9}$u$} \put(-255
    ,38){\fontsize{9}{9}$o$} \put(-255
    ,-38){\fontsize{9}{9}$o$} \put(-245
    ,38){\fontsize{9}{9}$o$} \put(-245
    ,-38){\fontsize{9}{9}$o$} \put(-199
    ,38){\fontsize{9}{9}$u$} \put(-199
    ,-38){\fontsize{9}{9}$u$} \put(-184
    ,38){\fontsize{9}{9}$o$} \put(-184
    ,-38){\fontsize{9}{9}$o$} \put(-174
    ,38){\fontsize{9}{9}$o$} \put(-174
    ,-38){\fontsize{9}{9}$o$} \put(-116
    ,38){\fontsize{9}{9}$u$} \put(-116
    ,-38){\fontsize{9}{9}$u$}
    \put(-92,38){\fontsize{9}{9}$o$} \put(-92
    ,-38){\fontsize{9}{9}$o$}
    \put(-35,38){\fontsize{9}{9}$u$} \put(-35
    ,-38){\fontsize{9}{9}$u$}
    \put(-25,38){\fontsize{9}{9}$u$} \put(-25
    ,-38){\fontsize{9}{9}$u$}
    \put(-11,38){\fontsize{9}{9}$o$} \put(-11
    ,-38){\fontsize{9}{9}$o$}
    \put(-23,-55){\fontsize{9}{9}$(1)$} \put(-318,
    10){\small{braiding}} \put(-233, 10){\small{braid}} \put(-235,
    -10){\small{isotopy}} \put(-162,
    10){\small{$L_o$-move}} \put(-80, 10){\small{$L_u$-move}}
    \]
    \begin{center}
      \rule{\textwidth}{.5pt}
    \end{center}
    \[
    \raisebox{-25pt}{\includegraphics[height =
      .7in]{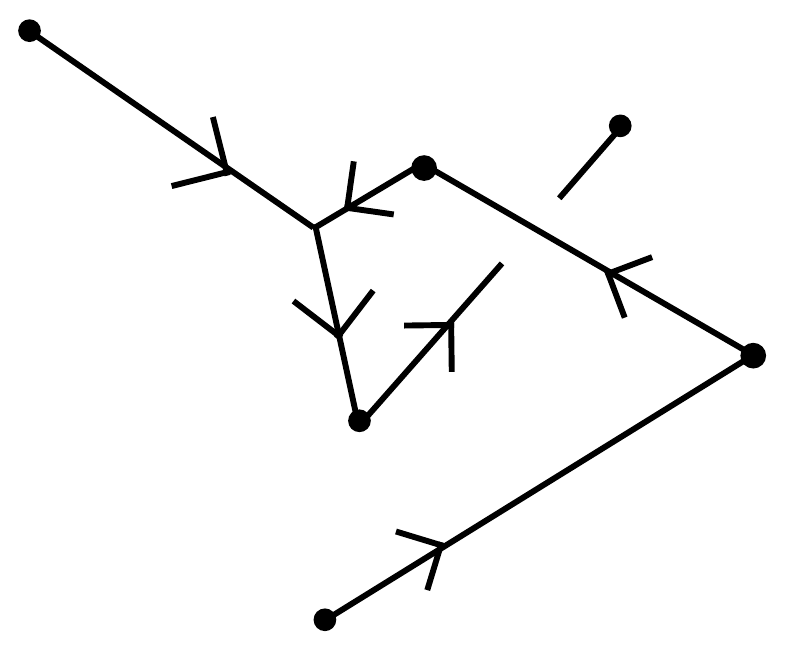}} \hspace{.2cm} \longrightarrow
    \hspace{.2cm} \raisebox{-40pt}{\includegraphics[height =
      1.2in]{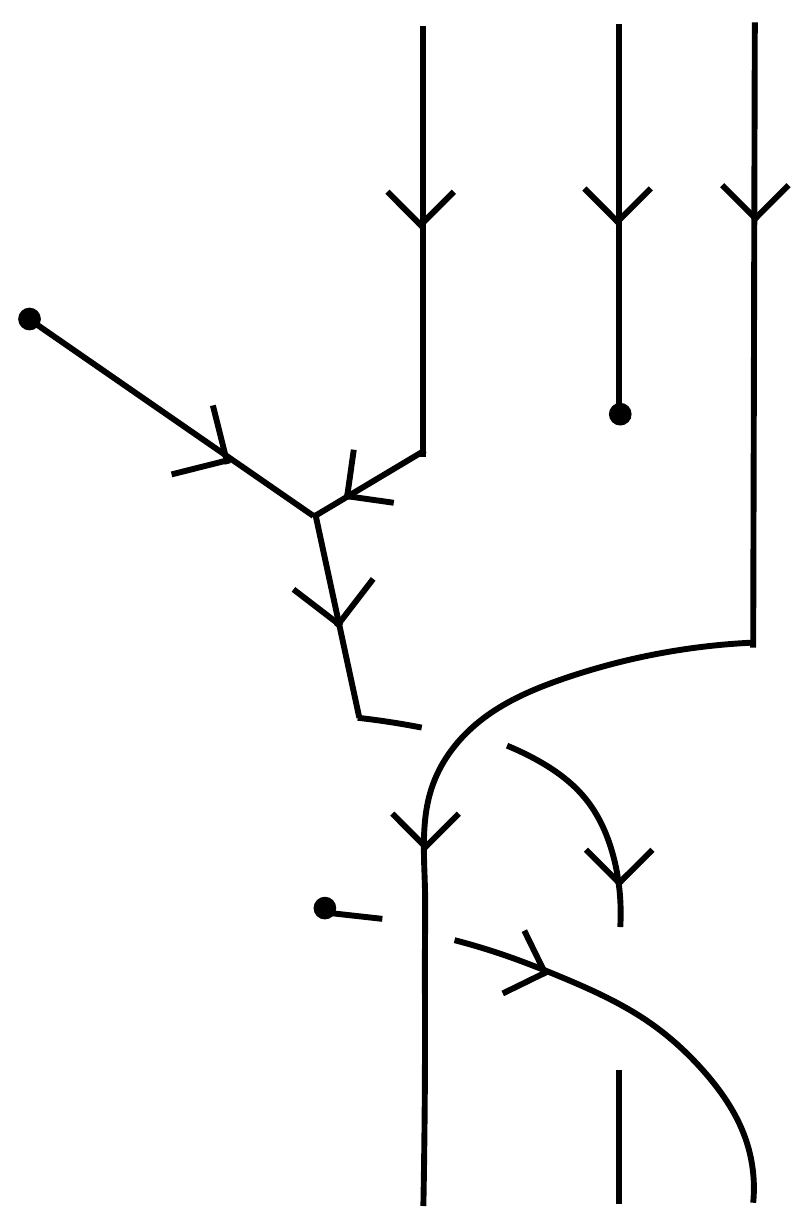}} \hspace{.2cm} \thicksim
    \hspace{.2cm} \raisebox{-40pt}{\includegraphics[height =
      1.2in]{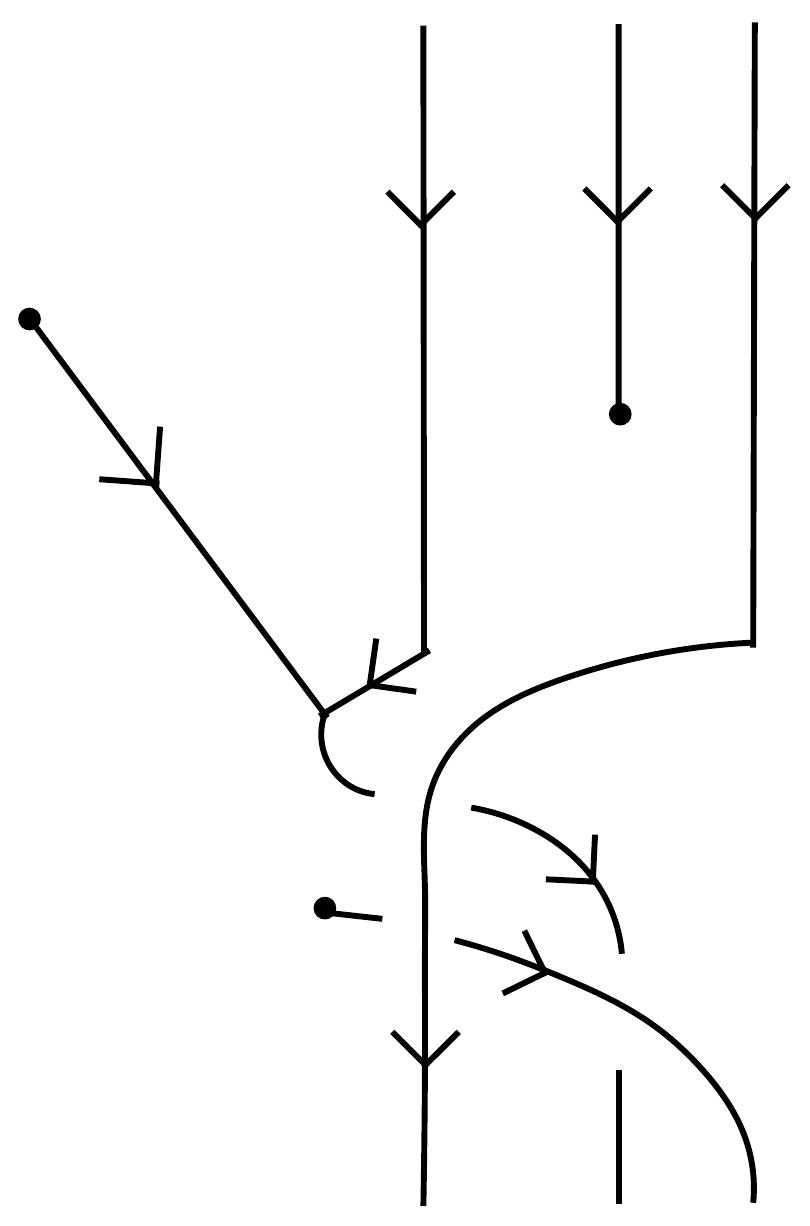}} \hspace{.2cm}
    \longleftrightarrow \hspace{.2cm} \put(-220,
    -20){\fontsize{9}{9}$o$} \put(-145,
    38){\fontsize{9}{9}$o$} \put(-145,
    -38){\fontsize{9}{9}$o$} \put(-133,
    38){\fontsize{9}{9}$u$} \put(-133,
    -38){\fontsize{9}{9}$u$} \put(-122,
    38){\fontsize{9}{9}$o$} \put(-122,
    -38){\fontsize{9}{9}$o$} \put(-64,
    38){\fontsize{9}{9}$o$} \put(-64,
    -38){\fontsize{9}{9}$o$} \put(-52,
    38){\fontsize{9}{9}$u$} \put(-52,
    -38){\fontsize{9}{9}$u$} \put(-41,
    38){\fontsize{9}{9}$o$} \put(-41,
    -38){\fontsize{9}{9}$o$} \put(-202, 10){\small{braiding}}
    \put(-109, 10){\small{braid}} \put(-111, -10){\small{isotopy}}
    \put(-28, 10){\small{$L_o$-move}}
    \]
    \[
    \raisebox{-40pt}{\includegraphics[height =
      1.2in]{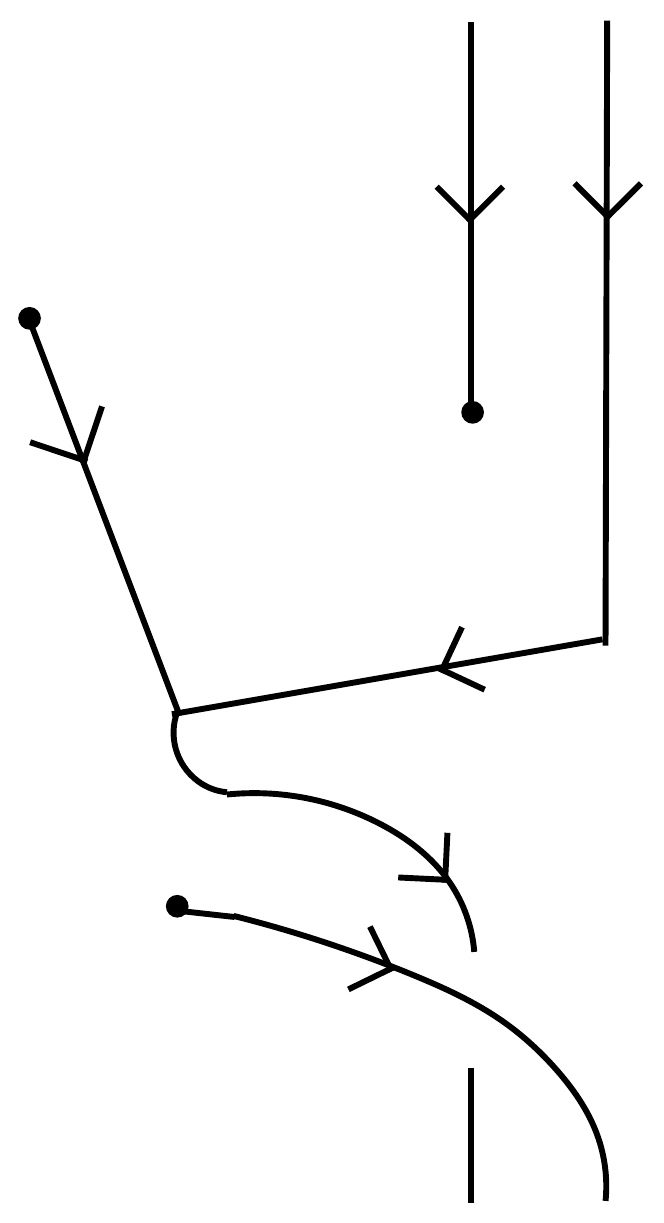}} \hspace{.2cm} \longleftrightarrow
    \hspace{.2cm} \raisebox{-40pt}{\includegraphics[height =
      1.2in]{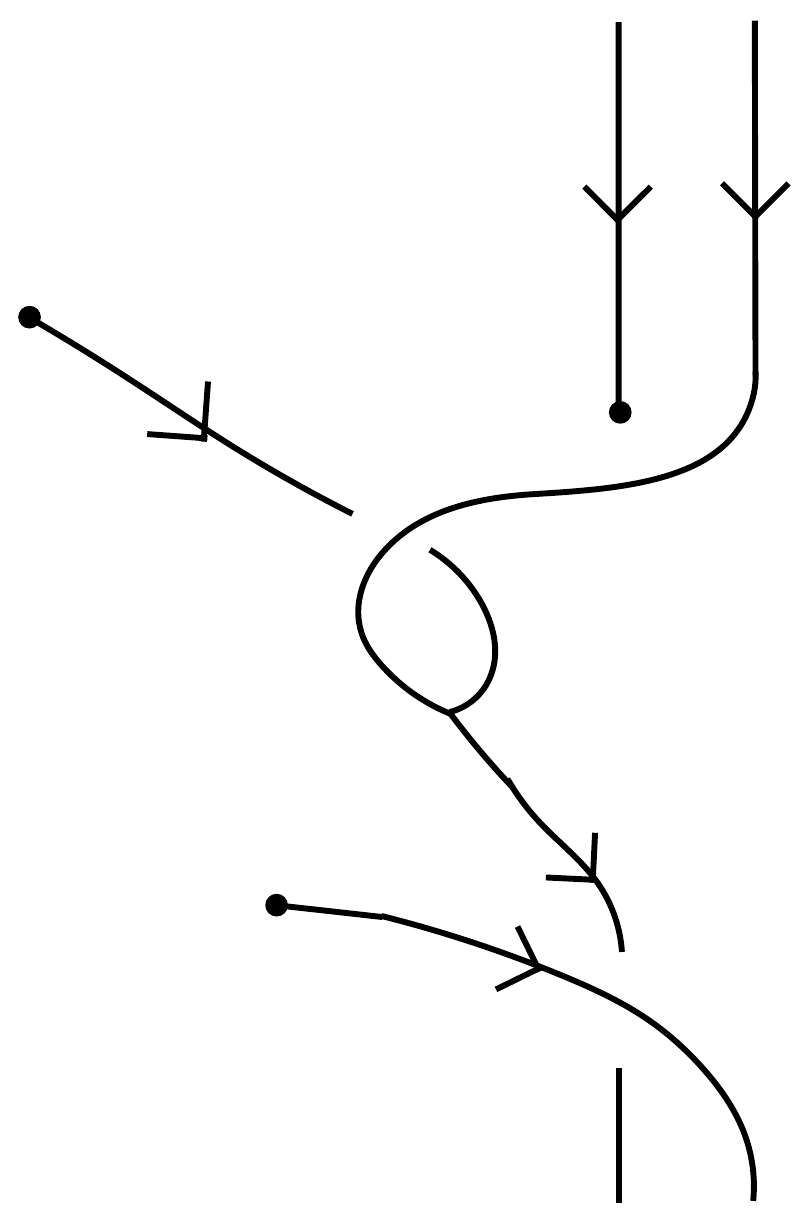}} \hspace{.2cm} \longleftrightarrow
    \hspace{.2cm} \raisebox{-40pt}{\includegraphics[height =
      1.2in]{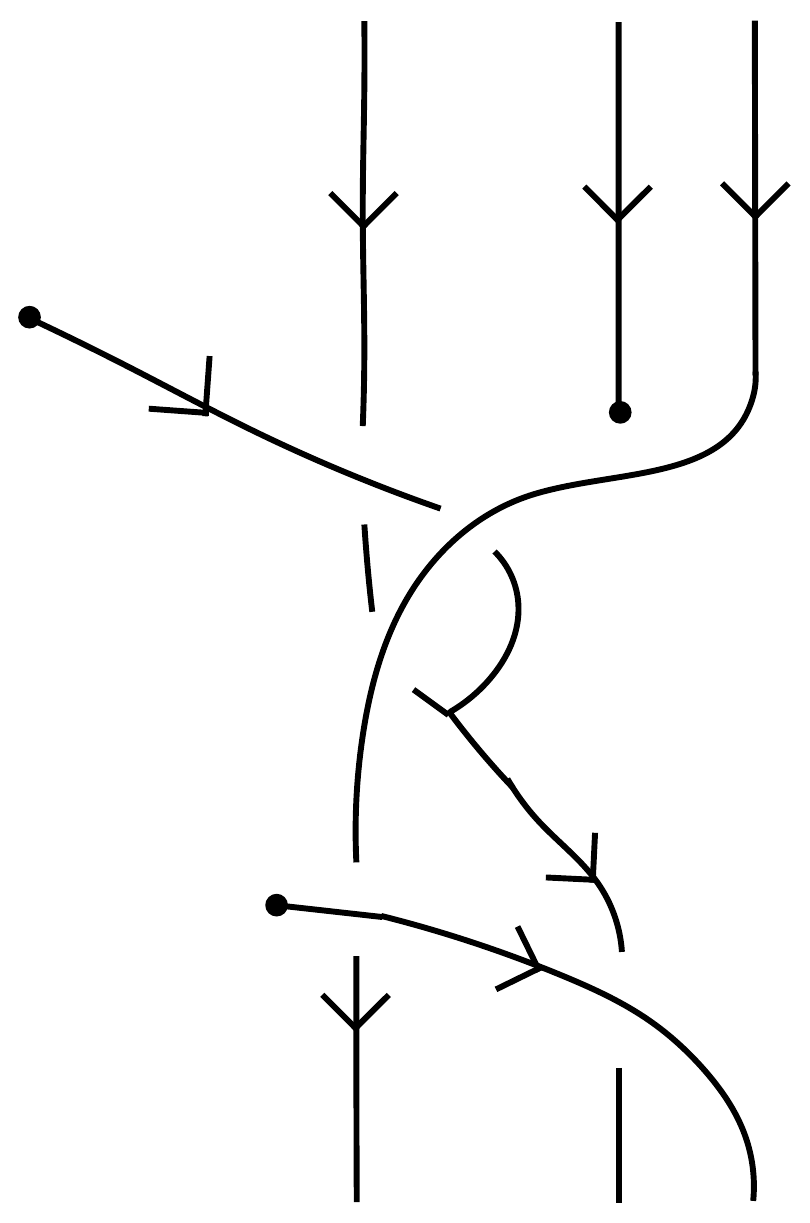}} \hspace{.2cm} \longleftrightarrow
    \hspace{.2cm} \raisebox{-40pt}{\includegraphics[height =
      1.2in]{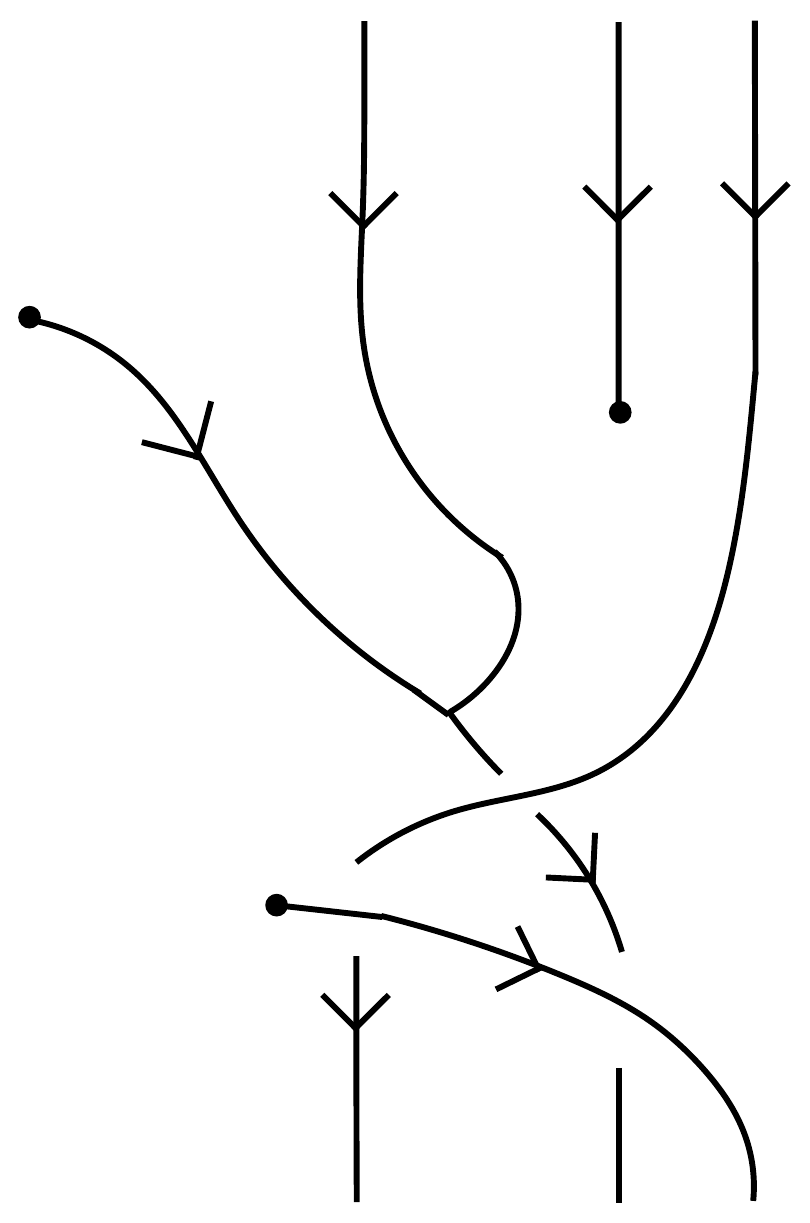}} \put(-299,
    38){\fontsize{9}{9}$u$} \put(-299,
    -38){\fontsize{9}{9}$u$} \put(-288,
    38){\fontsize{9}{9}$o$} \put(-289,
    -38){\fontsize{9}{9}$o$} \put(-206,
    38){\fontsize{9}{9}$u$} \put(-206,
    -38){\fontsize{9}{9}$u$} \put(-195,
    38){\fontsize{9}{9}$o$} \put(-196,
    -38){\fontsize{9}{9}$o$} \put(-134,
    38){\fontsize{9}{9}$u$} \put(-134,
    -38){\fontsize{9}{9}$u$} \put(-114,
    38){\fontsize{9}{9}$u$} \put(-114,
    -38){\fontsize{9}{9}$u$} \put(-103,
    38){\fontsize{9}{9}$o$} \put(-103,
    -38){\fontsize{9}{9}$o$} \put(-42,
    38){\fontsize{9}{9}$u$} \put(-42,
    -38){\fontsize{9}{9}$u$} \put(-22,
    38){\fontsize{9}{9}$u$} \put(-22,
    -38){\fontsize{9}{9}$u$} \put(-11,
    38){\fontsize{9}{9}$o$} \put(-11,
    -38){\fontsize{9}{9}$o$}
    \put(-23,-55){\fontsize{9}{9}$(2)$} \put(-274,
    10){\small{br.
        $R5$}} \put(-175, 20){\small{Left}} \put(-188,
    10){\small{$+L_u$-move}} \put(-88, 20){\small{br.
        $R4$}} \put(-88, 10){\small{br. $R5$}}
    \]
    \caption{Checking a switch move on a $Y$-type vertex with two
      up-arcs} \label{twist move}
  \end{figure}

  For the case of a $\lambda$-type vertex incident with two up-arcs we perform an $R5$ move between the up-arcs (as displayed in the second row of Figure~\ref{vgp2}). Similar to the case of a $Y$-type vertex, we have a choice for the type of crossing introduced by the $R5$ move.  Once again, the braids resulting from either choice differ by a switch move. In Figure~\ref{twist move2} we show that this version of the switch move applied on the right hand side of a
  $\lambda$-type vertex does not affect the final braid up to $TL$-equivalence; specifically, the braids (1) and (2) in Figure~\ref{twist move2} differ by planar
  isotopy.  The case where the two up-arcs lie on the left hand side of a $\lambda$-type vertex follows similarly, and thus we omit it to avoid repetition.

  \begin{figure}[ht]
    \[
    \raisebox{-25pt}{\includegraphics[height =
      .6in]{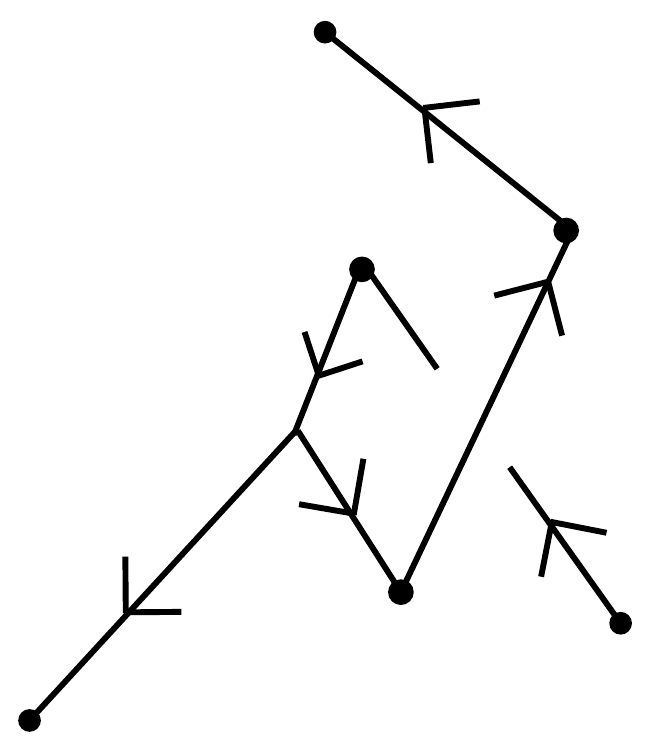}} \hspace{.2cm} \longrightarrow
    \hspace{.2cm} \raisebox{-40pt}{\includegraphics[height =
      1.2in]{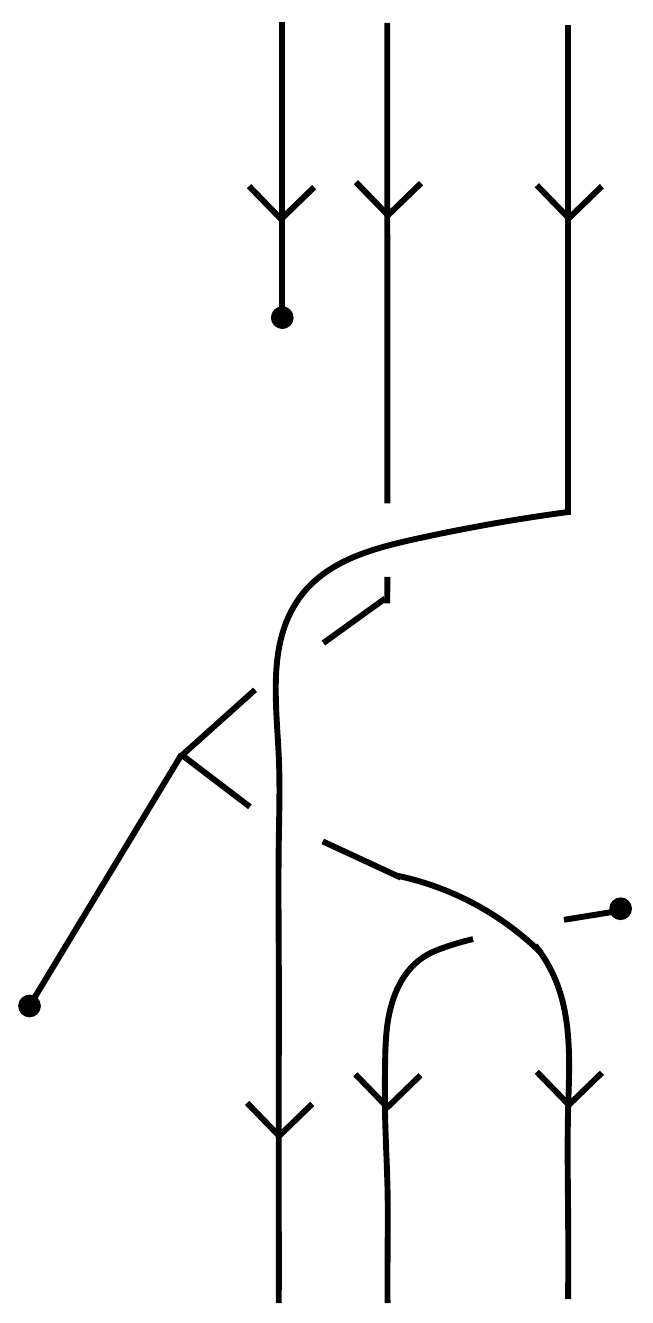}} \hspace{.2cm} \longleftrightarrow
    \hspace{.2cm} \raisebox{-40pt}{\includegraphics[height =
      1.2in]{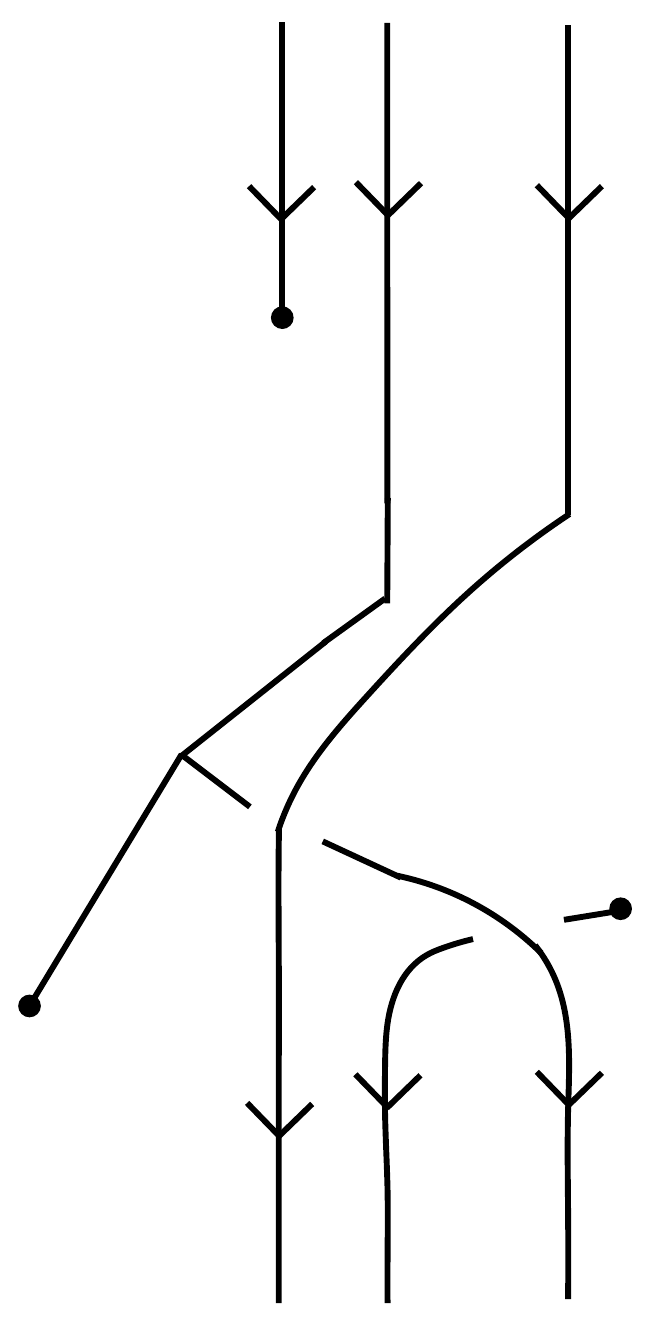}} \hspace{.2cm} \longleftrightarrow
    \hspace{.2cm} \put(-200,
    20){\fontsize{9}{9}$o$} \put(-140,
    38){\fontsize{9}{9}$o$} \put(-140,
    -38){\fontsize{9}{9}$o$} \put(-132,
    38){\fontsize{9}{9}$u$} \put(-132,
    -38){\fontsize{9}{9}$u$} \put(-120,
    38){\fontsize{9}{9}$o$} \put(-120,
    -38){\fontsize{9}{9}$o$} \put(-61,
    38){\fontsize{9}{9}$o$} \put(-61,
    -38){\fontsize{9}{9}$o$} \put(-53,
    38){\fontsize{9}{9}$u$} \put(-53,
    -38){\fontsize{9}{9}$u$} \put(-41,
    38){\fontsize{9}{9}$o$} \put(-41,
    -38){\fontsize{9}{9}$o$} \put(-185, 10){\small{braiding}}
    \put(-100, 10){\small{br.
        $R2$}} \put(-17, 20){\small{right}} \put(-28,
    10){\small{$+L_o$-move}}
    \]
    \[
    \raisebox{-40pt}{\includegraphics[height =
      1.2in]{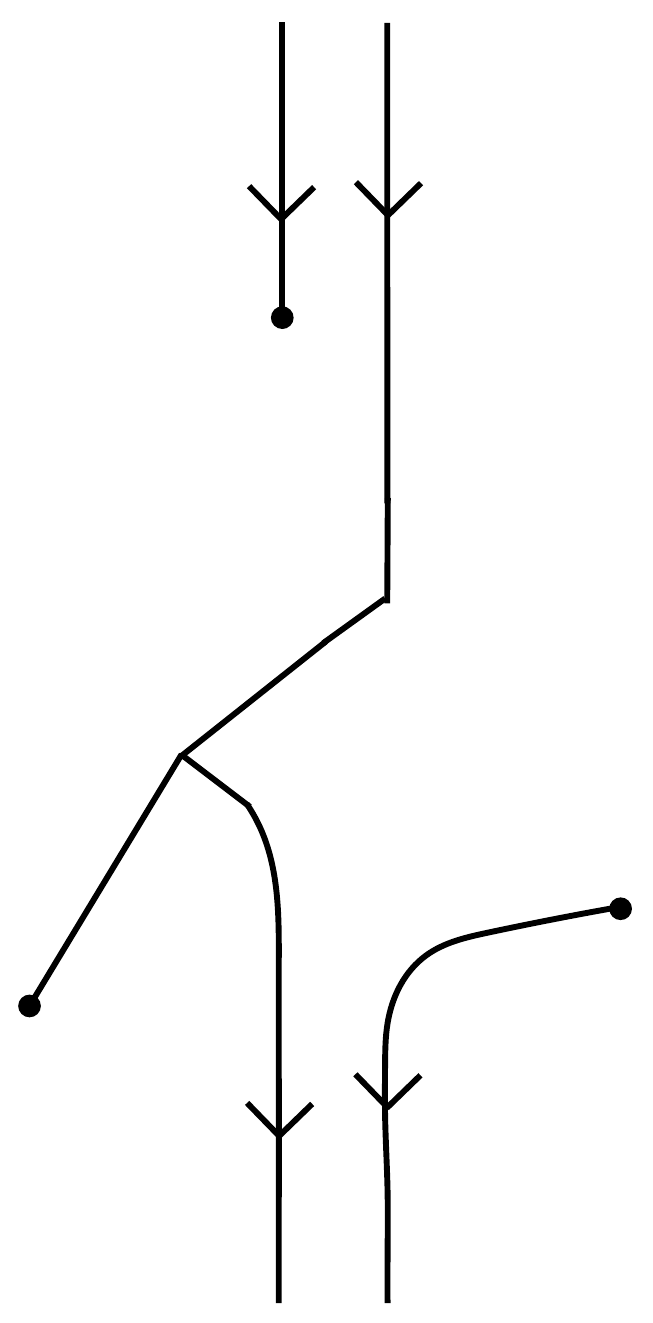}} \hspace{.2cm} \longleftrightarrow
    \hspace{.2cm} \raisebox{-40pt}{\includegraphics[height =
      1.2in]{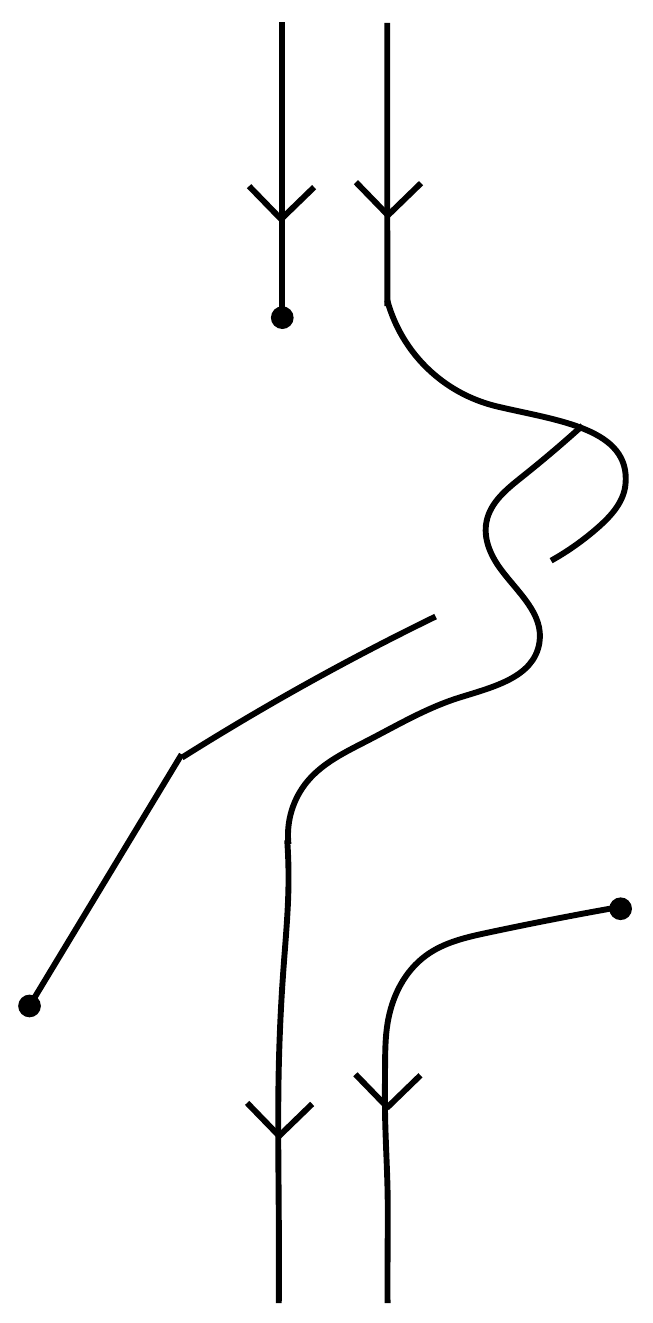}} \hspace{.2cm} \longleftrightarrow
    \hspace{.2cm} \raisebox{-40pt}{\includegraphics[height =
      1.2in]{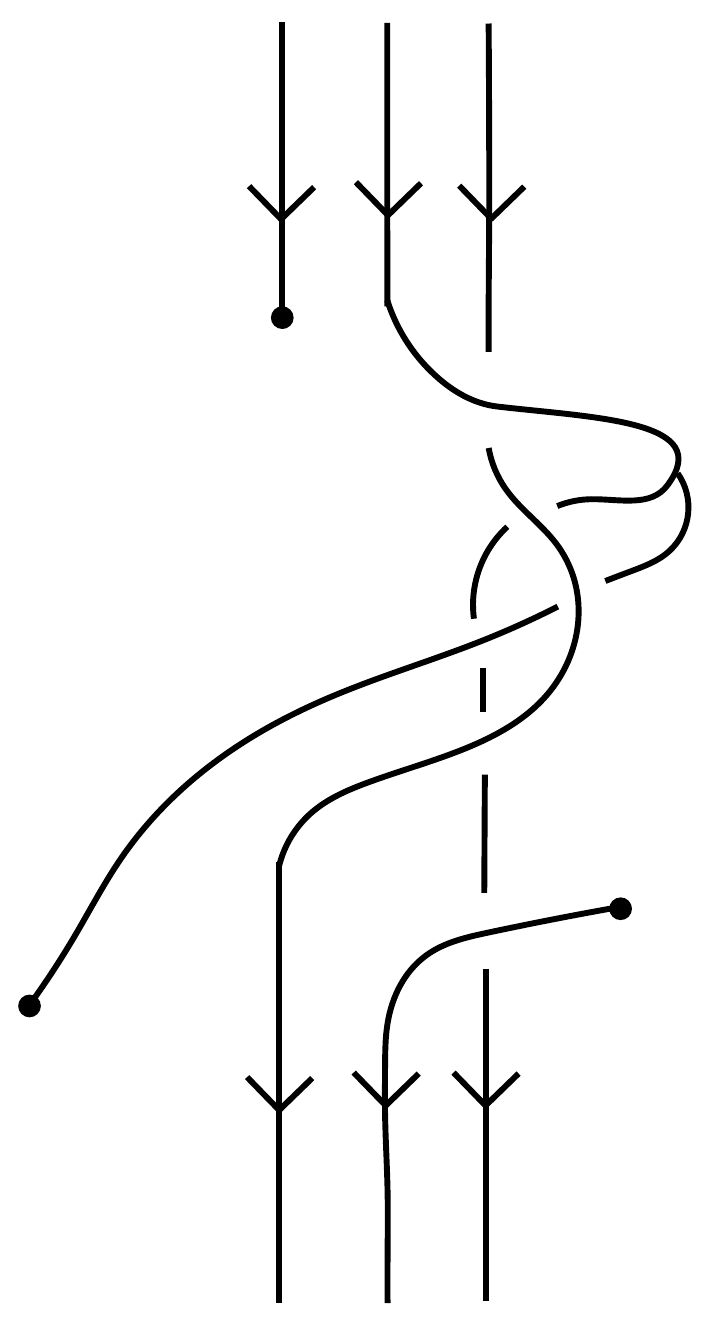}} \hspace{.2cm} \longleftrightarrow
    \hspace{.2cm} \raisebox{-40pt}{\includegraphics[height =
      1.2in]{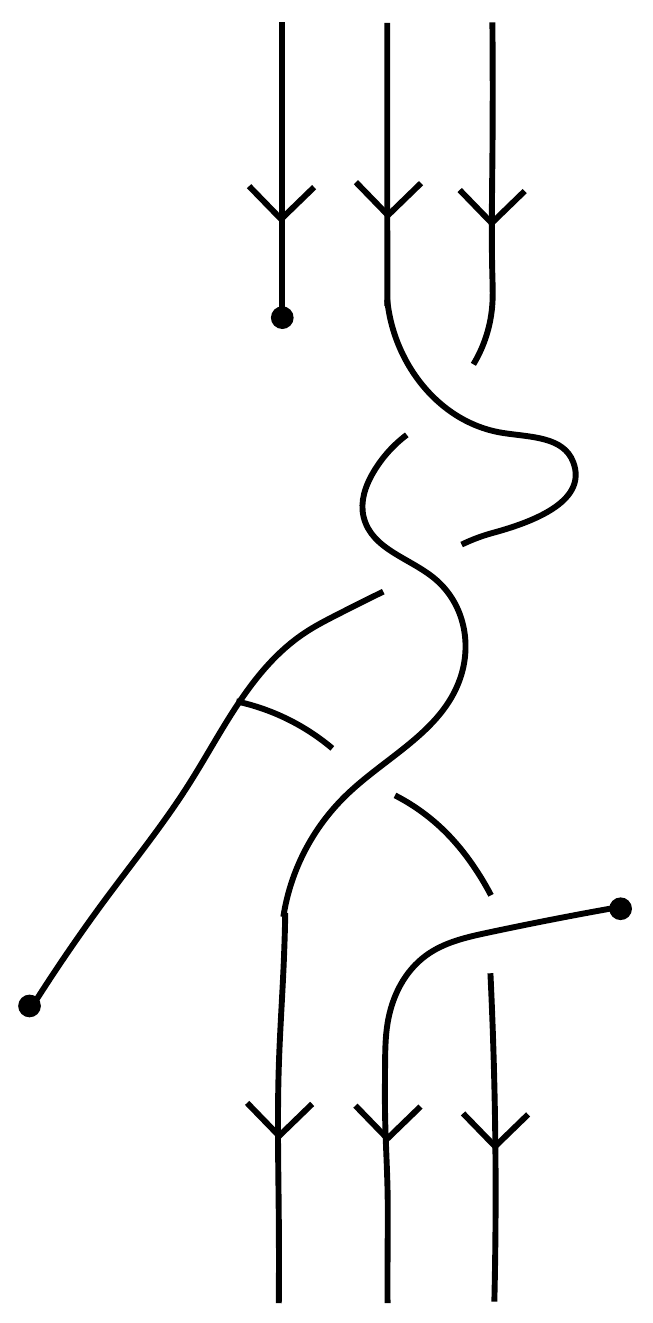}} \put(-269,
    38){\fontsize{9}{9}$o$} \put(-269,
    -38){\fontsize{9}{9}$o$} \put(-261,
    38){\fontsize{9}{9}$u$} \put(-261,
    -38){\fontsize{9}{9}$u$} \put(-190,
    38){\fontsize{9}{9}$o$} \put(-190,
    -38){\fontsize{9}{9}$o$} \put(-183,
    38){\fontsize{9}{9}$u$} \put(-183,
    -38){\fontsize{9}{9}$u$} \put(-112,
    38){\fontsize{9}{9}$o$} \put(-112,
    -38){\fontsize{9}{9}$o$} \put(-105,
    38){\fontsize{9}{9}$u$} \put(-105,
    -38){\fontsize{9}{9}$u$} \put(-98,
    38){\fontsize{9}{9}$u$} \put(-98,
    -38){\fontsize{9}{9}$u$} \put(-30,
    38){\fontsize{9}{9}$o$} \put(-30,
    -38){\fontsize{9}{9}$o$} \put(-23,
    38){\fontsize{9}{9}$u$} \put(-23,
    -38){\fontsize{9}{9}$u$} \put(-16,
    38){\fontsize{9}{9}$u$} \put(-16,
    -38){\fontsize{9}{9}$u$} \put(-232, 20){\small{planar}} \put(-232,
    10){\small{isotopy}} \put(-231, -10){\small{br.
        $R5$}} \put(-145, 20){\small{left}} \put(-158,
    10){\small{$-L_u$-move}} \put(-70, 20){\fontsize{9}{9}br.
      $R4$} \put(-70, 10){\fontsize{9}{9}br. $R5$}
    \]
    \[\hspace{.2cm} \longleftrightarrow \hspace{.2cm}
    \raisebox{-40pt}{\includegraphics[height =
      1.2in]{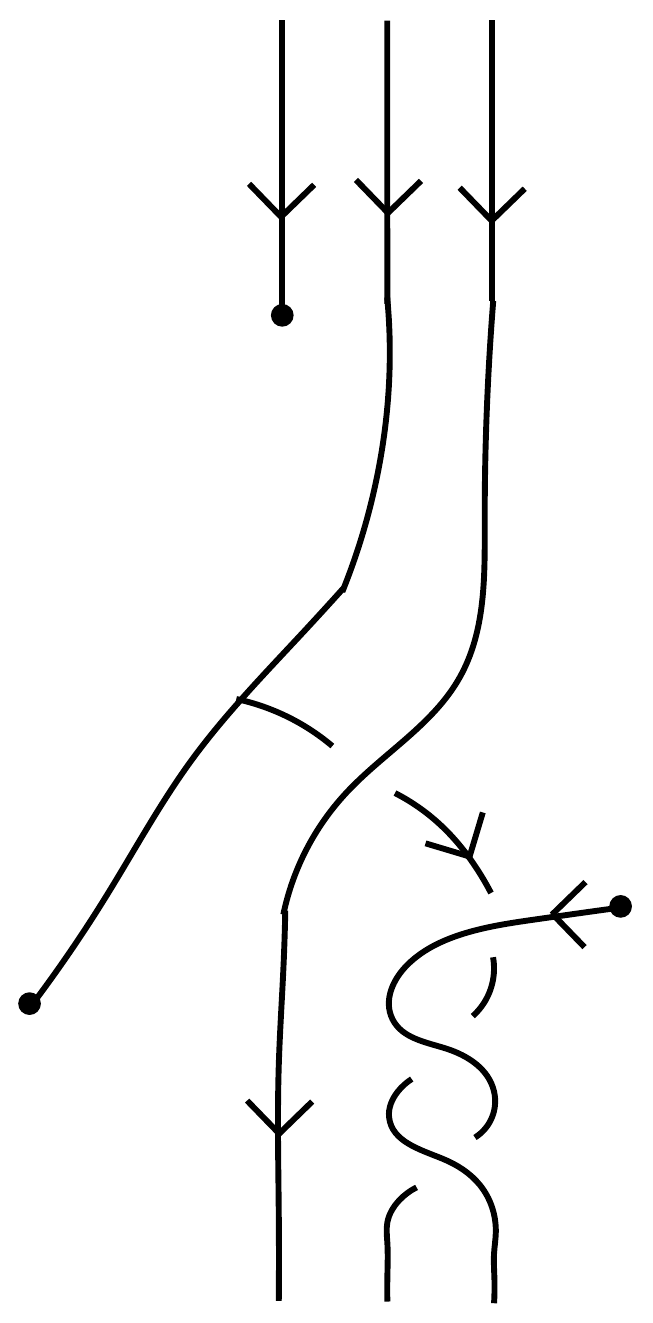}} \hspace{.2cm} \longleftrightarrow
    \hspace{.2cm} \raisebox{-40pt}{\includegraphics[height =
      1.2in]{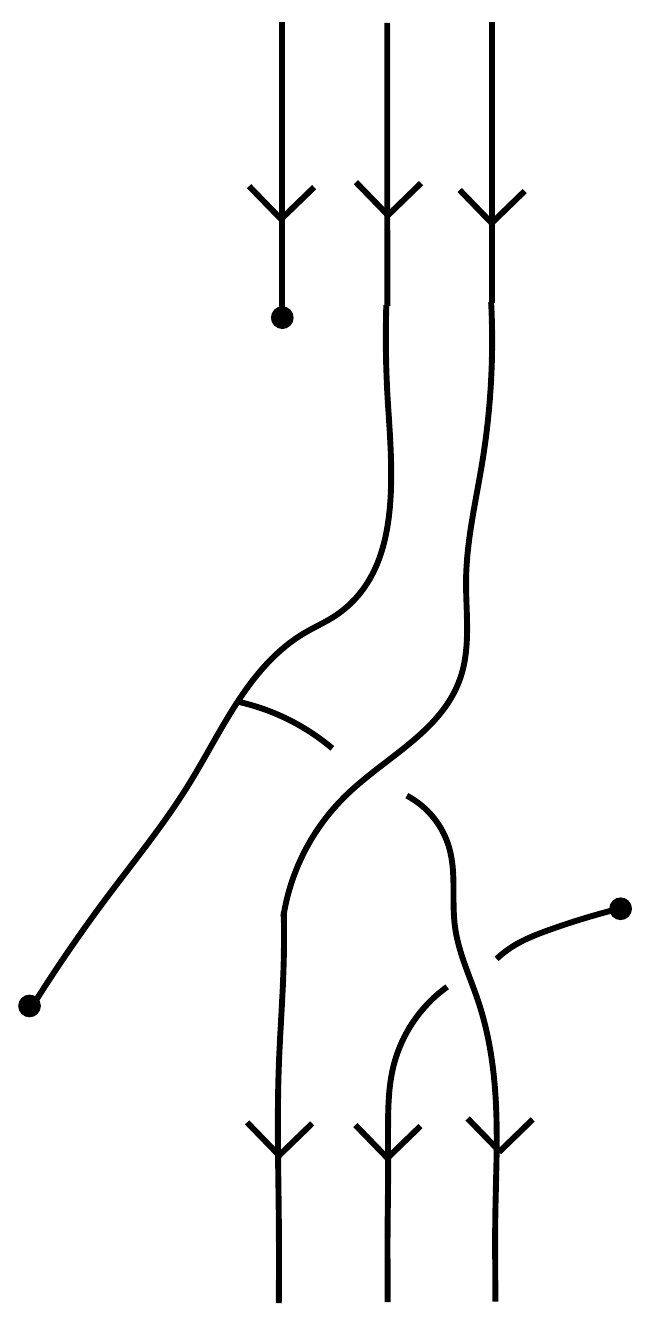}} \put(-109,
    38){\fontsize{9}{9}$o$} \put(-109,
    -38){\fontsize{9}{9}$o$} \put(-102,
    38){\fontsize{9}{9}$u$} \put(-102,
    -38){\fontsize{9}{9}$u$} \put(-95,
    38){\fontsize{9}{9}$u$} \put(-95,
    -38){\fontsize{9}{9}$u$} \put(-30,
    38){\fontsize{9}{9}$o$} \put(-30,
    -38){\fontsize{9}{9}$o$} \put(-23,
    38){\fontsize{9}{9}$u$} \put(-23,
    -38){\fontsize{9}{9}$u$} \put(-16,
    38){\fontsize{9}{9}$u$} \put(-16,
    -38){\fontsize{9}{9}$u$} \put(-160, 10){\small{conjugation}}
    \put(-67,10){\small{$R2$}} \put(-24,-55){\small{$(1)$}}
    \]
    \begin{center}
      \rule{\textwidth}{.5pt}
    \end{center}
    \[
    \raisebox{-25pt}{\includegraphics[height =
      0.6in]{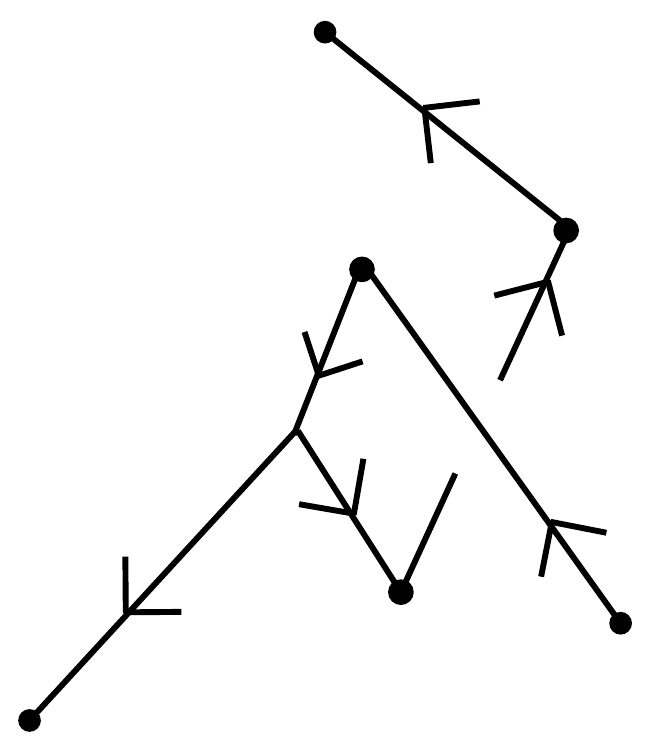}} \hspace{.2cm} \longrightarrow
    \hspace{.2cm} \raisebox{-40pt}{\includegraphics[height =
      1.2in]{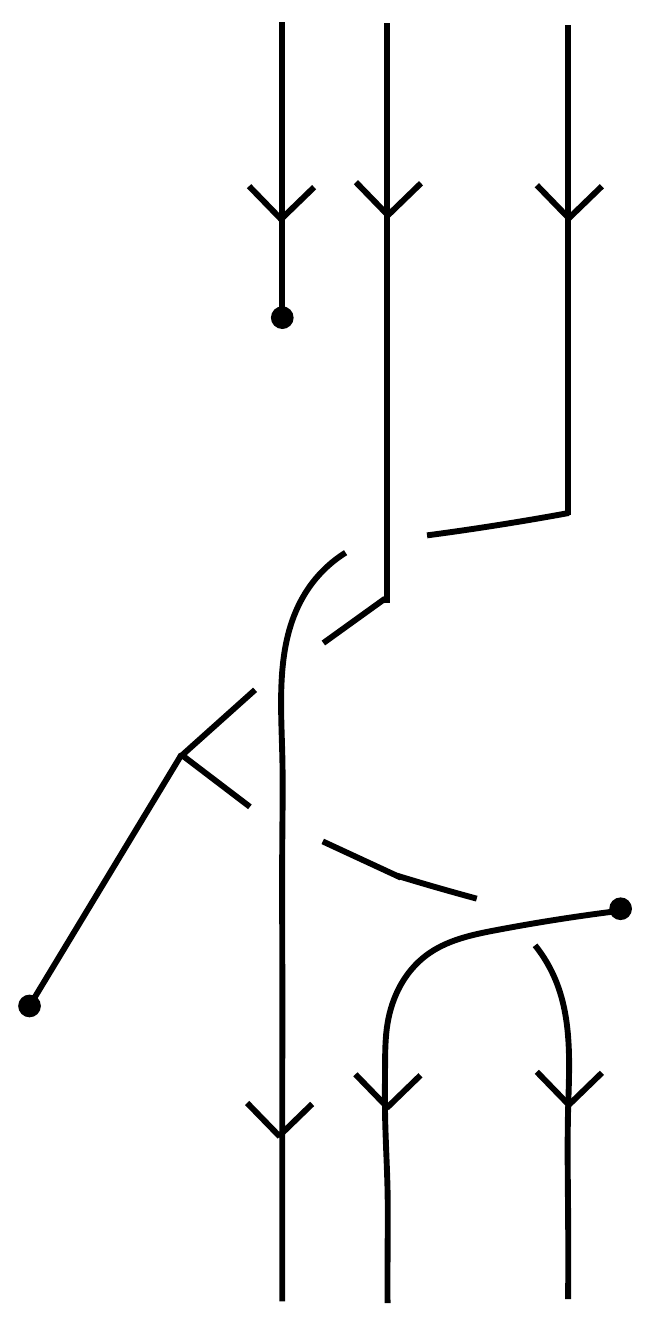}} \hspace{.2cm} \thicksim
    \hspace{.2cm} \raisebox{-40pt}{\includegraphics[height =
      1.2in]{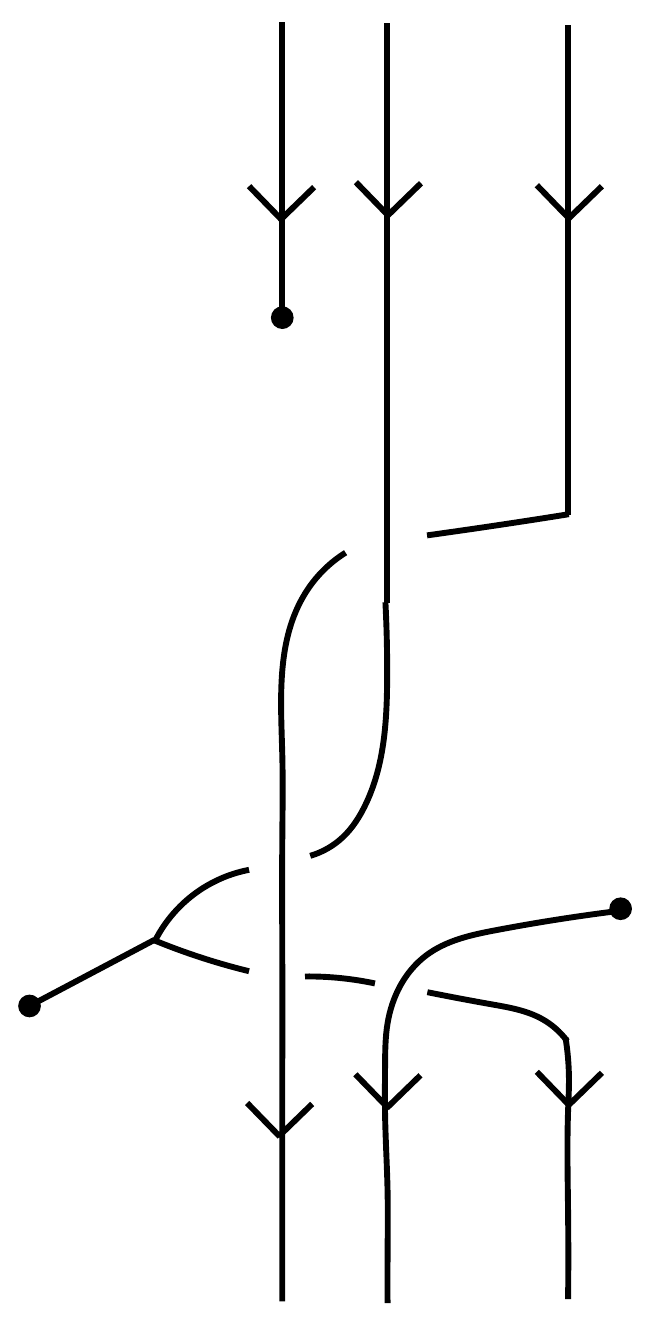}} \hspace{.2cm} \longleftrightarrow
    \hspace{.2cm} \put(-193,
    20){\fontsize{9}{9}$o$} \put(-127,
    38){\fontsize{9}{9}$o$} \put(-127,
    -38){\fontsize{9}{9}$o$} \put(-120,
    38){\fontsize{9}{9}$o$} \put(-120,
    -38){\fontsize{9}{9}$o$} \put(-109,
    38){\fontsize{9}{9}$u$} \put(-109,
    -38){\fontsize{9}{9}$u$} \put(-61,
    38){\fontsize{9}{9}$o$} \put(-61,
    -38){\fontsize{9}{9}$o$} \put(-53,
    38){\fontsize{9}{9}$o$} \put(-53,
    -38){\fontsize{9}{9}$o$} \put(-43,
    38){\fontsize{9}{9}$u$} \put(-43,
    -38){\fontsize{9}{9}$u$} \put(-174, 10){\small{braiding}}
    \put(-92, 20){\small{braid}} \put(-94, 10){\small{isotopy}}
    \put(-18, 18){\small{basic}} \put(-25, 10){\small{$L_o$-move}}
    \]
    \[
    \raisebox{-40pt}{\includegraphics[height =
      1.2in]{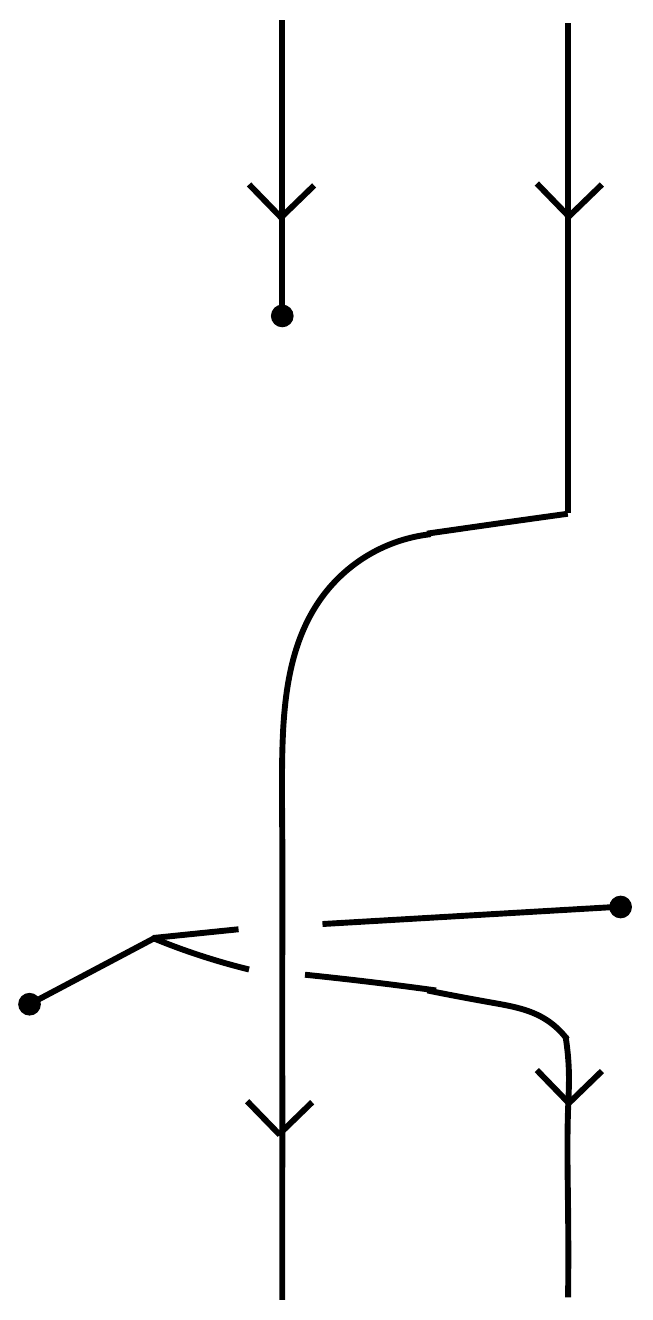}} \hspace{.2cm} \longleftrightarrow
    \hspace{.2cm} \raisebox{-40pt}{\includegraphics[height =
      1.2in]{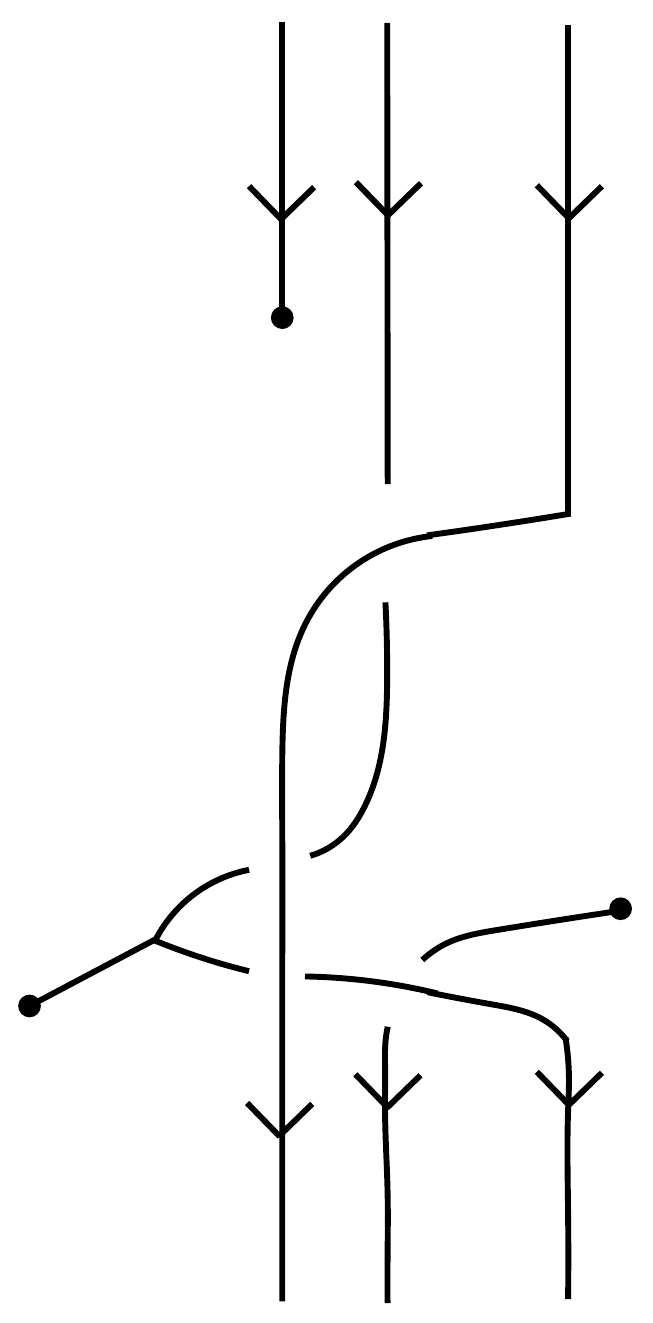}} \hspace{.2cm} \longleftrightarrow
    \hspace{.2cm} \raisebox{-40pt}{\includegraphics[height =
      1.2in]{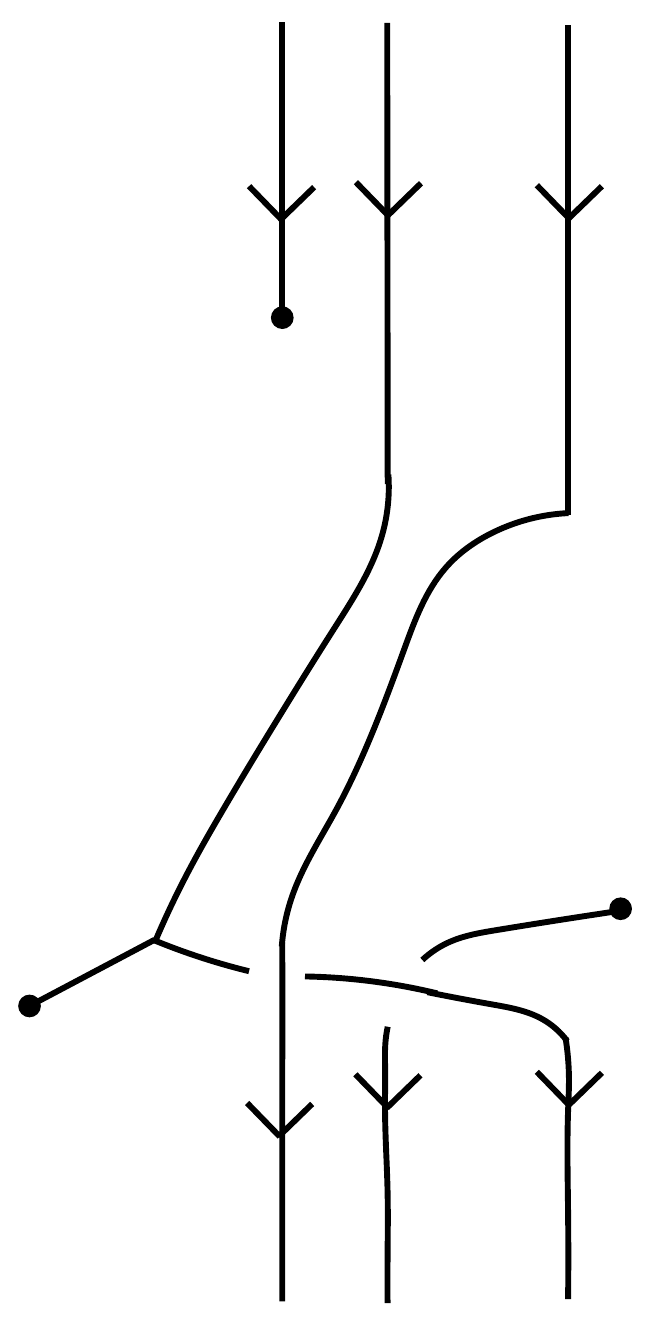}} \put(-186,
    38){\fontsize{9}{9}$o$} \put(-186,
    -38){\fontsize{9}{9}$o$} \put(-168,
    38){\fontsize{9}{9}$u$} \put(-168,
    -38){\fontsize{9}{9}$u$} \put(-108,
    38){\fontsize{9}{9}$o$} \put(-108,
    -38){\fontsize{9}{9}$o$} \put(-101,
    38){\fontsize{9}{9}$u$} \put(-101,
    -38){\fontsize{9}{9}$u$} \put(-90,
    38){\fontsize{9}{9}$u$} \put(-90,
    -38){\fontsize{9}{9}$u$} \put(-30,
    38){\fontsize{9}{9}$o$} \put(-30,
    -38){\fontsize{9}{9}$o$} \put(-23,
    38){\fontsize{9}{9}$u$} \put(-23,
    -38){\fontsize{9}{9}$u$} \put(-12,
    38){\fontsize{9}{9}$u$} \put(-12,
    -38){\fontsize{9}{9}$u$}
    \put(-23,-55){\fontsize{9}{9}$(2)$} \put(-145, 18){\small{basic}}
    \put(-152,
    10){\small{$L_u$-move}} \put(-72, 10){\fontsize{9}{9}br. $R2$}
    \]
    \caption{Checking a switch move on a $\lambda$-type vertex with two
      up-arcs} \label{twist move2}
  \end{figure}

  Now consider a $Y$-type vertex $v$ incident with three up-arcs. In order to shift $v$ into regular position we first perform planar isotopy on one arc
  and an $R5$ move between the remaining two up-arcs. This means we have four choices for shifting $v$ into regular position (see the last row of Figure~\ref{vgp}). We want to compare the braided portions obtained from each choice of shifting $v$ into regular position and show that they are $TL$-equivalent. In order to do this, we shall first compare choices that result in diagrams which differ by a switch move on the right hand side of $v$ (see
  Figure \ref{UUU Twist}). We leave it as an exercise for the reader to show the braids in Figure \ref{UUU Twist} are $TL$-equivalent. (One can use a similar approach to that used in Figure \ref{twist move2}.) The case for diagrams that differ by a switch move on the left hand side of $v$ is treated similarly. Finally, we wish to compare a choice where the crossing is on the right hand side of $v$ to one where the crossing is on the left hand side of $v$. Figure~\ref{R4R1} illustrates a way to relate such diagrams via $R1$, $R4$ and swing moves (this shall be enough once we check the swing moves and braid isotopy below).

  \begin{figure}
    \[
    \raisebox{-20pt}{\includegraphics[height=.5in]{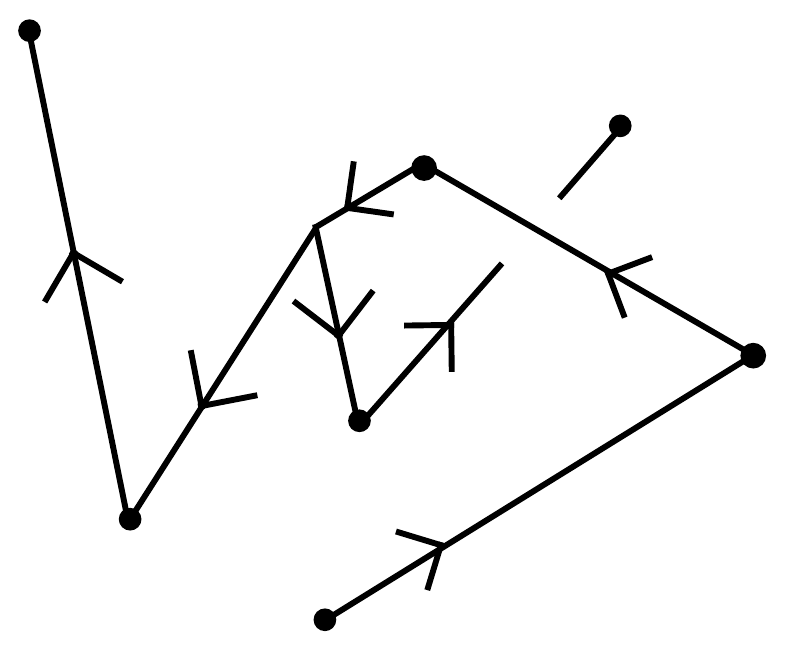}}
    \hspace{0.4cm} \longleftrightarrow \hspace{0.4cm}
    \raisebox{-20pt}{\includegraphics[height=.5in]{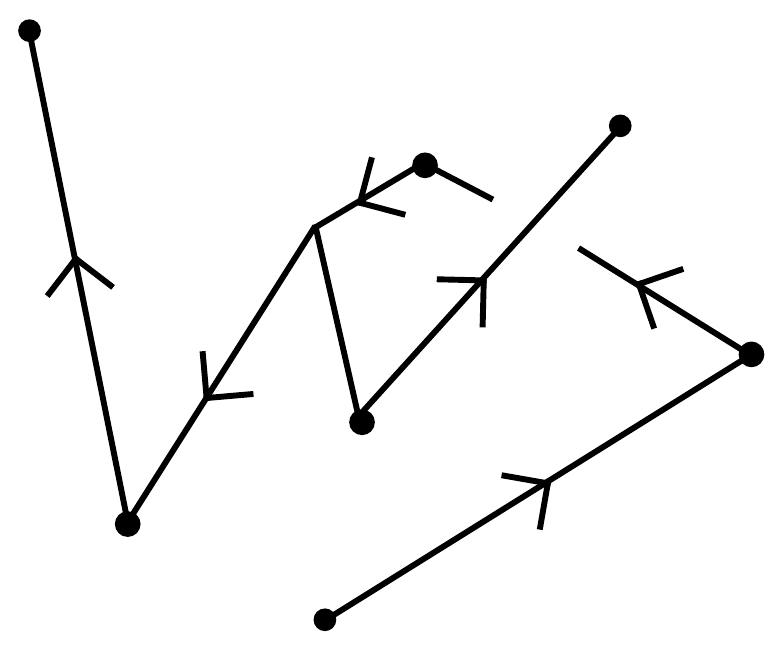}}
    \put(-138,8){\fontsize{9}{9}$o$}
    \put(-95,-13){\fontsize{9}{9}$o$}
    \put(-48,8){\fontsize{9}{9}$o$} \put(-5,-13){\fontsize{9}{9}$o$}
    \]
    \vspace{.1cm}
    \[
    \downarrow \hspace{3cm} \downarrow \put(-136,0){\small{braiding}}
    \put(0,0){\small{braiding}}
    \]
    \vspace{.1cm}
    \[
    \raisebox{-30pt}{\includegraphics[height=1in]{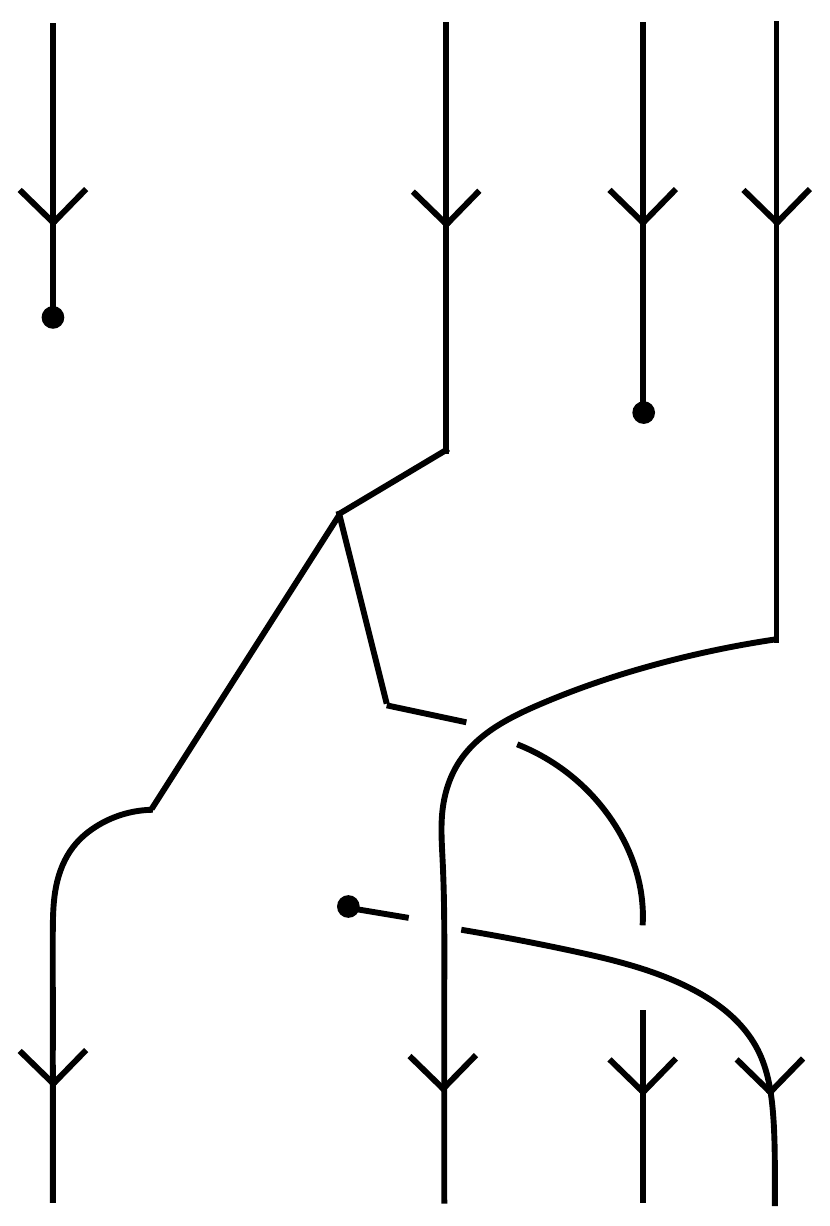}}
    \hspace{1.8cm}
    \raisebox{-30pt}{\includegraphics[height=1in]{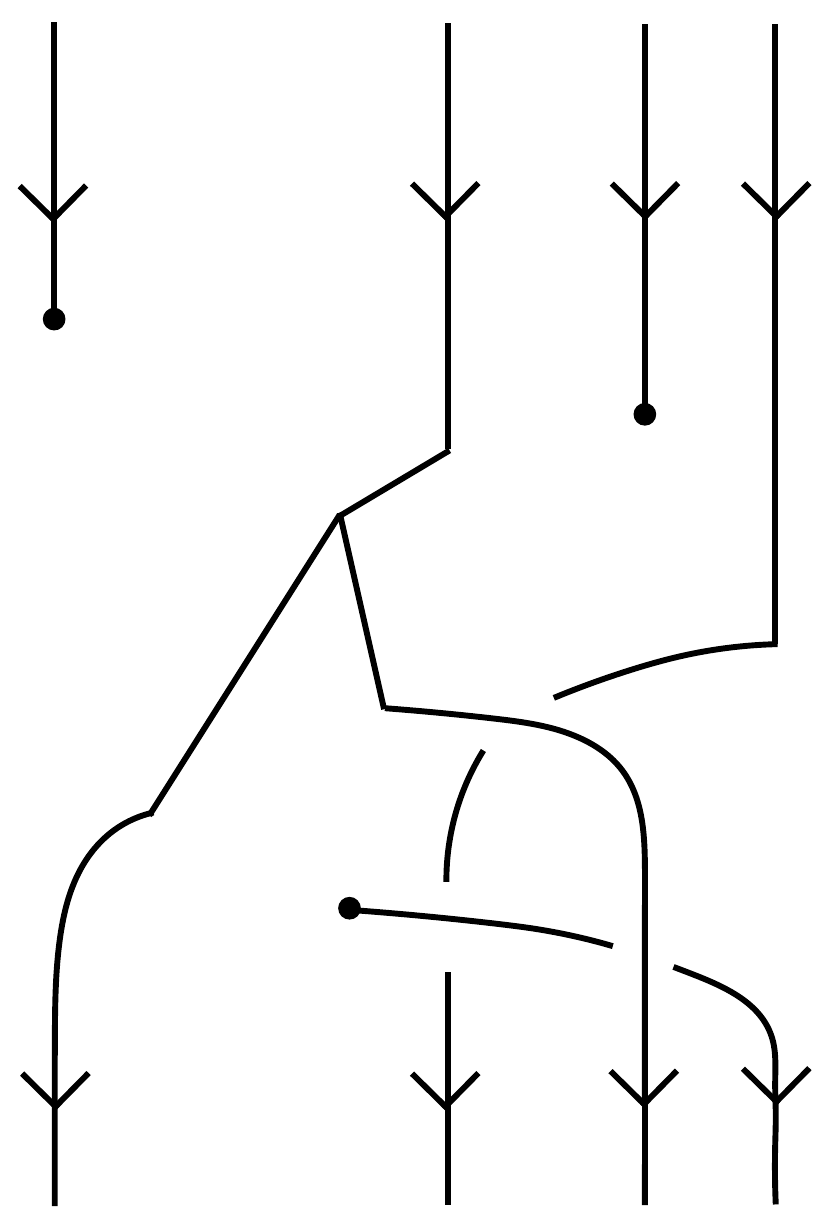}}
    \put(-152,36){\fontsize{9}{9}$o$}
    \put(-152,-28){\fontsize{9}{9}$o$}
    \put(-130,36){\fontsize{9}{9}$o$}
    \put(-130,-28){\fontsize{9}{9}$o$}
    \put(-119,36){\fontsize{9}{9}$u$}
    \put(-119,-28){\fontsize{9}{9}$u$}
    \put(-109,36){\fontsize{9}{9}$o$}
    \put(-109,-28){\fontsize{9}{9}$o$}
    \put(-52,36){\fontsize{9}{9}$o$}
    \put(-52,-28){\fontsize{9}{9}$o$}
    \put(-30,36){\fontsize{9}{9}$u$}
    \put(-30,-28){\fontsize{9}{9}$u$}
    \put(-19,36){\fontsize{9}{9}$o$}
    \put(-19,-28){\fontsize{9}{9}$o$}
    \put(-9,36){\fontsize{9}{9}$o$} \put(-9,-28){\fontsize{9}{9}$o$}
    \]
    \caption{} \label{UUU Twist}
  \end{figure}

  \begin{figure}
    \[
    \raisebox{-25pt}{\includegraphics[height = 0.75in]{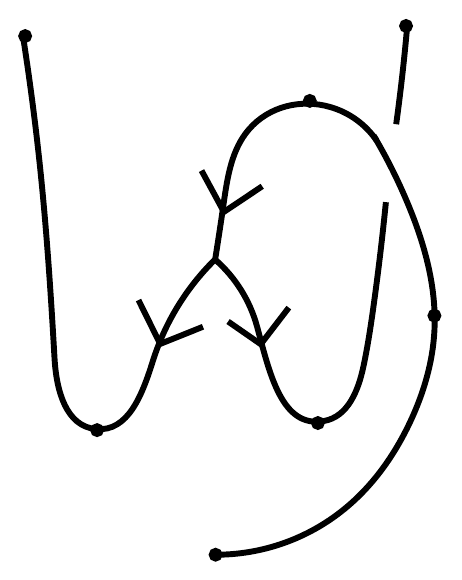}}
    \hspace{.2cm} \longleftrightarrow \hspace{.2cm}
    \raisebox{-40pt}{\includegraphics[height = 1in]{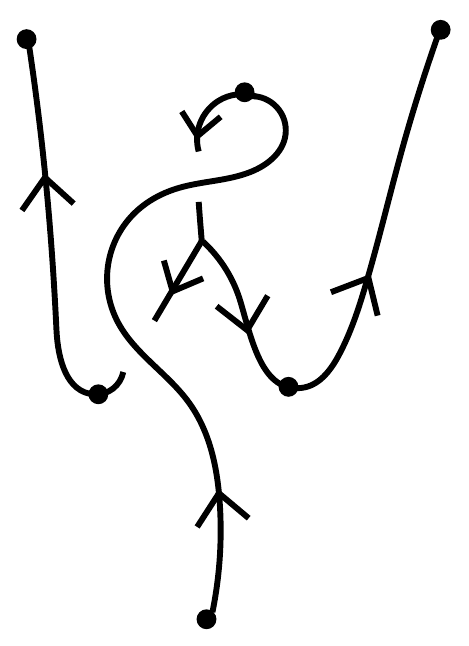}}
    \hspace{.2cm} \longleftrightarrow \hspace{.2cm}
    \raisebox{-40pt}{\includegraphics[height = 1in]{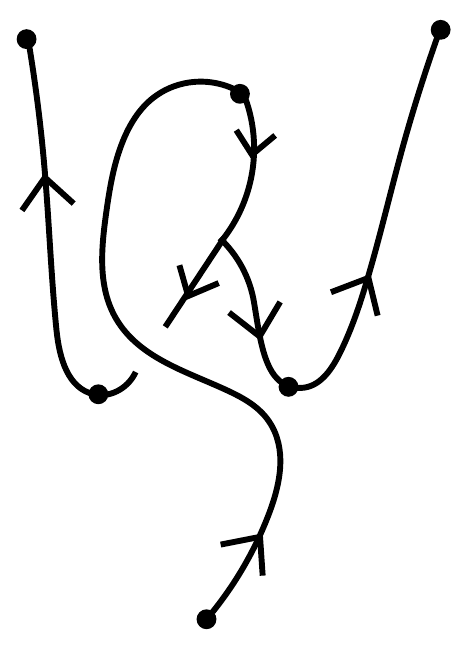}}
    \hspace{.2cm} \longleftrightarrow \hspace{.2cm}
    \raisebox{-25pt}{\includegraphics[height = 0.75in]{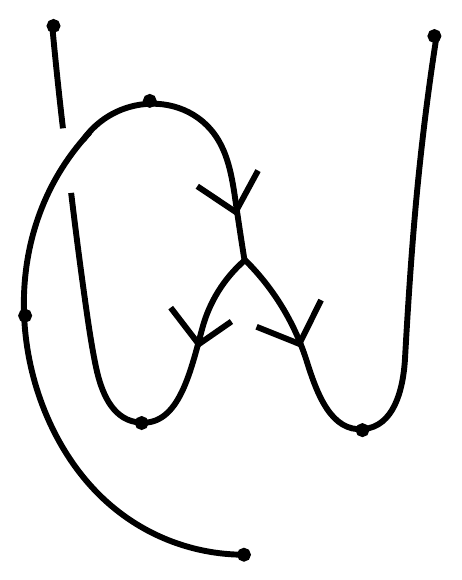}}
    \put(-244,10){\small{$R4$}}
    \put(-154,10){\small{$R1$}} \put(-72,20){\small{swing}}
    \put(-72,10){\small{move}} \put(-246,-10){\small{swing}}
    \put(-246,-20){\small{move}}
    \]
    \caption{} \label{R4R1}
  \end{figure}

  For the last instance of shifting a vertex into regular position, we need to consider a $\lambda$-type vertex incident with three up-arcs. Once again, we allow for
  four different choices to put such vertex into regular position (see the third row of Figure \ref{vgp2}). A similar argument as the one in the paragraph above shows that these choices yield $TL$-equivalent braids.

  Now we check the elimination of horizontal arcs. This amounts to planar isotopy between diagrams in general position. For the case of an up-arc, planar isotopy can be treated by subdividing an up-arc (we refer the reader to \cite{LR} for details). The most interesting case of planar isotopy of a down-arc is verified in  Figure~\ref{Darc}. The remaining cases can be derived easily from the previous one.

  \begin{figure}
    \[
    \raisebox{-15pt}{\includegraphics[height=.5in,
      width=.7in]{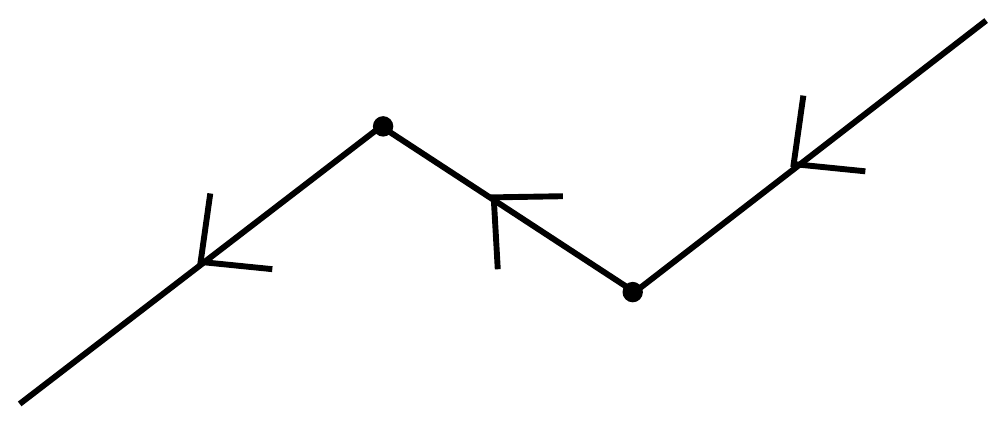}} \hspace{.2cm} \longrightarrow
    \hspace{.2cm} \raisebox{-35pt}{\includegraphics[height = 1in,
      width = .7in]{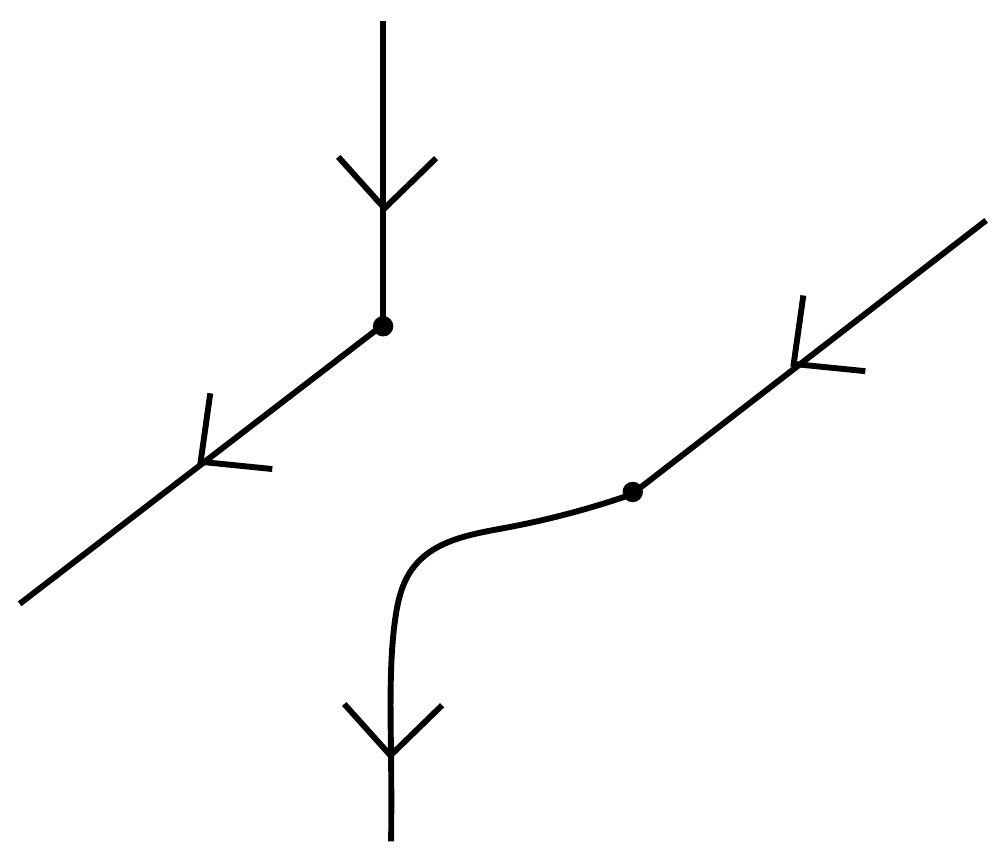}} \hspace{.4cm} \thicksim
    \hspace{.4cm} \raisebox{-35pt}{\includegraphics[height = 1in,
      width = .7in]{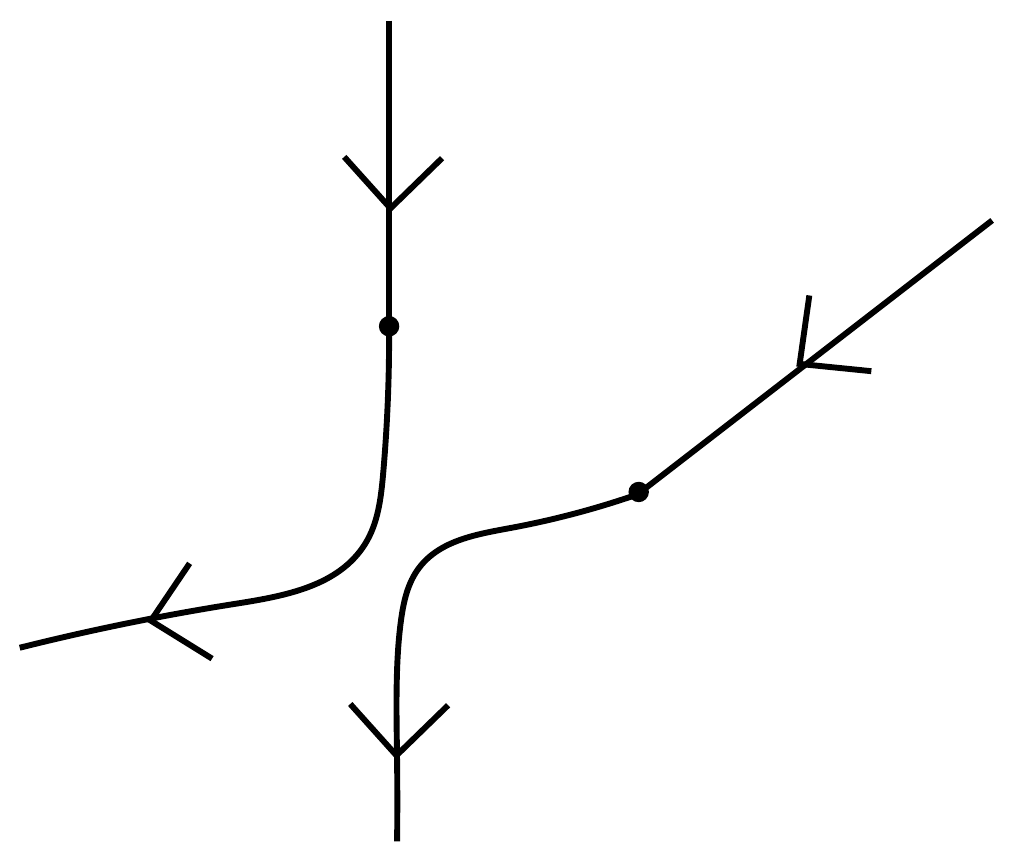}} \hspace{.2cm}
    \longleftrightarrow \hspace{.2cm}
    \raisebox{-20pt}{\includegraphics[height = .5in, width
      =.7in]{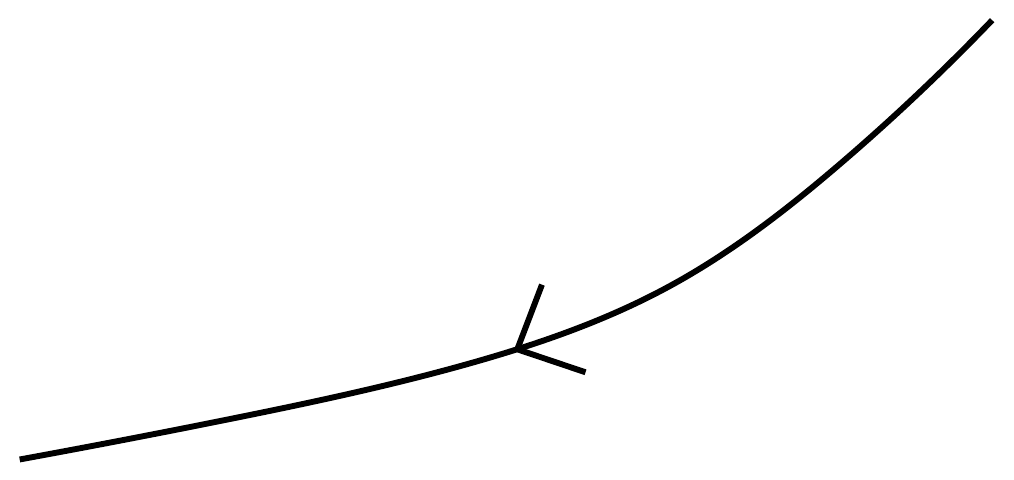}}
    \put(-280,8){\fontsize{9}{9}$o$} \put(-210,
    29){\fontsize{9}{9}$o$} \put(-210,
    -32){\fontsize{9}{9}$o$} \put(-123,
    29){\fontsize{9}{9}$o$} \put(-123,
    -32){\fontsize{9}{9}$o$} \put(-255, 10){\small{braiding}}
    \put(-165, 20){\small{braid}} \put(-168, 10){\small{isotopy}}
    \put(-84, 10){\small{$L_o$-move}}
    \]
    \caption{Planar isotopy on a down-arc} \label{Darc}
  \end{figure}

  Correcting horizontal alignment of either crossings, subdivision points or vertices amounts to small vertical shifts, which yield\---- up to braid isotopy\---- the same trivalent braid. In Figure~\ref{valign} we illustrate the correcting shifts for vertically aligned subdivision points. Note that the final braids are the same, up to planar isotopy. The remaining instances of vertical alignment can be treated similarly.

  \begin{figure}[ht]
    \[
    \raisebox{-30pt}{\includegraphics[height=1in,
      width=.6in]{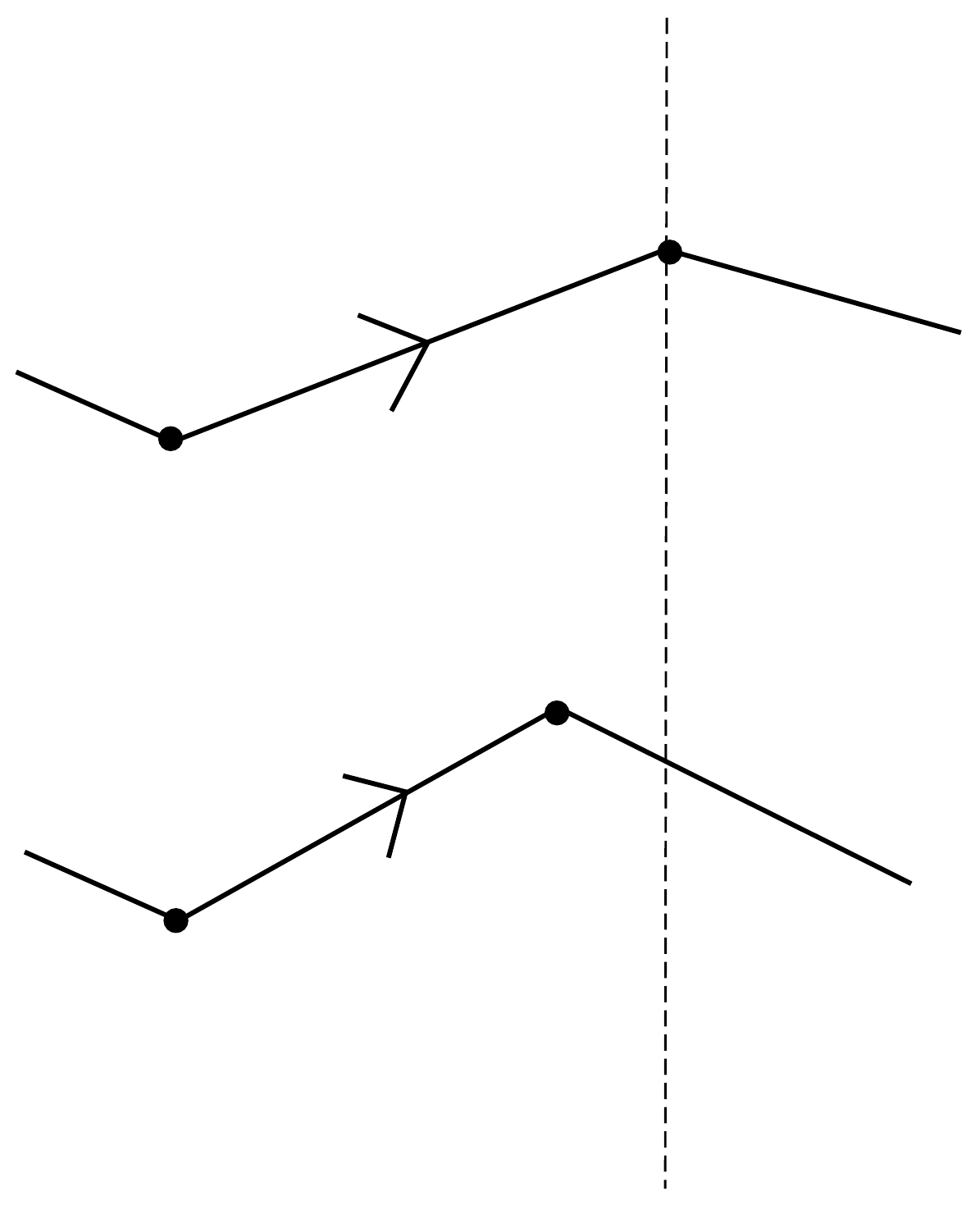}} \hspace{.4cm} \longrightarrow
    \hspace{.4cm} \raisebox{-30pt}{\includegraphics[height=1in,
      width=.6in]{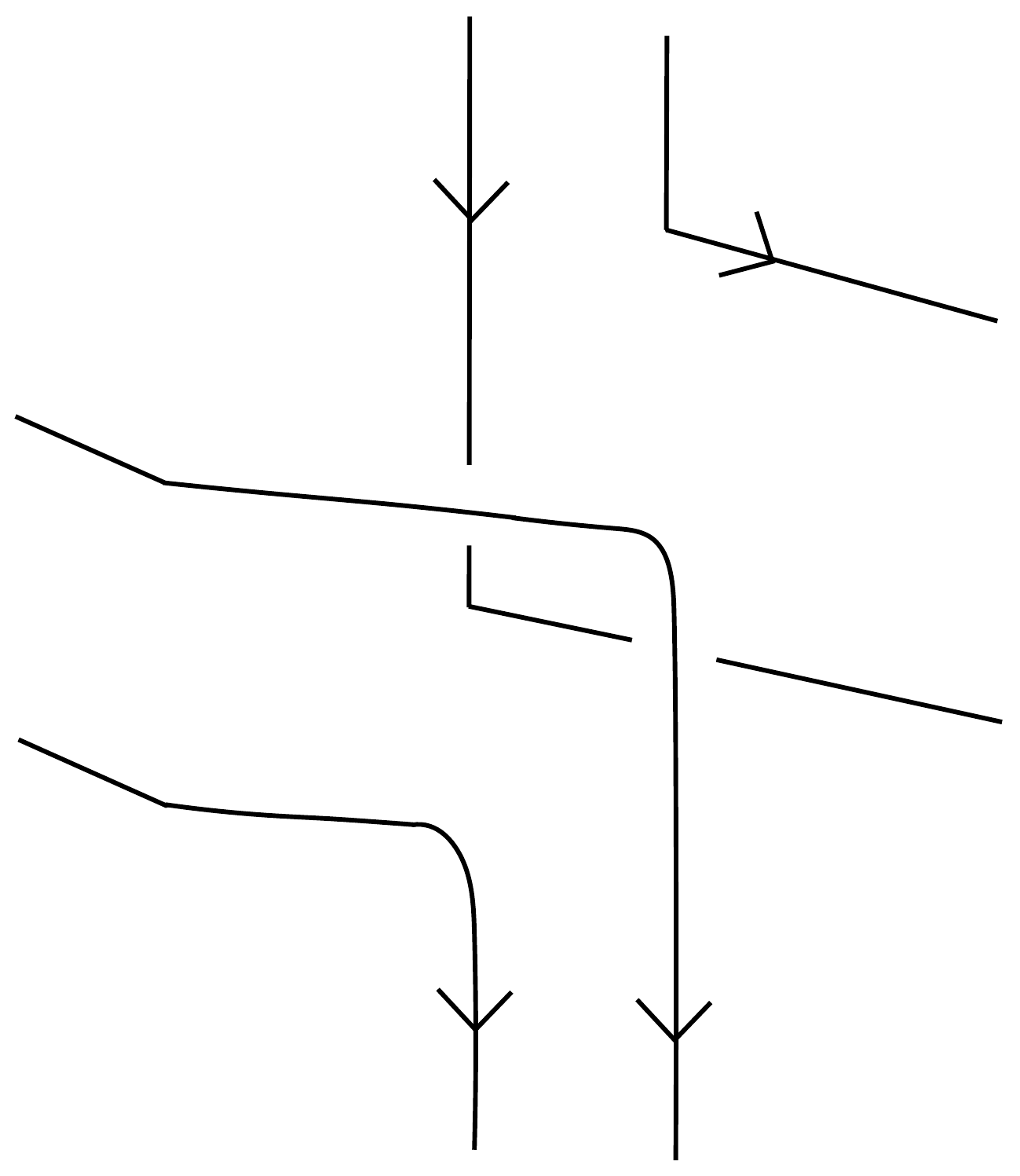}} \hspace{.4cm}
    \longleftrightarrow \hspace{.4cm}
    \raisebox{-30pt}{\includegraphics[height=1in,
      width=.6in]{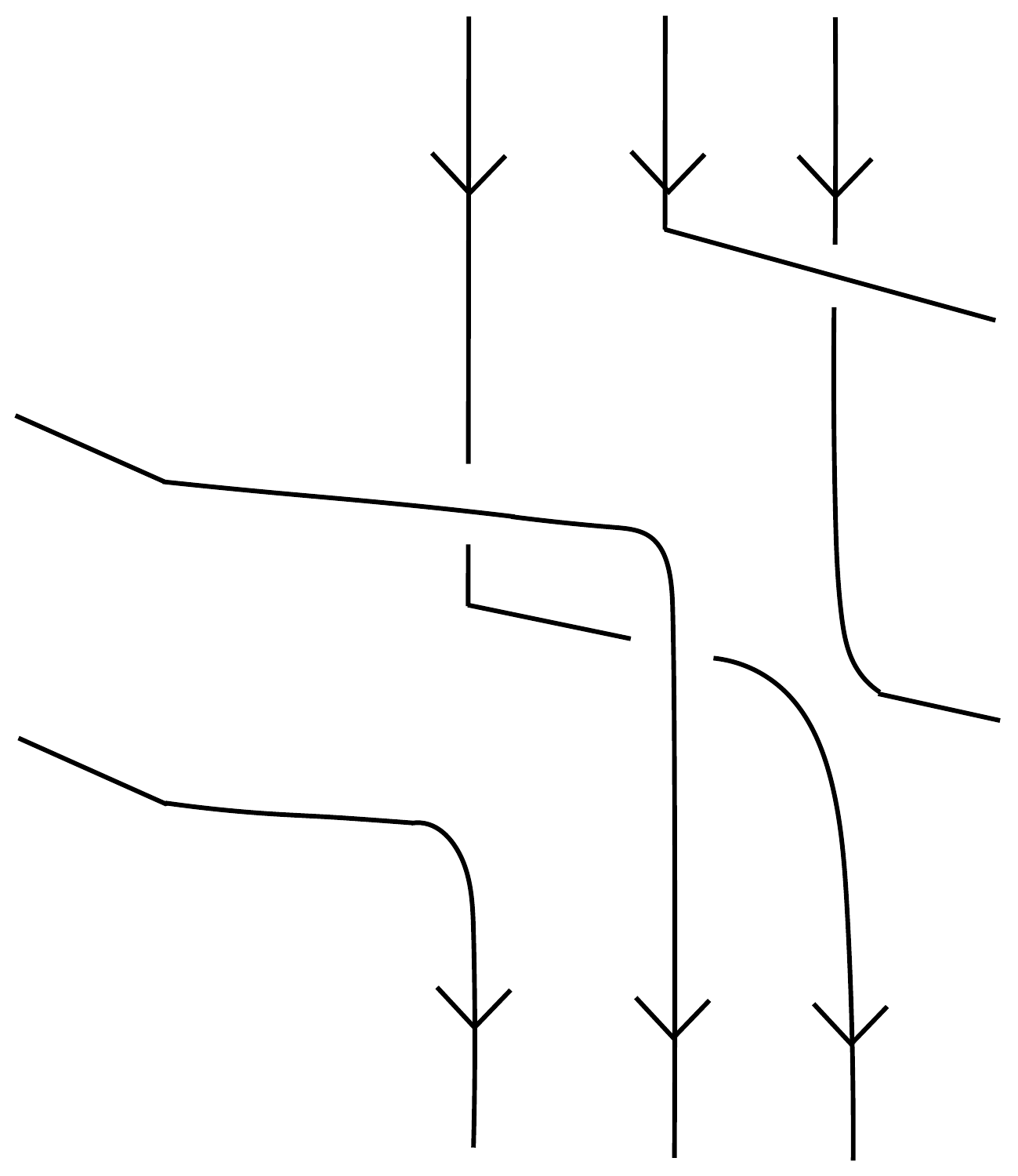}} \put(-205,
    25){\fontsize{9}{9}$o$} \put(-210,
    -2){\fontsize{9}{9}$u$} \put(-30,
    35){\fontsize{9}{9}$u$} \put(-21,
    35){\fontsize{9}{9}$o$} \put(-14,
    35){\fontsize{9}{9}$u$} \put(-30,
    -27){\fontsize{9}{9}$u$} \put(-21,
    -27){\fontsize{9}{9}$o$} \put(-13,
    -27){\fontsize{9}{9}$u$} \put(-111,
    35){\fontsize{9}{9}$o$} \put(-120,
    35){\fontsize{9}{9}$u$} \put(-120,
    -27){\fontsize{9}{9}$u$} \put(-111,
    -27){\fontsize{9}{9}$o$} \put(-84,
    10){\small{$L_u$-move}} \put(-170, 10){\small{braiding}}
    \]
    \vspace{.4cm}
    \[
    \raisebox{-30pt}{\includegraphics[height=1in,
      width=.6in]{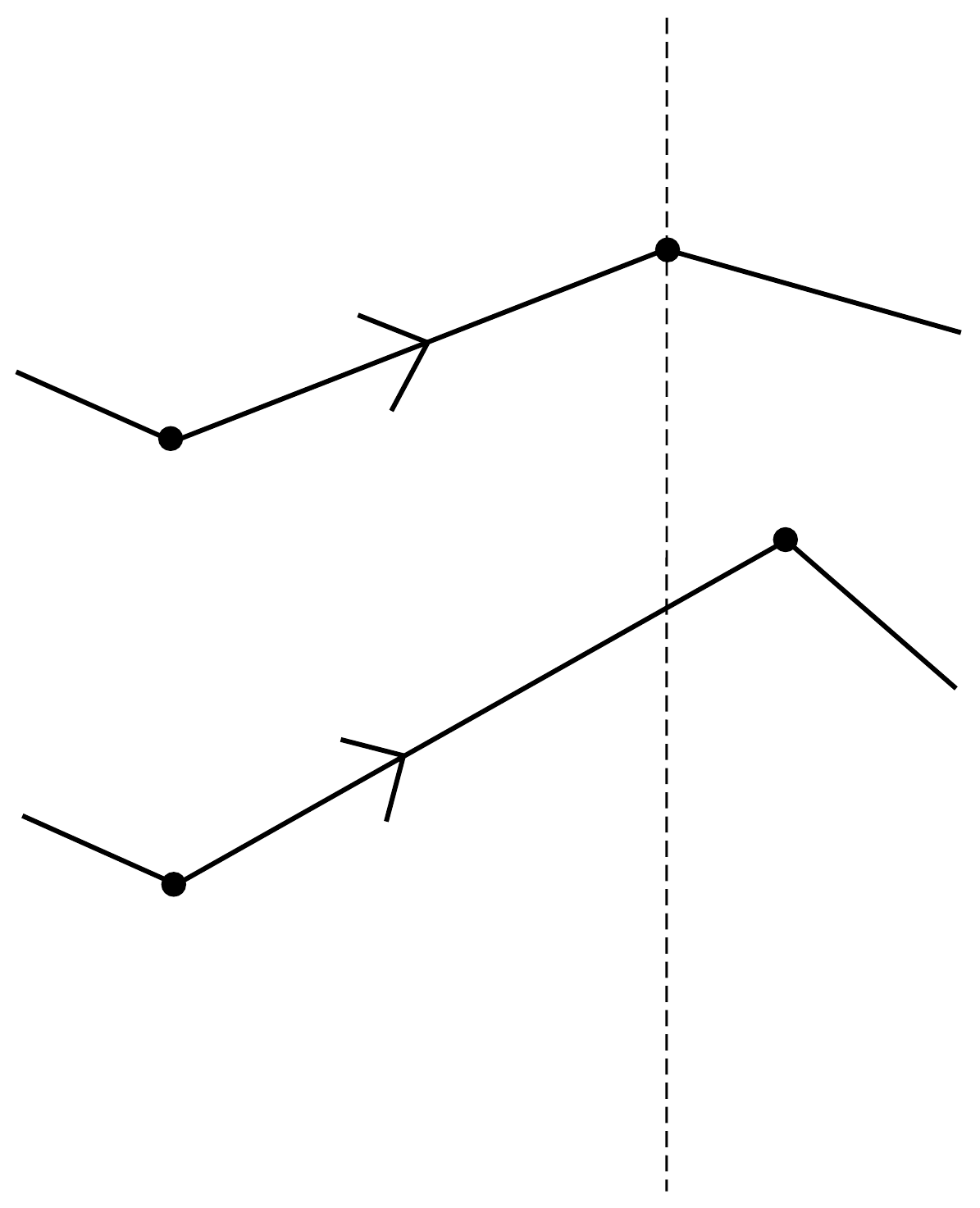}} \hspace{.4cm} \longrightarrow
    \hspace{.4cm} \raisebox{-30pt}{\includegraphics[height=1in,
      width=.6in]{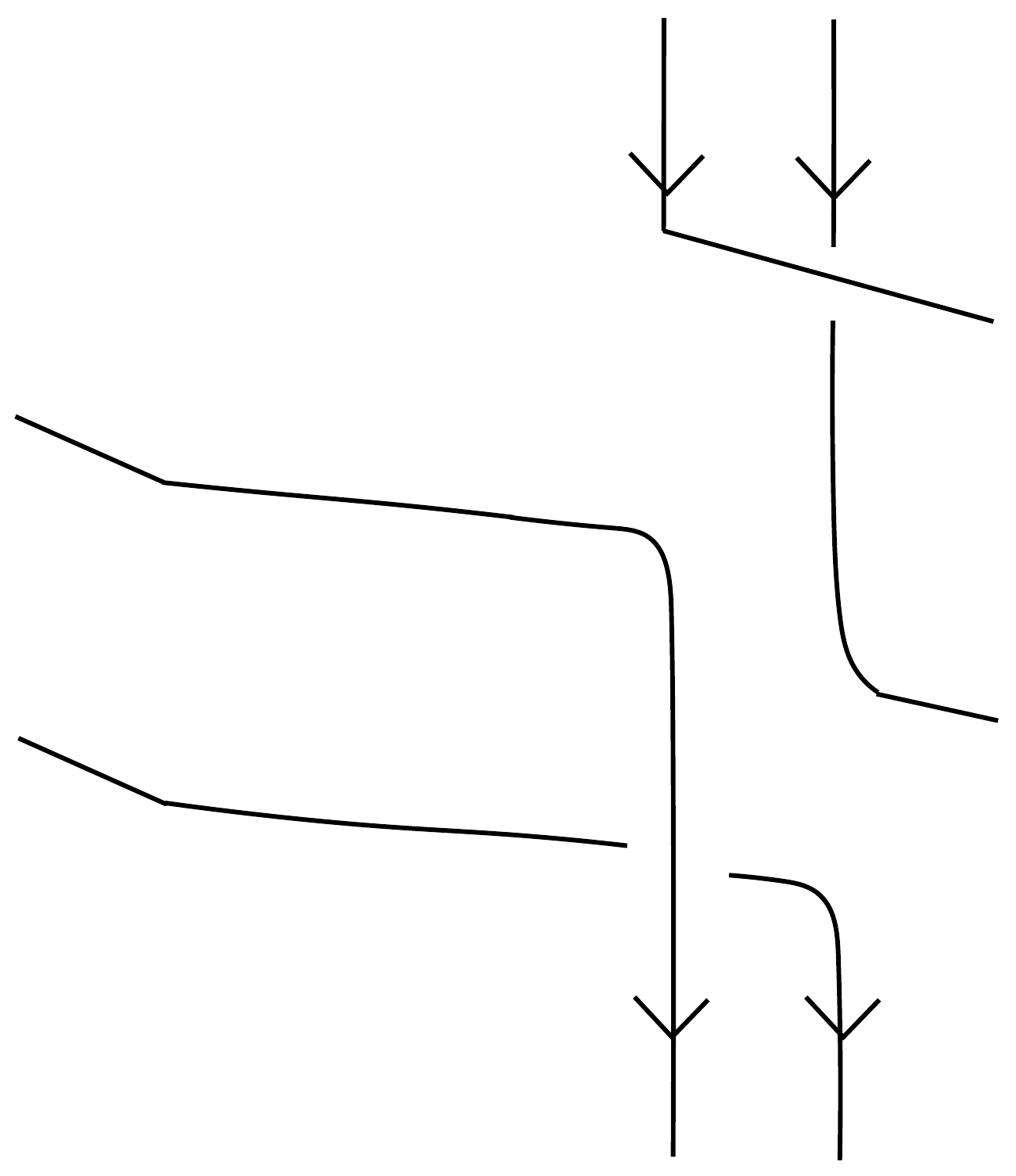}} \hspace{.3cm}
    \longleftrightarrow \hspace{.4cm}
    \raisebox{-30pt}{\includegraphics[height=1in,
      width=.6in]{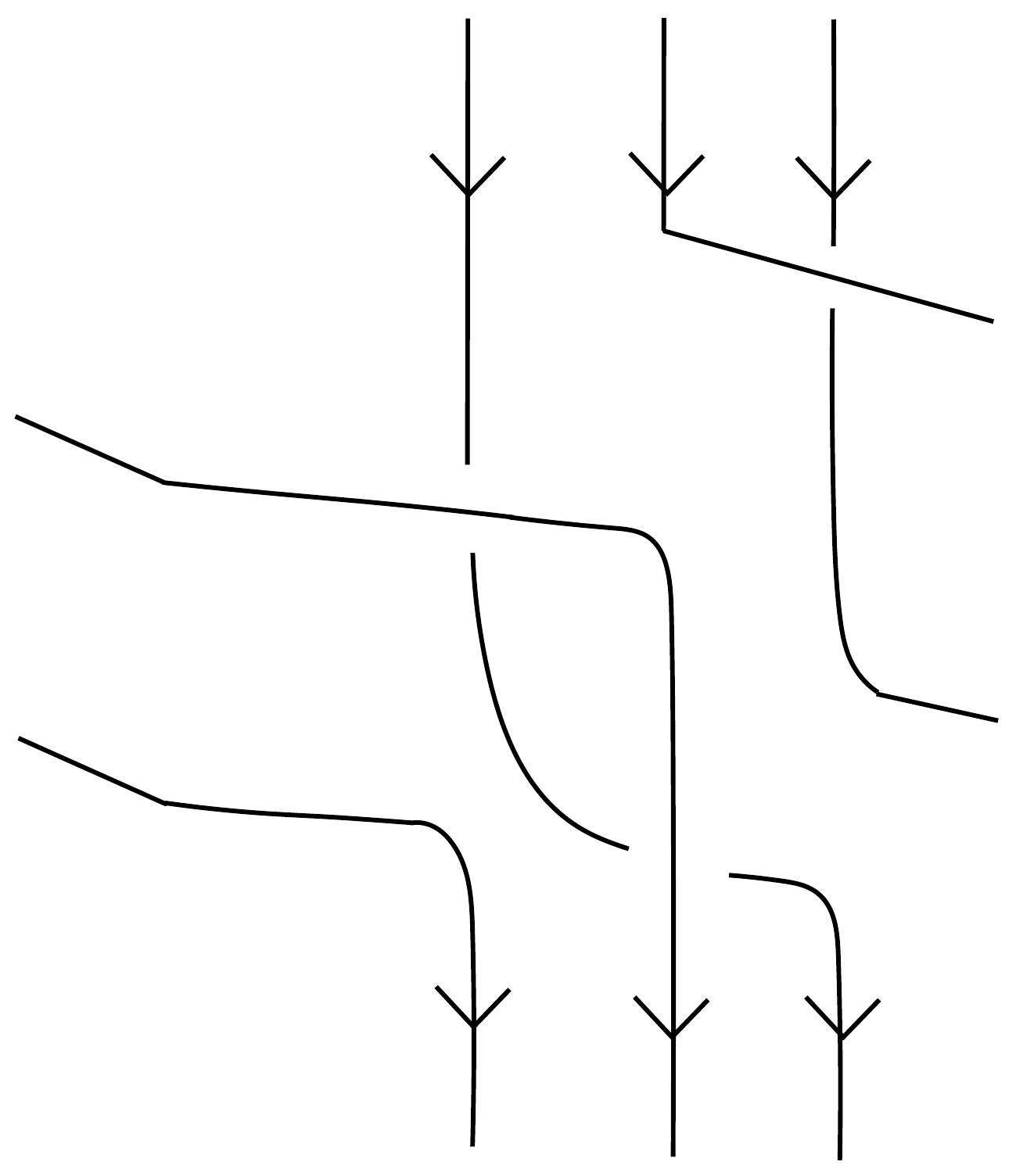}} \put(-205,
    25){\fontsize{9}{9}$o$} \put(-200,
    4){\fontsize{9}{9}$u$} \put(-30,
    35){\fontsize{9}{9}$u$} \put(-21,
    35){\fontsize{9}{9}$o$} \put(-14,
    35){\fontsize{9}{9}$u$} \put(-30,
    -27){\fontsize{9}{9}$u$} \put(-21,
    -27){\fontsize{9}{9}$o$} \put(-14,
    -27){\fontsize{9}{9}$u$} \put(-110,
    35){\fontsize{9}{9}$o$} \put(-101,
    35){\fontsize{9}{9}$u$} \put(-101,
    -27){\fontsize{9}{9}$u$} \put(-109,
    -27){\fontsize{9}{9}$o$} \put(-83,
    10){\fontsize{9}{9}{$L_u$-move}} \put(-170, 10){\small{braiding}}
    \]
    \caption{Planar shifts of vertically aligned subdivision
      points} \label{valign}
  \end{figure}

  Finally, in Figures~\ref{swing1} and~\ref{swing2} we show that the swing moves also yield $TL$-equivalent trivalent braids. This completes the proof of the lemma.
\end{proof}

\begin{figure}
  \[
  \raisebox{-15pt}{\includegraphics[height=.59in]{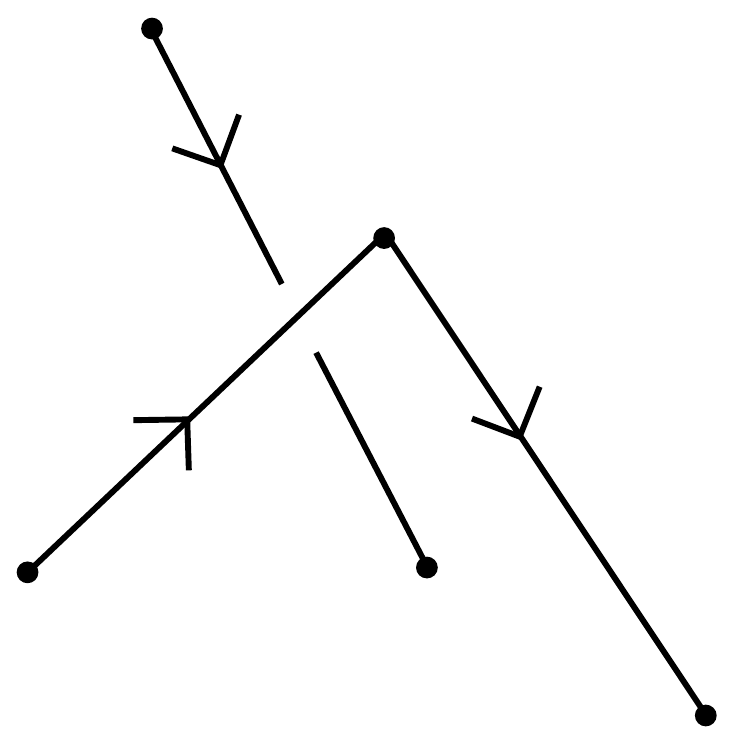}}
  \hspace{.4cm} \longrightarrow \hspace{.4cm}
  \raisebox{-30pt}{\includegraphics[height=1in]{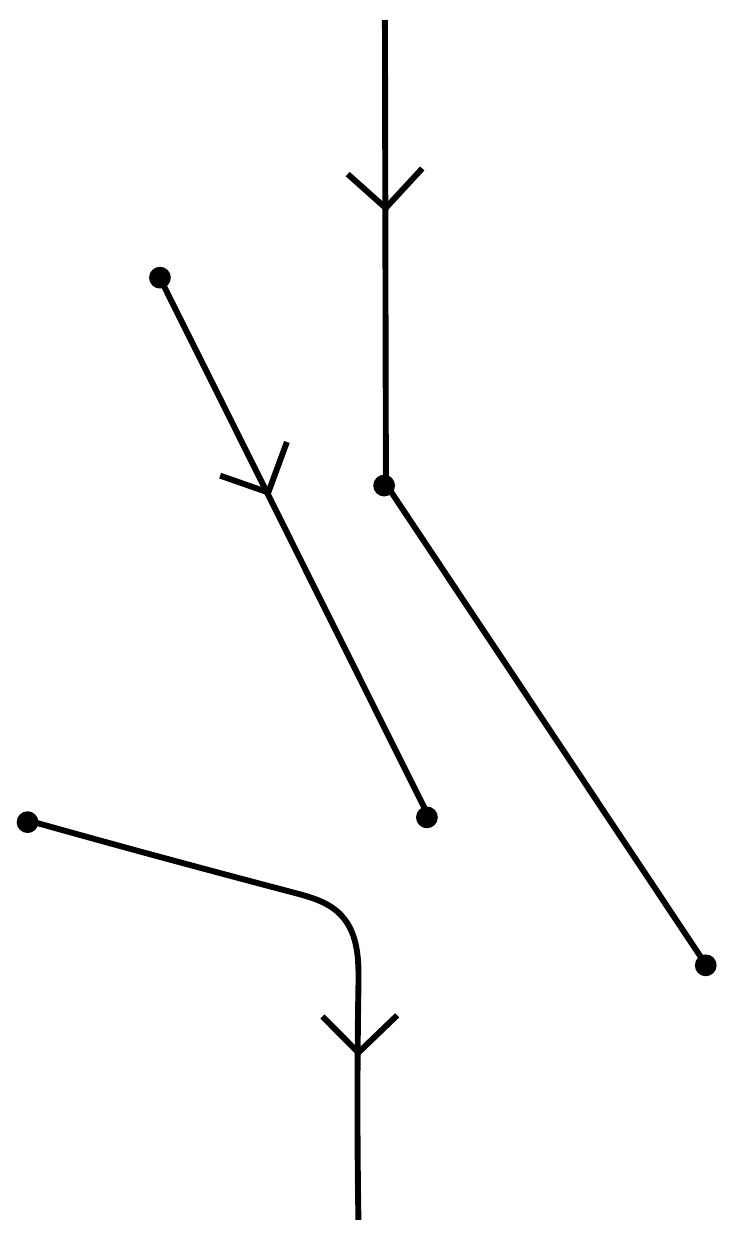}}
  \hspace{.4cm} \thicksim \hspace{.4cm}
  \raisebox{-30pt}{\includegraphics[height=1in]{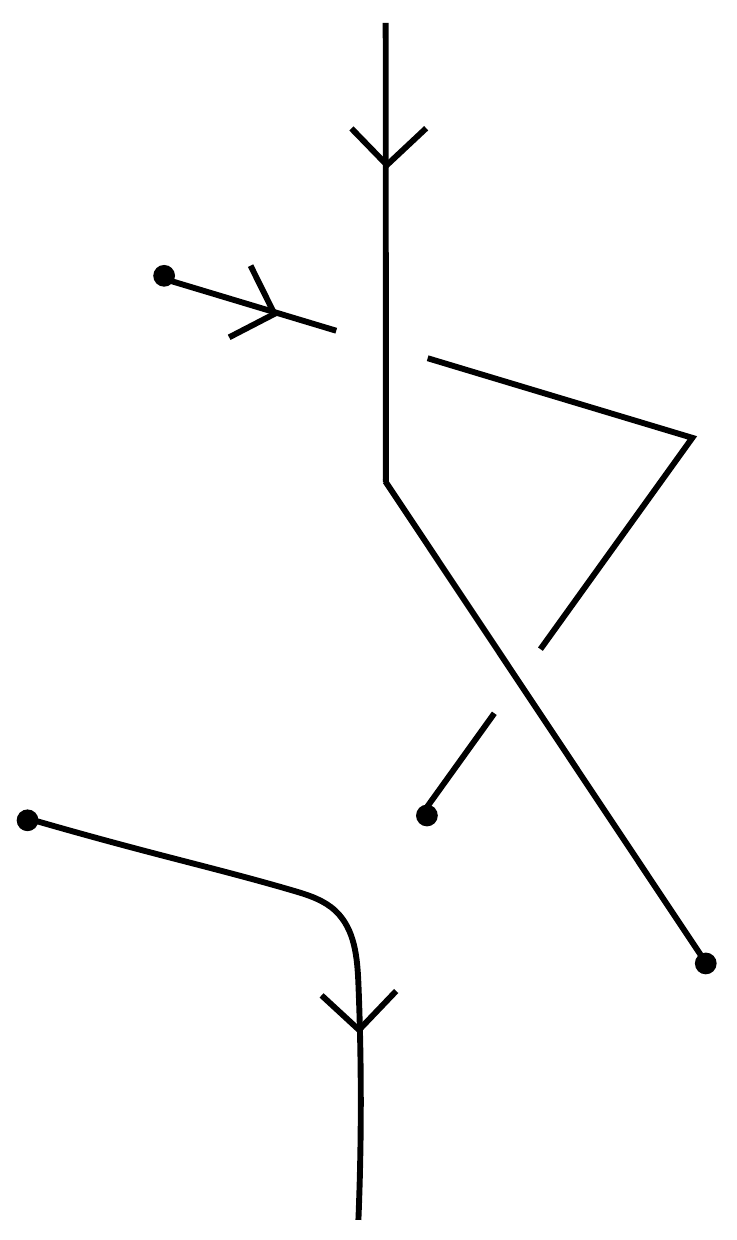}}
  \hspace{.4cm} \longleftarrow \hspace{.4cm}
  \raisebox{-15pt}{\includegraphics[height=.59in]{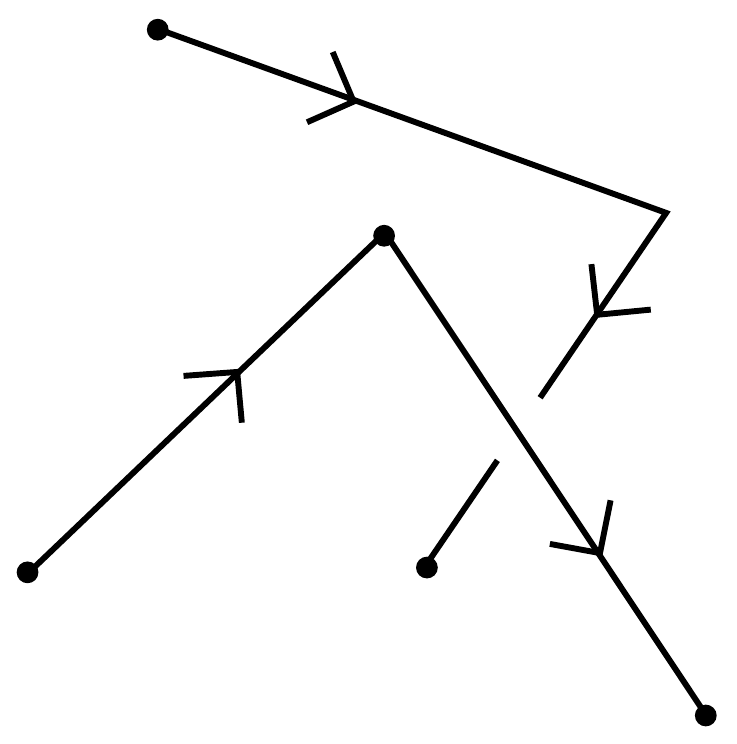}}
  \put(-193,
  35){\fontsize{9}{9}$o$} \put(-194,
  -27){\fontsize{9}{9}$o$} \put(-114,
  35){\fontsize{9}{9}$o$} \put(-115,
  -27){\fontsize{9}{9}$o$} \put(-249, 10){\small{braiding}} \put(-80,
  10){\small{braiding}} \put(-159, 20){\small{braid}} \put(-162,
  10){\small{isotopy}}
  \]
  \caption{The first case of the swing moves} \label{swing1}
\end{figure}

\begin{figure}
  \[
  \raisebox{-15pt}{\includegraphics[height=.55in]{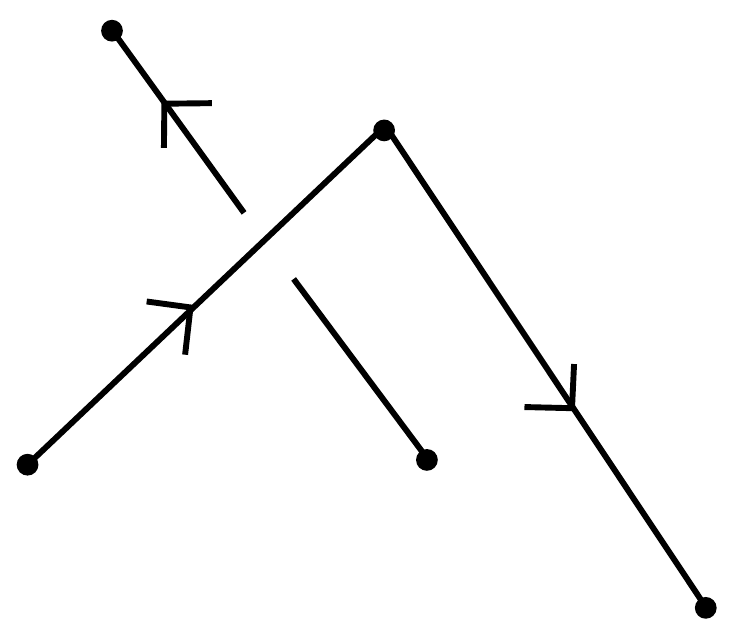}}
  \hspace{.4cm} \longrightarrow \hspace{.4cm}
  \raisebox{-30pt}{\includegraphics[height=1in]{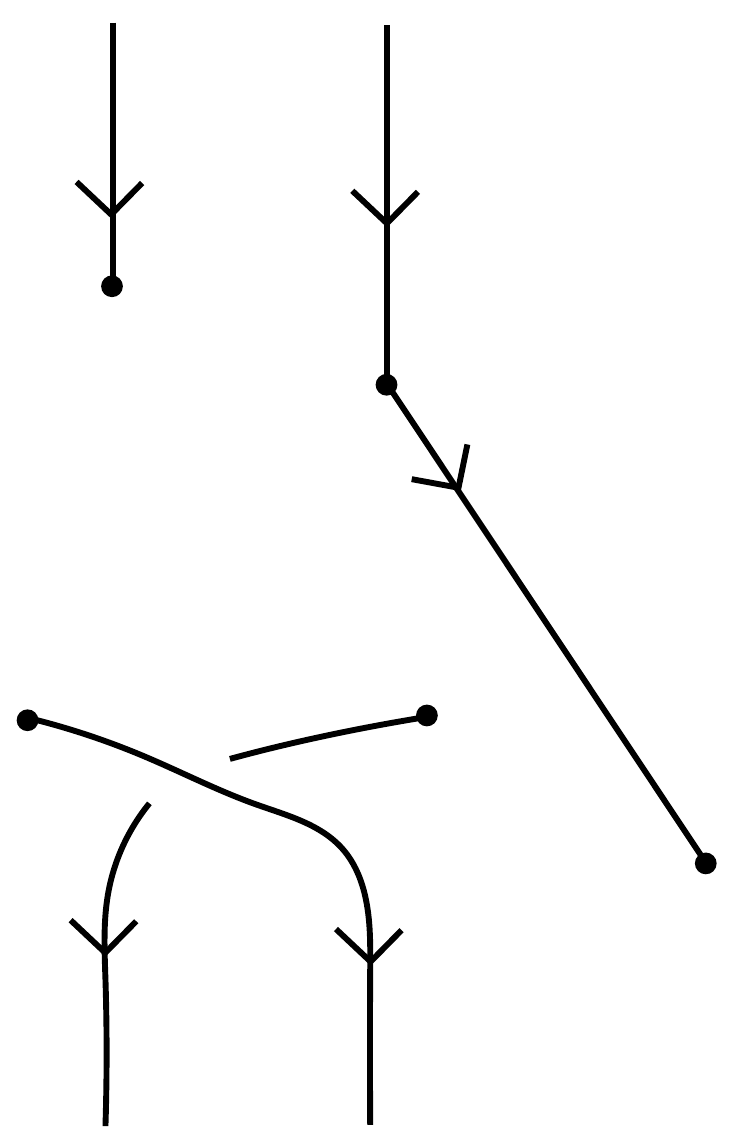}}
  \hspace{.4cm} \thicksim \hspace{.4cm}
  \raisebox{-30pt}{\includegraphics[height=1in]{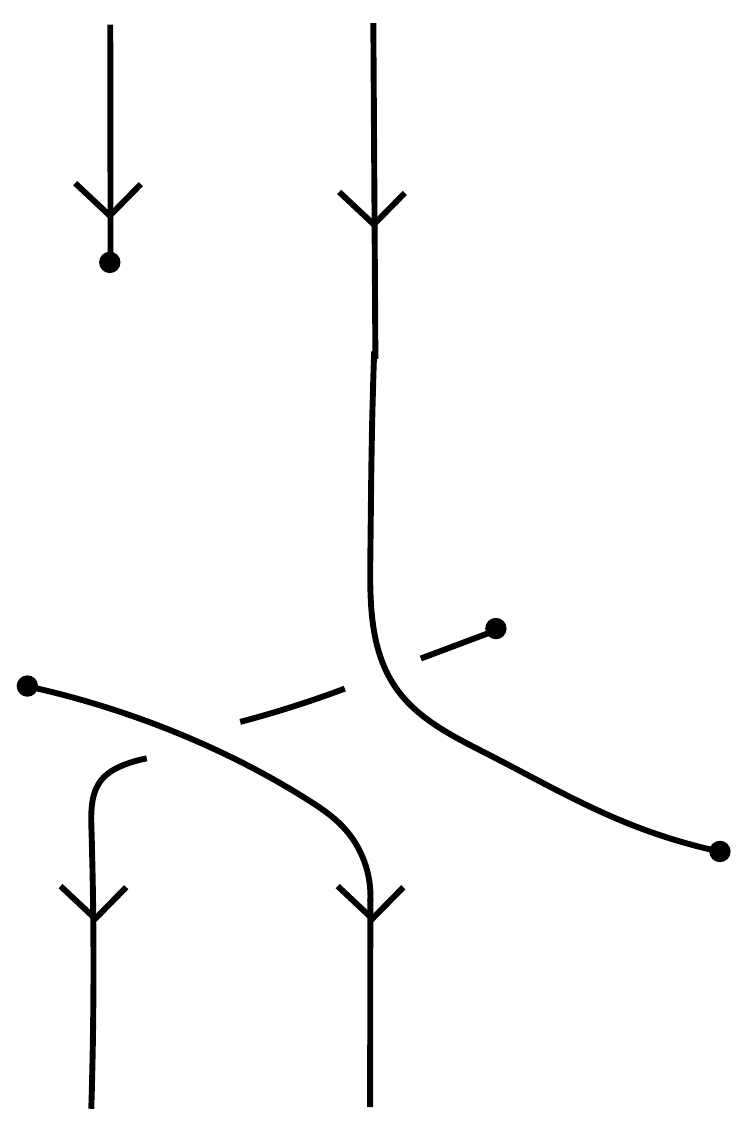}}
  \hspace{.4cm} \longleftrightarrow \hspace{.4cm}
  \raisebox{-30pt}{\includegraphics[height=1in]{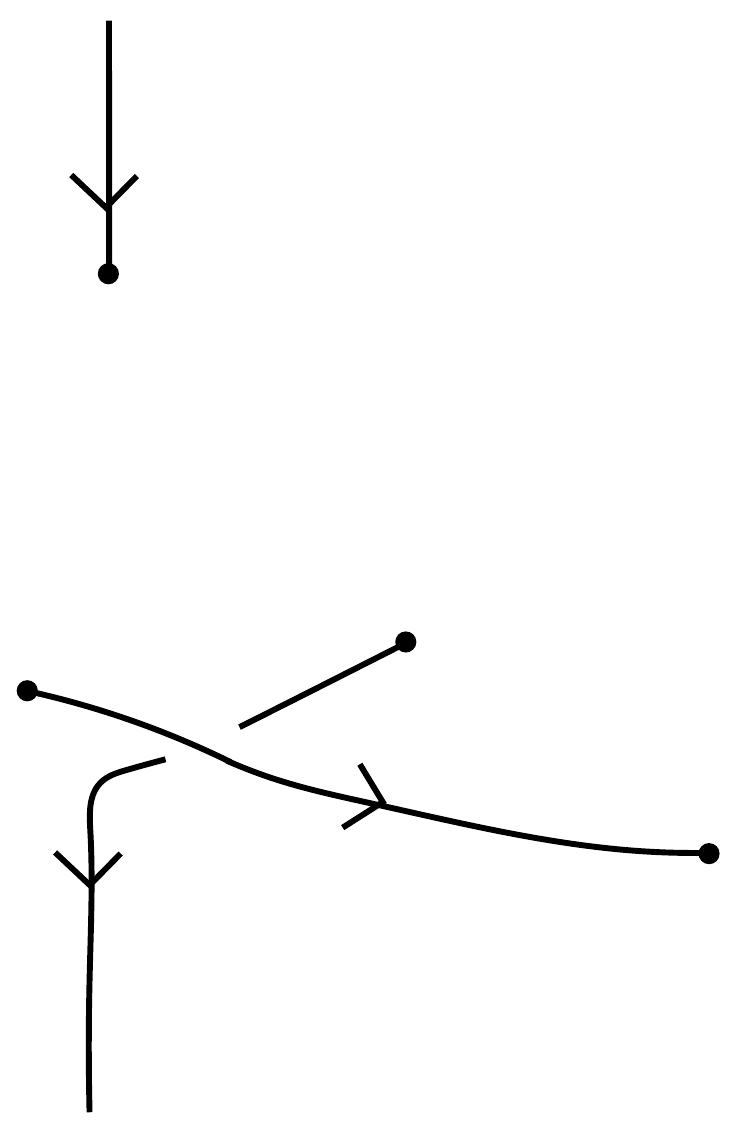}}
  \put(-224,
  35){\fontsize{9}{9}$u$} \put(-225,
  -27){\fontsize{9}{9}$u$} \put(-207,
  35){\fontsize{9}{9}$o$} \put(-207,
  -27){\fontsize{9}{9}$o$} \put(-142,
  35){\fontsize{9}{9}$u$} \put(-142,
  -27){\fontsize{9}{9}$u$} \put(-124,
  35){\fontsize{9}{9}$o$} \put(-124,
  -27){\fontsize{9}{9}$o$} \put(-47,
  35){\fontsize{9}{9}$u$} \put(-48,
  -27){\fontsize{9}{9}$u$} \put(-82, 20){\small{basic}} \put(-88,
  10){\small{$L_o$-move}} \put(-264, 10){\small{braiding}} \put(-170,
  20){\small{braid}} \put(-173, 10){\small{isotopy}}
  \]
  \vspace{0cm}
  \begin{center}
    \rule{\textwidth}{.5pt}
  \end{center}
  \vspace{0cm}
  \[
  \raisebox{-15pt}{\includegraphics[height=.59in]{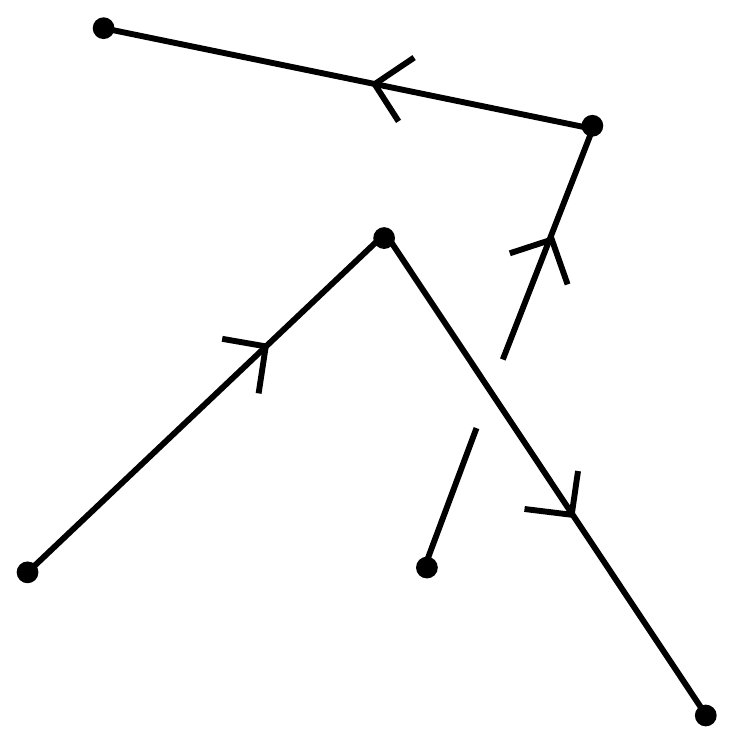}}
  \hspace{.4cm} \longrightarrow \hspace{.4cm}
  \raisebox{-30pt}{\includegraphics[height=1in]{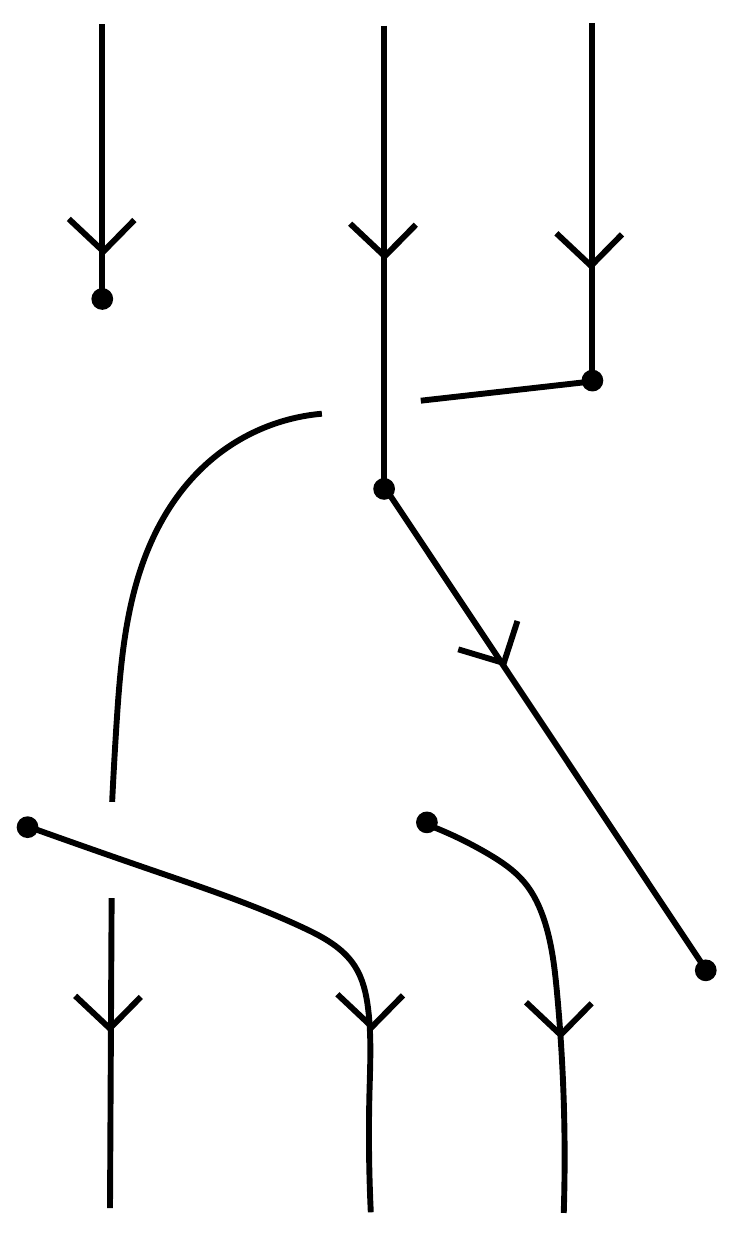}}
  \hspace{.4cm} \thicksim \hspace{.4cm}
  \raisebox{-30pt}{\includegraphics[height=1in]{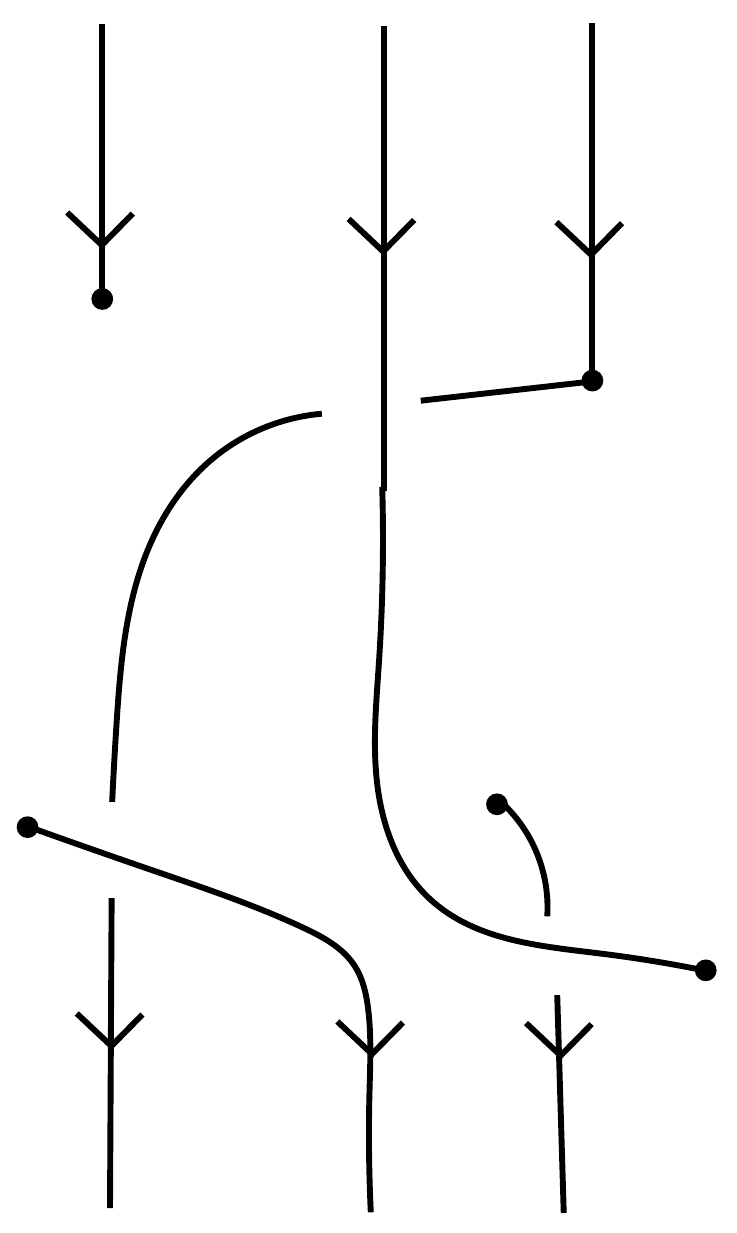}}
  \put(-192,
  28){\fontsize{9}{9}$u$} \put(-200,
  10){\fontsize{9}{9}$o$} \put(-124,
  35){\fontsize{9}{9}$u$} \put(-123,
  -27){\fontsize{9}{9}$u$} \put(-107,
  35){\fontsize{9}{9}$o$} \put(-107,
  -27){\fontsize{9}{9}$o$} \put(-95,
  35){\fontsize{9}{9}$u$} \put(-96,
  -27){\fontsize{9}{9}$u$} \put(-44,
  35){\fontsize{9}{9}$u$} \put(-44,
  -27){\fontsize{9}{9}$u$} \put(-27,
  35){\fontsize{9}{9}$o$} \put(-28,
  -27){\fontsize{9}{9}$o$} \put(-15,
  35){\fontsize{9}{9}$u$} \put(-17,
  -27){\fontsize{9}{9}$u$} \put(-161, 10){\small{braiding}} \put(-72,
  20){\small{braid}} \put(-75, 10){\small{isotopy}}
  \]
  \vspace{.4cm}
  \[
  \hspace{.4cm} \longleftrightarrow \hspace{.4cm}
  \raisebox{-30pt}{\includegraphics[height=1in]{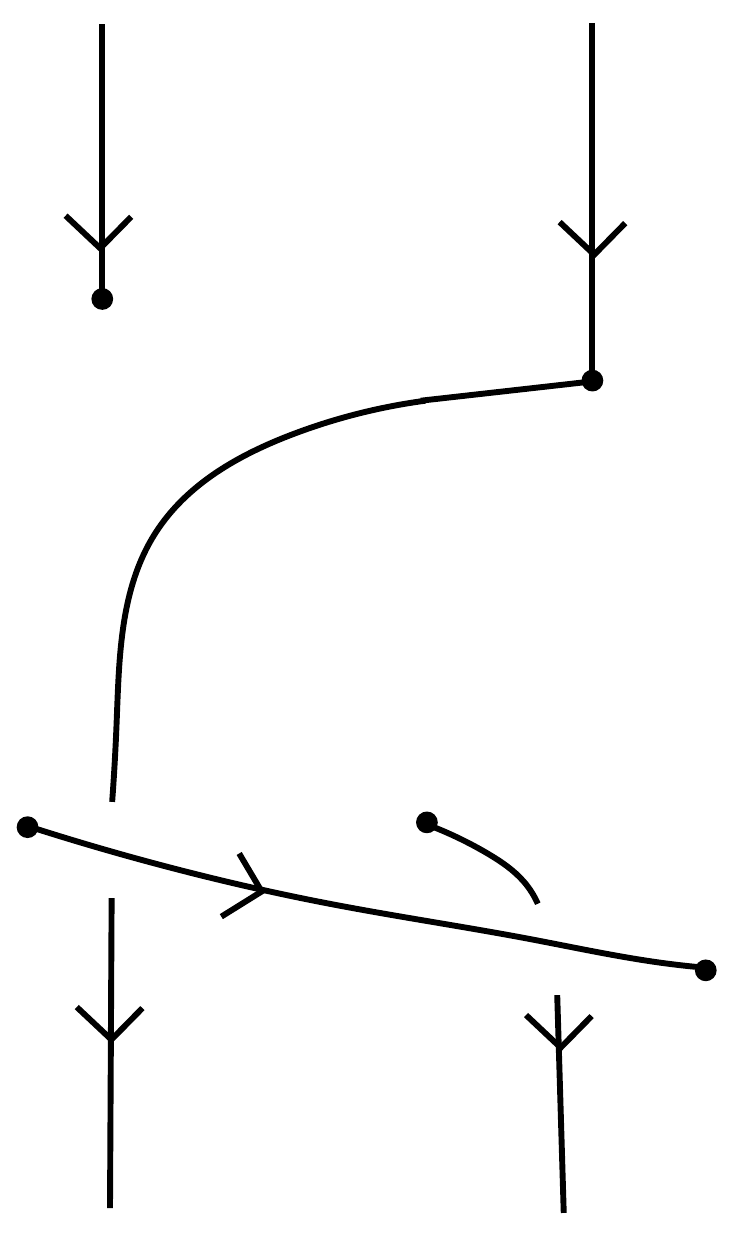}}
  \hspace{.4cm} \thicksim \hspace{.4cm}
  \raisebox{-30pt}{\includegraphics[height=1in]{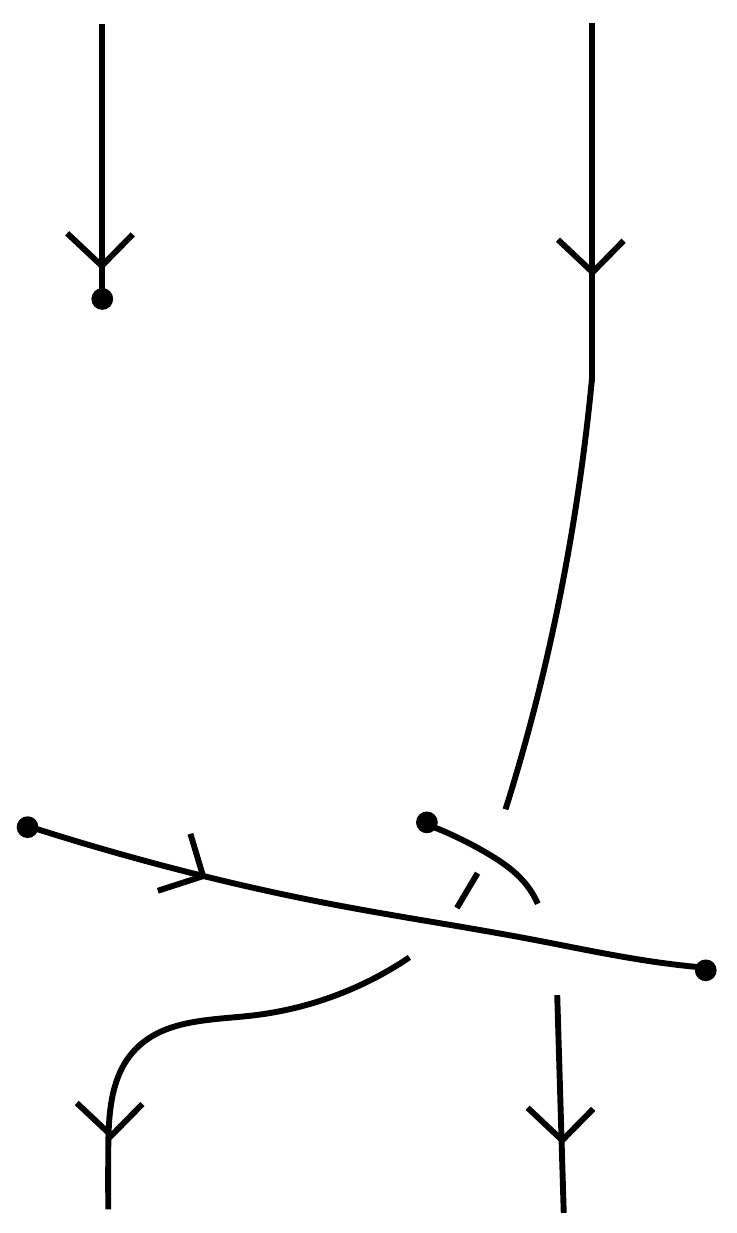}}
  \hspace{.4cm} \longleftrightarrow \hspace{.4cm}
  \raisebox{-30pt}{\includegraphics[height=1in]{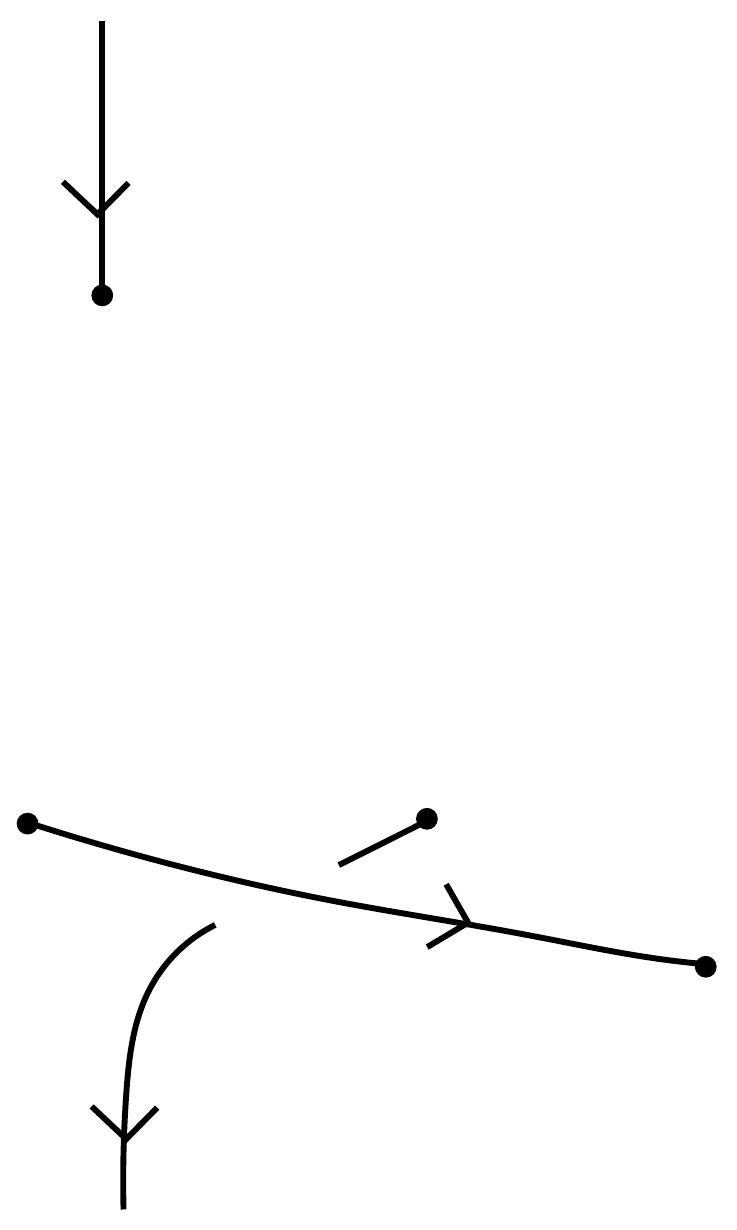}}
  \put(-213,
  35){\fontsize{9}{9}$u$} \put(-213,
  -27){\fontsize{9}{9}$u$} \put(-185,
  35){\fontsize{9}{9}$u$} \put(-186,
  -27){\fontsize{9}{9}$u$} \put(-134,
  35){\fontsize{9}{9}$u$} \put(-135,
  -27){\fontsize{9}{9}$u$} \put(-106,
  35){\fontsize{9}{9}$u$} \put(-108,
  -27){\fontsize{9}{9}$u$} \put(-44,
  35){\fontsize{9}{9}$u$} \put(-44,
  -27){\fontsize{9}{9}$u$} \put(-248, 20){\small{basic}} \put(-254,
  10){\small{$L_o$-move}} \put(-165, 20){\small{braid}} \put(-168,
  10){\small{isotopy}} \put(-78, 20){\small{right}} \put(-86,
  10){\small{$-L_u$-move}}
  \]
  \caption{The second case of the swing moves} \label{swing2}
\end{figure}


We will now show that ambient isotopy does not affect the braiding process.

\begin{lemma}
  The extended Reidemeister moves yield $TL$-equivalent braids.
  \label{iso moves}
\end{lemma}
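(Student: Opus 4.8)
The plan is to carry on with the localization strategy already in force. Given two STG diagrams in general position differing by a single extended Reidemeister move, I would restrict attention to the small disk in which the move is performed, assume that every up-arc lying outside this disk has already been braided, and invoke the triangle condition to conclude that it suffices to compare the braided portions produced by the local region before and after the move. If applying the move destroys general position (which can happen only near a vertex or because two aligned points are created), I would first restore general position; by the discussion following Definition~\ref{gen pos} this uses only direction sensitive moves and re-subdivision, so by Lemma~\ref{dir sen} and the corollary on subdivisions it changes the braid by at most a sequence of $L$-moves and braid isotopy, hence is harmless. With this reduction in place, I would treat the moves $R1$ through $R5$ one at a time, splitting each into subcases according to the orientations of the participating edges (up-arc versus down-arc and, for moves meeting a vertex, zip versus unzip and $Y$/$\lambda$/$M$/$W$-type), and in each subcase produce a sequence of braid isotopies and $L$-moves relating the two braided outputs.

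For the moves taking place away from vertices, namely $R1$, $R2$, and $R3$, the analysis is essentially that of Lambropoulou and Rourke for classical braids (see~\cite{L, LR}). When all strands involved are down-arcs, $R2$ and $R3$ are literally the braid relations $\sigma_i\sigma_i^{-1}=1_n$ and $\sigma_i\sigma_{i+1}\sigma_i=\sigma_{i+1}\sigma_i\sigma_{i+1}$, hence braid isotopy; when an up-arc is involved, one braids it first and then checks that the two outputs agree up to braid isotopy, possibly after contracting an $L$-move pair (this is needed for $R2$, which may remove an up-arc and so change the number of strands on the two sides). The move $R1$ is where the $L$-move genuinely enters: a kink, depending on its sign, on whether it sits on an up- or a down-arc, and on the chosen label, braids to a braid that differs from the kink-free one precisely by introducing or deleting one pair of vertically aligned strands encircling the braid axis, that is, by an $L_o$- or $L_u$-move. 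Thus $R1$ corresponds directly to an $L$-move.

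For $R4$ and $R5$, which involve a trivalent vertex, I would first observe that the vertex in the local region must be brought into regular position before braiding; the different choices in doing so, and the switch and swing moves that mediate them, were already shown harmless in Lemma~\ref{dir sen}. After normalizing the vertex I would run through the cases. When every edge in the local region is a down-arc, $R4$ is one of the mixed relations $\sigma_i\sigma_{i+1}y_i=y_{i+1}\sigma_i$, $\lambda_i\sigma_{i+1}\sigma_i=\sigma_i\lambda_{i+1}$ and their variants, and $R5$ is one of $y_i=\sigma_i y_i$, $\lambda_i=\lambda_i\sigma_i$; in all of these the move is already part of trivalent braid isotopy. When the local region contains up-arcs, one braids them and compares the two portions: using the braid relations of Section~\ref{sec:braidsintro}, together with $L$-moves where necessary to reconcile the extra strands created by braiding, one sees that the two braided portions are $TL$-equivalent. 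Each such case is a finite diagrammatic check; $M$- and $W$-type configurations reduce, exactly as in the preparation for braiding (Figure~\ref{vyl}), to $Y$- and $\lambda$-type ones, and the left/right and orientation-reversed versions follow by reflection.

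The main obstacle is bookkeeping rather than conceptual difficulty. For $R4$ and $R5$ the number of configurations is large, since one must vary simultaneously the over/under type of the sliding strand, the zip/unzip character of the vertex, and whether each of the three edges at the vertex and the sliding strand is an up- or a down-arc, and, before any of this, normalize the vertex to regular position, which may itself introduce an $R5$ move and call on the switch/swing machinery. Keeping this case list under control by systematically exploiting reflections, orientation reversal, and the reduction of $M$/$W$ vertices to $Y$/$\lambda$ vertices is the delicate part; once a case has been reduced to a concrete pair of braid words, verifying their $TL$-equivalence is routine. By contrast, the conceptually essential point --- that $R1$ is exactly an $L$-move --- is immediate once the braiding of a kink is drawn, and it is precisely this observation that makes a one-move Markov-type theorem possible. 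Combined with Lemma~\ref{lemma:isotopyRegPosition}, this completes the proof of Theorem~\ref{LMarkov}.
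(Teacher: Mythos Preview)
Your proposal is correct and follows essentially the same localization-and-case-analysis strategy as the paper. Two streamlining observations are worth noting, however.

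First, you overstate the bookkeeping burden for $R4$ and $R5$. Since the diagrams are already in general position, the vertex participating in an $R4$ or $R5$ move is in regular position, so all three of its incident edges are locally down-arcs. There is no need to vary ``whether each of the three edges at the vertex \dots\ is an up- or a down-arc'' or to consider $M$/$W$-type configurations at all: for $R5$ the only surviving case is the braid-form relation $y_i=\sigma_i y_i$ (resp.\ $\lambda_i=\lambda_i\sigma_i$), and for $R4$ the only freedom is the orientation and over/under type of the sliding strand. This collapses the case explosion you anticipate.

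Second, the paper handles the up-arc cases of $R3$ and $R4$ not by direct diagrammatic comparison but by a reduction trick: apply an $R2$ move (already verified) to convert the up-arc into a down-arc locally, invoke the braid-form version of $R3$ or $R4$, and undo the $R2$. This avoids any explicit $L$-move computation for these moves. Your plan of braiding the up-arcs directly and comparing would also work, but the reduction via $R2$ is cleaner.
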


\begin{proof}
  In Figure \ref{R1}, we verify one version of the $R1$ move; the braids corresponding to the two diagrams involved in the move are equivalent up to braid isotopy and $L$-moves.

  \begin{figure}
    \[\raisebox{-15pt}{\includegraphics[height=.4in]{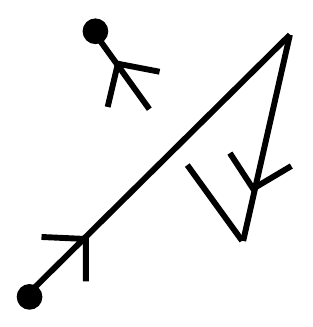}}
    \hspace{.5cm} {\longrightarrow} \hspace{.4cm}
    \raisebox{-40pt}{\includegraphics[height=1.2in]{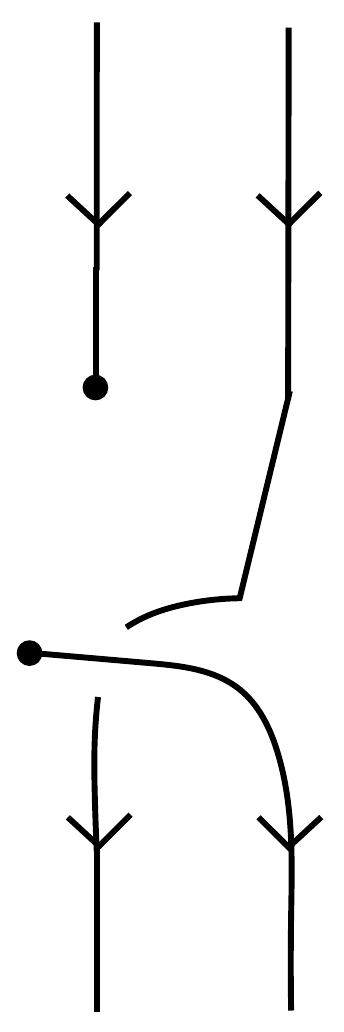}}
    \hspace{.4cm} {\thicksim} \hspace{.4cm}
    \raisebox{-40pt}{\includegraphics[height=1.2in]{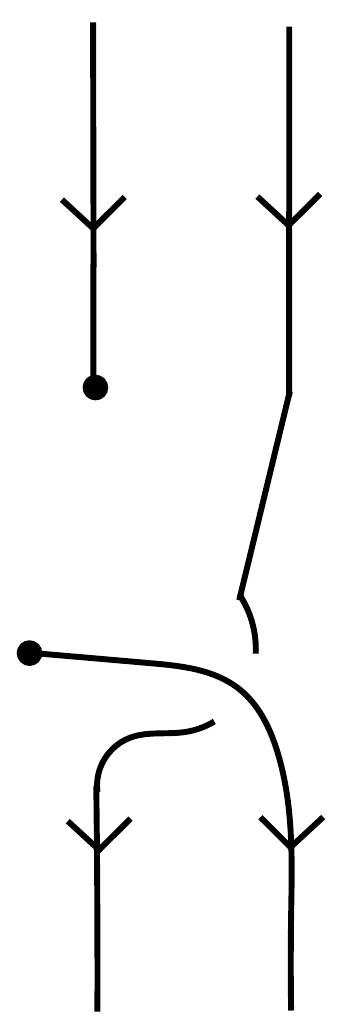}}
    \hspace{.4cm} {\longleftrightarrow} \hspace{.4cm}
    \raisebox{-40pt}{\includegraphics[height=1.2in]{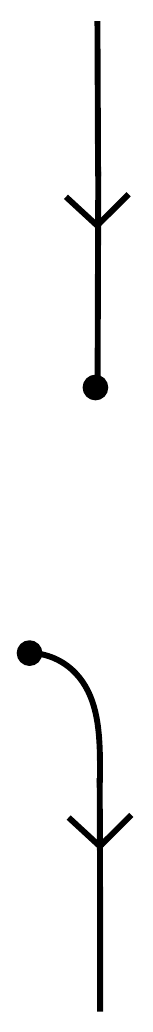}}
    \hspace{.4cm} {\longleftarrow} \hspace{.4cm}
    \raisebox{-15pt}{\includegraphics[height=.4in]{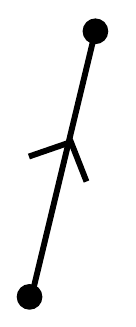}}
    \put(-195,38){\fontsize{9}{9}$u$}
    \put(-195,-38){\fontsize{9}{9}$u$}
    \put(-178,38){\fontsize{9}{9}$o$}
    \put(-178,-38){\fontsize{9}{9}$o$}
    \put(-136,38){\fontsize{9}{9}$u$}
    \put(-136,-38){\fontsize{9}{9}$u$}
    \put(-119,38){\fontsize{9}{9}$o$}
    \put(-119,-38){\fontsize{9}{9}$o$}
    \put(-66,38){\fontsize{9}{9}$u$}
    \put(-66,-38){\fontsize{9}{9}$u$}
    \put(-2,0){\fontsize{9}{9}$u$} 
    \put(-232,10){\small{braiding}}
    \put(-165,10){\small{isotopy}}
    \put(-106,10){\small{$-L_o$-move}} \put(-96,20){\small{right}}
    \put(-50,10){\small{braiding}}
    \]
    \caption{An $R1$ move} \label{R1}
  \end{figure}

  The proof of a version of the $R2$ move with one up-arc is given in Figure \ref{R2}. After braiding the up-arcs in the two diagrams involved in the move, we obtain two braids that differ by an $L_u$-move and braid isotopy. The more interesting case is an $R2$ move with two up-arcs is shown in Figure~\ref{R2D2}. Again, the trivalent braids associated with the two sides of the move are $TL$-equivalent.

  \begin{figure}
    \[\raisebox{-18pt}{\includegraphics[height=.6in]{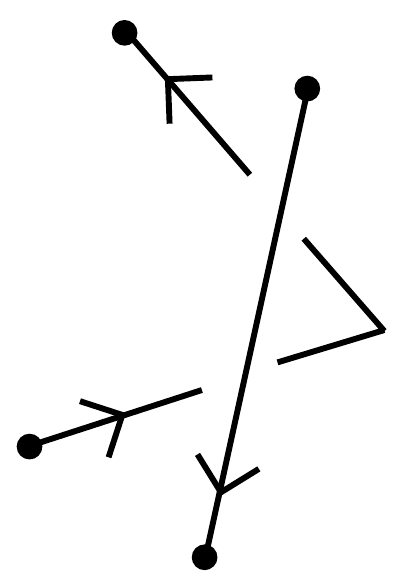}}
    \hspace{.2cm} \longrightarrow \hspace{.2cm}
    \raisebox{-40pt}{\includegraphics[height=1.2in]{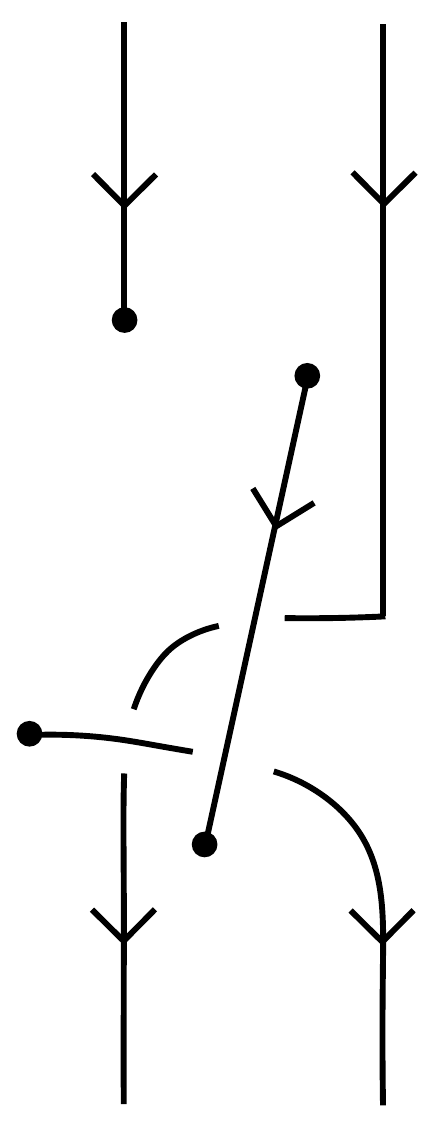}}
    \hspace{.25cm} \thicksim \hspace{.2cm}
    \raisebox{-40pt}{\includegraphics[height=1.2in]{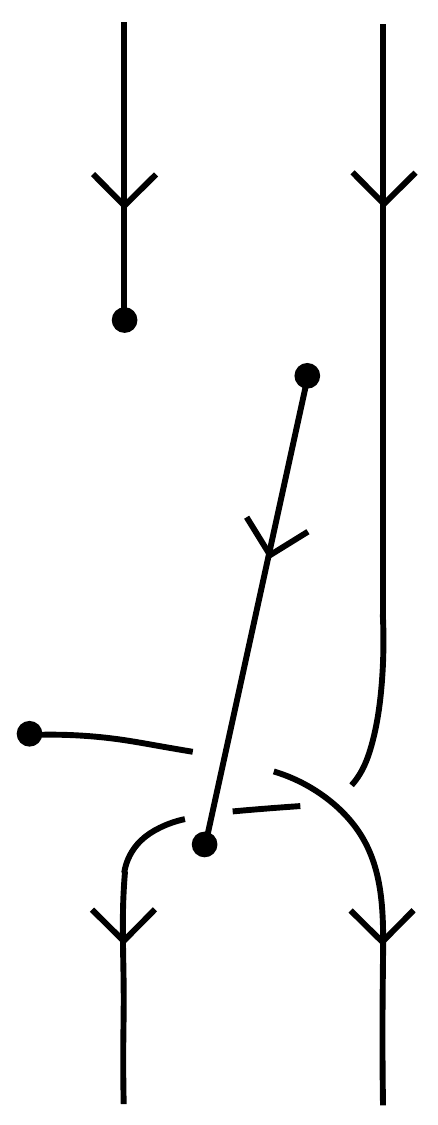}}
    \hspace{.2cm} \longleftrightarrow \hspace{.2cm}
    \raisebox{-40pt}{\includegraphics[height=1.2in]{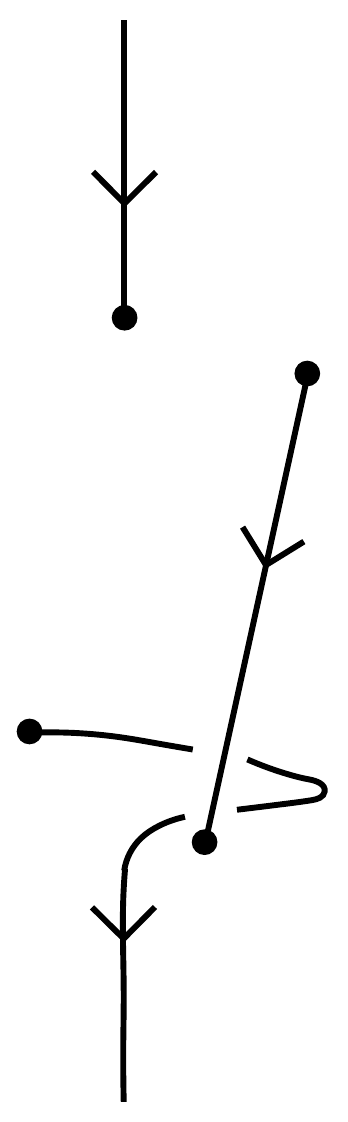}}
    \hspace{.2cm} \sim \hspace{.2cm}
    \raisebox{-40pt}{\includegraphics[height=1.2in]{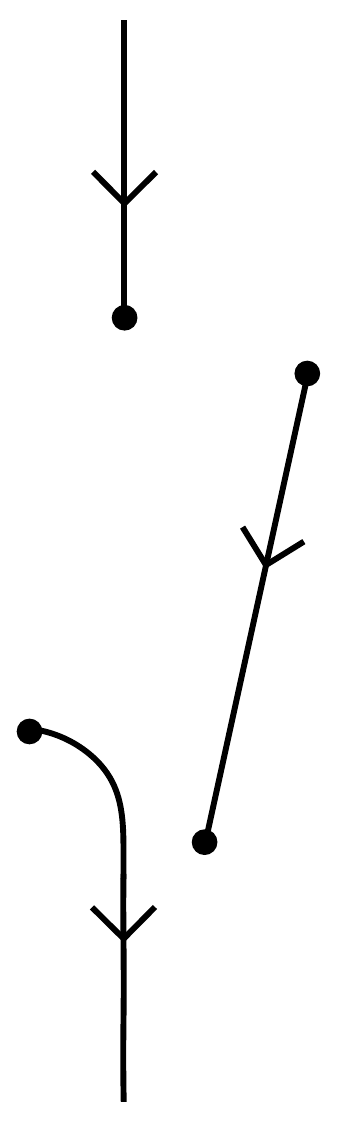}}
    \hspace{.2cm} \longleftarrow \hspace{.2cm}
    \raisebox{-20pt}{\includegraphics[height=.6in]{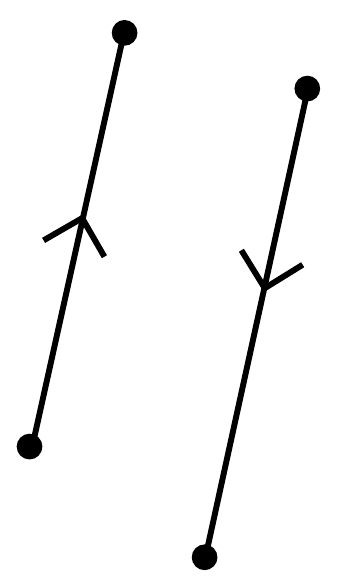}}
    \put(-264,40){\fontsize{9}{9}$u$}
    \put(-264,-38){\fontsize{9}{9}$u$}
    \put(-243,40){\fontsize{9}{9}$u$}
    \put(-243,-38){\fontsize{9}{9}$u$}
    \put(-205,40){\fontsize{9}{9}$u$}
    \put(-205,-38){\fontsize{9}{9}$u$}
    \put(-183,40){\fontsize{9}{9}$u$}
    \put(-183,-38){\fontsize{9}{9}$u$}
    \put(-135,40){\fontsize{9}{9}$u$}
    \put(-135,-38){\fontsize{9}{9}$u$}
    \put(-82,40){\fontsize{9}{9}$u$}
    \put(-82,-38){\fontsize{9}{9}$u$} \put(-300,10){\small{braiding}}
    \put(-230,10){\small{br.
        $R3$}} \put(-163,20){\small{Right}}
    \put(-173,10){\small{$-L_u$-move}} \put(-110,10){\small{br.
        $R2$}} \put(-58,10){\small{braiding}}
    \]
    \caption{An $R2$ move with one up-arc} \label{R2}
  \end{figure}

  \begin{figure} 
    \[
    \raisebox{-15pt}{\includegraphics[height=.51in]{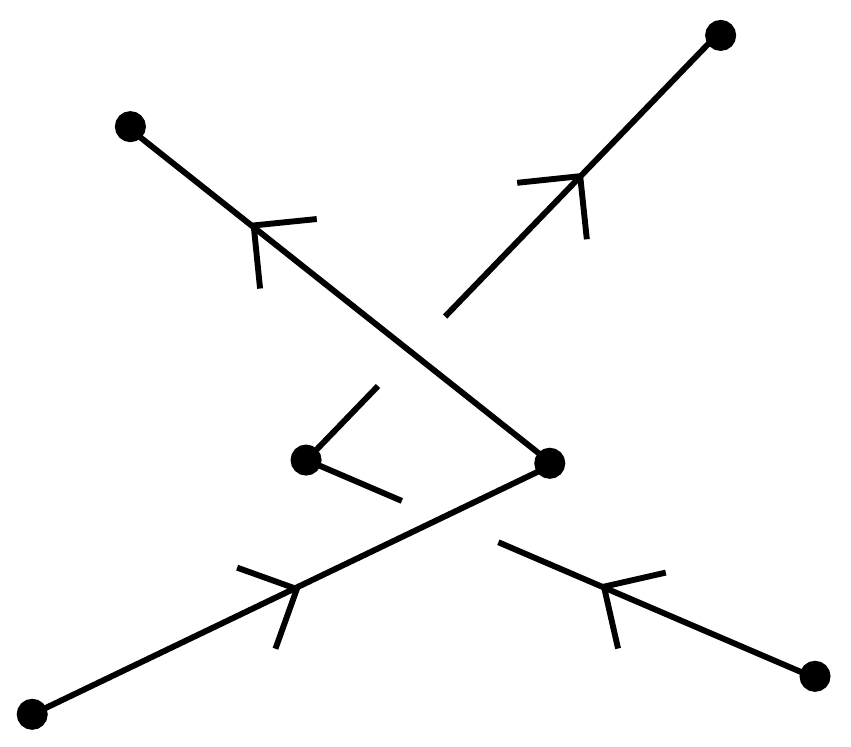}}
    \hspace{.4cm} \longrightarrow \hspace{.4cm}
    \raisebox{-55pt}{\includegraphics[height=1.3in]{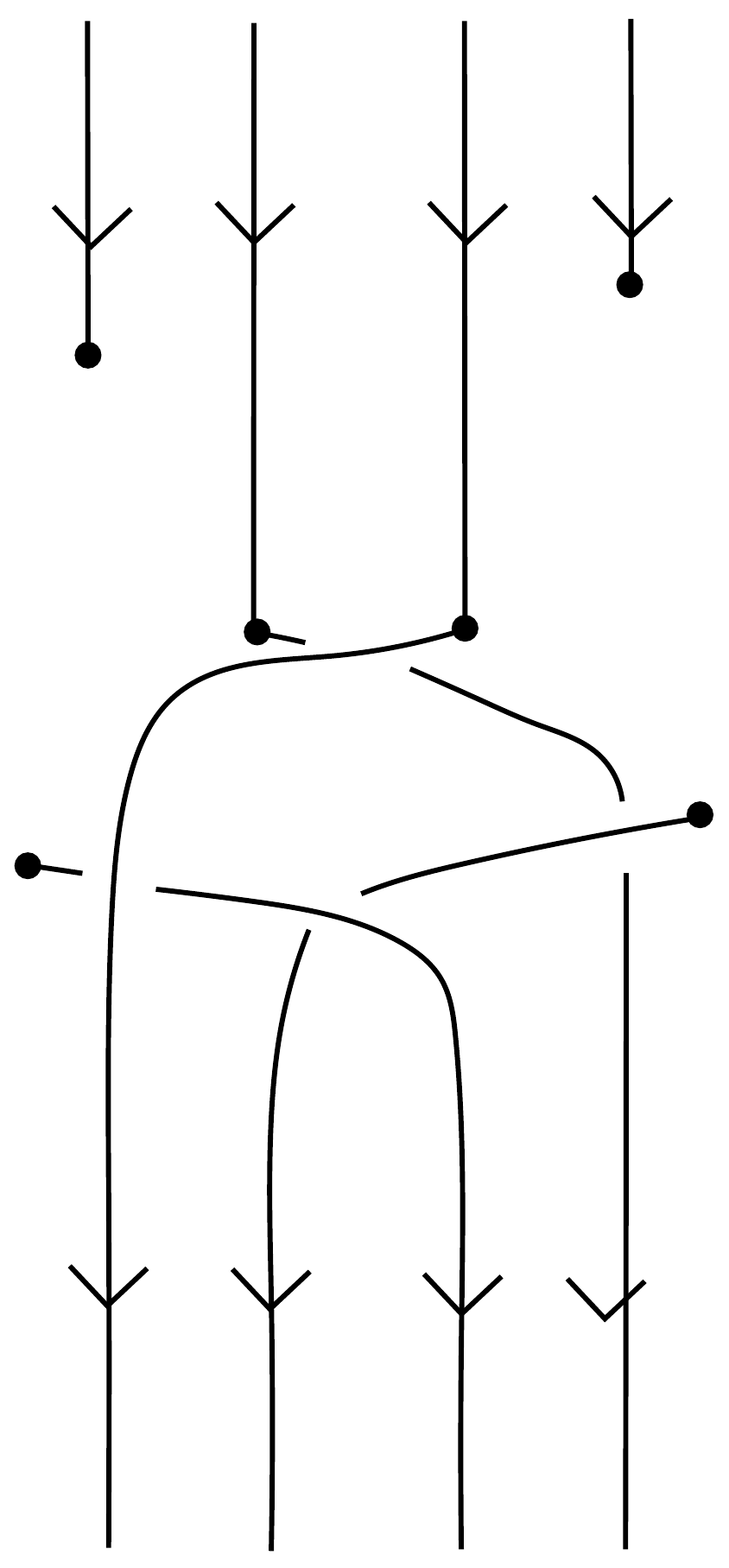}}
    \hspace{.4cm} \thicksim \hspace{.4cm}
    \raisebox{-55pt}{\includegraphics[height=1.3in]{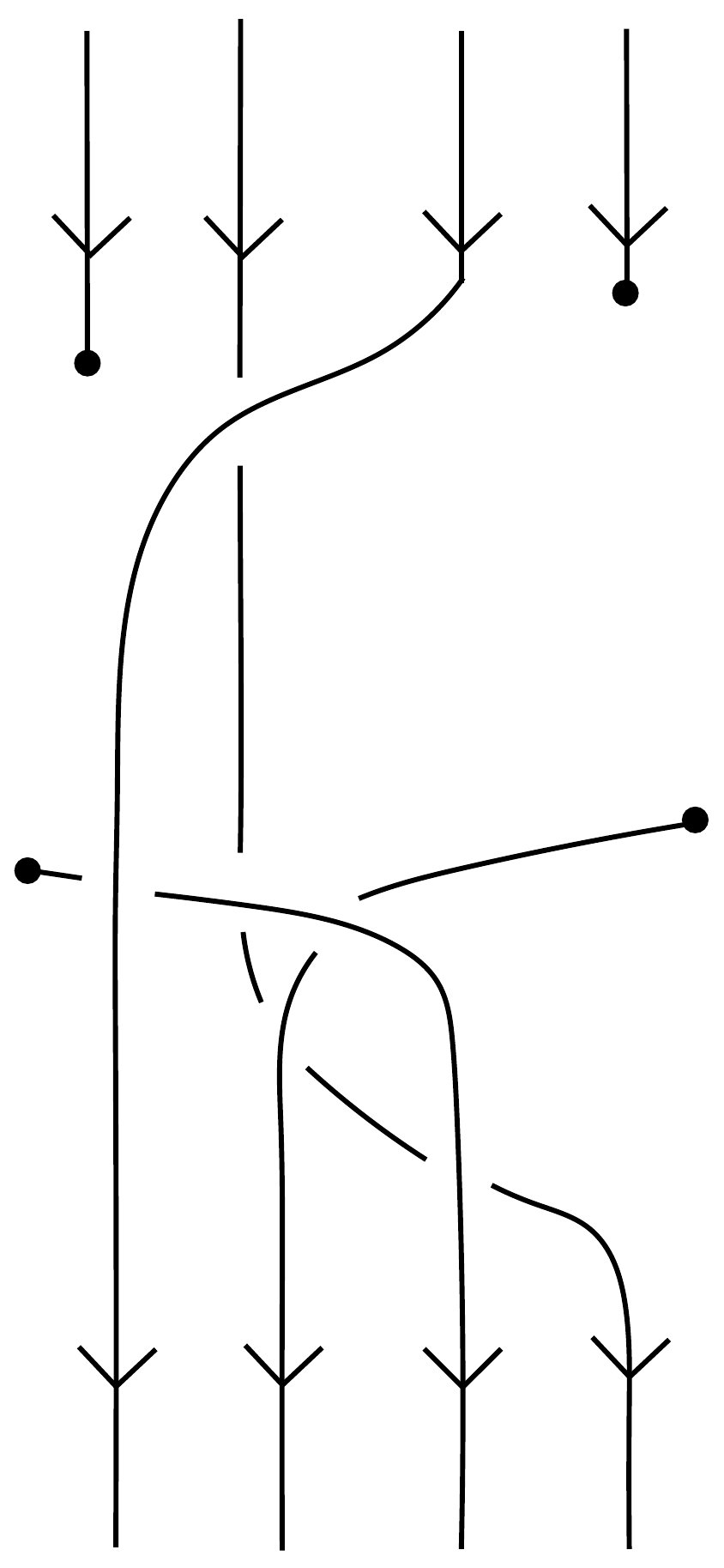}}
    \hspace{.4cm} \longleftrightarrow \hspace{.4cm}
    \raisebox{-55pt}{\includegraphics[height=1.3in]{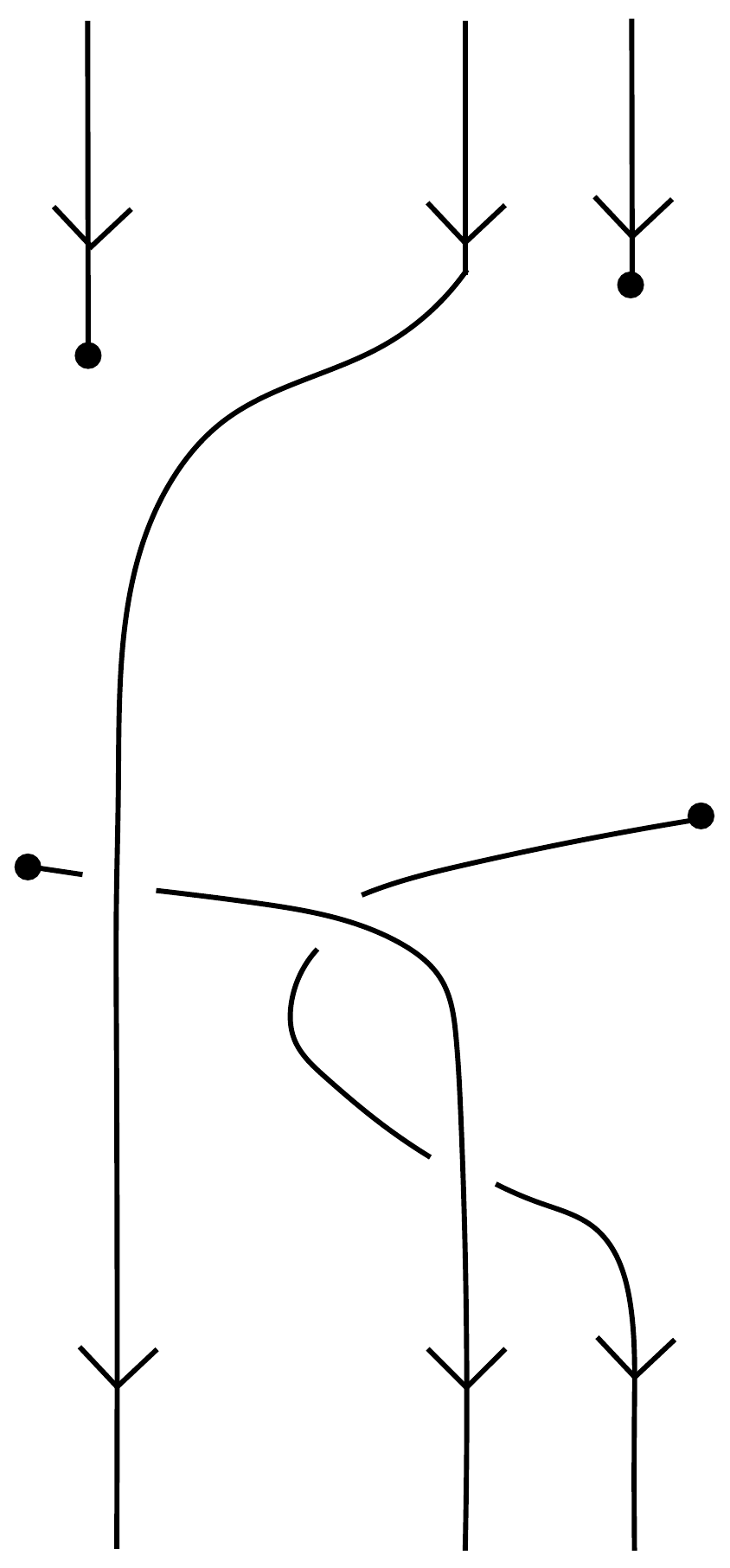}}
    \put(-216,
    31){\fontsize{9}{9}$o$} \put(-214,
    -52){\fontsize{9}{9}$o$} \put(-206,
    31){\fontsize{9}{9}$u$} \put(-204,
    -52){\fontsize{9}{9}$u$} \put(-193,
    31){\fontsize{9}{9}$o$} \put(-193,
    -52){\fontsize{9}{9}$o$} \put(-183,
    31){\fontsize{9}{9}$u$} \put(-183,
    -52){\fontsize{9}{9}$u$} \put(-136,
    31){\fontsize{9}{9}$o$} \put(-134,
    -52){\fontsize{9}{9}$o$} \put(-127,
    31){\fontsize{9}{9}$u$} \put(-124,
    -52){\fontsize{9}{9}$u$} \put(-113,
    31){\fontsize{9}{9}$o$} \put(-113,
    -52){\fontsize{9}{9}$o$} \put(-103,
    31){\fontsize{9}{9}$u$} \put(-103,
    -52){\fontsize{9}{9}$u$} \put(-46,
    31){\fontsize{9}{9}$o$} \put(-46,
    -52){\fontsize{9}{9}$o$} \put(-24,
    31){\fontsize{9}{9}$o$} \put(-24,
    -52){\fontsize{9}{9}$o$} \put(-13,
    31){\fontsize{9}{9}$u$} \put(-13,
    -52){\fontsize{9}{9}$u$} \put(-253, 10){\small{braiding}}
    \put(-168, 10){\small{br.
        $R3$}} \put(-75, 20){\small{Left}} \put(-90,
    10){\small{\fontsize{9}{9}$+L_u$-move}}
    \]
    \vspace{.7cm}
    \[
    \raisebox{-15pt}{\includegraphics[height=.51in]{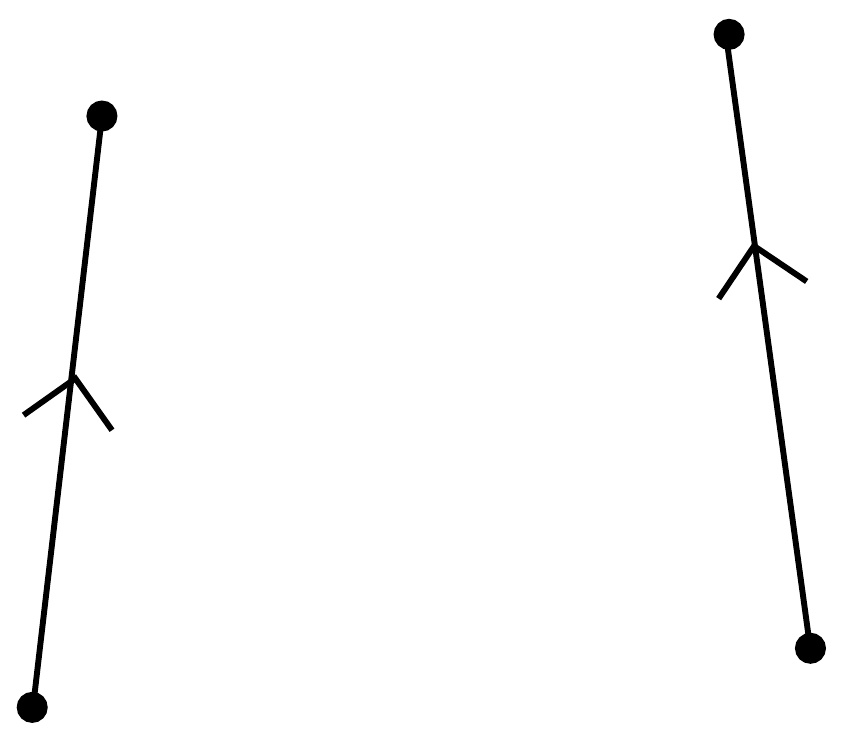}}
    \hspace{.4cm} \longrightarrow \hspace{.4cm}
    \raisebox{-55pt}{\includegraphics[height=1.3in]{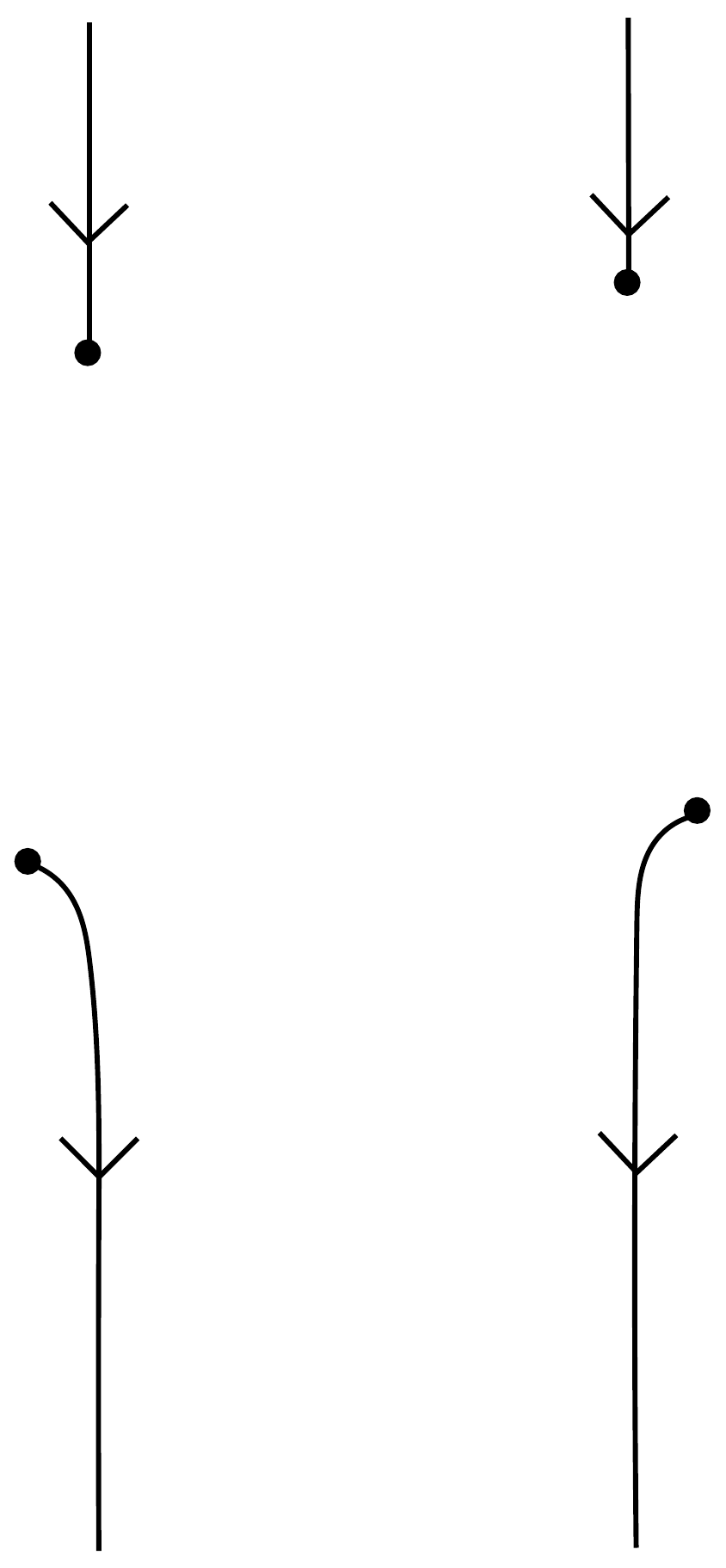}}
    \hspace{.4cm} \thicksim \hspace{.4cm}
    \raisebox{-55pt}{\includegraphics[height=1.3in]{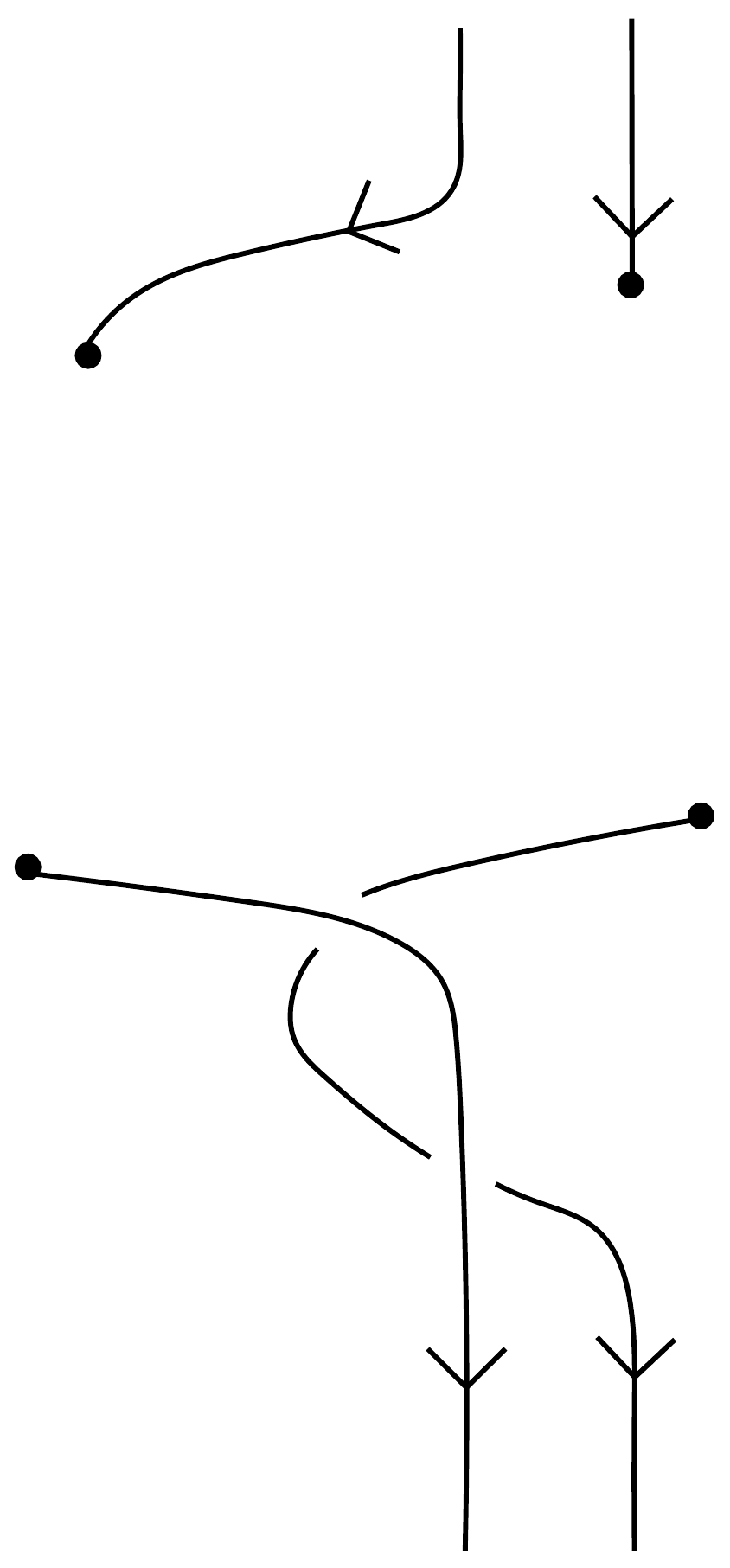}}
    \hspace{.4cm} \longleftrightarrow \hspace{.4cm}
    \raisebox{-55pt}{\includegraphics[height=1.3in]{R2/r22.pdf}}
    \put(-304,
    10){\fontsize{9}{9}$o$} \put(-272,
    13){\fontsize{9}{9}$u$} \put(-216,
    31){\fontsize{9}{9}$o$} \put(-214,
    -52){\fontsize{9}{9}$o$} \put(-183,
    31){\fontsize{9}{9}$u$} \put(-183,
    -52){\fontsize{9}{9}$u$} \put(-113,
    31){\fontsize{9}{9}$o$} \put(-113,
    -52){\fontsize{9}{9}$o$} \put(-104,
    31){\fontsize{9}{9}$u$} \put(-104,
    -52){\fontsize{9}{9}$u$} \put(-46,
    31){\fontsize{9}{9}$o$} \put(-46,
    -52){\fontsize{9}{9}$o$} \put(-24,
    31){\fontsize{9}{9}$o$} \put(-24,
    -52){\fontsize{9}{9}$o$} \put(-13,
    31){\fontsize{9}{9}$u$} \put(-13,
    -52){\fontsize{9}{9}$u$} \put(-253, 10){\small{braiding}}
    \put(-168, 10){\small{br. $R2$}} 
    \put(-79, 20){\small{basic}} 
    \put(-86,10){\small{\fontsize{9}{9}$L_o$-move}}
    \]
    \caption{An $R2$ move with two up-arcs} \label{R2D2}
  \end{figure}

  For the $R3$ move, we rely on the $R2$ moves which have been already verified. In Figure~\ref{R3} we consider a version of the move with one up-arc. The other oriented versions of the $R3$ move can be verified similarly, by applying braid isotopy (namely the $R2$ move) and then a version of the $R3$ move that was already verified.

  \begin{figure}
    \[\raisebox{-15pt}{\includegraphics[width=.75in]{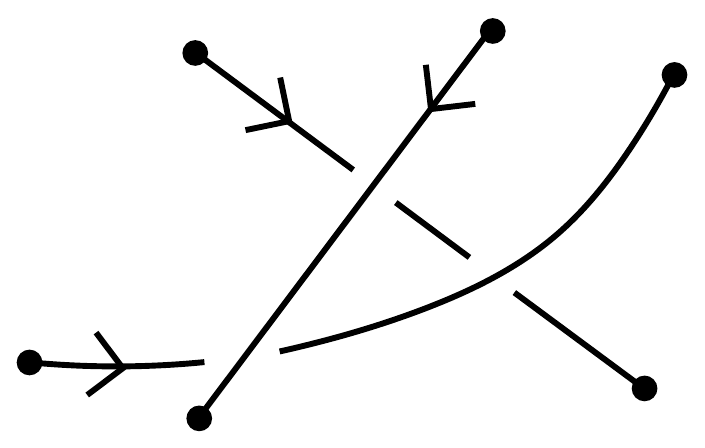}}
    \hspace{.1cm} \longleftrightarrow \hspace{.1cm}
    \raisebox{-15pt}{\includegraphics[width=.75in]{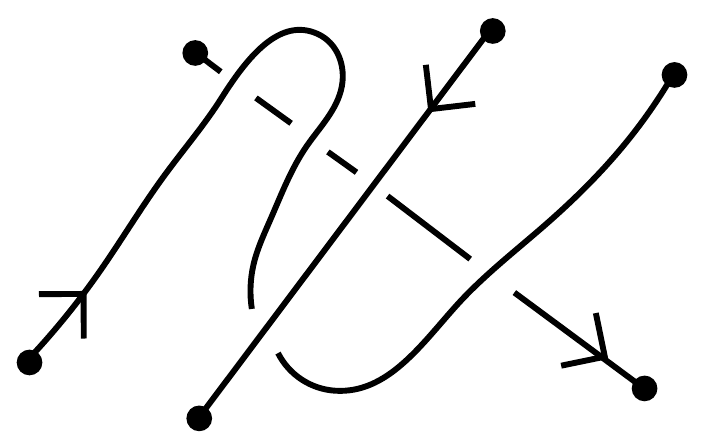}}
    \hspace{.18cm} \sim \hspace{.18cm}
    \raisebox{-15pt}{\includegraphics[width=.75in]{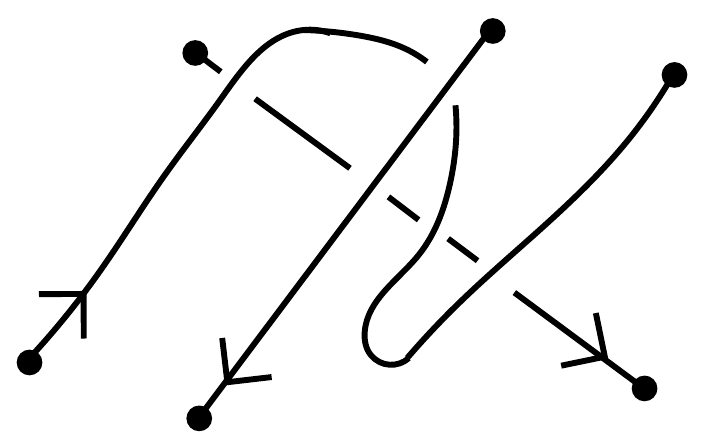}}
    \hspace{.1cm} \longleftrightarrow \hspace{.1cm}
    \raisebox{-15pt}{\includegraphics[width=.75in]{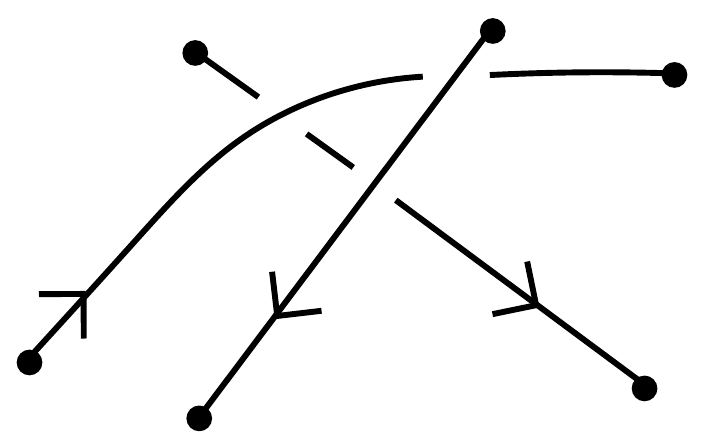}}
    \put(-237,10){\fontsize{9}{9}$R2$} \put(-158,20){\small{braid}}
    \put(-155,10){\small{R3}} \put(-73,10){\fontsize{9}{9}$R2$}
    \]
    \caption{An $R3$ move with one up-arc} \label{R3}
  \end{figure}

  In addition to the traditional Reidemeister moves, there are two extended Reidemeister moves, namely $R4$ and $R5$.

  The two basic versions of the $R4$ move on a regular $Y$-type vertex with an arc sliding under the vertex are shown in Figures~\ref{R4a} and~\ref{R4b}.  Figure~\ref{R4a} shows the basic braid isotopy case where all strands in the diagram are oriented downward. Figure~\ref{R4b} shows the move where the sliding strand is an up-arc. We reduce this move to the version of the $R4$ move in braid form, by employing first an $R2$ move (in a similar way as we did for the considered version of the $R3$ move). The move with an arc sliding over a $Y$-type vertex is treated similarly. No other orientations on $Y$-type vertices need to be considered, since we are assuming our diagram is in general position. The same method can be applied to the various versions of an $R4$ move on a $\lambda$-type vertex.
  
  \begin{figure}
    \[
    \raisebox{-30pt}{\includegraphics[height=0.8in]{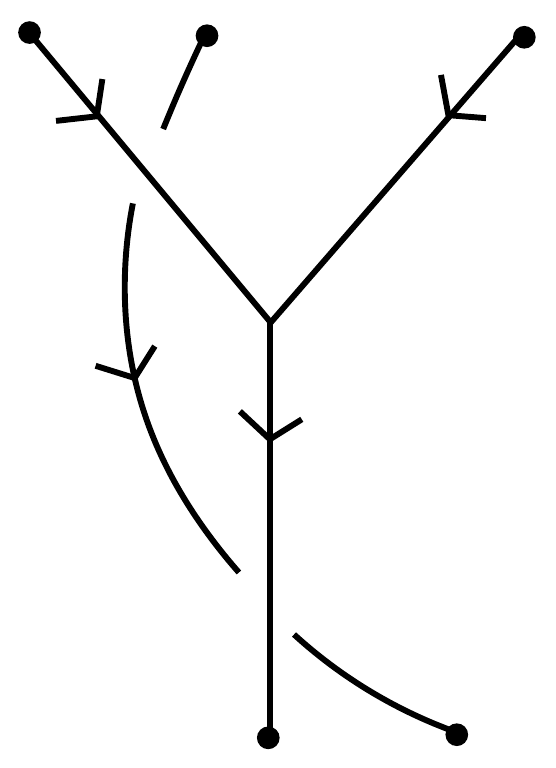}}
    \hspace{.1cm} \longleftrightarrow \hspace{.1cm}
    \raisebox{-30pt}{\includegraphics[height=0.8in]{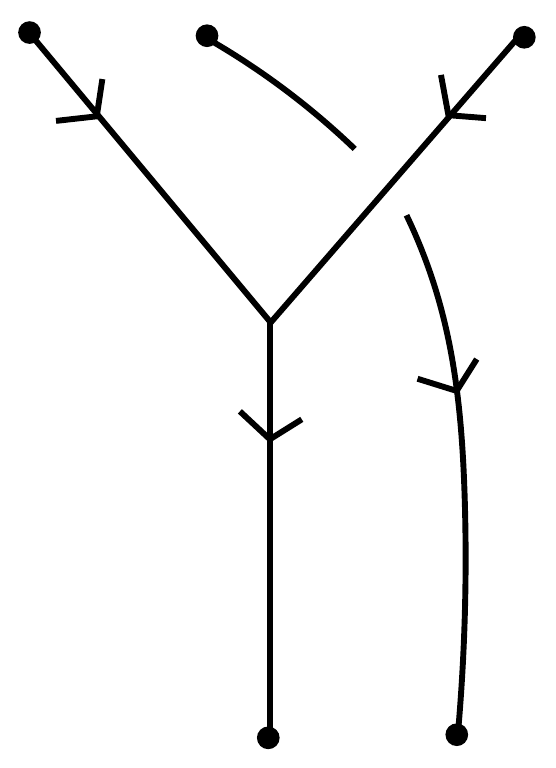}}
    \]
    \caption{An $R4$ move in braid form and on a
      $Y$-type vertex} \label{R4a}
  \end{figure}
  
  \begin{figure}
    \[
    \raisebox{-30pt}{\includegraphics[height=0.8in]{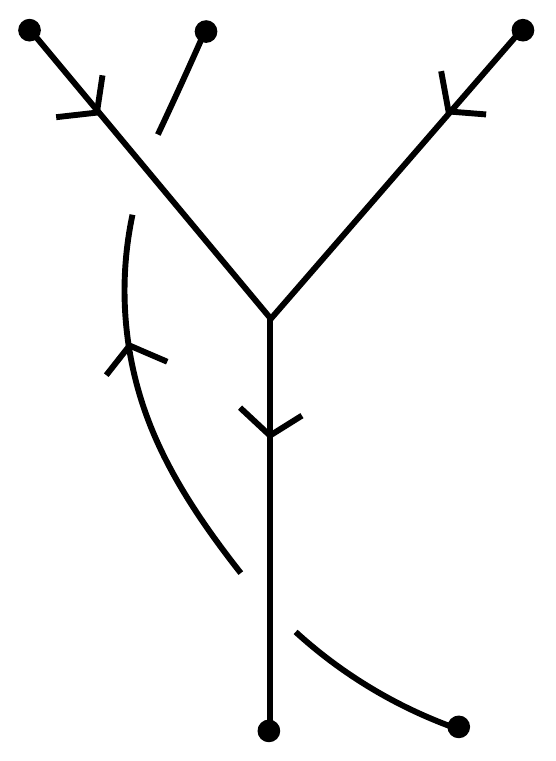}}
    \hspace{.1cm} \longleftrightarrow \hspace{.1cm}
    \raisebox{-30pt}{\includegraphics[height=0.8in]{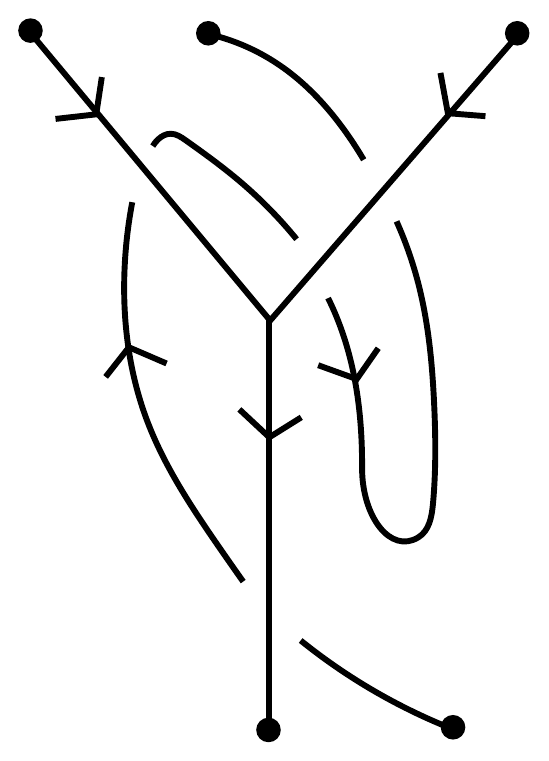}}
    \hspace{.18cm} \sim \hspace{.18cm}
    \raisebox{-30pt}{\includegraphics[height=0.8in]{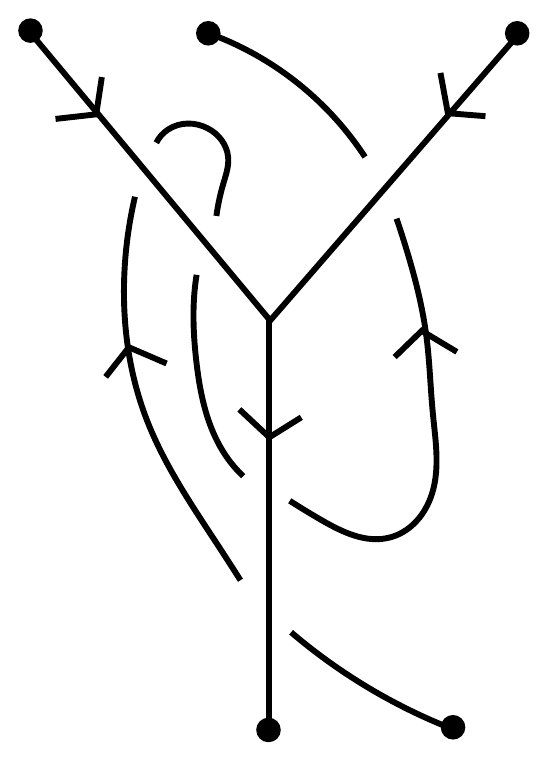}}
    \hspace{.1cm} \longleftrightarrow \hspace{.1cm}
    \raisebox{-30pt}{\includegraphics[height=0.8in]{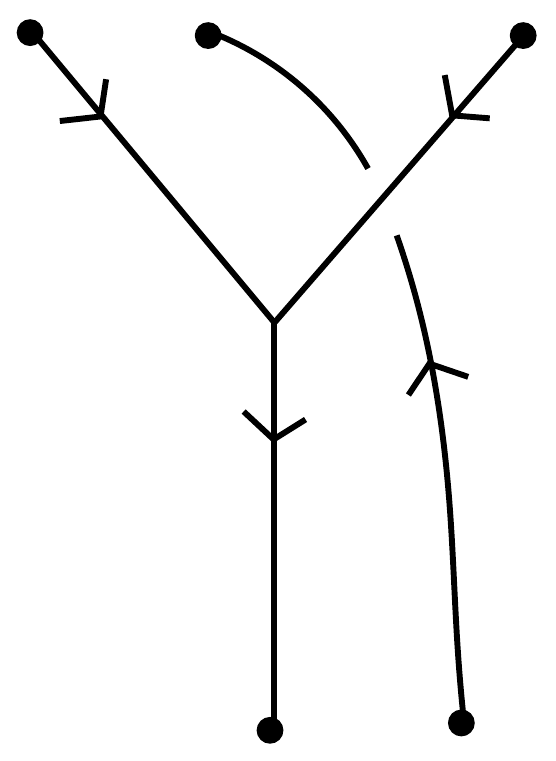}}
    \put(-198,10){\fontsize{9}{9}$R2$} \put(-136,20){\small{braid}}
    \put(-133,10){\small{R4}} \put(-63,10){\fontsize{9}{9}$R2$}
    \]
    \caption{An $R4$ move with an up-arc and on a $Y$-type vertex} \label{R4b}
  \end{figure}
 
 Since we are assuming all vertices in our diagram are in regular position, we see that the only case needed to be verified for the $R5$ move on a $Y$-type vertex is that shown in Figure~\ref{R5}. However, this move is in braid form and, therefore, it is part of trivalent braid isotopy. This argument applies to the $R5$ move on a $\lambda$-type vertex as well.
 
 \begin{figure}
   \[\includegraphics[height=0.8in]{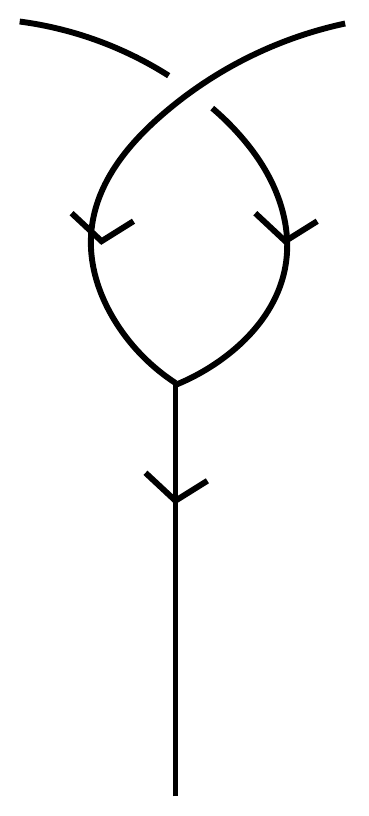} \hspace{.1cm}
   \raisebox{20pt}{$\longleftrightarrow$}
   \hspace{.1cm} \includegraphics[height=0.8in]{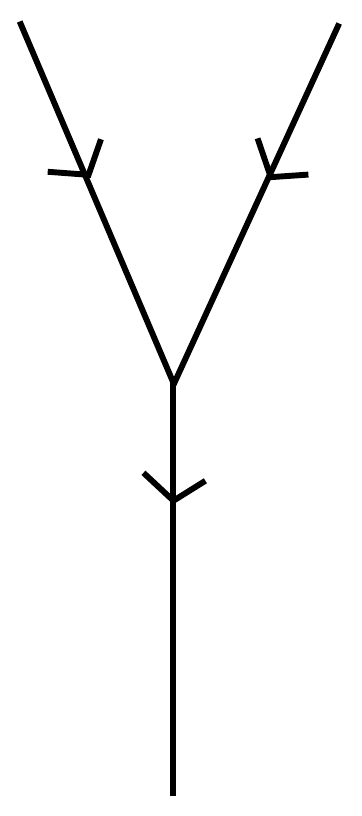}
   \hspace{.1cm}
    \raisebox{20pt}{$\longleftrightarrow$}
    \hspace{.1cm} \includegraphics[height=0.8in]{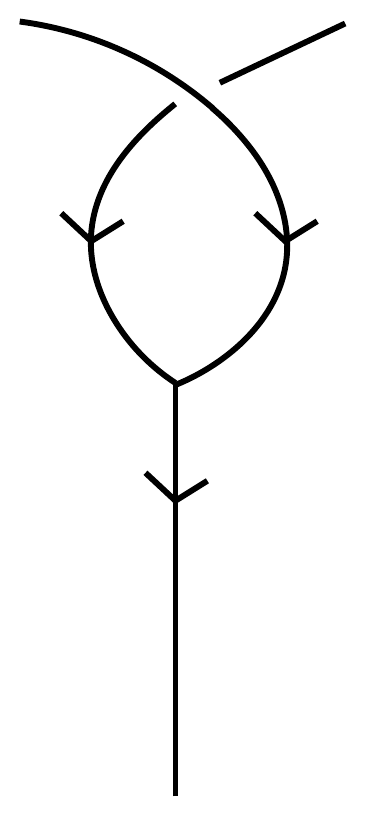}
    \]
    \caption{The $R5$ move in braid form and on a
      $Y$-type vertex} \label{R5}
  \end{figure}

We have shown that our braiding algorithm applied to the two sides of any of the extended Reidemeister moves yield trivalent braids that are $TL$-equivalent. This concludes the proof of the $L$-move Markov-type theorem for trivalent braids and spatial trivalent graphs (Theorem~\ref{LMarkov}).
\end{proof}

\subsection{Algebraic Markov Theorem} \label{sec:ProofAMarkov}

In this section, we state and prove an algebraic Markov-type theorem for trivalent braids and their closures. We use the elementary trivalent braids introduced in Section~\ref{sec:braidsintro} to define a set of algebraic moves that define an equivalence relation on trivalent braids. This algebraic equivalence relation can replace the geometric $TL$-equivalence used in Theorem~\ref{LMarkov}.

We use $b$ to represent an arbitrary trivalent braid in $TB_n^n$.  We can also embed $b$ into $TB_{n+1}^{n+1}$ by adding an extra strand to the right of $b$; we use the same symbol, $b$, to denote the resulting braid with an extra strand.  Using this operation, we can think of $TB_n^n$ being embedded into $TB_{n+1}^{n+1}$, so we define $TB:= \cup_{n=1}^\infty TB_n^n$.

\begin{theorem}[\textbf{Algebraic Markov-type theorem for STGs}] \label{AMarkov}
 Two well-oriented spatial trivalent graphs are isotopic if and only if any two corresponding trivalent braids differ by a finite sequence of braid relations in $TB$ and the following moves:
  \begin{enumerate}

  \item[(i)] Elementary conjugation (conjugation by $\sigma_i$ and $\sigma_i^{-1}$; see Figure~\ref{fig:conj2}):\\
    $\sigma_i b \sim b\sigma_i \,\,\, \text{and} \,\,\, \sigma_i^{-1} b \sim b\sigma_i^{-1}, \,\,\, \text{where}\,\, b, \sigma_i^{\pm 1} \in TB^n_n, \, \, 1 \leq i \leq n-1$\\

  \item[(ii)] Right stabilization (see Figure~\ref{fig:stab}):\\
    $ b c \sim b \sigma_n^{\pm 1} c, \, \, \text{where} \,\, b, c \in
    TB^n_n \, \text{and}\, \, b \sigma_n^{\pm 1} c \in TB_{n+1}^{n+1}$
  \end{enumerate}
\end{theorem}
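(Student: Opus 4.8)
The plan is to show that the $TL$-equivalence of Section~\ref{sec:TL-equiv} agrees with the algebraic equivalence generated by the braid relations in $TB$ together with moves (i) and (ii); call the latter \emph{$M$-equivalence}. Granting this, Theorem~\ref{AMarkov} is immediate from Theorem~\ref{LMarkov}. The inclusion ``$M$-equivalence $\subseteq$ $TL$-equivalence'' is the short half. The braid relations in $TB$ are, by definition, instances of trivalent braid isotopy. Elementary conjugation is a consequence of $L$-moves and braid isotopy by Lemma~\ref{conj}. And a right stabilization $bc\sim b\sigma_n^{\pm1}c$ is exactly a right $L$-move carried out at a point $P$ of the rightmost strand, at the level separating $b$ from $c$: since nothing lies to the right of $P$, dragging the cut around the braid axis produces the new strand in position $n+1$ together with a single crossing $\sigma_n^{\pm1}$ located at the level of $P$ (compare Figure~\ref{crossL}), i.e.\ the braid $b\sigma_n^{\pm1}c$. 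Hence $M$-equivalent braids are $TL$-equivalent, and therefore (by the easy half of Theorem~\ref{LMarkov}, namely that $TL$-equivalent braids have isotopic closures) they represent isotopic spatial trivalent graphs.

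For the reverse inclusion I would use Theorem~\ref{LMarkov} to reduce to a single right $L$-move: braid isotopy is already subsumed by the braid relations in $TB$, so it suffices to realize one right $L$-move by braid relations, elementary conjugations, and one right stabilization. Following~\cite{L,LR}, suppose the $L$-move is performed at a point $P$ lying on the strand in position $k$ at its level, and write $\beta=\beta_1\beta_2$ for the splitting of the braid word at that level. Reading off the crossings created when the new pair of strands is pulled around the braid axis --- and using braid isotopy, including $R4$ moves to push the new strand past any vertices alongside which it runs --- one sees that the $L$-moved braid is braid-isotopic to a braid in which the new strand weaves from position $k$ to position $n+1$ above $P$ and back from $n+1$ to $k$ below $P$, both weavings being dictated by a single word $\delta$ in $\sigma_k^{\pm1},\dots,\sigma_{n-1}^{\pm1}$ and its mirror. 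Conjugating the whole braid by $\delta$ undoes these weavings and straightens the new strand so that it runs parallel to the rest at position $n+1$, leaving the braid $\widetilde{\beta_1}\,\sigma_n^{\pm1}\,\widetilde{\beta_2}$, where $\widetilde{\beta_i}\in TB_{n+1}^{n+1}$ is $\beta_i$ with a strand adjoined on the right; this conjugation is legitimate because $\delta$ is a product of $\sigma_i^{\pm1}$ only, hence a finite sequence of elementary conjugations. A right stabilization applied in reverse (move (ii)) then removes $\sigma_n^{\pm1}$ and the extra strand, returning $\beta_1\beta_2=\beta$. The basic and left $L$-moves, and the under versus over variants, reduce to this one by Lemma~\ref{L equiv} and the obvious over/under symmetry.

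The main obstacle is precisely the step that is trivial in the classical case~\cite{L,LR} but needs care here: one must check that, after braid isotopy, the weaving of the newly created strand is governed by a \emph{pure} word $\delta$ in the $\sigma_i^{\pm1}$, with no occurrence of $y_i$ or $\lambda_i$. This matters because $TB_n^n$ is not a group --- conjugation by $y_i$ or $\lambda_i$ is not available as a move --- so the argument breaks unless the new strand is never dragged \emph{through} a trivalent vertex. It is not, since the $L$-move is performed away from vertices and the straightening isotopy can be kept away from them, meeting vertices only through $R4$-type braid relations; verifying this carefully (in the spirit of the figures used to check the braid relations) is the heart of the proof, after which the remainder is a transcription of Lambropoulou's argument that $L$-equivalence coincides with Markov equivalence.
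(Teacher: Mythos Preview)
Your proposal is correct and follows essentially the same route as the paper: both directions are reduced to Theorem~\ref{LMarkov}, and the substantive step is realizing a single right $L$-move via braid isotopy, elementary conjugation, and one right stabilization. The paper carries this out by an explicit diagrammatic sequence (Figure~\ref{stab-proofAMarkov}) rather than the verbal argument you sketch, but the content is the same.

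Your ``main obstacle'' --- that the conjugating word $\delta$ must lie in the $\sigma_i^{\pm1}$ alone because $y_i,\lambda_i$ are not invertible --- is a valid and important observation about where the trivalent case differs from the classical one, but it is less delicate than you suggest. By definition the new $L$-move strands cross entirely over or entirely under the rest of the braid, so sliding them past a vertex is always an $R4$ braid-isotopy move and never calls for conjugation by $y_i$ or $\lambda_i$; the paper handles this implicitly in its figure and does not single it out as a separate issue.
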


\begin{figure}[ht]
  \[
  \raisebox{-35pt}{\includegraphics[height=1in]{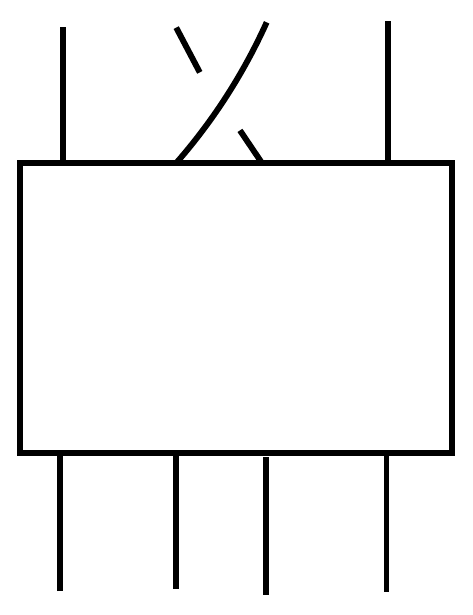}}
  \hspace{0.5cm} \sim \hspace{0.5cm}
  \raisebox{-35pt}{\includegraphics[height=1 in]{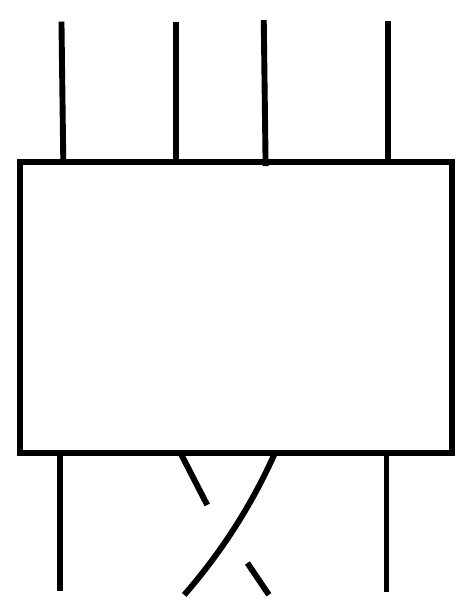}} \put(-130,
  0){\fontsize{9}{9}$b$} \put(-30,
  0){\fontsize{9}{9}$b$} \put(-135,
  38){\fontsize{7}{7}$i$} \put(-125,
  38){\fontsize{7}{7}$i+1$} \put(-40,
  38){\fontsize{7}{7}$i$} \put(-30, 38){\fontsize{7}{7}$i+1$}
  \]
  \caption{Conjugation by $\sigma_i$}\label{fig:conj2}
\end{figure}

\begin{figure}[ht]
  \[
  \raisebox{-35pt}{\includegraphics[height=1.1in]{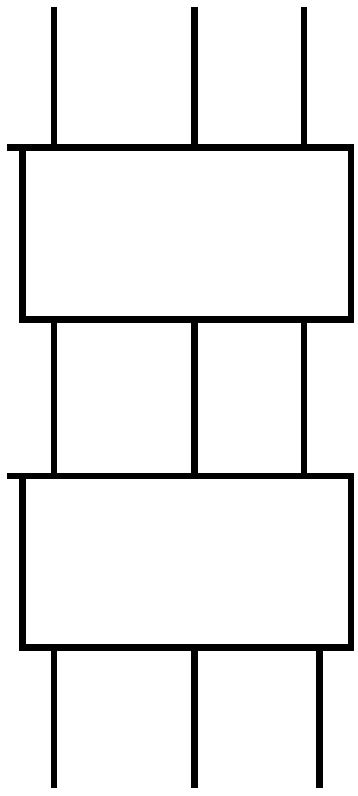}}
  \hspace{.2in} {\sim} \hspace{.2in}
  \raisebox{-35pt}{\includegraphics[height=1.1in]{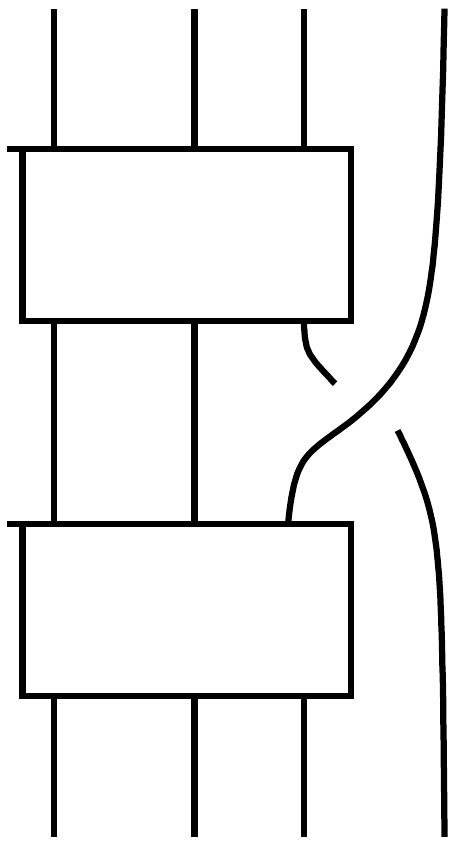}}
  \put(-97,20){\fontsize{9}{9}$b$}
  \put(-30,20){\fontsize{9}{9}$b$} \put(-97,
  -15){\fontsize{9}{9}$c$} \put(-30, -15){\fontsize{9}{9}$c$}
  \]
  \caption{Right stabilization by $\sigma_n$}\label{fig:stab}
\end{figure}

\begin{proof}
  We note first that braid isotopy is part of both $TL$-equivalence and the algebraic equivalence of Theorem~\ref{AMarkov}.

  It is easy to see that the closures of two trivalent braids that are related by trivalent braid isotopy and a finite sequence of right stabilization and elementary conjugation are isotopic STG diagrams.

  For the converse, let $b_1$ and $b_2$ be trivalent braids that yield isotopic STG diagrams upon the closure operation. By Theorem~\ref{LMarkov}, we know that $b_1$ and $b_2$ are $TL$-equivalent. Therefore, it suffices to show that the right $L$-moves for trivalent braids follow from the algebraic moves of  Theorem~\ref{AMarkov}. In Figure~\ref{stab-proofAMarkov}, we show that the right $L_u$-move can be obtained from right stabilization, elementary conjugation, and trivalent braid isotopy. 
  \begin{figure}[ht]
  \[
  \raisebox{-35pt}{\includegraphics[height = 1in]{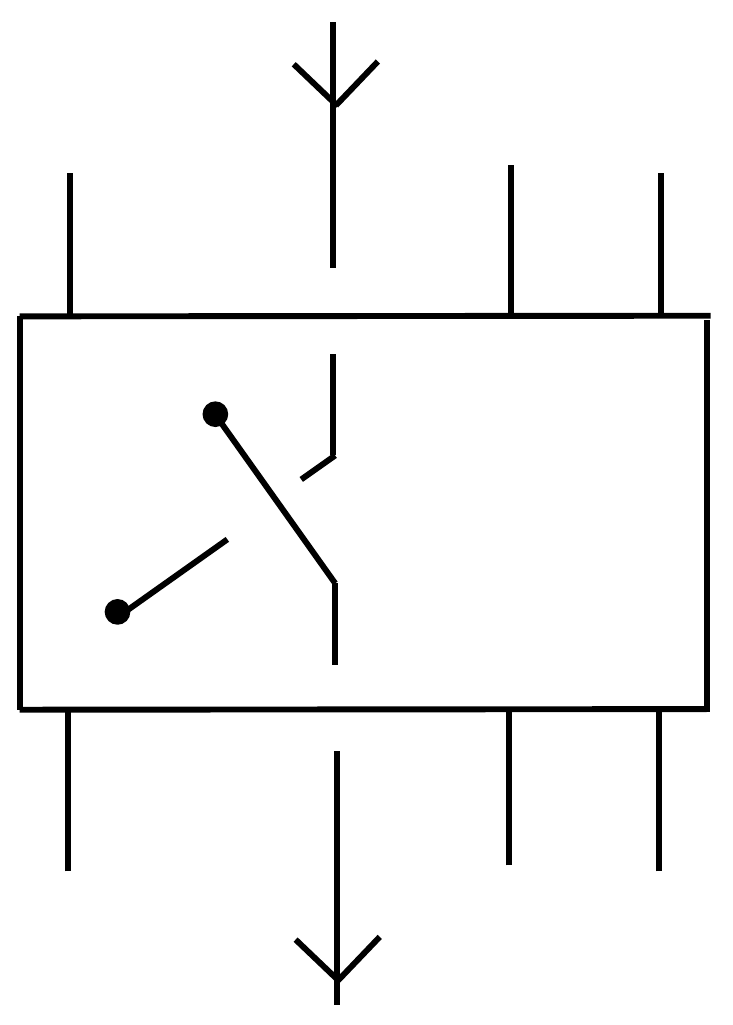}}
  \hspace{.3cm} \thicksim \hspace{.5cm}
  \raisebox{-35pt}{\includegraphics[height = 1in]{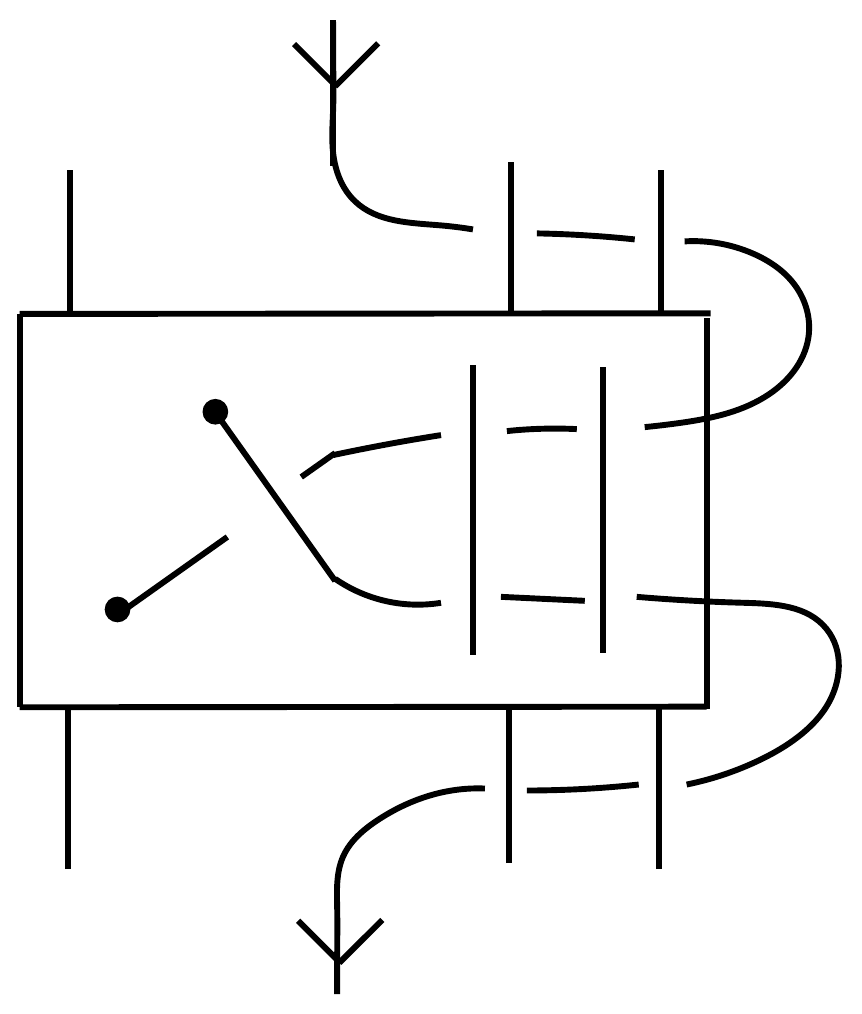}}
  \hspace{.3cm} \longleftrightarrow \hspace{.2cm}
  \raisebox{-35pt}{\includegraphics[height = 1in]{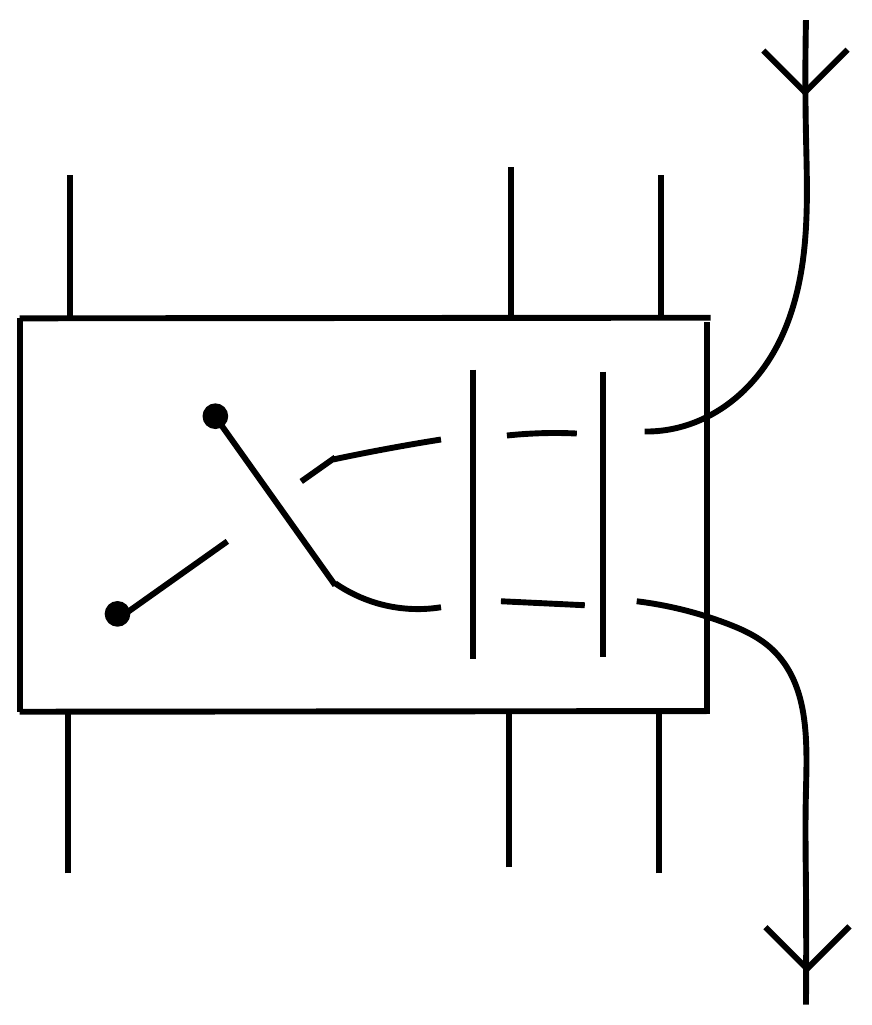}}
  \hspace{.2cm} \thicksim \hspace{.2cm} \put(-210,20){\small{braid}}
  \put(-212,10){\small{isotopy}} \put(-107,10){\small{conj.}}
  \put(-13,10){\small{R3}}
  \]
  \[
  \raisebox{-35pt}{\includegraphics[height = 1in]{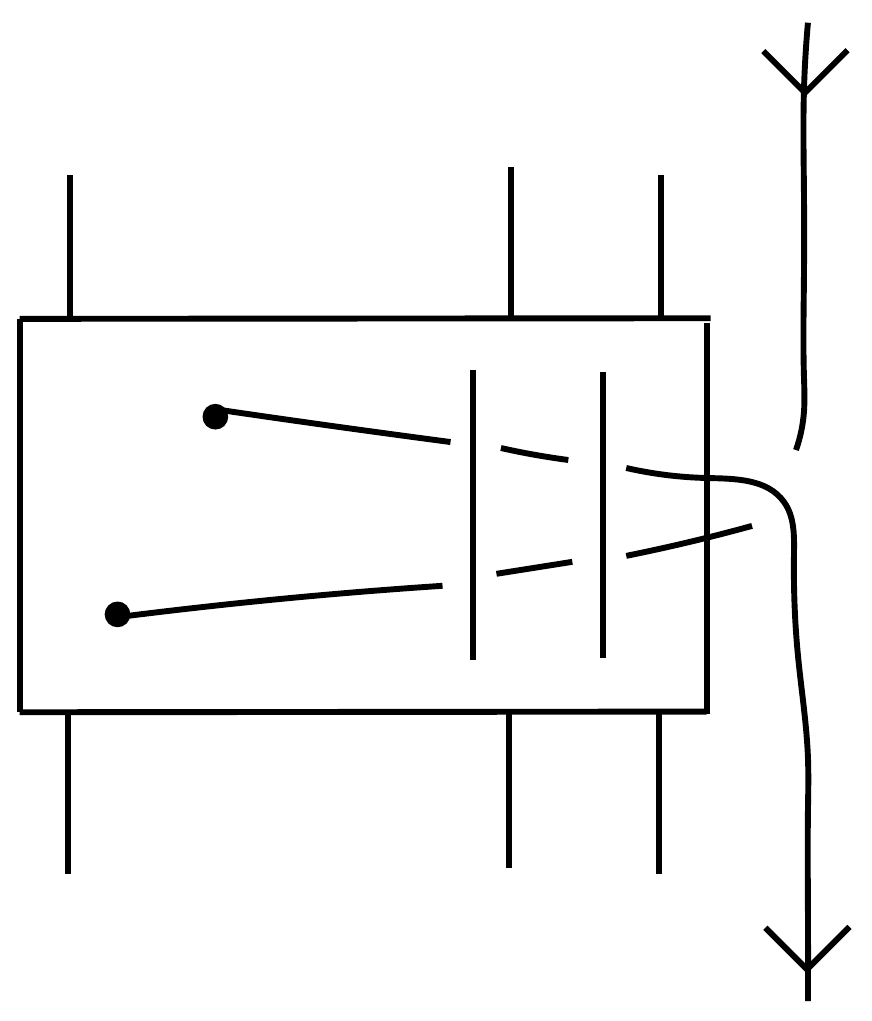}}
  \hspace{.2cm} \longleftrightarrow \hspace{.2cm}
  \raisebox{-30pt}{\includegraphics[height = 0.8in]{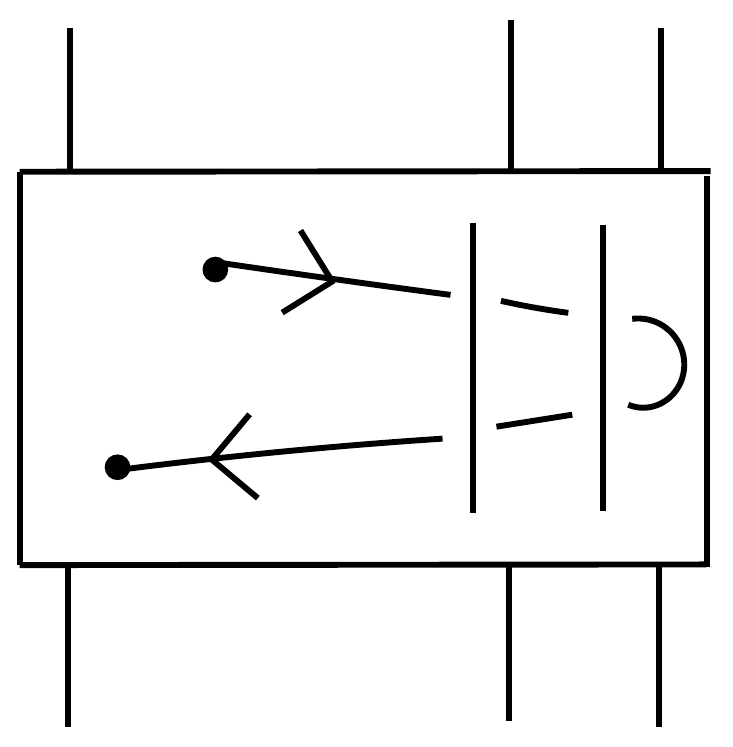}}
  \hspace{.2cm} \thicksim \hspace{.2cm}
  \raisebox{-30pt}{\includegraphics[height = 0.8in]{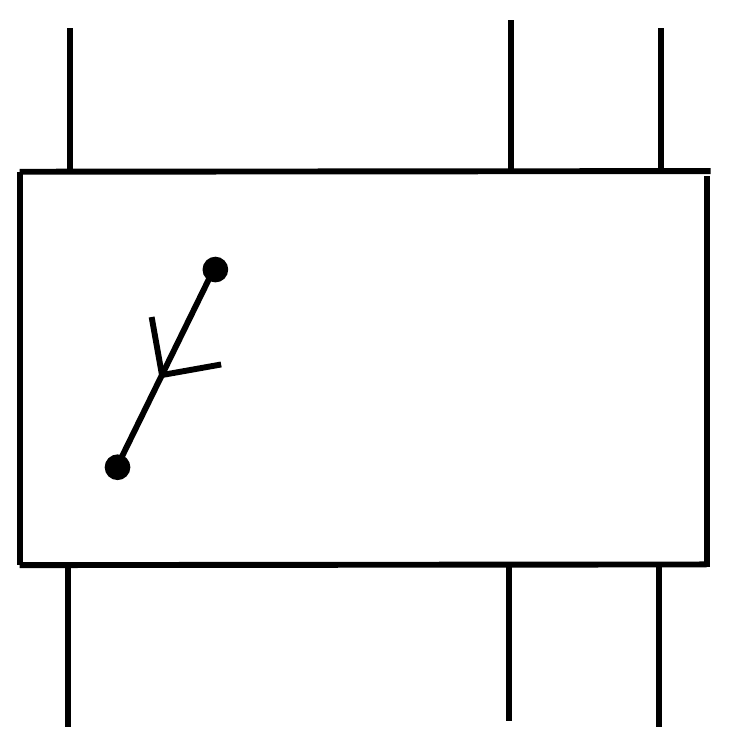}}
  \put(-165,10){\small{stab.}}  \put(-75,10){\small{R2}}
  \]
  \caption{Right $L_u$-move follows from the algebraic
    moves}\label{stab-proofAMarkov}
\end{figure}

The proof for the right $L_o$-move follows similarly. This completes the proof of Theorem~\ref{AMarkov}.
\end{proof}

\textbf{Final comments.} We provided two Markov-type theorems for trivalent braids and spatial trivalent graphs. As in the case of classical braids, our algebraic Markov-type theorem requires two moves, besides braid isotopy: conjugation by the elementary braids $\sigma_i$ or $\sigma_i^{-1}$ and right stabilization. In our case, right stabilization is not done in the bottom of a braid but between two trivalent braids. For classical braids, due to conjugation, bottom right stabilization is equivalent to right stabilization between braids. This is not the case for our approach for trivalent braids, since we do not have conjugation by the elementary trivalent braids $y_i$ and $\lambda_i$.

We defined the $TL$-equivalence for trivalent braids as an extension of the $L$-equivalence for classical braids. $TL$-equivalence encompasses trivalent braid isotopy and right $L$-moves. We used the $TL$-equivalence to prove a one-move Markov-type theorem for trivalent braids. Then, we used this $L$-move Markov-type theorem to prove a more algebraic Markov-type theorem for trivalent braids.

\vspace{0.2cm}

\textbf{Acknowledgements.} We gratefully acknowledge support from the NSF Grant DMS--1460151 through the \textit{Research Experience for Undergraduates} (REU) Program at California State University, Fresno. The first author was also partially supported by Simons Foundation collaboration grant $\#$ 355640.


\begin{thebibliography}{999}

\bibitem{A} J. W. Alexander, A lemma on systems of knotted curves, \textit{Proc. Nat. Acad. Sci. USA} \textbf{9} (1923), 93-95.

\bibitem {CPM} C. Caprau, A. De la Pena, S. McGahan, Virtual singular braids and links, \textit{Manuscripta Math.} \textbf{151}, No 1 (2016), 147-175.

\bibitem{CDS} C. Caprau, A. Dirdak, E. Sawyer, Alexander- and Markov-type theorems for virtual trivalent braids; preprint at \textit{arXiv:1804.09919[math.GT]}.

\bibitem{HL} R. H\"aring-Oldenburg, S. Lambropoulou, Knot theory in handlebodies, \textit{J. Knot Theory and Ramifications} \textbf{11}, No. 6, (2002), 921-943.

\bibitem{KaTa} K. Kanno and K. Taniyama, Braid presentation of spatial graphs, \textit{Tokyo J. Math.} \textbf{33} (2010), 509-522.

\bibitem {Ka1} L. H. Kauffman, Invariants of graphs in three-space, \textit{Trans. Amer. Math. Soc.} \textbf{311} (1989), 697-710.

\bibitem{KL2} L. H. Kauffman, S. Lambropoulou, Virtual braids and the L-move, \textit{J. Knot Theory and Ramifications} \textbf{15}, No. 6 (2006), 773-811.

\bibitem {L} S. Lambropoulou, A study of braids in 3-manifolds, Ph.D. thesis, Warwick Univ. (1993).

\bibitem{LR} S. Lambropoulou, C.P. Rourke, Markov's theorem in 3-manifolds, \textit{Topol. Appl.} \textbf{78} (1997), 95-122.

\bibitem{V} V. Lebed, Qualgebras and knotted 3-valent graphs, \textit{Fund. Mathematicae} \textbf{230}, no. 2, 2015, pp.167-204.

\bibitem {M} A. A. Markov, \"{U}ber die freie \"{A}quivalenz geschlossener Z\"{o}pfe, \textit{Recueil Math\'{e}matique Moscou} \textbf{1}, (1935).

\end{thebibliography}
\end{document}